\newtheorem{lemma}{Lemma}[section]
\newtheorem{proposition}[lemma]{Proposition}
\newtheorem{remark}[lemma]{Remark}
\newtheorem{definition}[lemma]{Definition}
\newtheorem{example}[lemma]{Example}
\newtheorem{corollary}[lemma]{Corollary}
\newtheorem{theorem}[lemma]{Theorem}
\SetMathAlphabet{\mathcal}{normal}{OMS}{cmsy}{m}{n}
\title{Generalised 6j symbols over the category of $G$-graded vector spaces}
\author{Fabio Lischka\\FAU Erlangen-Nürnberg\\Email: \href{mailto:fabio.lischka@fau.de}{fabio.lischka@fau.de}}
\date{7th October 2021}
\begin{document}

\maketitle

\begin{abstract}
Any choice of a spherical fusion category defines an invariant of oriented closed 3-manifolds, which is computed by choosing a triangulation of the manifold and considering a state sum model that assigns a 6j symbol to every tetrahedron in this triangulation. This approach has been generalized to oriented closed 3-manifolds with defect data by Catherine Meusburger. In a recent paper, she constructed a family of invariants for such manifolds parametrised by the choice of certain spherical fusion categories, bimodule categories, finite bimodule functors and module natural transformations. Meusburger defined generalised 6j symbols for these objects, and introduces a state sum model that assigns a generalised 6j symbol to every tetrahedron in the triangulation of a manifold with defect data, where the type of 6j symbol used depends on what defect data occur within the tetrahedron. The present work provides non-trivial examples of suitable bimodule categories, bimodule functors and module natural transformation, all over categories of $G$-graded vector spaces. Our main result is the description of module functors in terms of matrices, which allows us to classify these functors when $G$ is a finite cyclic group. Furthermore, we calculate the generalised 6j symbols for categories of $G$-graded vector spaces, (bi-)module categories over such categories and module functors.
\end{abstract}

\setcounter{tocdepth}{2}
\tableofcontents

\section{Introduction}
In \cite{barrett_westbury}, Barrett and Westbury construct a family of invariants of closed oriented 3-manifolds, parametrised by spherical fusion categories, which generalise the Turaev-Viro invariant.
Given a spherical fusion category $\mathcal C$, calculating the corresponding invariant of an oriented closed 3-manifold $M$ involves
choosing a triangulation of $M$ and considering a state sum model that assigns a 6j symbol to every tetrahedron in this triangulation. These 6j symbols encode the associativity constraint of the category $\mathcal C$.

Meusburger has shown that this approach can be generalised to oriented closed 3-manifolds with defect data \cite{bm_tv+}.
The defect data of dimension 2, 1 and 0 are called defect areas, defect lines and defect vertices, respectively, and their union is a subset $D\subset M$.
The invariants defined in \cite{bm_tv+} are then parametrised by the choice of
a spherical fusion category for every connected component of $M\backslash D$, a bimodule category for every defect area, a finite bimodule functor for every defect line and a module natural transformation for every defect vertex. Meusburger defines generalised 6j symbols for these objects, and the state sum model she introduces involves assigning a generalised 6j symbol to every tetrahedron in the triangulation of $M$, where the type of 6j symbol used depends on what defect data occur within the tetrahedron.

In order to calculate this generalised state sum model and study its properties on examples, it is necessary to investigate non-trivial examples of spherical fusion categories, their bimodule categories as well as bimodule functors and calculate their 6j symbols. 
In this thesis, we will examine these structures in a simple example with applications.
Specifically, we only consider the category $\mathcal C=\vectgo$ of $G$-graded vector spaces as an example of a spherical fusion category, where $G$ is a group and $\omega$ is a 3-cocycle of $G$. For this choice of spherical category, the Turaev-Viro invariants from \cite{barrett_westbury} reduce to Dijkgraf-Witten theory, which was introduced in \cite{dw90}. The generalised state sum model from \cite{bm_tv+} should thus give rise to a generalised Dijkgraf-Witten theory for 3-manifolds with defects. 

Our main result is the explicit description of $\vectgo$-module functors in terms of matrices, which allows us to classify these functors in the case $G=\mathbb Z/n \mathbb Z$ for $n\neq 0$. Furthermore, we calculate the generalised 6j symbols for $\vectgo$, for $\monos$-bimodule categories and for $\vectgo$-module functors.

This thesis is structured as follows: In Section \ref{sec:prelim}, we fix our notation for spherical fusion categories and recall their basic properties. Section \ref{sec:vecgo} provides some background on the category $\vectgo$ of $G$-graded vector spaces. In Section \ref{sec:modc}, we recall the general definition of (bi-)module categories, (bi-)module functors and module natural transformations as well as the classification of semisimple $\vectgo$-module categories from \cite{egno} and use this classification to describe the semisimple $\monos$-bimodule categories. In Section \ref{sec:modf}, we introduce an explicit description of $\vectgo$-module functors, discuss a number of examples and give a classification of $\vectgo$-module functors for $G=\mathbb Z/n\mathbb Z$. In Section \ref{sec:6j} we introduce generalised 6j symbols following \cite{bm_tv+} and then calculate the generalised 6j symbols for $\vectgo$, for $\monos$-bimodule categories and $\vectgo$-module functors.

\clearpage
\section{Preliminaries} \label{sec:prelim}
Throughout this thesis, we fix an algebraically closed field $\mathbb F$.
Let $G$ be a group and let $1$ denote its neutral element.
The natural numbers $\mathbb N$ include $0$.
We write the Kronecker delta as
\[ \delta_{i, j} := \left\{ \begin{array}{ll} 1, & i=j \\ 0, & i\neq j \end{array} \right. . \]

\subsection{Group Cohomology}
We recall the definition of group cohomology from \cite[Application 6.5.5]{weibel}, see also \cite[Definition 2.3.1]{homalg}. 
Let $M$ be a $\mathbb Z[G]$-module with structure map $\opl: \mathbb Z[G] \times M \to M$. We use multiplicative notation for the group structure of $M$. For $n\geq 0$, the set of \emph{$n$-cochains with coefficients in $M$} is defined as
\[ C^n(G, M) := \Map(G^{\times n}, M). \]
The set $C^n(G, M)$ is an abelian group with the pointwise multiplication:
\[ (\eta \cdot \mu)(g_1, \dots, g_n) := \eta(g_1, \dots, g_n) \, \mu(g_1, \dots, g_n) \qquad (\eta, \mu\in C^n(G, M), g_i \in G) \]
The group homomorphisms
\[ d^n: \ C^n(G, M) \to C^{n+1}(G, M) \]
\vspace{-2\baselineskip}
\begin{multline*} d^n(\eta)(g_1, \dots, g_{n+1}) := (g_1 \opl \eta(g_2, \dots, g_{n+1}))\, \eta^{-1}(g_1 g_2, g_3, \dots, g_{n+1}) \\ \eta(g_1, g_2 g_3, g_4, \dots, g_{n+1})\, \dots \, \eta^{(-1)^n}(g_1, \dots, g_n g_{n+1})\, \eta^{(-1)^{n+1}}(g_1, \dots, g_n) \qquad (g_i\in G) \end{multline*}
satisfy $d^{n+1} \circ d^n = 0$, which makes
\[\begin{tikzcd}
	0 \ar[r] & C^0(G, M) \ar[r, "d^0"] & C^1(G, M) \ar[r, "d^1"] & C^2(G, M) \ar[r, "d^2"] & \dots
\end{tikzcd} \]
a cochain complex. We will write $d$ instead of $d^n$ whenever no confusion is possible. 
Denote by $Z^n(G, M) := \ker(d^n) \subset C^n(G, M)$ the subgroup of \emph{$n$-cocycles} and by $B^n(G, M) := \image (d^{n-1})\subset C^n(G, M)$ the subgroup of \emph{$n$-coboundaries}. Then the \emph{$n$-th group cohomology of $G$ with coefficients in $M$} is defined as the abelian group
\[ H^n(G, M) := Z^n(G, M) / B^n(G, M) . \]

Throughout this thesis, the module $M$ will always be either $M=\units$ (with multiplication as group structure and $G$ operating trivially) or $M=\Map(X, \units)$ for some $G$-set $X$ with multiplication in $M$ defined pointwise and $G$ acting on $M$ by
\[ (g \opl f)(x) = f(g^{-1} \opl x). \]
If $M=\Map(X, \units)$, we write $\Psi(g_1, \dots, g_n, x)$ instead of $\Psi(g_1, \dots, g_n)(x)$ for $\Psi\in C^n(G, \Map(X, \units))$ by an abuse of notation.

An $n$-cochain $\eta$ is called \emph{normalized} if $\eta(g_1, \dots, g_n) = 1$ whenever at least one of $g_1, \dots, g_n$ is $1$. By \cite[Lemma 6.1]{emNorm}, every cocycle is cohomologous to a normalized one. In fact, a more general statement is proved there:

\begin{lemma} \label{lem:norm_cochain}
	\cite[Lemma 6.1]{emNorm}
	Every $n$-cochain $\eta$ with $\dif \eta$ normalized is cohomologous to a normalized $n$-cochain $\eta'$.
\end{lemma}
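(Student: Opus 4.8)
The plan is to produce, by hand, an $(n-1)$-cochain $\beta$ such that $\eta' := \eta\cdot\dif\beta$ is normalized. Since $\dif(\eta\cdot\dif\beta)=\dif\eta$ by $\dif\dif=0$, the hypothesis that $\dif\eta$ is normalized is preserved throughout, and $\eta'$ is cohomologous to $\eta$ by construction, so only normalization of $\eta'$ needs to be arranged. The natural bookkeeping device is the family of identity-insertion operators (codegeneracies) $s_k\colon C^m(G,M)\to C^{m-1}(G,M)$, $1\le k\le m$, given by
\[ (s_k\xi)(g_1,\dots,g_{m-1}) := \xi(g_1,\dots,g_{k-1},1,g_k,\dots,g_{m-1}). \]
A cochain $\xi$ is normalized precisely when $s_k\xi$ is the constant $1$ for every $k$, so the task is to kill $s_1\eta,\dots,s_n\eta$ simultaneously by adding a single coboundary.

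I would achieve this one coordinate at a time. Call $\xi$ \emph{$k$-normalized} if $s_1\xi=\dots=s_k\xi\equiv 1$, and build a chain $\eta=\eta_0,\eta_1,\dots,\eta_n=\eta'$ in which each $\eta_k$ is cohomologous to $\eta$ and $k$-normalized, setting $\eta_k:=\eta_{k-1}\cdot\dif\beta_k$ for a suitable $\beta_k\in C^{n-1}(G,M)$ assembled from the codegeneracies of $\eta_{k-1}$ (with leading term $(s_k\eta_{k-1})^{-1}$). Two points must then be checked at each step. First, \emph{preservation}: $\beta_k$ is automatically $(k-1)$-normalized, because it is built from $\eta_{k-1}$ with an identity inserted in a slot $\ge k$ while $\eta_{k-1}$ is already $(k-1)$-normalized; consequently $\dif\beta_k$ is again $(k-1)$-normalized, so multiplying by it does not disturb the coordinates already normalized. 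Concretely, evaluating $\dif\beta_k$ at an argument with $g_j=1$ for some $j\le k-1$, the coface terms in the formula for $\dif$ cancel in adjacent pairs and the surviving terms vanish by normalization of $\beta_k$ — a direct computation. Second, \emph{achievement}: one must verify $s_k\eta_k\equiv 1$.

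The achievement step is where the hypothesis enters and is the main obstacle. Expanding $s_k(\dif\beta_k)$ through the cosimplicial relations between the insertion operators $s_k$ and the coface maps occurring in $\dif$, and discarding terms using that $\eta_{k-1}$ is already $(k-1)$-normalized, reduces $s_k\eta_k$ to $s_k\eta_{k-1}$ corrected by a sum of terms governed precisely by $s_k(\dif\eta_{k-1})=s_k(\dif\eta)$; since $\dif\eta$ is normalized this correction is trivial and $s_k\eta_k\equiv 1$ follows. The delicate part is to pin down the signs and the exact combination of codegeneracy terms making up $\beta_k$ so that this cancellation is exact in \emph{every} degree: a single term $(s_k\eta_{k-1})^{-1}$ already suffices in the lowest degrees, but the general step requires the correctly signed combination dictated by the cosimplicial identities, and verifying that this combination is simultaneously $(k-1)$-normalized and produces the right obstruction $s_k(\dif\eta)$ is the technical heart of the argument. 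Once $\beta_k$ is fixed and both checks are carried out, the induction terminates at the normalized cochain $\eta'=\eta_n$.
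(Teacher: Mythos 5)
Your strategy is the right one, and it is in fact the argument of the source: the paper gives no proof of this lemma at all, deferring to Eilenberg--MacLane \cite[Lemma 6.1]{emNorm}, whose proof is precisely the slot-by-slot induction you outline. Two of your supporting observations are also correct as sketched: since $\dif(\eta\cdot\dif\beta)=\dif\eta$, the hypothesis survives every step of the induction; and if $\beta_k$ is $(k-1)$-normalized then so is $\dif\beta_k$, because the two cofaces adjacent to an inserted unit cancel in a pair and all remaining terms vanish by the normalization of $\beta_k$.

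The genuine gap is the achievement step, which you explicitly defer as ``the technical heart'': $\beta_k$ is never actually defined, no sign is fixed, and the cancellation giving $s_k\eta_k\equiv 1$ is asserted rather than computed — you even leave open whether a single codegeneracy term suffices or some larger ``correctly signed combination'' is needed. As written, this is a plan, not a proof. The resolution is simpler than your hedging suggests: the single term $\beta_k:=\bigl(s_k\eta_{k-1}\bigr)^{(-1)^{k}}$ works at every step $k$ and in every degree $n$. Evaluating $\dif\bigl(s_k\eta_{k-1}\bigr)$ at a tuple whose $k$-th entry is $1$, the cofaces indexed $k-1$ and $k$ cancel against each other, the leading term and the cofaces indexed below $k-1$ vanish because $s_k\eta_{k-1}$ is $(k-1)$-normalized, and the surviving tail of terms matches, term by term, the expansion of the hypothesis
\[ \dif\eta\,(g_1,\dots,g_{k-1},1,1,g_{k+1},\dots,g_n)=1, \]
that is, the normalization of $\dif\eta$ applied to tuples with units in the \emph{two adjacent} slots $k$ and $k+1$. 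This identity shows that $\dif\bigl(s_k\eta_{k-1}\bigr)$ and $\eta_{k-1}$ agree on slot-$k$-degenerate tuples up to the exponent $(-1)^{k+1}$, whence $\eta_k:=\eta_{k-1}\cdot\dif\beta_k$ is $k$-normalized. This computation — which also pins down the sign you left open — is exactly the missing content; without it (or an equivalent verification) the induction does not close and the statement is not proved.
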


For a subgroup $H \subset G$ and a $\mathbb Z[H]$-module $M$, we call the $\mathbb Z[G]$-module of $\mathbb Z[H]$-linear maps $\mathbb Z[G] \to M$
\[ \Coind_H^G(M) := \Hom_{\mathbb Z[H]}(\mathbb Z[G], M) \]
the \emph{coinduced $\mathbb Z[G]$-module}. We note that $\Coind_H^G(M)$ is isomorphic to $\Map(G/H, M)$ with group structure defined pointwise and $G$ acting on $\Map(G/H, M)$ by
\[ (g\opl f)(g'H) := f(g^{-1}g'H) \qquad (g, g'\in G, f\in \Map(G/H, M)). \]
The group cohomology of the subgroup $H$ can be expressed in terms of the group cohomology of $G$ using Shapiro's lemma:

\begin{lemma}[Shapiro's Lemma] \label{lem:shapiro}
	\cite[Lemma 6.3.2]{weibel}
	The cochain map
	\[ f^n: C^n(G, \Map(G/H, M)) \to C^n(H, M): \eta \mapsto \tilde{\eta}, \quad \tilde \eta (h_1, \dots, h_n) := \eta(h_1, \dots, h_n, H) \]
	induces an isomorphism
	\[ H^{\bullet}(G, \Coind_H^G (M)) \cong H^{\bullet} (H, M). \]
\end{lemma}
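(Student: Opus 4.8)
The plan is to prove the statement at the level of the bar cochain complexes by exhibiting an explicit homotopy inverse to $f^{\bullet}$; this is the concrete shadow of the Eckmann--Shapiro adjunction. Conceptually, $H^{\bullet}(G, \Coind_H^G M) = \mathrm{Ext}^{\bullet}_{\mathbb{Z}[G]}(\mathbb{Z}, \Coind_H^G M)$ and $H^{\bullet}(H, M) = \mathrm{Ext}^{\bullet}_{\mathbb{Z}[H]}(\mathbb{Z}, M)$, and the isomorphism is induced by the natural adjunction $\Hom_{\mathbb{Z}[G]}(-, \Coind_H^G M) \cong \Hom_{\mathbb{Z}[H]}(\mathrm{Res}^G_H(-), M)$ together with the fact that $\mathbb{Z}[G]$ is free as a $\mathbb{Z}[H]$-module: restriction is exact and carries the bar resolution of $\mathbb{Z}$ over $\mathbb{Z}[G]$ to a projective resolution of $\mathbb{Z}$ over $\mathbb{Z}[H]$, so applying the two $\Hom$ functors yields isomorphic complexes. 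I would, however, prefer to make this explicit in order to identify the comparison map with the stated $f^{\bullet}$.

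First I would verify that $f^{\bullet}$ is a cochain map, i.e. $f^{n+1}\circ \dif = \dif \circ f^n$. Restricting all arguments to $H$ and evaluating at the base coset $H$, each factor of $\dif(\eta)$ becomes the corresponding factor of $\dif(\tilde\eta)$; the only term needing attention is the leading one $g_1 \opl \eta(g_2, \dots, g_{n+1})$, where the action $(h_1 \opl f)(H) = f(h_1^{-1}H) = f(H)$ for $h_1 \in H$ is exactly what is required for compatibility with the $H$-differential.

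Next I would build the homotopy inverse. Choose coset representatives, i.e. a section $s\colon G/H \to G$ of the projection with $s(H) = 1$, and set $\alpha(g, x) := s(g \opl x)^{-1}\, g\, s(x) \in H$ for $g \in G$, $x \in G/H$; a short computation gives the cocycle identity $\alpha(g g', x) = \alpha(g, g' \opl x)\,\alpha(g', x)$ and the normalization $\alpha(h, H) = h$ for $h \in H$. Define $r^{n}\colon C^n(H, M) \to C^n(G, \Map(G/H, M))$ by
\[ r^{n}(\tilde\eta)(g_1, \dots, g_n)(x) := \tilde\eta\bigl(\alpha(g_1, (g_2\cdots g_n)\opl x), \dots, \alpha(g_{n-1}, g_n \opl x), \alpha(g_n, x)\bigr). \]
Using the cocycle identity one checks that $r^{\bullet}$ is a cochain map, and because $s(H) = 1$ forces $\alpha(h_j, H) = h_j$ once all arguments lie in $H$, one obtains $f^{\bullet}\circ r^{\bullet} = \mathrm{id}$ on the nose. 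In particular $f_*$ is surjective and $r_*$ injective on cohomology.

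It remains to show $r^{\bullet}\circ f^{\bullet}\simeq \mathrm{id}$, and this is the main obstacle: $r^{\bullet}\circ f^{\bullet}$ is not the identity on cochains, as it replaces $\eta(g_1, \dots, g_n)(x)$ by $\eta\bigl(\alpha(g_1, (g_2\cdots g_n)\opl x), \dots, \alpha(g_n, x)\bigr)(H)$, so one must write down a genuine contracting homotopy $k^{\bullet}$ with $\dif k + k \dif = r f - \mathrm{id}$. The homotopy is the standard one associated to the section $s$, but assembling it requires careful indexing of the $\alpha$'s and tracking the alternating signs in $\dif^n$; one should also confirm that a different choice of representatives alters $r^{\bullet}$ only by a coboundary, so that $r_* = f_*^{-1}$ is canonical. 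Once the homotopy is in place, $f^{\bullet}$ and $r^{\bullet}$ are mutually inverse quasi-isomorphisms and the claimed isomorphism $H^{\bullet}(G, \Coind_H^G M)\cong H^{\bullet}(H, M)$ follows; alternatively, one may bypass the explicit homotopy by invoking the freeness argument above to conclude directly that $f_*$ is also injective.
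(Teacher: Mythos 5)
The paper offers no proof of this lemma at all: it is quoted with a citation to Weibel, and Weibel's proof is precisely the abstract Eckmann--Shapiro argument you sketch in your opening paragraph (the adjunction $\Hom_{\mathbb{Z}[G]}(-,\Coind_H^G M)\cong\Hom_{\mathbb{Z}[H]}(\mathrm{Res}^G_H(-),M)$ plus the fact that $\mathbb{Z}[G]$ is $\mathbb{Z}[H]$-free, so the bar resolution of $\mathbb{Z}$ over $\mathbb{Z}[G]$ restricts to a projective resolution over $\mathbb{Z}[H]$). That route, completed by the observation that $f^\bullet$ corresponds under the adjunction to precomposition with the inclusion $\mathbb{Z}[H^{\bullet+1}]\hookrightarrow\mathbb{Z}[G^{\bullet+1}]$ of bar resolutions --- an $H$-linear chain map lifting $\mathrm{id}_{\mathbb{Z}}$, hence inducing the canonical identification of $\mathrm{Ext}$ groups --- is a complete proof. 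The problems lie in the explicit cochain-level construction that you make the centre of your argument.

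There is a genuine error there: your $r^n$ is not a cochain map. With the paper's differential and your $r^1(\tilde\eta)(g)(x)=\tilde\eta(\alpha(g,x))$, $r^2(\tilde\eta)(g_1,g_2)(x)=\tilde\eta(\alpha(g_1,g_2\opl x),\alpha(g_2,x))$, one computes
\begin{align*}
\dif(r^1\tilde\eta)(g_1,g_2)(x)&=\tilde\eta(\alpha(g_2,g_1^{-1}\opl x))\,\tilde\eta^{-1}(\alpha(g_1g_2,x))\,\tilde\eta(\alpha(g_1,x)),\\
r^2(\dif\tilde\eta)(g_1,g_2)(x)&=\tilde\eta(\alpha(g_2,x))\,\tilde\eta^{-1}(\alpha(g_1g_2,x))\,\tilde\eta(\alpha(g_1,g_2\opl x)),
\end{align*}
and the first and last factors disagree for general $G,H,s$: your slots are twisted by the wrong group elements (the relation $f^\bullet\circ r^\bullet=\mathrm{id}$ does hold for your formula, but it is worthless if $r^\bullet$ does not descend to cohomology). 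The correct comparison map twists the $i$-th slot by the elements to its \emph{left},
\[ r^n(\tilde\eta)(g_1,\dots,g_n)(x):=\tilde\eta\bigl(\alpha(g_1,g_1^{-1}\opl x),\,\alpha(g_2,(g_1g_2)^{-1}\opl x),\,\dots,\,\alpha(g_n,(g_1\cdots g_n)^{-1}\opl x)\bigr), \]
which is a cochain map by the same cocycle identity and still satisfies $f^\bullet\circ r^\bullet=\mathrm{id}$ because $\alpha(h,H)=h$. Indeed this $r^\bullet$ is exactly $\Hom_{\mathbb{Z}[H]}(p,M)$ for the $H$-equivariant retraction $p(g_0,\dots,g_n)=(\pi(g_0),\dots,\pi(g_n))$, $\pi(g)=g\,s(g^{-1}H)$, of the inclusion of bar resolutions. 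Once $r^\bullet$ is recognised in this form, your second gap --- the unconstructed homotopy $r^\bullet f^\bullet\simeq\mathrm{id}$, which you yourself call the main obstacle --- disappears: $\iota$ and $p$ are chain maps between projective resolutions of $\mathbb{Z}$ over $\mathbb{Z}[H]$ lifting the identity, so the comparison theorem gives $\iota\circ p\simeq\mathrm{id}$, and applying $\Hom_{\mathbb{Z}[H]}(-,M)$ converts that chain homotopy into the required cochain homotopy; no sign-bookkeeping is needed. (Your closing suggestion of deducing injectivity of $f_*$ from the freeness argument is the same repair.) One last point: your verification that $f^\bullet$ is a cochain map, like the paper's identification of $\Coind_H^G(M)$ with $\Map(G/H,M)$ carrying the pure permutation action, silently assumes that $H$ acts trivially on $M$ --- the leading term of the $H$-differential is $h_1\opl\tilde\eta(h_2,\dots,h_{n+1})$, while evaluating the $G$-side at the base coset only produces $\tilde\eta(h_2,\dots,h_{n+1})$. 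This is harmless for the paper's applications, where $M=\units$ with trivial action, but it should be stated as a hypothesis in any self-contained proof.
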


\subsection{Tensor categories} \label{ssec:tens_cat}
Unless otherwise specified, we denote the associativity constraint of a monoidal category $\mathcal C$ by $a$, the tensor unit $e$ and the left and right unit constraints $l$ and $r$, respectively. We denote the opposite category $\mathcal C^{op}$ and the monoidal category with reversed tensor product $\mathcal C^{rev}$, i.e., $\otimes_{\mathcal C^{rev}} := \otimes_{\mathcal C} \tau$, where $\tau$ is the flip functor exchanging the two factors of $\mathcal C \times \mathcal C$. Frequently, the unit constraints $l$ and $r$ will be suppressed.

We now recall the definition of duals in monoidal categories from \cite[Definition 2.10.1 and 2.10.2]{egno}, see also \cite[Definition 2.1.1]{tenscat}.
An object $X^*\in\mathcal C$ is a \emph{right dual} of $X$ if there exist morphisms
\[ \eval_X^R: X^* \otimes X \to e, \quad \coev_X^R: e \to X \otimes X^* \]
such that the composites
\[ \begin{tikzcd}[column sep=large]
	X \ar[r, "\coev_X^R \otimes \ident_X"] & (X \otimes X^*) \otimes X \ar[r, "a_{X, X^*, X}"] & X \otimes (X^* \otimes X) \ar[r, "\ident_X \otimes \eval_X^R"] & X,
\end{tikzcd} \]
\[ \begin{tikzcd}[column sep=large]
	X^* \ar[r, "\ident_{X^*} \otimes \coev_X^R"] & X^* \otimes (X\otimes X^*) \ar[r, "a^{-1}_{X^*, X, X^*}"] & (X^* \otimes X)\otimes X^* \ar[r, "\eval_X^R\otimes \ident_{X^*}"] & X^*
\end{tikzcd} \]
are the identity morphisms. These conditions are known as the snake identities. Likewise, an object $\ldual X\in\mathcal C$ is a \emph{left dual} of $X$ if there exist morphisms
\[ \eval_X^L: X\otimes \ldual X \to e, \quad \coev_X^L: e \to \ldual X\otimes X \]
such that the composites
\[ \begin{tikzcd}[column sep=large]
	X \ar[r, "\ident_X \otimes \coev_X^L"] & X \otimes (\ldual X \otimes X) \ar[r, "a^{-1}_{X, \ldual X, X}"] & (X \otimes \ldual X) \otimes X \ar[r, "\eval_X^L \otimes \ident_X"] & X,
\end{tikzcd} \]
\[ \begin{tikzcd}[column sep=large]
	\ldual X \ar[r, "\coev_X^L\otimes \ident_{\ldual X}"] & (\ldual X \otimes X)\otimes \ldual X \ar[r, "a_{\ldual X, X, \ldual X}"] & \ldual X \otimes (X\otimes \ldual X) \ar[r, "\ident_{\ldual X}\otimes \eval_X^L"] & \ldual X
\end{tikzcd} \]
are the identity morphisms. A monoidal category $\mathcal C$ is called \emph{right rigid} if every object in $\mathcal C$ has a right dual, \emph{left rigid} if every object has a left dual and \emph{rigid} if it is both right rigid and left rigid.

\begin{remark}
	Note that the left duals defined here correspond to the right duals in \cite{egno} and vice versa. Our notation follows \cite{tenscat}.
\end{remark}

We now introduce pivotal categories as described in \cite[Section 4.7]{egno}, see also \cite[Section 2.2]{tenscat}.
A \emph{pivotal structure} on a right rigid monoidal category $\mathcal C$ is a monoidal natural isomorphism $b: \Ident_{\mathcal C} \to **$, where $*: \mathcal C\to \mathcal C^{op, rev}$ denotes the duality functor. The pair $(\mathcal C, b)$ is called a \emph{pivotal category}.
A pivotal category is left rigid with $\ldual X = X^*$ and left evaluation and coevaluation
\begin{equation} \label{eq:left_eval} 
\begin{tikzcd}[column sep=large]
	\eval_X^L \esccol X\otimes X^* \ar[r, "b_X\otimes\ident"] & X^{**} \otimes X^* \ar[r, "\eval_{X^*}^R"] & e
\end{tikzcd} \end{equation}
\begin{equation} \label{eq:left_coev}
\begin{tikzcd}[column sep=large]
	\coev_X^L \esccol e \ar[r, "\coev_{X^*}^R"] & X^* \otimes X^{**} \ar[r, "\ident\otimes b_X^{-1}"] & X^*\otimes X.
\end{tikzcd} \end{equation}
Let $f: X \to X$ be an endomorphism in a pivotal category $\mathcal C$. The \emph{left trace} and \emph{right trace} of $f$ are defined as
\[ \begin{tikzcd}
	\trace_L(f) =\ e \ar[r, "\coev_X^L"] & X^* \otimes X \ar[r, "\ident_{X^*} \otimes f"] & X^* \otimes X \ar[r, "\eval_X^R"] & e\ \in \End_{\mathcal C}(e)
\end{tikzcd} \]
\[ \begin{tikzcd}
	\trace_R(f) =\ e \ar[r, "\coev_X^R"] & X \otimes X^* \ar[r, "f \otimes\ident_{X^*}"] & X\otimes X^* \ar[r, "\eval_X^L"] & e\ \in \End_{\mathcal C}(e),
\end{tikzcd} \]
respectively, where $\eval_X^L$ and $\coev_X^L$ are defined by (\ref{eq:left_eval}) and (\ref{eq:left_coev}). In particular, $\dim_{L/R}(X) := \trace_{L/R}(\ident_X)$ for $X\in\mathcal C$. $(\mathcal C, b)$ is called \emph{spherical} if $\trace_L = \trace_R$.

Let $\mathcal A$ be an abelian category and let $X\in \mathcal A$. Suppose $X \cong \bigoplus_{i\in I} X_i$ for a finite set $I$ and certain $X_i\in \mathcal A, i\in I$. Then there exists a family $(\iota_i, \pi_i)_{i\in I}$ of morphisms $\iota_i: X_i \to X, \pi_i: X \to X_i$ that satisfies
\[ \pi_i \circ \iota_j = \delta_{i, j}\, \ident_{X_i}\ , \qquad \sum_{i\in I} \iota_i \circ \pi_i = \ident_X \qquad (i, j\in I). \]
We call such morphisms $(\iota_i, \pi_i)_{i\in I}$ \emph{inclusions and projections for $X$}. Let additionally $Y\in \mathcal A$ with inclusions and projections $(\iota'_j, \pi'_j)_{j\in J}$. Then every morphism $f: X \to Y$ is uniquely determined by the morphisms $\pi'_j \circ f \circ \iota_i$ for $i\in I, j\in J$. Moreover, if $F, G: \mathcal A\to \mathcal B$ are additive functors, every morphism $f: FX \to GY$ is uniquely determined by the morphisms $G\pi'_j \circ f \circ F\iota_i$ for $i\in I, j\in J$. We will frequently make use of these facts when defining morphisms.

The following definitions are taken from \cite{egno}:

Let $\mathcal A_i, i\in I$ be additive categories. Their direct sum $\mathcal A = \bigoplus_{i\in I} \mathcal A_i$ is the additive category whose objects are direct sums
\[ \bigoplus_{i\in I} X_i, \qquad (X_i \in \mathcal A_i \text{ almost all } 0), \]
and whose morphisms are
\[ \Hom_{\mathcal A}\left(\bigoplus_{i\in I} X_i, \bigoplus_{i\in I} Y_i\right) := \bigoplus_{i\in I} \Hom_{\mathcal A_i}(X_i, Y_i). \]
$\mathcal A$ is abelian if and only if all $\mathcal A_i$ is abelian for all $i\in I$.

An additive category $\mathcal A$ is called $\mathbb F$-\emph{linear} if $\Hom_{\mathcal A}(X, Y)$ is equipped with a structure of a vector space over $\mathbb F$ for all $X, Y\in \mathcal A$ such that composition is $\mathbb F$-bilinear. A functor $F: \mathcal A \to \mathcal B$ between $\mathbb F$-linear categories is called \emph{$\mathbb F$-linear} if it induces $\mathbb F$-linear maps $\Hom_{\mathcal A}(X, Y) \to \Hom_{\mathcal B}(FX, FY)$ for all $X, Y\in\mathcal A$. An $\mathbb F$-linear abelian category $\mathcal A$ is called \emph{locally finite} over $\mathbb F$ if
\begin{itemize}
	\item the $\mathbb F$-vector space $\Hom_{\mathcal A}(X, Y)$ is finite dimensional for all $X, Y\in \mathcal A$ and
	\item every object of $\mathcal A$ is of finite length.
\end{itemize}
A locally finite category $\mathcal A$ is called \emph{finite} over $\mathbb F$ if, additionally,
\begin{itemize}
	\item every simple object of $\mathcal A$ has a projective cover and
	\item there are only finitely many isomorphism classes of simple objects.
\end{itemize}

An abelian category $\mathcal A$ is called \emph{semisimple} if every object of $\mathcal A$ is semisimple.
Let $\mathcal C$ be a rigid monoidal category which is locally finite over $\mathbb F$. Then $\mathcal C$ is called a \emph{multitensor category} if the functor $\otimes: \mathcal C \times \mathcal C \to \mathcal C$ is bilinear on morphisms. It is called a \emph{tensor category} if, additionally, $e$ is simple.
A \emph{multifusion category} is a multitensor category that is also finite and semisimple. If $e$ is simple, it is called a \emph{fusion category}.

\begin{lemma}[Schur's Lemma] \label{lem:schur}
	\cite[Proposition 1.8.4]{egno}
	Let $\mathcal A$ be a locally finite category over $\mathbb F$ and let $x, y\in \mathcal A$ be simple objects. Then
	\[ \Hom_{\mathcal A}(x, y) \cong \left\{ \begin{array}{ll} 0, & x\neq y \\ \mathbb F, & x = y. \end{array} \right. \]
\end{lemma}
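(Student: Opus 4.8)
The plan is to deduce both cases simultaneously from the single observation that any nonzero morphism between simple objects is an isomorphism. First I would fix a morphism $f \in \Hom_{\mathcal A}(x, y)$ and use that $\mathcal A$ is abelian to form its kernel $\ker f \hookrightarrow x$ and its image $\image f \hookrightarrow y$. Since $x$ is simple, $\ker f$ is either $0$ or $x$; since $y$ is simple, $\image f$ is either $0$ or $y$. If $f \neq 0$, then $\ker f \neq x$ and $\image f \neq 0$, which forces $\ker f = 0$ and $\image f = y$, so $f$ is simultaneously a monomorphism and an epimorphism. In an abelian category such a morphism is an isomorphism, so every nonzero morphism between simple objects is invertible.

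For the off-diagonal case (reading $x \neq y$ as $x \not\cong y$), this immediately gives $\Hom_{\mathcal A}(x, y) = 0$: a nonzero element would be an isomorphism $x \cong y$, contradicting $x \not\cong y$. For the diagonal case it suffices to treat $x = y$ and show $\End_{\mathcal A}(x) \cong \mathbb F$. The previous step shows every nonzero element of $\End_{\mathcal A}(x)$ is invertible, so $\End_{\mathcal A}(x)$ is a division algebra over $\mathbb F$; because $\mathcal A$ is locally finite, it is moreover finite dimensional over $\mathbb F$. It then remains to prove that a finite dimensional division algebra over the algebraically closed field $\mathbb F$ equals $\mathbb F \cdot \ident_x$.

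For this last point I would take any $f \in \End_{\mathcal A}(x)$ and consider the commutative subalgebra $\mathbb F[f]$. Finite dimensionality yields a minimal polynomial $p \in \mathbb F[t]$ with $p(f) = 0$, and since $\mathbb F$ is algebraically closed, $p$ splits as $p(t) = \prod_i (t - \lambda_i)$ with $\lambda_i \in \mathbb F$. Thus $\prod_i (f - \lambda_i \ident_x) = 0$ in $\End_{\mathcal A}(x)$; as a division algebra has no zero divisors, some factor vanishes, i.e.\ $f = \lambda_i \ident_x$. Hence $\End_{\mathcal A}(x) = \mathbb F \cdot \ident_x \cong \mathbb F$, which is the asserted isomorphism (of $\mathbb F$-algebras, in particular of $\mathbb F$-vector spaces).

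The genuinely essential input, and the only place where a hypothesis could fail, is the algebraic closedness of $\mathbb F$ fixed at the start of the Preliminaries, together with finite dimensionality of the $\Hom$ spaces coming from local finiteness: over $\mathbb R$, for instance, $\End$ of a simple object could be $\mathbb C$ or the quaternions, so the conclusion genuinely uses both. The abelian-category manipulations (kernel/image, and mono $+$ epi $=$ iso) are routine; the only care needed is the harmless identification of $x \neq y$ with $x \not\cong y$, since isomorphic simple objects likewise have a one-dimensional $\Hom$ space.
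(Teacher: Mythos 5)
Your proof is correct and complete: every step (the kernel/image analysis showing nonzero morphisms between simples are isomorphisms, mono plus epi equals iso in an abelian category, and the division-algebra argument via the minimal polynomial over an algebraically closed field) is valid, and your reading of $x \neq y$ as $x \not\cong y$ is the intended interpretation. The paper itself offers no proof, simply citing \cite[Proposition 1.8.4]{egno}; your argument is precisely the standard one given in that reference, with the roles of local finiteness (finite-dimensional $\Hom$ spaces) and algebraic closedness of $\mathbb F$ correctly identified as the essential hypotheses.
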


Note that Schur' Lemma only holds when $\mathbb F$ is algebraically closed.

If a tensor category $\mathcal C$ is equipped with a pivotal structure, $\End_{\mathcal C}(e)\cong \mathbb F$ by Schur's Lemma and hence the traces can be interpreted as elements of $\mathbb F$.

\begin{corollary} \label{cor:trace_mult}
	Let $\mathcal C$ be a pivotal category which is locally finite over $\mathbb F$ and let $x\in\mathcal C$ be simple. Then, for every morphism $h: x\to x$, we have
	\[ \trace_{L/R}(h)\, \ident_x = \dim_{L/R}(x)\, h. \]
\end{corollary}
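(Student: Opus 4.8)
The plan is to reduce $h$ to a scalar multiple of the identity via Schur's Lemma and then exploit the fact that the trace is $\mathbb F$-linear in its argument, so that both sides of the claimed identity become the same scalar multiple of $\ident_x$.

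First I would invoke Schur's Lemma (Lemma \ref{lem:schur}): since $x$ is simple and $\mathcal C$ is locally finite over the algebraically closed field $\mathbb F$, we have $\End_{\mathcal C}(x)\cong \mathbb F$. Hence there is a unique scalar $\lambda\in\mathbb F$ with $h = \lambda\,\ident_x$. This is the key structural input and turns the statement into a computation with the single scalar $\lambda$.

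Next I would observe that the assignment $h\mapsto\trace_{L/R}(h)$ is $\mathbb F$-linear. Indeed, by definition $\trace_L(h)$ is the composite $\eval_x^R\circ(\ident_{x^*}\otimes h)\circ\coev_x^L$, and since the tensor product is bilinear on morphisms (so in particular linear in the second factor) and composition is $\mathbb F$-bilinear, the map $h\mapsto\trace_L(h)$ is $\mathbb F$-linear; the identical argument applies to $\trace_R$. Linearity then gives $\trace_{L/R}(h)=\lambda\,\trace_{L/R}(\ident_x)=\lambda\,\dim_{L/R}(x)$, where we regard these traces as elements of $\mathbb F$ through the identification $\End_{\mathcal C}(e)\cong\mathbb F$.

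Finally, substituting back yields $\trace_{L/R}(h)\,\ident_x = \lambda\,\dim_{L/R}(x)\,\ident_x = \dim_{L/R}(x)\,(\lambda\,\ident_x) = \dim_{L/R}(x)\,h$, which is the desired identity. I do not anticipate a genuine obstacle here; the only point that requires care is the identification of $\trace_{L/R}(h)$ with an honest scalar in $\mathbb F$, which rests on $e$ being simple so that $\End_{\mathcal C}(e)\cong\mathbb F$ (as recorded in the discussion preceding the statement), and on keeping track of the fact that the final equation lives in $\End_{\mathcal C}(x)$.
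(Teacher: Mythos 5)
Your proposal is correct and is essentially identical to the paper's proof, which likewise writes $h=\lambda\,\ident_x$ via Schur's Lemma and concludes by the $\mathbb F$-linearity of the trace; you have simply made explicit the linearity argument and the identification $\End_{\mathcal C}(e)\cong\mathbb F$ that the paper leaves implicit.
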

\begin{proof}
	Since $h=\lambda\, \ident_x$ for some $\lambda\in\mathbb F$, this follows from the linearity of the trace.
\end{proof}

\begin{corollary} \label{cor:mor_matr}
	Let $\mathcal C$ be a semisimple category which is locally finite over $\mathbb F$ and let $x\in \mathcal C$ be a simple object. For $n, m\in \mathbb N$, we have an isomorphism of $\mathbb F$-vector spaces
	\[ \Hom_{\mathcal C}(x^{\oplus n}, x^{\oplus m}) \cong \Mat(m\times n, \mathbb F). \]
\end{corollary}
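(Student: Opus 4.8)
The plan is to use the fact, recorded above, that a morphism between finite direct sums is determined by its ``matrix entries'', together with Schur's Lemma to identify those entries with scalars.

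First I would fix inclusions and projections $(\iota_i,\pi_i)_{i\in\{1,\dots,n\}}$ for $x^{\oplus n}$ and $(\iota'_j,\pi'_j)_{j\in\{1,\dots,m\}}$ for $x^{\oplus m}$, so that $\pi_i\circ\iota_{i'}=\delta_{i,i'}\,\ident_x$, $\pi'_j\circ\iota'_{j'}=\delta_{j,j'}\,\ident_x$, and $\sum_i\iota_i\circ\pi_i=\ident_{x^{\oplus n}}$, $\sum_j\iota'_j\circ\pi'_j=\ident_{x^{\oplus m}}$; these exist because $\mathcal C$ is semisimple. Since $x$ is simple and $\mathbb F$ is algebraically closed, Schur's Lemma (Lemma \ref{lem:schur}) gives $\End_{\mathcal C}(x)\cong\mathbb F$, so for every $f\in\Hom_{\mathcal C}(x^{\oplus n},x^{\oplus m})$ and all $i,j$ there is a unique scalar $\Phi(f)_{j,i}\in\mathbb F$ with $\pi'_j\circ f\circ\iota_i=\Phi(f)_{j,i}\,\ident_x$. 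This defines a map $\Phi\colon\Hom_{\mathcal C}(x^{\oplus n},x^{\oplus m})\to\Mat(m\times n,\mathbb F)$.

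Next I would check that $\Phi$ is $\mathbb F$-linear, which is immediate from the $\mathbb F$-bilinearity of composition in the $\mathbb F$-linear category $\mathcal C$ and the fact that the identification $\End_{\mathcal C}(x)\cong\mathbb F$ is one of $\mathbb F$-vector spaces. For injectivity I would invoke the fact stated above that any $f$ is uniquely determined by the morphisms $\pi'_j\circ f\circ\iota_i$: if $\Phi(f)=0$ then all these composites vanish, hence $f=0$. For surjectivity I would, given a matrix $(\lambda_{j,i})\in\Mat(m\times n,\mathbb F)$, write down the candidate preimage
\[ f:=\sum_{i=1}^n\sum_{j=1}^m \lambda_{j,i}\,\iota'_j\circ\pi_i \]
and compute $\pi'_k\circ f\circ\iota_l$ using the relations above; the double sum collapses via $\pi'_k\circ\iota'_j=\delta_{k,j}\,\ident_x$ and $\pi_i\circ\iota_l=\delta_{i,l}\,\ident_x$ to $\lambda_{k,l}\,\ident_x$, so $\Phi(f)=(\lambda_{j,i})$.

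There is no serious obstacle here; the argument is entirely formal. The only point that genuinely uses the hypotheses is the identification $\End_{\mathcal C}(x)\cong\mathbb F$, which needs both simplicity of $x$ and algebraic closedness of $\mathbb F$, while semisimplicity is what guarantees the direct sum decompositions and hence the existence of inclusions and projections. Alternatively, one could avoid constructing $\Phi$ explicitly and instead note that $\Hom$ is additive in each variable, so $\dim_{\mathbb F}\Hom_{\mathcal C}(x^{\oplus n},x^{\oplus m})=nm\cdot\dim_{\mathbb F}\End_{\mathcal C}(x)=nm=\dim_{\mathbb F}\Mat(m\times n,\mathbb F)$; but the explicit map is preferable, since it exhibits the isomorphism canonically with respect to the chosen inclusions and projections.
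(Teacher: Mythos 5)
Your proposal is correct and follows essentially the same route as the paper's proof: fix inclusions and projections, use Schur's Lemma to identify $\pi'_j \circ f \circ \iota_i \in \End_{\mathcal C}(x)$ with a scalar, and read off the matrix. The paper leaves the verification that this is an $\mathbb F$-linear isomorphism as ``easy to check''; you simply spell out those details (linearity, injectivity, and surjectivity via the explicit preimage $\sum_{i,j}\lambda_{j,i}\,\iota'_j\circ\pi_i$), which is a faithful completion of the same argument.
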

\begin{proof}
	Choose inclusions and projections
	\begin{align*} &\iota_i: x \to x^{\oplus n}, \quad \pi_i: x^{\oplus n} \to x \qquad (i=1, \dots, n) \\
		&\iota'_j: x \to x^{\oplus m}, \quad \pi'_j: x^{\oplus m} \to x \qquad (j=1, \dots, m)
	\end{align*}
	for $x^{\oplus n}$ and $x^{\oplus m}$, respectively. Every morphism $f:x^{\oplus n} \to x^{\oplus m}$ is uniquely determined by the morphisms
	\[ \pi'_i \circ f \circ \iota_j \in \Hom_{\mathcal C}(x, x) \cong \mathbb F \qquad (i=1, \dots, m, j=1, \dots, n). \]
	Define a matrix $A\in \Mat(m\times n, \mathbb F)$ by setting its entry in the $i$-th row and $j$-th column to
	$A^i_j := \pi'_i \circ f \circ \iota_j$.
	It is easy to check that this construction defines an $\mathbb F$-linear isomorphism $\Hom_{\mathcal C}(x^{\oplus n}, x^{\oplus m}) \cong \Mat(m\times n, \mathbb F)$.
\end{proof}

\subsection{The Deligne product of abelian categories}

\begin{definition} \label{def:deli_prod}
	\cite[Definition 1]{franco13}
	Let $\mathcal C$ and $\mathcal D$ be locally finite categories over $\mathbb F$. Their \emph{Deligne product} is an $\mathbb F$-linear abelian category $\mathcal C \boxtimes \mathcal D$ together with an $\mathbb F$-bilinear functor $\boxtimes: \mathcal C \times \mathcal D \to \mathcal C \boxtimes \mathcal D$ which is right exact in each variable and induces an equivalence of categories
	\[ \mathrm{Rex}(\mathcal C, \mathcal D; \mathcal E) \cong \mathrm{Rex}(\mathcal C \boxtimes \mathcal D; \mathcal E) \]
	for every $\mathbb F$-linear abelian category $\mathcal E$. Here $\mathrm{Rex}(\mathcal C_1, \dots, \mathcal C_n; \mathcal E)$ denotes the category of all $\mathbb F$-multilinear functors $\mathcal C_1 \times \dots \times \mathcal C_n \to \mathcal E$ that are right exact in each variable.
\end{definition}

Note that the Deligne product is unique up to unique equivalence of categories \cite[Proposition 1.11.2 (ii)]{egno}.

We recall the following facts about the Deligne product:
\begin{enumerate}
	\item The Deligne product of locally finite categories over $\mathbb F$ exists and is a locally finite category over $\mathbb F$ \cite[Proposition 1.11.2 (i)]{egno}.
	\item If $\mathcal C$ and $\mathcal D$ are multitensor categories, so is $\mathcal C \boxtimes \mathcal D$ \cite[Corollary 4.6.2]{egno}.
\end{enumerate}

\subsection{Properties of semisimple categories}
\newcommand{\simples}{\mathcal S(I_{\mathcal A})}

Let $\mathcal A$ be a semisimple category which is locally finite over $\mathbb F$ and let $\mathcal B$ be an $\mathbb F$-linear category. It is well-known that $\mathbb F$-linear functors $F: \mathcal A\to \mathcal B$ and natural transformations between such functors are determined by their behaviour on simple objects. Our aim in this section is to state this fact more precisely.

For an additive category $\mathcal A$ let $I_{\mathcal A}$ be a set of representatives for the simple objects of $\mathcal A$ and denote by $\simples$ the full subcategory of $\mathcal A$ with objects $I_{\mathcal A}$.

\newcommand{\dirSum}{\bigoplus_{j\in J_X} y_j^{(X)}}

\begin{lemma} \label{lem:fun_simp}
	Let $\mathcal A$ and $\mathcal B$ be $\mathbb F$-linear categories and suppose $\mathcal A$ is semisimple and locally finite over $\mathbb F$. Then every family of objects $(B_x)_{x\in I_{\mathcal A}}$ in $\mathcal B$ defines an $\mathbb F$-linear functor $F: \mathcal A \to \mathcal B$ (unique up to natural isomorphism) with $F(x) = B_x$ for all $x\in I_{\mathcal A}$. Conversely, every $\mathbb F$-linear functor $\mathcal A \to \mathcal B$ is naturally isomorphic to a functor of this form.
\end{lemma}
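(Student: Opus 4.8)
The plan is to prove this in two directions. For the main (existence) direction, I need to build an $\mathbb F$-linear functor $F$ from prescribed images $B_x$ on the skeleton of simple objects. For the converse, I need to show any $\mathbb F$-linear functor factors through this construction up to natural isomorphism. The key structural fact, which the excerpt has already supplied, is that in a semisimple locally finite category every object decomposes as a finite direct sum of simples and comes equipped with inclusions and projections $(\iota_i,\pi_i)$ satisfying $\pi_i\circ\iota_j=\delta_{i,j}\,\ident$ and $\sum_i\iota_i\circ\pi_i=\ident$. I will lean on this throughout.

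First I would fix, for every object $X\in\mathcal A$, a chosen decomposition $X\cong\bigoplus_{j\in J_X} y_j$ with each $y_j\in I_{\mathcal A}$ together with chosen inclusions and projections $(\iota_j^X,\pi_j^X)_{j\in J_X}$; for the objects $x\in I_{\mathcal A}$ themselves I take the trivial one-element decomposition with identity inclusion and projection. I then \emph{define} $F(X):=\bigoplus_{j\in J_X} B_{y_j}$ in $\mathcal B$, which is legitimate because $\mathcal B$ is additive. For a morphism $f:X\to X'$, I use that $f$ is determined by its components $f^{j'}_j:=\pi^{X'}_{j'}\circ f\circ\iota^X_j\in\Hom_{\mathcal A}(y_j,y_{j'})$. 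By Schur's Lemma (Lemma \ref{lem:schur}), each $\Hom_{\mathcal A}(y_j,y_{j'})$ is either $0$ or canonically $\mathbb F$, so each component is a scalar $\lambda^{j'}_j\in\mathbb F$ when $y_j=y_{j'}$ and zero otherwise. I set $F(f):F(X)\to F(X')$ to be the morphism with components $\lambda^{j'}_j\,\ident_{B_{y_j}}$ (interpreted via the identification $y_j=y_{j'}$), assembled through the inclusions and projections of $F(X),F(X')$. On simple objects this gives $F(x)=B_x$ on the nose.

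The substantive work, and the step I expect to be the main obstacle, is verifying that $F$ is a well-defined \emph{functor}: that $F(\ident_X)=\ident_{F(X)}$ and $F(g\circ f)=F(g)\circ F(f)$, together with $\mathbb F$-linearity. Functoriality is delicate precisely because the scalar components multiply and sum in a way that must reproduce composition in $\mathcal A$; the cleanest route is to observe that the assignment $f\mapsto(\lambda^{j'}_j)$ sending a morphism to its matrix of Schur scalars is itself compatible with composition (this is essentially the content of Corollary \ref{cor:mor_matr}, which identifies $\Hom(x^{\oplus n},x^{\oplus m})$ with matrices and respects composition as matrix multiplication), and that $F$ is the transport of this matrix along the fixed identifications. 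Linearity is immediate since the scalar-extraction is $\mathbb F$-linear in $f$. The uniqueness clause (unique up to natural isomorphism) follows because any two choices of decompositions for each $X$ yield functors intertwined by the evident isomorphisms built from the inclusions and projections, and naturality of these isomorphisms is forced by Schur.

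For the converse, given an arbitrary $\mathbb F$-linear functor $H:\mathcal A\to\mathcal B$, I set $B_x:=H(x)$ for $x\in I_{\mathcal A}$ and let $F$ be the functor produced above. For each $X$ with chosen decomposition $X\cong\bigoplus_j y_j$, applying $H$ to the inclusions and projections gives morphisms $H\iota_j^X,H\pi_j^X$ which, because $H$ is additive and preserves the relations $\pi\circ\iota=\delta\,\ident$ and $\sum\iota\circ\pi=\ident$, exhibit $H(X)$ as a direct sum $\bigoplus_j H(y_j)=\bigoplus_j B_{y_j}=F(X)$. This yields a canonical isomorphism $\eta_X:F(X)\xrightarrow{\sim}H(X)$, and naturality $\eta_{X'}\circ F(f)=H(f)\circ\eta_X$ reduces, componentwise, to the Schur identification agreeing with how $H$ acts on $\Hom(y_j,y_{j'})$ — which holds because $H$ is $\mathbb F$-linear and hence sends the Schur generator of $\Hom_{\mathcal A}(y,y)\cong\mathbb F$ to the corresponding scalar multiple of $\ident_{H(y)}$. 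Thus $H\cong F$, completing the proof.
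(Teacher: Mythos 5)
Your proposal is correct and follows essentially the same route as the paper: define $F$ on objects via chosen decompositions into simples, define it on morphisms through their Schur components (scalars by Schur's Lemma), and obtain both uniqueness and the converse from natural isomorphisms assembled out of inclusions and projections, whose naturality reduces via Schur to the fact that $\mathbb F$-linear functors agree on morphisms between simple objects. The only difference is organizational — the paper proves uniqueness first and derives the converse from it, while you prove existence and the converse directly (which subsumes uniqueness) — and this does not change the mathematical content.
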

\begin{proof}
	\begin{enumerate}
	\item Uniqueness: Let $F, G: \mathcal A \to \mathcal B$ be $\mathbb F$-linear functors with $F(x) = G(x) = B_x$ for all $x\in I_{\mathcal A}$. We prove that $F$ and $G$ are naturally isomorphic. As $\mathcal A$ is semisimple, every object $X \in \mathcal A$ admits a decomposition $X \cong \dirSum$ into simple objects $y_j^{(X)}\in I_{\mathcal A}$. Let
	\[ \iota_j^{(X)}: y_j^{(X)} \to X, \quad \pi_j^{(X)}: X \to y_j^{(X)} \qquad (j\in J_X) \]
	be inclusions and projections for $X$.
	Define a morphism $\alpha_X: F(X) \to G(X)$ by
	\[ G\pi_j^{(X)} \circ \alpha_X \circ F\iota_k^{(X)} = \delta_{j, k}\, \ident_{B_{y_k^{(X)}}}:\ B_{y_k^{(X)}} = F(y_k^{(X)})\, \to\, G(y_j^{(X)}) = B_{y_j^{(X)}}. \]
	It is straightforward to check that $\alpha_X$ is an isomorphism.
	It remains to show that this defines a natural transformation $\alpha: F\Rightarrow G$, i.e., that the diagram
	\[ \begin{tikzcd}
		FX \ar[r, "\alpha_X"] \ar[d, "Fh"] & GX \ar[d, "Gh"] \\
		FY \ar[r, "\alpha_Y"] & GY
	\end{tikzcd} \]
	commutes for every morphism $h: X \to Y$ in $\mathcal A$ or, equivalently, that
	\begin{equation} \label{eq:simp_fun_target} G\pi_j^{Y} \circ \alpha_Y \circ Fh \circ F\iota_k^{(X)} = G\pi_j^{Y} \circ Gh \circ \alpha_X \circ F\iota_k^{(X)} \end{equation}
	for all $j\in J_X, k\in J_Y$.
	By the definition of $\alpha$, (\ref{eq:simp_fun_target}) is equivalent to
	\begin{equation} \label{eq:simp_fun_target_2} F(\pi_j^{(Y)} \circ h \circ \iota_k^{(X)}) = G(\pi_j^{(Y)} \circ h \circ \iota_k^{(X)}). \end{equation}
	As $\pi_j^{(Y)} \circ h \circ \iota_k^{(X)}: y_k^{(X)} \to y_j^{(Y)}$ is a morphism between simple objects from $\mathcal A$, by Schur's Lemma \ref{lem:schur} it is either $0$ or a multiple of the identity. The functors $F$ and $G$ are $\mathbb F$-linear, so they agree on $\pi_j^{(Y)} \circ h \circ \iota_k^{(X)}$. Hence $F$ and $G$ are naturally isomorphic.
	
	This also implies that every functor $F: \mathcal A\to \mathcal B$ can be obtained, up to natural isomorphism, from the construction described in the remainder of the proof: Choose $B_x:= F(x)$ for $x\in I_{\mathcal A}$ and let $G: \mathcal A \to \mathcal B$ be the functor with $G(x) = B_x$ constructed below. Then $F\cong G$ by the above.
	
	\item Existence: Let $(B_x)_{x\in I_{\mathcal A}}$ be a family of objects in $\mathcal B$. Define a functor $F:\mathcal A \to \mathcal B$ on objects $X\in\mathcal A$ by
	\[ F(X) := \bigoplus_{j\in J_X} B_{y_j^{(X)}} = \bigoplus_{j\in J_X} F(y_j^{(X)}) \]
	and on morphisms as follows:
	\begin{enumerate}
		\item Any morphism $h: x\to y$ between simple objects $x, y\in I_{\mathcal A}$ is either $0$ or a multiple of the identity. Set $F(0) := 0$ and $F(\lambda \cdot \ident_x) := \lambda \cdot \ident_{Fx}$.
		\item For a morphism $h: X \to Y$ between arbitrary objects $X, Y\in \mathcal A$, define $F(h)$ by
		\[ F\pi_j^{(Y)} \circ F(h) \circ F\iota_k^{(X)} = F(\pi^{(Y)}_j \circ h \circ \iota^{(X)}_k). \]
		The morphism $F(\pi^{(Y)}_j \circ h \circ \iota^{(X)}_k)$ is already defined because $\pi^{(Y)}_j \circ h \circ \iota^{(X)}_k:\ y_k^{(X)} \to y_j^{(Y)}$ is a morphism between objects from $I_{\mathcal A}$.
	\end{enumerate}

	It is straightforward to check that $F$ is an $\mathbb F$-linear functor. \qedhere
	\end{enumerate}
\end{proof}

\begin{lemma} \label{lem:nat_simp}
	Let $\mathcal A$ and $\mathcal B$ be additive categories, suppose $\mathcal A$ is semisimple and let $F, G: \mathcal A \to \mathcal B$ be additive functors. Then restriction of natural transformations is a bijection
	\[ \mathrm{Nat}(F, G) \cong \mathrm{Nat}(F|_{\simples}, G|_{\simples}). \]
	A natural transformation $\eta: F\Rightarrow G$ is a natural isomorphism if and only if its image under the bijection is a natural isomorphism.
\end{lemma}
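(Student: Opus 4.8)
The plan is to exploit that, in a semisimple category, every object is a finite direct sum of simples and every morphism is recorded by its matrix entries between the simple summands. Fix once and for all, for each object $X$, a decomposition $X\cong\bigoplus_{j\in J_X} y_j^{(X)}$ into simples $y_j^{(X)}\in I_{\mathcal A}$ together with inclusions and projections $\iota_j^{(X)}, \pi_j^{(X)}$, choosing the trivial decomposition when $X\in I_{\mathcal A}$. The restriction map $\eta\mapsto \eta|_{\simples}$ sends a natural transformation to the family of its components at the representatives $I_{\mathcal A}$, and I want to show it is a bijection respecting invertibility.

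First I would establish injectivity. For a natural transformation $\eta:F\Rightarrow G$, naturality with respect to the inclusions $\iota_j^{(X)}$, together with additivity of $F$ and $G$ and the relation $\sum_{j\in J_X}\iota_j^{(X)}\circ\pi_j^{(X)}=\ident_X$, gives the closed formula
\[ \eta_X = \sum_{j\in J_X} G\iota_j^{(X)} \circ \eta_{y_j^{(X)}} \circ F\pi_j^{(X)}, \]
obtained by inserting $\ident_{FX}=F(\ident_X)=\sum_j F\iota_j^{(X)}\circ F\pi_j^{(X)}$ on the right of $\eta_X$ and moving each $\eta_X\circ F\iota_j^{(X)}$ across to $G\iota_j^{(X)}\circ\eta_{y_j^{(X)}}$. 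Since the right-hand side only involves the components at simple objects, two natural transformations with equal restrictions to $\simples$ coincide.

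For surjectivity, given $\tilde\eta\in\mathrm{Nat}(F|_{\simples},G|_{\simples})$ I would define $\eta_X$ by the same formula with $\eta_{y_j^{(X)}}$ replaced by $\tilde\eta_{y_j^{(X)}}$; by the trivial choice of decomposition on $I_{\mathcal A}$ this restricts to $\tilde\eta$, so the only thing left to verify is naturality of $\eta$, which is the main obstacle. The key step is to expand an arbitrary morphism $h:X\to Y$ as $h=\sum_{j,k}\iota_k^{(Y)}\circ h_{k,j}\circ \pi_j^{(X)}$, where $h_{k,j}:=\pi_k^{(Y)}\circ h\circ \iota_j^{(X)}$ is a morphism between the simple objects $y_j^{(X)},y_k^{(Y)}\in I_{\mathcal A}$, hence a morphism in $\simples$. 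Applying the additive functors, inserting the definitions of $\eta_X$ and $\eta_Y$, and collapsing the arising sums via $\pi_j^{(X)}\circ\iota_{j'}^{(X)}=\delta_{j,j'}\ident$, I expect both $Gh\circ\eta_X$ and $\eta_Y\circ Fh$ to reduce to the same expression
\[ \sum_{j,k} G\iota_k^{(Y)} \circ \tilde\eta_{y_k^{(Y)}} \circ Fh_{k,j} \circ F\pi_j^{(X)}; \]
the equality that makes the two sides meet is precisely the naturality of $\tilde\eta$ applied to each $h_{k,j}$. This yields the claimed bijection.

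Finally, for the statement about isomorphisms: if $\eta$ is a natural isomorphism then each component, in particular each $\eta_x$ for $x\in I_{\mathcal A}$, is invertible, so its restriction is a natural isomorphism. Conversely, if every $\tilde\eta_{y_j^{(X)}}$ is invertible I would write down the candidate inverse
\[ \mu_X := \sum_{j\in J_X} F\iota_j^{(X)} \circ \big(\tilde\eta_{y_j^{(X)}}\big)^{-1} \circ G\pi_j^{(X)} \]
and check $\mu_X\circ\eta_X=\ident_{FX}$ and $\eta_X\circ\mu_X=\ident_{GX}$ using the same orthogonality relations and additivity as above; the cross terms vanish and the diagonal terms telescope to $F(\ident_X)$ and $G(\ident_X)$. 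Hence $\eta$ is a natural isomorphism, completing the proof. I expect the naturality verification in surjectivity to be the only genuinely delicate bookkeeping; injectivity and the invertibility statement are then immediate consequences of the explicit formula.
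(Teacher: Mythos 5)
Your proof is correct and takes essentially the same approach as the paper: decompose an arbitrary object into simple summands, observe that $\eta_X$ is determined by its components at the simples, and reduce naturality of the extension to naturality of $\tilde\eta$ at the morphisms $\pi_k^{(Y)}\circ h\circ\iota_j^{(X)}$ between simple objects. The only difference is notational — you package everything into closed sum formulas such as $\eta_X=\sum_j G\iota_j^{(X)}\circ\tilde\eta_{y_j^{(X)}}\circ F\pi_j^{(X)}$, whereas the paper specifies the same morphisms by their matrix entries $G\pi_j\circ\eta_X\circ F\iota_k=\delta_{j,k}\,\eta_{x_j}$.
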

\begin{proof}
	First we show that any natural transformation $\eta: F \Rightarrow G$ is determined by the family $(\eta_{x})_{x\in I_\mathcal A}$. Every $X \in \mathcal A$ admits a decomposition $X \cong \bigoplus_{j\in J} x_j$ with $x_j\in I_{\mathcal A}$ for $j\in J$, as well as inclusions and projections $(\iota_j, \pi_j)_j$. Due to the naturality of $\eta$,
	\[\begin{tikzcd}
		F(X) \ar[r, "\eta_X"] & G(X) \\
		F(x_j) \ar[u, "F\iota_j"] \ar[r, "\eta_{x_j}"] & G(x_j) \ar[u, "G\iota_j"]
	\end{tikzcd}\]
	commutes for all $j\in J$, and thus 
	\[ G\pi_j \circ \eta_X \circ F\iota_k = G\pi_j \circ G\iota_k \circ \eta_{x_j} = \delta_{j, k}\, \eta_{x_j}. \]
	Hence $\eta_X$ is determined by the family $(\eta_{x})_{x\in I_\mathcal A}$.
	
	It remains to show that any natural transformation $\eta: F|_{\simples} \Rightarrow G|_{\simples}$ can be extended to a natural transformation $\tilde{\eta}: F \Rightarrow G$. Let $X\in\mathcal A$ be arbitrary with decomposition as well as inclusions and projections as above. We define $\tilde \eta_X$ by
	\[ G\pi_j \circ \tilde\eta_X \circ F\iota_k = \delta_{j, k}\, \eta_{x_j}:\ F(x_k) \to G(x_j) \qquad (j, k\in J). \]
	
	We now show that $\tilde{\eta}$ is a natural transformation. Let $h: X \to Y$ be an arbitrary $\mathcal A$-morphism. There exists a decomposition $Y \cong \bigoplus_{k\in K} y_k$ of $Y$ into objects $y_k\in I_{\mathcal A}$. Let $(\iota'_k, \pi'_k)_k$ denote inclusions and projections for $Y$. Then we have
	\[ G\pi'_k \circ Gh \circ \tilde{\eta}_X \circ F\iota_j = G(\pi'_k \circ h \circ \iota_j) \circ \eta_{x_j} \overset{(*)} = \eta_{y_k} \circ F(\pi'_k \circ h \circ \iota_j) = G\pi'_k \circ \tilde{\eta}_Y \circ Fh \circ F\iota_j \]
	for all $j\in J, k\in K$, where we used the naturality of $\eta: F|_{\simples} \Rightarrow G|_{\simples}$ in $(*)$. It follows that
	\[ Gh \circ \tilde{\eta}_X = \tilde{\eta}_Y \circ Fh. \]
	
	If $\eta$ is a natural isomorphism, it is straightforward to check that $\tilde\eta_X$ as defined above is an isomorphism.
	
\end{proof}

\begin{corollary} \label{cor:nat_simp2}
	Let $\mathcal A$ and $\mathcal B$ be $\mathbb F$-linear categories.
	Suppose $\mathcal A$ is locally finite and semisimple and let $F, G: \mathcal A \to \mathcal B$ be $\mathbb F$-linear functors.
	Then every family of morphisms
	\[ \eta_x: Fx \to Gx, \quad x\in I_{\mathcal A}, \]
	extends uniquely to a natural transformation $\eta: F \Rightarrow G$.
\end{corollary}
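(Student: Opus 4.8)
The plan is to reduce the statement to Lemma~\ref{lem:nat_simp} by first checking that a natural transformation between the restricted functors $F|_{\simples}$ and $G|_{\simples}$ is nothing more than an unconstrained family $(\eta_x)_{x\in I_{\mathcal A}}$ of morphisms $\eta_x\colon Fx\to Gx$. The category $\simples$ is the full subcategory of $\mathcal A$ on the simple objects $I_{\mathcal A}$, so its morphism spaces are exactly the $\Hom_{\mathcal A}(x,y)$ for $x,y\in I_{\mathcal A}$. Since $\mathcal A$ is locally finite over the algebraically closed field $\mathbb F$, Schur's Lemma~\ref{lem:schur} applies and gives $\Hom_{\mathcal A}(x,y)=0$ for $x\neq y$ and $\Hom_{\mathcal A}(x,x)=\mathbb F\,\ident_x$.

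The key (and essentially only) step is to verify that the naturality condition for a prospective natural transformation $F|_{\simples}\Rightarrow G|_{\simples}$ is automatically satisfied by any family $(\eta_x)_x$. I would argue on morphisms $h\colon x\to y$ of $\simples$ using the two cases from Schur's Lemma. If $x\neq y$ then $h=0$, and because $F$ and $G$ are $\mathbb F$-linear, hence additive, both $Fh$ and $Gh$ vanish, so the naturality square commutes trivially. If $x=y$ then $h=\lambda\,\ident_x$ for some $\lambda\in\mathbb F$, whence $Fh=\lambda\,\ident_{Fx}$ and $Gh=\lambda\,\ident_{Gx}$, and the naturality square reduces to the identity $\eta_x\circ(\lambda\,\ident_{Fx})=(\lambda\,\ident_{Gx})\circ\eta_x$, which holds by $\mathbb F$-bilinearity of composition. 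This shows that $\mathrm{Nat}(F|_{\simples},G|_{\simples})$ coincides with the set of all families $(\eta_x)_{x\in I_{\mathcal A}}$ of morphisms $\eta_x\colon Fx\to Gx$.

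With this identification in place, I would simply invoke Lemma~\ref{lem:nat_simp}: since $\mathcal A$ is semisimple and $F,G$ are additive, restriction to $\simples$ is a bijection $\mathrm{Nat}(F,G)\cong\mathrm{Nat}(F|_{\simples},G|_{\simples})$. Composing this bijection with the identification above, every family $(\eta_x)_x$ arises from a unique natural transformation $\eta\colon F\Rightarrow G$, which is precisely the asserted existence and uniqueness of the extension.

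I do not expect a genuine obstacle here, as the corollary is a formal consequence of Lemma~\ref{lem:nat_simp}. The only point requiring care is the observation that the naturality constraints over $\simples$ are vacuous; this is exactly where the algebraic closure of $\mathbb F$ (through Schur's Lemma) and the $\mathbb F$-linearity of $F$ and $G$ enter, and it is what upgrades a plain family of morphisms to a natural transformation on the subcategory of simples.
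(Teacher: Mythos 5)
Your proposal is correct and follows exactly the paper's own argument: invoke Lemma~\ref{lem:nat_simp} for the bijection $\mathrm{Nat}(F,G)\cong\mathrm{Nat}(F|_{\simples},G|_{\simples})$, then observe via Schur's Lemma that every family $(\eta_x)_{x\in I_{\mathcal A}}$ is automatically natural on $\simples$. You merely spell out the Schur's Lemma verification (the zero-morphism and scalar-multiple cases) that the paper leaves as an easy check.
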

\begin{proof}
	By Lemma \ref{lem:nat_simp}, every natural transformation $\eta: F\Rightarrow G$ is uniquely determined by the component morphisms $\eta_x: Fx \to Gx,\, x\in I_{\mathcal A}$. But every family of morphisms
	\[ \eta_x: Fx \to Gx, \quad x\in I_{\mathcal A}, \]
	is a natural transformation $\eta: F|_{\simples} \Rightarrow G|_{\simples}$, as can be verified easily using Schur's Lemma \ref{lem:schur}.
\end{proof}

We now prove the well-known fact that every object in a semisimple, abelian, locally finite category is projective, see e.g. \cite[Section 7.5]{egno}.

\begin{lemma}\label{lem:ssimp_epi_split}
	Let $\mathcal A$ be a semisimple category which is locally finite over $\mathbb F$. Then every epimorphism in $\mathcal A$ splits.
\end{lemma}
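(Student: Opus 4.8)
The plan is to reduce the statement to the case of an epimorphism onto a \emph{simple} object, where it follows immediately from Schur's Lemma \ref{lem:schur}. Throughout I will use that, since $\mathcal A$ is locally finite, every object has finite length and, being semisimple by hypothesis, decomposes as a \emph{finite} direct sum of simple objects; I also use that every subobject (and more generally every object constructed from objects of $\mathcal A$, such as a pullback) is again semisimple, as every object of $\mathcal A$ is.

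First I would dispose of the simple target. Let $p\colon P\to s$ be an epimorphism with $s\in\mathcal A$ simple, and write $P\cong\bigoplus_k P_k$ with $P_k$ simple and inclusions and projections $(\iota_k,\pi_k)$. Since $\sum_k (p\iota_k)\pi_k = p$ is epi and $s\neq 0$, at least one component $p\iota_k\colon P_k\to s$ is nonzero. By Schur's Lemma \ref{lem:schur} a nonzero morphism between simple objects is an isomorphism, so $p\iota_k$ is invertible and $\sigma:=\iota_k\,(p\iota_k)^{-1}\colon s\to P$ satisfies $p\sigma=\ident_s$. Hence every epimorphism onto a simple object splits.

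Next I would treat a general epimorphism $f\colon X\to Y$. Decompose $Y\cong\bigoplus_j y_j$ into simple objects with inclusions and projections $(\iota'_j,\pi'_j)$. For each $j$ the aim is to lift the inclusion $\iota'_j\colon y_j\to Y$ through $f$, i.e.\ to find $s_j\colon y_j\to X$ with $f s_j=\iota'_j$. To this end I form the pullback of $f$ along $\iota'_j$, obtaining $q\colon P_j\to X$ and $p\colon P_j\to y_j$ with $fq=\iota'_j p$. As a base change of the epimorphism $f$, the map $p$ is again an epimorphism (a standard property of abelian categories), and its target $y_j$ is simple, so by the previous step $p$ admits a section $\sigma_j$. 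Then $s_j:=q\sigma_j$ satisfies $f s_j=fq\sigma_j=\iota'_j p\sigma_j=\iota'_j$. Finally I set $s:=\sum_j s_j\,\pi'_j\colon Y\to X$ and compute $f s=\sum_j (f s_j)\pi'_j=\sum_j \iota'_j\pi'_j=\ident_Y$, exhibiting $s$ as a section of $f$.

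The decompositions into simples and the Schur computation are routine; the only points needing a word of justification are that a base change of an epimorphism is again an epimorphism and that the pullback $P_j$ is semisimple, hence a finite sum of simple objects. I expect the genuine obstacle to appear only if one insists on avoiding pullbacks and arguing purely by decomposition: projecting $f$ to a single summand $y_j$ via $\pi'_j f$ controls only the diagonal and leaves the off-diagonal components $\pi'_k f\iota_i$ for $k\neq j$ unconstrained, so a naive attempt yields a one-sided inverse only up to these cross terms. One is then forced either to pass through the pullback (which lifts $\iota'_j$ itself rather than its projection) or to regroup the component morphisms by isomorphism class and solve a genuine full-rank linear system over $\mathbb F$, the latter being exactly the content of the identification in Corollary~\ref{cor:mor_matr}.
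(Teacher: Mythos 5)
Your proof is correct, and it takes a genuinely different route from the paper's. The paper first decomposes $f$ into its isotypic blocks $f_x\colon x^{\oplus n_x}\to x^{\oplus m_x}$ (cross terms between non-isomorphic simples vanish by Schur's Lemma \ref{lem:schur}, and a direct sum of morphisms is an epimorphism iff each block is), then invokes Corollary \ref{cor:mor_matr} to identify each block with a full-rank $m_x\times n_x$ matrix over $\mathbb F$ and produces a right inverse by linear algebra. You never pass to matrices: you split epimorphisms onto a simple target directly, and reduce the general case to that one by pulling $f$ back along each inclusion $\iota'_j\colon y_j\to Y$ and using that base change along any morphism preserves epimorphisms in an abelian category. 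Your argument is more categorical and somewhat more robust: the only form of Schur you need --- a nonzero morphism between simple objects is an isomorphism --- holds in any abelian category, so your proof does not use that $\mathbb F$ is algebraically closed, whereas the paper's route relies on the identification $\Hom_{\mathcal A}(x,x)\cong\mathbb F$ built into Corollary \ref{cor:mor_matr}. It also sidesteps a delicate point in the paper's linear algebra: the explicit right inverse $A^T(AA^T)^{-1}$ need not exist over an algebraically closed field, since $AA^T$ can be singular for a full-rank $A$ (e.g. $A=(1\ \ i)$ over the complex numbers, where $AA^T=0$); the matrix route really requires the abstract fact that a full-rank matrix admits some right inverse. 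The price you pay is the extra machinery of pullbacks and the base-change property of epimorphisms, which the paper's purely matrix-theoretic argument avoids. Your closing paragraph correctly diagnoses why the naive componentwise lifting fails and correctly anticipates that the paper resolves it through Corollary \ref{cor:mor_matr}.
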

\begin{proof}
	Let $f:X\to Y$ be an epimorphism in $\mathcal A$. Assume w.l.o.g. that $X = x^{\oplus n}, Y = x^{\oplus m}$ for some simple object $x\in \mathcal A$. This is justified by the following:
	\begin{enumerate} \item Every morphism $f: X\to Y$ in $\mathcal A$ can be decomposed into a direct sum of certain morphisms \mbox{$f_x: x^{\oplus n_x} \to x^{\oplus m_x}$} with $x\in \mathcal A$ simple and $n_x, m_x\in \mathbb N$,
		\item $f_1 \oplus \ldots \oplus f_n$ is an epimorphism iff $f_i$ is an epimorphism for $i=1, \dots, n$.
	\end{enumerate} 
	By Corollary \ref{cor:mor_matr}, morphisms $X \to Y$ are in bijection with $m\times n$-matrices with entries in $\mathbb F$. Write $A\in\mathrm{Mat}(m\times n, \mathbb F)$ for the matrix associated to $f$. Since $f$ is an epimorphism, $A$ has full rank and $m\leq n$, so $A$ has the right inverse
	$B:=A^T (AA^T)^{-1}$. The morphism $g: Y \to X$ corresponding to $B$ is right inverse to $f$, so $f$ splits.
\end{proof}

\begin{lemma} \label{lem:proj_split}
	\cite[Theorem I.4.7]{hs97}
	Let $\mathcal A$ be an abelian category. Then an object $X\in\mathcal A$ is projective if and only if every epimorphism $A \to X$ in $\mathcal A$ splits.
\end{lemma}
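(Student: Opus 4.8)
The plan is to argue directly from the standard definition of projectivity: $X$ is \emph{projective} precisely when, for every epimorphism $f\colon A \to B$ and every morphism $g\colon X \to B$, there exists a lift $h\colon X \to A$ with $f\circ h = g$. Both implications then reduce to manipulating this lifting property.

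The forward implication is immediate. Suppose $X$ is projective and let $p\colon A \to X$ be an epimorphism. Applying the lifting property to $p$ together with the identity $\ident_X\colon X \to X$ yields a morphism $s\colon X \to A$ with $p\circ s = \ident_X$, so $p$ splits. No use of the abelian structure is needed here.

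For the converse I would use a pullback construction. Assume every epimorphism onto $X$ splits, and let $f\colon A \to B$ be an epimorphism together with a morphism $g\colon X \to B$. Since $\mathcal A$ is abelian the pullback $P = A\times_B X$ exists; write $p_A\colon P \to A$ and $p_X\colon P \to X$ for its projections, so that $f\circ p_A = g\circ p_X$. The crucial point is that $p_X$ is again an epimorphism. Granting this, the hypothesis supplies a section $s\colon X \to P$ with $p_X\circ s = \ident_X$, and then $h := p_A\circ s$ is the desired lift, since $f\circ h = f\circ p_A\circ s = g\circ p_X\circ s = g$. This establishes the lifting property and hence projectivity of $X$.

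The main obstacle is precisely the claim that the pullback of an epimorphism along an arbitrary morphism is an epimorphism, as this is the only place where the abelian structure of $\mathcal A$ is genuinely used (the statement fails in a general category). I would establish it by realizing $P$ as the kernel of the map $(f,-g)\colon A\oplus X \to B$, $(a,x)\mapsto f(a)-g(x)$, and then running a short diagram chase, justified by the Freyd--Mitchell embedding theorem: embedding the relevant small abelian subcategory into a module category identifies epimorphisms with surjections, and for any element $x\in X$ the surjectivity of $f$ produces $a\in A$ with $f(a)=g(x)$, so that $(a,x)\in P$ with $p_X(a,x)=x$. This shows $p_X$ is surjective, hence epi. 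Once this lemma is in hand, the two implications above complete the proof.
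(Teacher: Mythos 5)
Your proof is correct. There is nothing in the paper to compare it against: the paper does not prove this lemma but quotes it as \cite[Theorem I.4.7]{hs97}, using it only once (together with Lemma \ref{lem:ssimp_epi_split}) to conclude that every object of $\mathrm{Vec}_X$ is projective. Your argument is the standard categorical proof: the forward direction lifts $\mathrm{id}_X$ through the given epimorphism, and the converse pulls $f$ back along $g$, the only substantive point being that in an abelian category the pullback of an epimorphism is an epimorphism. Two remarks. First, your appeal to Freyd--Mitchell for that point is legitimate (one embeds a small full abelian subcategory containing the finitely many objects involved, and the exact fully faithful embedding preserves and reflects epimorphisms), but it is much heavier than necessary: the fact has a short direct proof. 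Writing $P=\ker\bigl(f\pi_A-g\pi_X\colon A\oplus X\to B\bigr)$ with kernel morphism $k\colon P\to A\oplus X$, the map $f\pi_A-g\pi_X$ is an epimorphism because $f$ is, hence it is the cokernel of $k$; now if $u\colon X\to C$ satisfies $u\circ\pi_X\circ k=0$, then $u\pi_X=v\circ(f\pi_A-g\pi_X)$ for some $v\colon B\to C$, and composing with $\iota_A$ gives $vf=0$, hence $v=0$, hence $u\pi_X=0$, hence $u=0$; so the pullback projection $\pi_X\circ k$ is epi. Second, the cited source proves the statement only for module categories and by a different route (every module is a quotient of a free module, so a split epimorphism from a free module exhibits $X$ as a direct summand of a free, hence projective, module); that argument does not transfer to abstract abelian categories, where free objects need not exist, so a pullback argument like yours is genuinely what is needed for the statement at the level of generality in which the paper uses it.
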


\subsection{Graphical calculus for spherical fusion categories}
\label{ssec:graph_fusion}
We use the graphical calculus for spherical fusion categories as described in \cite[Chapter XIV]{kassel}, see also \cite[Appendix A]{schaumann}.

Let $\mathcal C$ be a monoidal category. The diagrams introduced in the following each represent a single morphism in $\mathcal C$. For any object $C\in \mathcal C$, the identity on $C$ is represented by a vertical line labelled by $C$.
A morphism $f:C \to D$ in $\mathcal C$ can be represented by a circle labelled by $f$. All lines  labelled with the tensor unit $e$ are omitted.
Composition of morphisms is represented by vertical composition of diagrams from top to bottom, the tensor product of morphisms by horizontal composition of diagrams from left to right. The position of brackets is suppressed for both operations. Identity morphisms are included in the diagram in such a way that the domain and codomain of every morphism are clear from the diagram:

\begin{center}
\begin{tikzpicture}
	
	\draw[very thick] (0, 0) -- node[right=1pt, near start] {$C$} (0, -1) node[morphism, label=0:{$f$}, label=180:{$f\ \eqdiag \ $}] {} -- node[right=1pt, near end] {$D$} (0, -2);
\end{tikzpicture}
, \qquad
\begin{tikzpicture}
	\draw[very thick] (3, 0) -- node[right=1pt, near start] {$C$} (3, -1) node[morphism, label=0:{$\ident_C$}] {} -- node[right=1pt, near end] {$C$} (3, -2);
\end{tikzpicture}
$= \ \ $
\begin{tikzpicture}
	\draw[very thick] (5, 0) -- node[right=1pt, very near start] {$C$} (5, -2);	
\end{tikzpicture}
\end{center}

Here, the symbol $\eqdiag$ indicates that a diagram represents a certain algebraic expression. Lines labelled with $e$ are omitted in the diagrams, as well as the associativity constraints and unit constraints of $\mathcal C$, which is justified by the coherence theorem.
If $\mathcal C$ is right rigid or left rigid, we represent the evaluations and coevaluations by the following diagrams:
\begin{center}
\begin{tabular}{c|c|c|c}
	$\eval_C^R$ & $\coev_C^R$ & $\eval_C^L$ & $\coev_C^L$ \\
	\hline
	\tikz{
		\draw[very thick] (0, 0) node[right=1pt] {$C$} .. controls (0, -1) and (-1, -1) .. (-1, 0) node[left=1pt] {$C^*$};
	}
	& \tikz{
		\draw[very thick] (0, 0) node[left=1pt] {$C$} .. controls (0, 1) and (1, 1) .. (1, 0) node[right=1pt] {$C^*$};
	}
	& \tikz{
		\draw[very thick] (0, 0) node[left=1pt] {$C$} .. controls (0, -1) and (1, -1) .. (1, 0) node[right=1pt] {$\prescript{*}{}{C}$};
	}
	& \tikz{
		\draw[very thick] (0, 0) node[right=1pt] {$C$} .. controls (0, 1) and (-1, 1) .. (-1, 0) node[left=1pt] {$\prescript{*}{}{C}$};
	}
\end{tabular}
\end{center}
If $\mathcal C$ is pivotal, all vertical lines in the diagram are oriented and a line labelled with $C$ represents $\ident_C$ if oriented downwards and $\ident_{C^*}$ if oriented upwards. If the orientation of a line is not indicated in the diagram, it is oriented downward by default. We do not distinguish $C$ and $C^{**}$ and omit the pivotal structure in diagrams. The evaluations and coevaluations then read
\begin{center}
	\begin{tabular}{c|c|c|c}
		$\eval_C^R$ & $\coev_C^R$ & $\eval_C^L$ & $\coev_C^L$ \\
		\hline
		\tikz{
			\draw[very thick, ->] (0, 0) node[right=1pt] {$C$} .. controls (0, -1) and (-1, -1) .. (-1, 0) node[left=1pt] {$C$};
		}
		& \tikz{
			\draw[very thick, <-] (0, 0) node[left=1pt] {$C$} .. controls (0, 1) and (1, 1) .. (1, 0) node[right=1pt] {$C$};
		}
		& \tikz{
			\draw[very thick, ->] (0, 0) node[left=1pt] {$C$} .. controls (0, -1) and (1, -1) .. (1, 0) node[right=1pt] {$C$};
		}
		& \tikz{
			\draw[very thick, <-] (0, 0) node[right=1pt] {$C$.} .. controls (0, 1) and (-1, 1) .. (-1, 0) node[left=1pt] {$C$};
		}
	\end{tabular}
\end{center}

A pivotal category $\mathcal C$ is spherical if and only if for all morphisms $f:C\to C$ in $\mathcal C$

\begin{center}
	\begin{tikzpicture}
		\draw[very thick, ->] (-1,0) node[left=1pt] {$C$} -- (-1, -1) node[morphism, label=0:{$f$}, label=180:{$\trace_R(f)\ \eqdiag \ $}] {} -- (-1, -2) .. controls (-1, -2.5) and (1, -2.5) .. (1, -2) -- (1, 0) .. controls (1, 0.5) and (-1, 0.5) .. (-1, 0);
	\end{tikzpicture}
	$\ =\ $
	\begin{tikzpicture}
		\draw[very thick, ->] (4, 0) node[right=1pt] {$C$} -- 	(4, -1) node[morphism, label=180:{$f$}, label=0:{$\ \eqdiag\ \trace_L(f).$}] {} -- (4, -2) .. controls (4, -2.5) and (2, -2.5) .. (2, -2) -- (2, 0) .. controls (2, 0.5) and (4, 0.5) .. (4, 0);
	\end{tikzpicture}
\end{center}
\clearpage
\section{The tensor category $\vectgo$} \label{sec:vecgo}
We now introduce the main example of a spherical fusion category used in this thesis, the category $\vectgo$ of $G$-graded vector spaces.
First we define $\vectgo$ and prove that it is right rigid, following \cite[Section 2.3]{egno}. We then classify the pivotal structures on $\vectgo$ and determine which of these turn $\vectgo$ into a spherical category.

\begin{definition}
	Let $X$ be a set. The \emph{category of $X$-graded vector spaces} $\mathrm{Vec}_X$ has as objects finite dimensional $\mathbb F$-vector spaces $V$ with a grading $V = \bigoplus_{x\in X} V_x$ and as morphisms $\mathbb F$-linear maps $f:V\to W$ that respect the grading: $f(V_x) \subset W_x$ for all $x\in X$.
\end{definition}

It is straightforward to check that $\mathrm{Vec}_X$ is a locally finite category over $\mathbb F$ and that the simple objects of $\mathrm{Vec}_X$ are of the form $\delta^x$, $x\in X$, with grading
\[ \delta^x = \bigoplus_{y\in X} \delta_y^x, \qquad \delta_y^x = \left\{ \begin{array}{ll} \mathbb F, & x=y \\ 0, & x\neq y. \end{array} \right. \]

Furthermore, $\mathrm{Vec}_X$ is semisimple. Lemma \ref{lem:ssimp_epi_split} and Lemma \ref{lem:proj_split} imply that all objects of $\mathrm{Vec}_X$ are projective. Thus $\mathrm{Vec}_X$ is a finite category over $\mathbb F$ if and only if the set $X$ is finite.

If an object $\delta^x$ appears in a formula as an index, e.g. for a component morphism of a natural transformation, we usually replace it by $x$ to improve readability.

\subsection{Monoidal structures and rigidity}
We now replace $X$ with a group $G$. Define the functor $\otimes: \vectg \times \vectg \to \vectg$ as the tensor product of vector spaces with gradings
\[ (V \otimes W)_g = \bigoplus_{\substack{h, k\in G \\ hk=g}} V_h \otimes W_k. \]
In particular, $\delta^g \otimes \delta^h = \delta^{gh}$. Now assume $a: \otimes (\otimes \times \Ident) \Rightarrow \otimes (\Ident \times \otimes)$ is a natural isomorphism that makes $\vectg$ a monoidal category. Then, by Corollary \ref{cor:nat_simp2}, $a$ is determined by the collection of linear maps 
\[ a_{g, h, k}: (\delta^g \otimes \delta^h) \otimes \delta^k \to \delta^g \otimes (\delta^h \otimes \delta^k) \]
indexed by $g, h, k\in G$, which are non-zero multiples of the identity on $\delta^{ghk}$. Write $\omega(g, h, k)\in\units, g, h, k\in G$ for the scalars with $a_{g, h, k} = \omega(g, h, k)\, \ident_{ghk}$. The pentagon axiom for monoidal categories translates to the condition
\begin{equation} \label{eq:3cocycle}
	\omega(h, k, l)\, \omega(g, hk, l)\, \omega(g, h, k) = \omega(gh, k, l)\, \omega(g, h, kl), 
\end{equation}
for any quadruple $(g, h, k, l) \in G^{\times 4}$, making $\omega$ a 3-cocycle of $G$ with coefficients in $\units$.
We will refer to (\ref{eq:3cocycle}) as the \enquote{cocycle condition for $(g, h, k, l)$} subsequently.
We write $\vectgo$ for the monoidal category $(\mathrm{Vec}_G, \otimes, a)$ with $a_{g, h, k} = \omega(g, h, k)\, \ident_{ghk}$.

The tensor unit of $\vectgo$ is $\delta^1$, where $1\in G$ is the neutral element of $G$. The associativity constraint determines the left and right unit constraints, which are of the form
\[ l_g = \rho_l (g)\, \ident_g: \delta^1 \otimes \delta^g \to \delta^g, \quad r_g = \rho_r (g)\, \ident_g: \delta^g \otimes \delta^1 \to \delta^g \]
for certain maps
\[ \rho_l: G\to \units, \quad \rho_r: G\to \units. \]
The cocycle condition (\ref{eq:3cocycle}) for $(g, 1, 1, h)$ implies $\omega(g, 1, h) = \omega(g, 1, 1)\, \omega(1, 1, h)$. Together with the triangle axiom from the definition of a monoidal category, this implies
\[ \rho_l(g) = \omega^{-1}(1, 1, g), \quad \rho_r(g) = \omega(g, 1, 1). \]
It can be shown that the monoidal categories $\mathrm{Vec}_G^{\omega_G}$ and $\mathrm{Vec}_H^{\omega_H}$ are related by an $\mathbb F$-linear monoidal equivalence iff there exists an isomorphism $f: G \to H$ and $\omega_G$ is cohomologous to $\omega_H \circ (f\times f \times f)$ \cite[Section 2.6]{egno}.
Since every cocycle is cohomologous to a normalised one by Lemma \ref{lem:norm_cochain}, it may be assumed without loss of generality that $\omega$ is normalised. We will make this assumption for all $3$-cocycles $\omega$ and categories $\vectgo$ in the following. Also note that this condition makes the left and right constraints trivial. We therefore omit them subsequently.

Define the dual of a simple object $\delta^g$ as $(\delta^g)^* = \delta^{g^{-1}}$. The right evaluation and coevaluation
\[ \eval_g^R =: \zeta_g\, \ident_1: \delta^{g^{-1}}\otimes \delta^g \to \delta^1, \quad \coev_g^R =: \zeta'_g\, \ident_1: \delta^1 \to \delta^g \otimes \delta^{g^{-1}} \quad (\zeta_g, \zeta'_g \in\mathbb F)\]
need to satisfy the snake identities
\begin{align*}
	(\ident_g \otimes \eval_g^R) \circ a_{g, g^{-1}, g} \circ (\coev_g^R \otimes \ident_g) &= \ident_{g} \\
	(\eval_g^R \otimes \ident_{g^{-1}}) \circ a^{-1}_{g^{-1}, g, g^{-1}} \circ (\ident_{g^{-1}} \otimes \coev_g^R) &= \ident_{g^{-1}}, \\
\end{align*}
which translate to the conditions
\begin{equation} \label{eq:rig_temp} \zeta_g\, \omega(g, g^{-1}, g)\, \zeta'_g = 1 = \zeta_g\, \omega^{-1}(g^{-1}, g, g^{-1})\, \zeta'_g. \end{equation}
The cocycle condition (\ref{eq:3cocycle}) for $(g^{-1}, g, g^{-1}, g)$ implies
\begin{equation} \label{eq:3co_inv} \omega(g, g^{-1}, g)\, \omega(g^{-1}, g, g^{-1}) = 1 \end{equation}
for a normalized cocycle $\omega$ and hence (\ref{eq:rig_temp}) is equivalent to $\zeta_g\, \zeta'_g = \omega(g^{-1}, g, g^{-1})$. Thus any choice of $(\zeta'_g)_{g\in G}$ determines right evaluation and coevaluation uniquely. We choose $\zeta'_g = 1$ for all $g\in G$ to simplify subsequent calculations. This yields
\begin{align*}
	\eval_g^R &= \omega(g^{-1}, g, g^{-1}) \, \ident_1 \\
	\coev_g^R &=
	\ident_1.
\end{align*}

It follows that all objects in $\vectgo$ have right duals: If $X$ and $Y$ have right duals, $X\oplus Y$ has the right dual $X^*\oplus Y^*$, with evaluation and coevaluation given by

\[\begin{tikzcd}[row sep=tiny]
	\eval_{X\oplus Y}^R = && (X^* \oplus Y^*) \otimes (X\oplus Y) \ar[r, "\pi_{X^*}\otimes \pi_X"] & [1.5em] X^* \otimes X \ar[r, "\eval_X^R"] & e \\
	& \hskip-5em + \hskip-5em & (X^* \oplus Y^*) \otimes (X\oplus Y) \ar[r, "\pi_{Y^*}\otimes \pi_Y"] & Y^* \otimes Y \ar[r, "\eval_Y^R"] & e,
\end{tikzcd}\]
\[\begin{tikzcd}[row sep=tiny]
	\coev_{X\oplus Y}^R = && e \ar[r, "\coev_X^R"] & X\otimes X^* \ar[r, "\iota_X \otimes \iota_{X^*}"] &[1.5em] (X \oplus Y) \otimes (X^* \oplus Y^*) \\
	& \hskip-5em + \hskip-5em & e \ar[r, "\coev_Y^R"] & Y\otimes Y^* \ar[r, "\iota_Y \otimes \iota_{Y^*}"] &[1.5em] (X \oplus Y) \otimes (X^* \oplus Y^*),
\end{tikzcd}\]
where $+$ denotes the sum of morphisms in an additive category and $\iota_X, \iota_Y, \pi_X, \pi_Y$ are inclusions and projections for $X\oplus Y$.
A direct calculation confirms that this choice of evaluation and coevaluation satisfies the snake identities.

\subsection{Pivotal structures on $\vectgo$} \label{ssec:vecgo_piv}
We now determine all pivotal structures on $\vectgo$. 
It has already been noted in \cite[Example 3.13]{schaumann} that the pivotal structures on $\vectgo$ are in bijection with characters $\kappa: G\to \units$. As this bijection is rather subtle, we give the required calculation in detail.

By Corollary \ref{cor:nat_simp2}, every natural isomorphism $b: \Ident \Rightarrow **$ is determined by a map $\beta: G \to \units$ in the sense that
\[ b_g = \beta(g)\, \ident_g:\ \delta^g \to \delta^g = (\delta^g)^{**} \qquad (g\in G). \]

The natural transformation $b$ is monoidal if and only if the diagram
\begin{equation} \label{diag:piv_**} \begin{tikzcd}[column sep=3.5cm]
	\delta^g \otimes \delta^h \ar[r, "b_g \otimes b_h = \beta(g)\, \beta(h)\, \ident_{gh}"] \ar[d, "\ident_{gh}"] &[2em] \delta^g \otimes \delta^h = (\delta^g)^{**} \otimes (\delta^h)^{**} \ar[d, "\phi^{\otimes}_{g, h}"] \\
	\delta^g \otimes \delta^h \ar[r, "b_{gh} = \beta(gh)\, \ident_{gh}"] & \delta^g \otimes \delta^h = (\delta^g \otimes \delta^h)^{**}
\end{tikzcd} \end{equation}
commutes for all $g, h\in G$, where $\phi^{\otimes}: \otimes\, (** \times **) \Rightarrow ** \otimes$ denotes the coherence datum of the monoidal functor $**$.

We now calculate the coherence datum of $**$ in general, that is, for an arbitrary right rigid monoidal category $\mathcal C$. For simplicity, we assume that $\mathcal C$ has trivial left and right unit constraints.
To determine the coherence datum of $**$, one needs the coherence datum of the monoidal functor
\[ *: \mathcal C \to \mathcal C^{op, rev}. \]

This coherence datum $\psi^{\otimes}_{X, Y}: (X\otimes Y)^* \to Y^*\otimes X^*$ is defined graphically in \cite[Proposition 2.1.5]{tenscat}
(see also \cite[Proposition XIV.2.2]{kassel}) as

\begin{center}
\begin{tikzpicture}
	
	\node[morphism, label=180:{$\ident_{X\otimes Y}$}, minimum size=1cm] (I) at (0, 0) {};
	
	\draw[thick] (I.south) .. controls +(0, -1) and +(0, -1) .. node[right=2pt, near start] {$X\otimes Y$} ++(-2, 0) -- node[left=1pt, near end] {$(X\otimes Y)^*$} ++(0, 3);
	
	\draw[thick] (I.north east) .. controls +(0, 1) and +(0, 1) .. node[right=1pt, very near start] {$Y$} ++(1.5, 0) -- node[right=1pt, very near end] {$Y^*$} ++(0, -2);
	\draw[thick] (I.north west) .. controls +(0, 2) and +(0, 2) .. node[left=1pt, very near start] {$X$} ++(3.5, 0) -- node[right=1pt, very near end] {$X^*$} ++(0, -2);
\end{tikzpicture}
\end{center}
which translates to the algebraic expression
\begin{multline} \label{eq:coh_*}
	\psi^{\otimes}_{X, Y} := 
	\left( \eval_{X\otimes Y}^R \otimes \ident_{Y^* \otimes X^*} \right) \circ a^{-1}_{(X\otimes Y)^*, X\otimes Y, (Y^* \otimes X^*)} \circ \left( \ident_{(X\otimes Y)^*} \otimes a_{X\otimes Y, Y^*, X^*} \right) \\
	\circ \left(\ident_{(X\otimes Y)^*} \otimes a_{X, Y, Y^*}^{-1} \otimes \ident_{X^*} \right)
	\circ \left( \ident_{(X\otimes Y)^*\otimes X} \otimes \coev_Y^R \otimes \ident_{X^*} \right) \circ \left(\ident_{(X\otimes Y)^*} \otimes \coev_X^R\right).
\end{multline}
Using the graphical calculus, it is easy to check that $(*, \psi^{\otimes})$ is a monoidal functor.

\begin{lemma} \label{lem:coh_**}
	Let $\psi^{\otimes}$ denote the coherence datum of $*:\mathcal C \to \mathcal C^{op, rev}$. Then $**: \mathcal C \to \mathcal C$ is a monoidal functor with coherence datum
	\[ \begin{tikzcd}[column sep=large]
		\phi^{\otimes}_{X, Y} \text{ : } X^{**} \otimes Y^{**} \ar[r, "\left( \psi_{Y^*, X^*}^{\otimes} \right)^{-1}"] & (Y^* \otimes X^*)^* \ar[r, "\left( \psi_{X, Y}^{\otimes} \right)^*"] & (X\otimes Y)^{**}.
	\end{tikzcd}\]
\end{lemma}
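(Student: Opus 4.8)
The plan is to exhibit $**$ as a composite of two monoidal functors and to read off its coherence datum from the standard formula for the coherence of a composite. The duality functor is a monoidal functor $*\colon \mathcal C \to \mathcal C^{op, rev}$ with coherence $\psi^{\otimes}$, as established just above the lemma. Applying it twice gives the composite
\[ \mathcal C \xrightarrow{\ *\ } \mathcal C^{op, rev} \xrightarrow{\ *\ } (\mathcal C^{op, rev})^{op, rev} = \mathcal C, \]
which is exactly $**$. Before anything else I would check that the second $*$ is defined, i.e. that $\mathcal C^{op, rev}$ is again right rigid: unwinding the reversal of the tensor product and of the arrows shows that an object has the same right dual in $\mathcal C^{op, rev}$ as in $\mathcal C$ (the two reversals cancel the usual left/right interchange), so the outer $*$ exists and sends $X^* \mapsto X^{**}$.

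Since a composite of monoidal functors is monoidal, $**$ is monoidal, and writing $J$ for the coherence of the outer $*$ its coherence datum is $\phi^{\otimes}_{X,Y} = (\psi^{\otimes}_{X,Y})^* \circ J_{X^*, Y^*}$; that is, (i) the coherence $J_{X^*,Y^*}$ of the outer $*$ at $(X^*, Y^*)$, followed by (ii) the outer $*$ applied to the coherence $\psi^{\otimes}_{X,Y}$ of the inner functor. Factor (ii) is immediate: applying the duality functor to $\psi^{\otimes}_{X,Y}$ yields $(\psi^{\otimes}_{X,Y})^*\colon (Y^*\otimes X^*)^* \to (X\otimes Y)^{**}$, the second arrow of the claimed formula. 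The real content is factor (i): I must identify $J_{X^*, Y^*}$ with $(\psi^{\otimes}_{Y^*, X^*})^{-1}\colon X^{**}\otimes Y^{**} \to (Y^*\otimes X^*)^*$, the first arrow.

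To establish this I would argue by uniqueness of duals rather than expand the defining formula. Both morphisms go $X^{**}\otimes Y^{**} \to (Y^*\otimes X^*)^*$, and both are canonical comparison isomorphisms between the two right duals $X^{**}\otimes Y^{**}$ and $(Y^*\otimes X^*)^*$ of the object $Y^*\otimes X^*$; since the comparison isomorphism between two right duals of a fixed object is unique once the evaluation and coevaluation are fixed, the two coincide. Composing factors (i) and (ii) then reproduces precisely the asserted formula for $\phi^{\otimes}_{X,Y}$.

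The main obstacle is factor (i): one has to transport the explicit coherence formula (\ref{eq:coh_*}) into $\mathcal C^{op, rev}$, tracking the reversal of $\otimes$, the reversal of arrows, and the induced interchange of evaluations and coevaluations, and then confirm that the duality data appearing there match those defining $\psi^{\otimes}$ in $\mathcal C$, so that the uniqueness argument actually applies. A fully self-contained alternative, better matched to the graphical calculus of the preceding sections, is to forgo the composite-functor viewpoint and verify directly that $(\psi^{\otimes}_{X,Y})^* \circ (\psi^{\otimes}_{Y^*, X^*})^{-1}$ is a natural isomorphism satisfying the associativity (hexagon) axiom of a monoidal functor; naturality and invertibility follow from those of $\psi^{\otimes}$ together with functoriality of $*$, and the unit conditions are automatic because the unit constraints of $\mathcal C$ are assumed trivial. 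In either route the delicate point is the careful bookkeeping of the $op, rev$ conventions.
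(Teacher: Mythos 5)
Your proposal is correct in outline and shares the paper's skeleton — both write $**$ as a composite of $*\colon\mathcal C\to\mathcal C^{op,rev}$ with a second functor $\mathcal C^{op,rev}\to\mathcal C$ and then invoke the composition formula for coherence data — but the two treatments of the outer functor genuinely differ. The paper never looks at $\mathcal C^{op,rev}$ as a rigid category in its own right: its outer functor is $(*)^{op,rev}$, i.e.\ the \emph{same} assignment $*$ formally re-typed, and its coherence datum $(\psi^{\otimes}_{Y,X})^{-1}$ falls out of two purely formal lemmas (the coherence of $F^{op}$ is the inverse of that of $F$; the coherence of $F^{rev}$ is that of $F$ with arguments swapped), so no uniqueness-of-duals argument and no identification of duality functors is ever needed. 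Your route instead takes the outer functor to be the intrinsic duality functor of $\mathcal D=\mathcal C^{op,rev}$ and pins down its coherence by the uniqueness of the comparison isomorphism between the two right duals $(Y^*\otimes X^*)^*$ and $X^{**}\otimes Y^{**}$ of $Y^*\otimes X^*$. That works, but it carries extra obligations which you only partly flag: besides transporting the formula (\ref{eq:coh_*}) through the $op,rev$ conventions (which you do flag as the main obstacle), you also need (a) that the $\mathcal D$-dual of a \emph{morphism} agrees with its $\mathcal C$-dual — otherwise your composite is not literally $**$ on morphisms and your "factor (ii) is immediate" step, which silently replaces the outer functor's action on $\psi^{\otimes}_{X,Y}$ by $(-)^*$ in $\mathcal C$, is unjustified — and (b) the standard fact (not stated in the paper, where $\psi^{\otimes}$ is merely defined graphically) that $\psi^{\otimes}$ \emph{is} the comparison isomorphism compatible with the chosen and nested evaluations/coevaluations, since uniqueness only identifies two maps that are both known to be compatible. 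Both (a) and (b) are true and provable by the same bookkeeping, so your plan closes; the trade-off is that the paper's formal re-typing avoids rigidity of $\mathcal C^{op,rev}$ and duality-uniqueness entirely, while your argument is more conceptual and explains \emph{why} the first arrow must be $\left(\psi^{\otimes}_{Y^*,X^*}\right)^{-1}$ rather than deriving it by symbol manipulation.
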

\begin{proof}
	Let $\mathcal C, \mathcal D, \mathcal E$ be arbitrary monoidal categories.
	\begin{enumerate}
		\item \label{it:op_mon} Given any monoidal functor $(F, \phi^{\otimes}): \mathcal C \to \mathcal D^{op}$, the functor $F^{op}: \mathcal C^{op} \to \mathcal D$ is monoidal with coherence datum
		\[ \tilde{\phi}^{\otimes}_{X, Y} = \left(\phi^{\otimes}_{X, Y}\right)^{-1}:\ FX \otimes FY \to F(X\otimes Y). \]
		\item \label{it:rev_mon} Given any monoidal functor $(F, \phi^{\otimes}): \mathcal C\to \mathcal D^{rev}$, the functor $F^{rev}: \mathcal C^{rev} \to \mathcal D$ (identical to $F$ as functor) is monoidal with coherence datum
		\[ \tilde{\phi}^{\otimes}_{X, Y} = \phi^{\otimes}_{Y, X}:\ FX \otimes FY\to F(Y \otimes X). \]
		\item \label{it:comp_mon}
		Given monoidal functors $(F, \phi^{\otimes}): \mathcal C\to \mathcal D, (G, \rho^{\otimes}): \mathcal D\to \mathcal E$, their composite $GF$ is a monoidal functor with coherence datum
		\[ \begin{tikzcd}
			\tilde\phi^{\otimes}_{X, Y} =\
			GFX \otimes GFY \ar[r, "\rho^{\otimes}_{FX, FY}"] & G(FX \otimes FY) \ar[r, "G\phi^{\otimes}_{X, Y}"] & GF(X\otimes Y).
		\end{tikzcd} \]
	\end{enumerate}
	It is straightforward to check that these constructions yield monoidal functors. \ref{it:op_mon}. and \ref{it:rev_mon}. imply that $(*)^{op, rev}: \mathcal C^{op, rev}\to \mathcal C$ is a monoidal functor with coherence datum $\tilde\psi_{X, Y}^{\otimes} = (\psi_{Y, X}^{\otimes})^{-1}$.
	Now apply \ref{it:comp_mon}. to $F=*$ and $G=(*)^{op, rev}$.
\end{proof}

We now return to the case $\mathcal C = \vectgo$.
\begin{lemma} \label{lem:vecgo_piv}
	Pivotal structures on $\vectgo$ are in bijection with characters $\kappa: G\to \units$ of $G$. If $\kappa: G \to \units$ is a character,
	\[ \beta(g) := \kappa(g)\, \omega^{-1}(g, g^{-1}, g) \]
	defines a pivotal structure $b$ on $\vectgo$ with $b_g = \beta(g)\, \ident_g$.
\end{lemma}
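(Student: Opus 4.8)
The plan is to build on the reduction to scalars already prepared in the excerpt. By Corollary \ref{cor:nat_simp2}, every candidate morphism $b\colon\Ident\Rightarrow {**}$ has the form $b_g=\beta(g)\,\ident_g$ for a map $\beta\colon G\to\units$, and since each $\beta(g)\neq 0$ it is automatically a natural \emph{isomorphism}. Hence $b$ is a pivotal structure exactly when it is monoidal, i.e.\ when diagram (\ref{diag:piv_**}) commutes. Writing $\phi^{\otimes}_{g,h}=\Phi(g,h)\,\ident_{gh}$ for the scalar recording the coherence datum of ${**}$ on the simple objects, commutativity of (\ref{diag:piv_**}) is equivalent to the single functional equation
\[ \beta(g)\,\beta(h)\,\Phi(g,h)=\beta(gh)\qquad(g,h\in G). \]
So the lemma comes down to computing $\Phi$ and analysing this equation.

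To compute $\Phi$ I would first invoke Lemma \ref{lem:coh_**}, which expresses $\phi^{\otimes}_{g,h}$ through the coherence datum $\psi^{\otimes}$ of the single duality functor $*$. Writing $\psi^{\otimes}_{g,h}=\psi(g,h)\,\ident$, and using that $*$ is $\mathbb F$-linear (so it sends a scalar morphism to the same scalar) while inversion is transparent, Lemma \ref{lem:coh_**} yields $\Phi(g,h)=\psi(g,h)\,\psi(h^{-1},g^{-1})^{-1}$. It then remains to evaluate $\psi(g,h)$ by tracing through the explicit formula (\ref{eq:coh_*}). As every object appearing there is simple, each $\coev^R$ contributes the scalar $1$, each $\eval^R_k$ contributes $\omega(k^{-1},k,k^{-1})$, and each associativity constraint $a^{\pm1}_{p,q,r}$ contributes $\omega^{\pm1}(p,q,r)$. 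The two factors carrying the triple $((gh)^{-1},gh,(gh)^{-1})$ — one from $\eval^R_{gh}$, one from the outermost $a^{-1}$ — cancel, leaving
\[ \psi(g,h)=\omega(gh,h^{-1},g^{-1})\,\omega^{-1}(g,h,h^{-1}). \]

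With $\psi$, and hence $\Phi$, in hand I would substitute the proposed $\beta(g)=\kappa(g)\,\omega^{-1}(g,g^{-1},g)$ into the functional equation. Abbreviating $D(g):=\omega^{-1}(g,g^{-1},g)$, the equation reads $\kappa(g)\kappa(h)\,D(g)D(h)\,\Phi(g,h)=\kappa(gh)\,D(gh)$, so it is equivalent to the multiplicativity $\kappa(g)\kappa(h)=\kappa(gh)$ precisely when the purely cohomological identity
\[ D(g)\,D(h)\,\Phi(g,h)=D(gh) \]
holds for all $g,h$; here (\ref{eq:3co_inv}), which lets one rewrite $D(k)=\omega(k^{-1},k,k^{-1})$, is convenient. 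This identity — which says that the $2$-cochain $\Phi$ is the coboundary of $D^{-1}$ — is the crux of the argument and the step I expect to be the main obstacle: it must be extracted from the cocycle condition (\ref{eq:3cocycle}) applied to a handful of well-chosen quadruples (for instance $(g,h,h^{-1},g^{-1})$, its analogue $(h^{-1},g^{-1},g,h)$ coming from $\psi(h^{-1},g^{-1})$, and quadruples producing the self-dual triples of the form $(k,k^{-1},k)$), after which the relations telescope; all the genuine bookkeeping lives here.

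Once the identity is verified the bijection is formal. The assignment $\kappa\mapsto\beta$, $\beta(g)=\kappa(g)D(g)$, carries characters to solutions of the functional equation, hence to pivotal structures; conversely any pivotal structure $b$ determines $\beta$ and then $\kappa(g):=\beta(g)D(g)^{-1}$, which is multiplicative by the same equivalence and so is a genuine character (multiplicativity into a group forces $\kappa(1)=1$). These two assignments are mutually inverse bijections of maps $G\to\units$ and restrict to a bijection between pivotal structures on $\vectgo$ and characters of $G$, as claimed.
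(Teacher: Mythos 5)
Your route coincides step-for-step with the paper's own proof: reduce to scalars via Corollary \ref{cor:nat_simp2}, evaluate the coherence datum of $*$ from (\ref{eq:coh_*}) on simple objects (your value $\psi(g,h)=\omega(gh,h^{-1},g^{-1})\,\omega^{-1}(g,h,h^{-1})$, including the cancellation of the two $((gh)^{-1},gh,(gh)^{-1})$ factors, is exactly what the paper obtains), combine via Lemma \ref{lem:coh_**} to get $\Phi(g,h)=\psi(g,h)\,\psi(h^{-1},g^{-1})^{-1}$, and translate (\ref{diag:piv_**}) into the functional equation $\beta(g)\,\beta(h)\,\Phi(g,h)=\beta(gh)$. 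The closing bijection argument is likewise the paper's.

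There is, however, a genuine gap: the identity $D(g)\,D(h)\,\Phi(g,h)=D(gh)$, which you yourself identify as the crux and on which the equivalence with multiplicativity of $\kappa$ rests, is never proved --- you only predict that it follows from (\ref{eq:3cocycle}) applied to ``well-chosen quadruples'' and that ``the relations telescope''. A complete proof must exhibit those quadruples and carry out the telescoping; in the paper this is the entire computational content of the lemma. Concretely, the paper applies the cocycle condition for $(g,h,h^{-1},g^{-1})$, $(h^{-1}g^{-1},g,h,h^{-1}g^{-1})$, $(h^{-1},h,h^{-1},g^{-1})$ and $(h^{-1},g^{-1},g,g^{-1})$ (normalisation of $\omega$ kills the degenerate factors), arriving at $\Phi(g,h)=\omega^{-1}(h^{-1},h,h^{-1})\,\omega^{-1}(g^{-1},g,g^{-1})\,\omega(h^{-1}g^{-1},gh,h^{-1}g^{-1})$, and then uses (\ref{eq:3co_inv}) to rewrite this as $\omega(g,g^{-1},g)\,\omega(h,h^{-1},h)\,\omega^{-1}(gh,(gh)^{-1},gh)$, which is precisely $D(g)^{-1}D(h)^{-1}D(gh)$. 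Of your guessed quadruples only $(g,h,h^{-1},g^{-1})$ is among these four; $(h^{-1},g^{-1},g,h)$ is not used, though your heuristic of producing triples of the form $(k,k^{-1},k)$ points the right way. So no step of your plan would fail, but until this computation is actually performed the proof is incomplete exactly at the step where all the work lies.
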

\begin{proof}
The coherence datum of $*$ can be determined by setting $X=\delta^g, Y=\delta^h$ in (\ref{eq:coh_*}):
\[
	\psi^{\otimes}_{g, h} = \omega(gh, h^{-1}, g^{-1})\, \omega^{-1}(g, h, h^{-1})\ \ident_{h^{-1}g^{-1}}.
\]
Hence, by Lemma \ref{lem:coh_**}, the coherence datum of $**$ is
\begin{multline*} 
	\phi^{\otimes}_{g, h} = \left( \psi_{g, h}^{\otimes} \right)^* \circ \left( \psi_{h^{-1}, g^{-1}}^{\otimes}\right)^{-1} = \\ 
	\omega(gh, h^{-1}, g^{-1}) \, \omega^{-1}(g, h, h^{-1}) \, \omega^{-1}(h^{-1}g^{-1}, g, h) \, \omega(h^{-1}, g^{-1}, g)\ \ident_{gh}.
\end{multline*}
We apply the cocycle condition (\ref{eq:3cocycle}) for $(g, h, h^{-1}, g^{-1}), (h^{-1}g^{-1}, g, h, h^{-1}g^{-1}), (h^{-1}, h, h^{-1}, g^{-1})$ and $(h^{-1}, g^{-1}, g, g^{-1})$ and get
\[ \phi^{\otimes}_{g, h} = \omega^{-1}(h^{-1}, h, h^{-1}) \, \omega^{-1}(g^{-1}, g, g^{-1}) \, \omega(h^{-1}g^{-1}, gh, h^{-1}g^{-1})\ \ident_{gh}. \]
By (\ref{eq:3co_inv}), this is equivalent to
\[ \phi^{\otimes}_{g, h} = \omega(h, h^{-1}, h) \, \omega(g, g^{-1}, g) \, \omega^{-1}(gh, h^{-1}g^{-1}, gh)\ \ident_{gh}. \]
We insert this formula for $\phi^{\otimes}_{g, h}$ into (\ref{diag:piv_**}) and conclude that $b_g = \beta(g)\, \ident_g$ for $g\in G$ defines a pivotal structure on $\vectgo$ if and only if
\[  \beta(g)\, \beta(h)\, \omega(g, g^{-1}, g)\, \omega(h, h^{-1}, h) = \beta(gh)\, \omega(gh, h^{-1}g^{-1}, gh). \]
This holds if and only if the map $\kappa: G\to \units$ defined by
\[ \kappa(g) := \beta(g)\, \omega(g, g^{-1}, g) \]
is a character of $G$. This concludes the proof.
\end{proof}

We write $\vectgok:= (\vectgo, b)$ with $b_g=\kappa(g)\, \omega^{-1}(g, g^{-1}, g)\, \ident_g$ for the pivotal category determined by a character $\kappa:G\to \units$.
We can now calculate the left evaluation and coevaluation of $\mathrm{Vec}_G^{\omega, \kappa}$ as defined by (\ref{eq:left_eval}) and (\ref{eq:left_coev}).
\begin{eqformarray}{lcrlcr} \label{eq:vecgo_ev}
	\eval_g^R &=& \omega(g^{-1}, g, g^{-1}) \, \ident_1,
	\hspace*{2em} & \eval_g^L &=& \kappa(g)\, \ident_1 \\
	\coev_g^R &=&
	\ident_1,
	\hspace*{2em} &\coev_g^L &=& \kappa^{-1}(g)\, \omega(g, g^{-1}, g) \, \ident_1
\end{eqformarray}

In order to determine whether $\vectgok$ is spherical, we calculate the dimensions of simple objects in $\vectgo$:
\[ \dim_R (\delta^g) = \kappa(g), \qquad \dim_L (\delta^g) = \kappa^{-1}(g) \qquad (g\in G), \]
where (\ref{eq:3co_inv}) was used to calculate $\dim_L(\delta^g)$. The fact that $\kappa$ is a character translates to the compatibility of dimensions and tensor products from \cite[Proposition 4.7.3]{egno}:
\[ \dim_{L/R} (X \otimes Y) = \dim_{L/R} (X) \cdot \dim_{L/R} (Y). \]

\begin{lemma} \label{lem:vecgok_sph}
	$\vectgok$ is spherical if and only if $\kappa(g) \in \{ 1, -1\}$ for all $g\in G$.
\end{lemma}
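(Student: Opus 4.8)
The plan is to reduce the defining condition $\trace_L=\trace_R$ to a numerical condition on the dimensions of the simple objects, which have already been computed above. Since $\vectgo$ is semisimple, every object $X$ decomposes as $X\cong\bigoplus_i\delta^{g_i}$ with inclusions and projections $(\iota_i,\pi_i)$, so the first step is to show that both traces are additive over this decomposition, i.e.\ that $\trace_{L/R}(f)=\sum_i\trace_{L/R}(\pi_i\circ f\circ\iota_i)$ for every endomorphism $f\colon X\to X$. To see this I would substitute the direct-sum formulas for the evaluations and coevaluations (the right-handed ones are recorded explicitly above, and the left-handed ones follow from them via the pivotal structure) into $\trace_R(f)=\eval_X^L\circ(f\otimes\ident_{X^*})\circ\coev_X^R$ and the analogous expression for $\trace_L$. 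Using the orthogonality relations $\pi_j\circ\iota_i=\delta_{i,j}\,\ident$ and their counterparts for the dual objects, every off-diagonal term $\pi_j\circ f\circ\iota_i$ with $i\neq j$ is annihilated, leaving exactly the diagonal sum. Consequently $\trace_L$ and $\trace_R$ coincide on all of $\mathcal C$ if and only if they coincide on every endomorphism of every simple object $\delta^g$.

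The second step evaluates the two traces on a simple object. By Schur's Lemma \ref{lem:schur} any endomorphism of $\delta^g$ is a scalar multiple $\lambda\,\ident_g$, and by the linearity of the trace, equivalently by Corollary \ref{cor:trace_mult}, one has $\trace_{L/R}(\lambda\,\ident_g)=\lambda\,\dim_{L/R}(\delta^g)$. Inserting the dimensions $\dim_R(\delta^g)=\kappa(g)$ and $\dim_L(\delta^g)=\kappa^{-1}(g)$ obtained above, the traces agree on $\End_{\vectgo}(\delta^g)$ precisely when $\kappa(g)=\kappa^{-1}(g)$, that is $\kappa(g)^2=1$. As $x^2-1=(x-1)(x+1)$ has only the roots $1$ and $-1$ in any field, this is equivalent to $\kappa(g)\in\{1,-1\}$, and demanding it for all $g\in G$ gives the stated characterisation.

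The only genuinely delicate point is the additivity of the two traces over direct sums in the first step; everything afterwards is an immediate application of Schur's Lemma and the dimension formulas. I expect the bookkeeping in that reduction — keeping track of the inclusions and projections for $X$ and for $X^*$ simultaneously, and verifying the cancellation of the off-diagonal contributions for both $\trace_L$ and $\trace_R$ — to be the main, though entirely routine, obstacle.
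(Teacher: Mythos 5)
Your proof is correct and follows the same overall strategy as the paper's: decompose an arbitrary endomorphism over the simple summands $\delta^{g_i}$, reduce the equality $\trace_L=\trace_R$ to the equality of left and right dimensions of simple objects, and then read off $\kappa(g)=\kappa^{-1}(g)$, i.e.\ $\kappa(g)\in\{1,-1\}$, from the formulas $\dim_R(\delta^g)=\kappa(g)$ and $\dim_L(\delta^g)=\kappa^{-1}(g)$. The one place you diverge is the justification of the reduction step $\trace_{L/R}(f)=\sum_i\trace_{L/R}(\pi_i\circ f\circ\iota_i)$: the paper obtains this in one line from linearity and cyclicity of the traces together with Corollary \ref{cor:trace_mult}, writing $\trace_{L/R}(f)=\sum_i\trace_{L/R}(\iota_i\circ\pi_i\circ f)=\sum_i\trace_{L/R}(\pi_i\circ f\circ\iota_i)$, whereas you verify it by substituting the explicit direct-sum formulas for the (co)evaluations and cancelling the off-diagonal terms via $\pi_{X_j^*}\circ\iota_{X_i^*}=\delta_{i,j}\,\ident$. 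Your route is more self-contained, since cyclicity of the trace is a standard fact the paper invokes without proof, but it costs exactly the bookkeeping you anticipate, including the check that the pivotal structure (and hence the left (co)evaluations) is compatible with direct sums. Either justification is sound; the remainder of your argument (Schur's Lemma \ref{lem:schur}, Corollary \ref{cor:trace_mult}, and the dimension formulas) coincides with the paper's.
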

\begin{proof}
	If $\vectgok$ is spherical, then
	\[ \kappa(g) = \dim_R(\delta^g) = \dim_L(\delta^g) = \kappa^{-1}(g) \]
	so $\kappa(g)\in \{ 1, -1\}$ for all $g\in G$.
	
	Now suppose $\kappa(g) \in \{ 1, -1\}$ for all $g\in G$. Then $\dim_L(\delta^g) = \dim_R(\delta^g)$ for all $g\in G$. Let $\varphi: V \to V$ be a $\vectgo$-morphism, and let $V\cong \bigoplus_{i\in I} \delta^{g_i}$ be the composition of $V$ into simple objects. Let $(\iota_i, \pi_i)_{i\in I}$ be inclusions and projections for $V$. Then
	\begin{align*} 
		\trace_{L/R} (\varphi) &= \sum_{i\in I} \trace_{L/R} (\iota_i \circ \pi_i \circ \varphi) = \sum_{i\in I} \trace_{L/R} (\pi_i \circ \varphi \circ \iota_i) \overset{\ref{cor:trace_mult}} = \sum_{i\in I} \dim_{L/R} (\delta^{g_i})\ \pi_i \circ \varphi \circ \iota_i,
	\end{align*}

	where we used Corollary \ref{cor:trace_mult} as well as cyclicity and linearity of the traces. In the last step, we identified $\Hom(\delta^{g_i}, \delta^{g_i})$ and $\mathbb F$. Since the left and right dimensions of $\delta^{g_i}$ agree for all $i$, it follows that $\trace_L (\varphi) = \trace_R (\varphi)$.
\end{proof}

Characters $\kappa: G\to \{1, -1\}$ are uniquely determined by their kernel. Therefore, the spherical structures on $\vectgo$ are classified as follows: The category $\vectgo$ always admits the trivial spherical structure given by $\kappa= 1$. The non-trivial spherical structures on $\vectgo$ are in bijection with normal subgroups $N\trianglelefteq G$ of index $2$.

We summarize the results of this section: $\vectgo$ is a semisimple tensor category over $\mathbb F$ and spherical structures on $\vectgo$ correspond to characters $\kappa: G \to \{\pm 1\}$ which encode the dimensions of simple objects. If $G$ is finite, $\vectgo$ is a fusion category.
\clearpage
\section{Module categories}\label{sec:modc}
So far, we have been working exclusively with $\mathbb F$-linear monoidal categories, which are a categorification of $\mathbb F$-algebras. In a similar fashion, one can categorify the notion of modules and bimodules over such algebras. This leads to the concept of module categories and bimodule categories over multitensor categories.

Throughout this section let $(\mathcal C, \otimes, a)$ and $(\mathcal D, \otimes', a')$ be multitensor categories over $\mathbb F$ as defined in Section \ref{ssec:tens_cat}.

\subsection{Definition and properties}\label{ssec:modc}
This section recalls basic definitions and results from \cite[chapter 7]{egno}.

\begin{definition}
	A \emph{$\mathcal C$-(left) module category} or \emph{(left) module category over $\mathcal C$} is a triple $(\mathcal M, \opl, m)$ consisting of
	\begin{itemize}
		\item a category $\mathcal M$ which is locally finite over $\mathbb F$,
		\item a functor $\opl: \mathcal C \times \mathcal M \to \mathcal M$ that is $\mathbb F$-bilinear and exact in the first argument, sometimes referred to as \emph{action functor}, and
		\item a natural isomorphism $m: \opl (\otimes \times \Ident) \Rightarrow \opl (\Ident \times \opl)$ called \emph{(left) module constraint}
	\end{itemize}
	such that the functor $e\opl -: \mathcal M \to \mathcal M$ is an equivalence and the \emph{module pentagon axiom} holds, i.e., the diagram
	\begin{equation} \label{diag:modc_pent} \begin{tikzcd}[column sep=large]
		((X \otimes Y) \otimes Z) \opl M \ar[r, "m_{X\otimes Y, Z, M}"] \ar[d, "a_{X, Y, Z}\opl \ident_M", swap] & (X\otimes Y) \opl (Z\opl M) \ar[r, "m_{X, Y, Z\opl M}"] & X \opl (Y \opl (Z\opl M)) \\
		(X\otimes (Y\otimes Z)) \opl M \ar[r, "m_{X, Y\otimes Z, M}", swap] & X\opl ((Y\otimes Z) \opl M) \ar[ru, "\ident_X \opl m_{Y, Z, M}", swap]
	\end{tikzcd} \end{equation}
	commutes for all $X, Y, Z\in \mathcal C, M\in \mathcal M$.
\end{definition}

Likewise, one defines a $\mathcal C$-right module category with action functor $\opr: \mathcal M \times \mathcal C \to \mathcal M$ and right module constraint $n: \opr (\Ident \times \otimes) \Rightarrow \opr (\opr \times \Ident)$ that satisfies an analogous pentagon axiom. Alternatively, one may define a $\mathcal C$-right module category as a $\mathcal C^{rev}$-left module category.

Every $\mathcal C$-module category admits a unique natural isomorphism
\[ l_M: e \opl M \to M \qquad (M\in \mathcal M), \]
called the \emph{unit constraint}, such that the diagram
\begin{equation} \label{diag:mod_triangle} \begin{tikzcd}
		(X\otimes e) \opl M \ar[rr, "m_{X, e, M}"] \ar[rd, "r_X \opl \ident_M", swap] && X \opl (e \opl M) \ar[ld, "\ident_X \opl l_M"] \\ & X\opl M
\end{tikzcd} \end{equation}
commutes for all $X\in \mathcal C, M\in\mathcal M$. In fact, $l_M$ is uniquely determined by
\[ \begin{tikzcd}
	\ident_e \opl l_M = \ e\opl (e\opl M) \ar[r, "m^{-1}_{e, e, M}"] & (e\otimes e)\opl M \ar[r, "r_e\opl \ident_M"] & e\opl M.
\end{tikzcd} \]
By diagram chasing one shows that $l_M$ is natural in $M$ and that (\ref{diag:mod_triangle}) commutes.

\begin{remark}
	Often, we will only consider semisimple module categories. In \cite{egno}, much of the theory is instead developed for so-called \emph{exact} module categories, see \cite[Definition 7.5.1]{egno}. Since all semisimple module categories are exact \cite[Section 7.5]{egno}, these results apply to semisimple module categories.
\end{remark}

The following lemma allows us to ignore the condition that $\opl$ needs to be exact in the first argument in the case that $\mathcal C$ is a semisimple multitensor category (for instance, $\mathcal C=\vectgo$). The lemma is a special case of \cite[Proposition 7.6.9]{egno}.

\begin{lemma} \label{lem:exactness}
	Let $\mathcal A$ and $\mathcal B$ be abelian categories and suppose that $\mathcal A$ is semisimple and locally finite over $\mathbb F$. Then every additive functor $F: \mathcal A \to \mathcal B$ is exact.
\end{lemma}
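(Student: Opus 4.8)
The plan is to reduce exactness to the preservation of short exact sequences and then exploit the fact that in a semisimple category every such sequence splits. Recall that an additive functor between abelian categories is exact precisely when it sends short exact sequences to short exact sequences; since $F$ is additive it automatically preserves finite biproducts, so it suffices to show that $F$ preserves every short exact sequence
\[ 0 \to A \xrightarrow{f} B \xrightarrow{g} C \to 0 \]
in $\mathcal A$.

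First I would show that every such sequence splits. The morphism $g$ is an epimorphism, so by Lemma \ref{lem:ssimp_epi_split} it admits a section $s: C \to B$ with $g \circ s = \ident_C$. By the splitting lemma for abelian categories, the existence of such a section means that $f$ and $g$ exhibit $B$ as a biproduct of $A$ and $C$: there is an isomorphism $B \cong A \oplus C$ under which $f$ corresponds to the canonical inclusion $A \to A \oplus C$ and $g$ to the canonical projection $A \oplus C \to C$.

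Next I would apply $F$. Because $F$ is additive, it carries the biproduct $A \oplus C$ to the biproduct $FA \oplus FC$ and carries the canonical inclusions and projections to those of $FA \oplus FC$. Hence the image sequence
\[ 0 \to FA \xrightarrow{Ff} FB \xrightarrow{Fg} FC \to 0 \]
is isomorphic to the split short exact sequence $0 \to FA \to FA \oplus FC \to FC \to 0$, and is therefore itself short exact. This shows $F$ preserves short exact sequences, so $F$ is exact.

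The only genuinely categorical input beyond additivity is Lemma \ref{lem:ssimp_epi_split}, which guarantees that semisimplicity forces every epimorphism — and thus every short exact sequence — to split; the remainder is the standard splitting lemma together with the fact that additive functors preserve biproducts. The main point to be careful about is verifying that the isomorphism $B \cong A \oplus C$ is compatible with $f$ and $g$ as claimed, so that applying $F$ genuinely reproduces the canonical biproduct data rather than merely an abstract isomorphism of objects.
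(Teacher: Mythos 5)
Your proposal is correct and follows essentially the same route as the paper's proof: both invoke Lemma \ref{lem:ssimp_epi_split} to conclude that every short exact sequence in $\mathcal A$ splits, and then use the fact that an additive functor preserves biproducts to see that the image sequence is again split exact. Your write-up is merely more explicit about the splitting lemma and the compatibility of the isomorphism $B \cong A \oplus C$ with $f$ and $g$, which the paper leaves implicit.
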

\begin{proof}
	By Lemma \ref{lem:ssimp_epi_split}, every epimorphism in $\mathcal A$ splits. Hence every exact sequence in $\mathcal A$ is of the form
	\[ \begin{tikzcd}
		0 \ar[r] & X \ar[r] & X\oplus Y \ar[r] & Y \ar[r] & 0.
	\end{tikzcd} \]
	The functor $F$ is additive, so $F(X\oplus Y) \cong FX \oplus FY$ and the sequence
	\[ \begin{tikzcd}
		0 \ar[r] & FX \ar[r] & F(X\oplus Y) \ar[r] & FY \ar[r] & 0
	\end{tikzcd} \]
	is exact.
\end{proof}

For $\mathbb F$-algebras $A$ and $B$, an $(A, B)$-bimodule is an abelian group $M$ with the structure of both an $A$-left module and a $B$-right module subject to the condition
\begin{equation} \label{eq:bimod_mot} (a\opl m) \opr b = a \opl (m \opr b) \qquad (a\in A, b\in B, m\in M). \end{equation}
If we replace algebras by multitensor categories, modules by module categories and the condition (\ref{eq:bimod_mot}) by a natural isomorphism with suitable coherence properties, we obtain the definition of a bimodule category:

\begin{definition} \label{def:bimodc}
	A \emph{$(\mathcal C, \mathcal D)$-bimodule category} is a tuple $(\mathcal M, \opl, \opr, m, n, b)$, where
	\begin{itemize}
		\item $(\mathcal M, \opl, m)$ is a $\mathcal C$-left module category,
		\item $(\mathcal M, \opr, n)$ is a $\mathcal D$-right module category and
		\item $b: \opr (\opl \times \Ident) \Rightarrow \opl (\Ident \times \opr)$ is a natural isomorphism, called the \emph{middle module constraint},
	\end{itemize}
	such that the diagrams
	\[ \begin{tikzcd}[column sep=large]
		((C \otimes C') \opl M) \opr D \ar[r, "b_{C\otimes C', M, D}"] \ar[d, "m_{C, C', M} \opr \ident_D", swap] & (C\otimes C') \opl (M \opr D) \ar[r, "m_{C, C', M\opr D}"] & C \opl (C' \opl (M \opr D)) \\
		(C \opl (C' \opl M)) \opr D \ar[r, "b_{C, C'\opl M, D}"] & C \opl ((C' \opl M) \opr D) \ar[ru, "\ident_C \opl b_{C', M, D}", swap]
	\end{tikzcd} \]
	and
	\[ \begin{tikzcd}[column sep=large]
		(C \opl M) \opr (D\otimes' D') \ar[r, "b_{C, M, D\otimes D'}"] \ar[d, "n_{C\opl M, D, D'}", swap] & C \opl (M \opr (D\otimes' D')) \ar[r, "\ident_C \opl n_{M, D, D'}"] & C \opl ((M \opr D) \opr D') \\
		((C\opl M) \opr D) \opr D' \ar[r, "b_{C, M, D}\opr \ident_{D'}"] & (C \opl (M\opr D)) \opr D' \ar[ru, "b_{C, M\opr D, D'}", swap]
	\end{tikzcd} \]
	commute for all objects $C, C' \in \mathcal C, D, D'\in \mathcal D, M\in\mathcal M$.
\end{definition}

\begin{remark} \label{rem:bimod_deligne}
	$(\mathcal C, \mathcal D)$-bimodule categories can be identified with left module categories over the Deligne product $\mathcal C \boxtimes \mathcal D^{rev}$ \cite[Exercise 7.4.3]{egno}. We will therefore focus on the theory of left module categories in the following.
\end{remark}

\begin{example} \label{ex:c_bimod_cat}
	Every multitensor category $\mathcal C$ is a $(\mathcal C, \mathcal C)$-bimodule category with $\opl = \opr = \otimes$ and 
	\[ m_{X, Y, Z} = b_{X, Y, Z} = a_{X, Y, Z},\qquad n_{X, Y, Z}=a_{X, Y, Z}^{-1}. \]
	Note that $\otimes$ is biexact by \cite[Proposition 4.2.1]{egno}.
	All four pentagon axioms for a bimodule category reduce to the pentagon axiom for a monoidal category in this case.
\end{example}

Morphisms in a bimodule category can be represented by diagrams similar to those introduced in Section \ref{ssec:graph_fusion}. We briefly summarize this graphical calculus which is described in \cite{bm_tv+}, see also \cite[Section 3.1]{schaumann}.

Let $\mathcal M$ be a $(\mathcal C, \mathcal D)$-bimodule category. The identity morphisms $\ident_M$ in $\mathcal M$ are represented by coloured vertical lines labelled by $M$. Morphisms $f:M\to M'$ can be represented by a coloured circle labelled by $f$. If $g: C\to C'$ is a morphism in $\mathcal C$, it is represented by a (black) diagram as described in Section \ref{ssec:graph_fusion} and the morphism $g \opl f$ is represented by the horizontal composite of the diagrams for $g$ and $f$. Similarly, a morphism $h: D \to D'$ in $\mathcal D$ is represented by a grey diagram and the morphism $f \opr h$ is represented by horizontal composition of the diagrams for $f$ and $h$. Brackets are suppressed and the natural transformations $m, n$ and $b$ are omitted from the diagrams. This is justified by an analogue of the coherence theorem for bimodule categories \cite[Remark 7.2.4]{egno}. For instance, the diagram

\begin{center}
\begin{tikzpicture}
	\draw[very thick] (0, 0) node[left=1pt] {$C$} -- (0, -1) node[morphism, label=180:{$g$}] {} -- (0, -2) node[left=1pt] {$C'$};
	\draw[very thick, color=blue] (1.5, 0) node[left=1pt] {$M$} -- (1.5, -1) node[morphism, label=180:$f$] {} -- (1.5, -2) node[left=1pt] {$M'$};
	\draw[very thick, color=black!60] (2.5, 0) node[right=1pt] {$D$} -- (2.5, -1) node[morphism, label=0:{$h$}] {} -- (2.5, -2) node[right=1pt] {$D'$};
\end{tikzpicture}
\end{center}

may represent any of the four morphisms
\[ \psi \circ ((g\opl f)\opr h) \circ \phi \qquad \text{ for } \psi\in \left\{ \ident_{(C\opl M)\opr D}, b_{C', M', D'}\right\}, \phi\in\left\{\ident_{(C\opl M)\opr D}, b_{C, M, D}^{-1}\right\}. \]
As before, composition of morphisms is represented by vertical composition of diagrams from top to bottom.

\begin{example}
	(\cite[Remark 7.1.5]{egno}, see also \cite{bm_tv+})
	Let $\mathcal M$ be a $(\mathcal C, \mathcal D)$-bimodule category. Let $\mathcal M^{\#}$ be the dual category of $\mathcal M$ and equip $\mathcal M^{\#}$ with the structure of a $(\mathcal D, \mathcal C)$-bimodule category as follows:
	\[ D \opl^{\#} M := M \opr D^*, \qquad M \opr^{\#} C := C^* \opl M \]
	for $M\in \mathcal M, C\in\mathcal C, D\in\mathcal D$. The module associativity constraints are
	\begin{formarray}{lll}
		m^{\#}_{D, D', M} &:= (\ident_M \opr \psi^{\otimes}_{D', D}) \circ n^{-1}_{M, (D')^*, D^*} &: (M \opr (D')^*) \opr D^*
		\to M \opr (D \otimes D')^* \\
		n^{\#}_{M, C, C'} &:= (\psi^{\otimes}_{C', C} \opl \ident_M) \circ m^{-1}_{(C')^*, C^*, M} &: (C')^* \opl (C^* \opl M)
		\to (C \otimes C')^* \opl M \\
		b^{\#}_{D, M, C} &:= b_{C^*, M, D^*} &: (C^* \opl M) \opr D^* \to C^* \opl (M \opr D^*),
	\end{formarray}
	\hspace{-0.5em}
	where $\psi^{\otimes}$ denotes the coherence datum of the functor $*$ as in Section \ref{ssec:vecgo_piv}. $\mathcal M^{\#}$ is called the \emph{opposite bimodule category}.
\end{example}

The following lemma will occasionally be helpful for constructing morphisms in module categories. It is a generalisation of the well-known adjunction
\[ \Hom_{\mathcal C}(X^* \otimes Y, Z) \cong \Hom_{\mathcal C}(Y, X\otimes Z) \]
which holds for all rigid categories $\mathcal C$ and all objects $X, Y, Z\in \mathcal C$ \cite[Proposition 2.10.8]{egno}.

\begin{lemma} \label{lem:*opl_adj}
	\cite[Proposition 7.1.6]{egno}
	Let $\mathcal M$ be a $\mathcal C$-module category. There exists a natural isomorphism
	\[ \Hom_{\mathcal M}(C^* \opl M, N) \cong \Hom_{\mathcal M}(M, C\opl N) \]
	for $M, N\in\mathcal M, C\in\mathcal C$, so the functor $C^* \opl -: \mathcal M \to \mathcal M$ is left adjoint to $C \opl -$.
\end{lemma}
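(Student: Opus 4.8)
The plan is to realise the adjunction through an explicit unit and counit built from the right duality data $\eval_C^R,\coev_C^R$ of $C$ together with the module constraint $m$ and the unit constraint $l_M\colon e\opl M\to M$, mirroring the proof of the rigid adjunction $\Hom_{\mathcal C}(X^*\otimes Y, Z)\cong\Hom_{\mathcal C}(Y, X\otimes Z)$ recalled just above the statement. First I would define a unit $\eta_M\colon M\to C\opl(C^*\opl M)$ as the composite
\[ M \xrightarrow{\ l_M^{-1}\ } e\opl M \xrightarrow{\ \coev_C^R\,\opl\,\ident_M\ } (C\otimes C^*)\opl M \xrightarrow{\ m_{C, C^*, M}\ } C\opl(C^*\opl M), \]
and a counit $\varepsilon_N\colon C^*\opl(C\opl N)\to N$ as
\[ C^*\opl(C\opl N) \xrightarrow{\ m_{C^*, C, N}^{-1}\ } (C^*\otimes C)\opl N \xrightarrow{\ \eval_C^R\,\opl\,\ident_N\ } e\opl N \xrightarrow{\ l_N\ } N. \]
Both families are natural in $M$ and $N$, being assembled from the natural transformations $m$ and $l$ and from whiskering by the bifunctor $\opl$.

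From these data I would define the candidate isomorphism and its inverse by
\[ \Phi\colon\ \Hom_{\mathcal M}(C^*\opl M, N)\ \to\ \Hom_{\mathcal M}(M, C\opl N),\qquad \Phi(f) := (\ident_C\opl f)\circ\eta_M, \]
\[ \Psi\colon\ \Hom_{\mathcal M}(M, C\opl N)\ \to\ \Hom_{\mathcal M}(C^*\opl M, N),\qquad \Psi(g) := \varepsilon_N\circ(\ident_{C^*}\opl g). \]
That $\Phi$ and $\Psi$ are $\mathbb F$-linear and natural in $M$, $N$ (and in $C$) is immediate from the $\mathbb F$-bilinearity and functoriality of $\opl$ and the naturality of $\eta,\varepsilon$, so the entire content of the lemma is the bijectivity, i.e.\ that $\Psi\circ\Phi=\mathrm{id}$ and $\Phi\circ\Psi=\mathrm{id}$.

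The main obstacle, and the only nontrivial step, is this bijectivity, which is equivalent to the two triangle identities
\[ (\ident_C\opl\varepsilon_N)\circ\eta_{C\opl N} = \ident_{C\opl N},\qquad \varepsilon_{C^*\opl M}\circ(\ident_{C^*}\opl\eta_M) = \ident_{C^*\opl M}. \]
Expanding each side and using the module pentagon axiom (\ref{diag:modc_pent}) to reassociate, I can arrange the $\coev_C^R$ and $\eval_C^R$ factors so that the full right-dual snake composite in $\mathcal C$ — namely $(\ident_C\otimes\eval_C^R)\circ a_{C, C^*, C}\circ(\coev_C^R\otimes\ident_C)$ and its $C^*$-analogue — acts on $M$ (resp.\ $N$) through the action functor $\opl$; the snake identities for the right dual $C^*$ then collapse this factor to the identity. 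The leftover $e$-strands and unit morphisms are reconciled using the module triangle axiom (\ref{diag:mod_triangle}) relating $m_{-,e,-}$, $r$ and $l$, after which $l^{-1}$ and $l$ cancel. I expect the cleanest execution is in the graphical calculus for module categories, where $\coev_C^R$ and $\eval_C^R$ are a cap and a cup on the black strand and the two triangle identities reduce visibly to the zig-zag moves; by coherence the constraints $m$ and $l$ are suppressed in the diagrams, so only the snake identities remain to be invoked.
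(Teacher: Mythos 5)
Your proposal is correct and takes essentially the same route as the paper: the paper defines exactly your maps $\Phi$ and $\Psi$ (namely $f\mapsto(\ident_C\opl f)\circ m_{C,C^*,M}\circ(\coev_C^R\opl\ident_M)$ and $g\mapsto(\eval_C^R\opl\ident_N)\circ m_{C^*,C,N}^{-1}\circ(\ident_{C^*}\opl g)$, with the unit constraints suppressed) and verifies mutual inverseness and naturality via the graphical calculus, i.e.\ the snake identities together with the module pentagon, just as you outline. The only difference is cosmetic: you package the data as a unit/counit pair and reduce bijectivity to the triangle identities, while the paper works with the two hom-set maps directly.
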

\begin{proof}
	Let $f: C^* \opl M\to N$ be a $\mathcal M$-morphism. Define $f^{\#}: M \to C\opl N$ as
	\[ \begin{tikzcd}[column sep=large]
		M \ar[r, "\coev_C^R\opl \ident_M"] & (C\otimes C^*) \opl M \ar[r, "m_{C, C^*, M}"] & C \opl (C^* \opl M) \ar[r, "\ident_C \opl f"] & C \opl N.
	\end{tikzcd} \]
	Using the graphical calculus, it is straightforward to check that mapping $f: M \to C\opl N$ to
	\[ \begin{tikzcd}[column sep=large]
		C^* \opl M \ar[r, "\ident_{C^*} \opl f"] & C^* \opl (C \opl N) \ar[r, "m_{C^*, C, N}^{-1}"] & (C^* \otimes C) \opl N \ar[r, "\eval_C^R \opl \ident_N"] & N
	\end{tikzcd} \]
	defines an inverse to $f \mapsto f^{\#}$ and that $f \mapsto f^{\#}$ is a natural transformation.
\end{proof}

In the theory of monoidal categories, monoidal functors and monoidal natural transformations take a place similar to that of functors and natural transformation in general category theory. In particular, one is only interested in classifying monoidal categories up to monoidal equivalences. 
We now introduce \emph{module functors}, which are the analogue of monoidal functors in the theory of module categories.

\begin{definition}
	\begin{enumerate}
	\item Let $(\mathcal M, \opl, m)$ and $(\mathcal N, \opl', m')$ be $\mathcal C$-module categories. A $\mathcal C$-module functor $(F, s): \mathcal M \to \mathcal N$ consists of a left exact functor $F: \mathcal M \to \mathcal N$ and a natural isomorphism $s: F\opl \Rightarrow \opl' F$ such that the \emph{module functor pentagon axiom} is satisfied, i.e., the diagram
	\begin{equation} \label{diag:modf} \begin{tikzcd}
		F((X \otimes Y) \opl M) \ar[r, "s_{X\otimes Y, M}"] \ar[d, "Fm_{X, Y, M}", swap] & (X\otimes Y) \opl' F(M) \ar[r, "m'_{X, Y, F(M)}"] & X \opl' (Y \opl' F(M)) \\
		F(X\opl (Y\opl M)) \ar[r, "s_{X, Y\opl M}"] & X\opl' F(Y\opl M) \ar[ru, "\ident_X\opl' s_{Y, M}", swap]
	\end{tikzcd} \end{equation}
	commutes for all objects $X, Y\in \mathcal C, M\in \mathcal M$.
	\item Let $(\mathcal M, \opr, n)$ and $(\mathcal N, \opr', n')$ be $\mathcal C$-right module categories. A $\mathcal C$-right module functor $(F, t): \mathcal M\to \mathcal N$ consists of a left exact functor $F: \mathcal M\to \mathcal N$ and a natural isomorphism $t:F\opr \Rightarrow \opr' F$ such that the \emph{right module functor pentagon axiom} holds, i.e., the diagram
	\[ \begin{tikzcd}
	F(M \opr (X\otimes Y)) \ar[r, "t_{M, X\otimes Y}"] \ar[d, "Fn_{M, X, Y}", swap] & F(M) \opr' (X\otimes Y) \ar[r, "n'_{F(M), X, Y}"] & (F(M) \opr' X) \opr' Y \\
	F((M \opr X) \opr Y) \ar[r, "t_{M\opr X, Y}"] & F(M\opr X) \opr' Y \ar[ru, "t_{M, X} \opr' \ident_Y", swap]
	\end{tikzcd} \]
	commutes for all objects $X, Y\in \mathcal C, M\in \mathcal M$.
	\item Let $(\mathcal M, \opl, \opr, m, n, b)$ and $(\mathcal N, \opl', \opr', m', n', b')$ be $(\mathcal C, \mathcal D)$-bimodule categories. A $(\mathcal C, \mathcal D)$-bimodule functor $(F, s, t): \mathcal M \to \mathcal N$ is a triple such that $(F, s)$ is a $\mathcal C$-left module functor, $(F, t)$ is a $\mathcal D$-right module functor and the \emph{bimodule functor hexagon axiom} is satisfied, i.e., the diagram
	\[ \begin{tikzcd}
		F((C\opl M)\opr D) \ar[r, "t_{C\opl M, D}"] \ar[d, "Fb_{C, M, D}", swap] & F(C \opl M) \opr' D \ar[r, "s_{C, M} \opr' \ident_D"] & (C\opl' F(M)) \opr' D \ar[d, "b'_{C, F(M), D}"] \\
		F(C\opl (M\opr D)) \ar[r, "s_{C, M\opr D}"] & C\opl' F(M\opr D) \ar[r, "\ident_C\opl' t_{M, D}"] & C \opl' (F(M) \opr' D)
	\end{tikzcd} \]
	commutes for all $C\in \mathcal C, D\in\mathcal D, M\in \mathcal M$.
	\end{enumerate}

	A (bi-) module functor is called an \emph{equivalence} of $\mathcal C$-module categories (or $\mathcal C$-right module categories or $(\mathcal C, \mathcal D)$-bimodule categories, respectively) if its underlying functor is an equivalence of categories.
\end{definition}

\begin{remark}\label{rem:modf_triangle}
	In \cite[Definition 7.2.1]{egno}, left module functors are required to satisfy the additional condition that
	\[ \begin{tikzcd}
		F(e \opl M) \ar[rr, "s_{e, M}"] \ar[rd, "F(l_M)", swap] && e\opl' F(M) \ar[ld, "l'_M"] \\ & F(M)
	\end{tikzcd} \]
	commutes for all $M\in\mathcal M$. But this already follows from (\ref{diag:modf}) by a diagram chase, where one uses that $\ident_e \opl l_M = (r_e \opl \ident_M) \circ m_{e, e, M}^{-1}$ by construction of $l$.
\end{remark}

\begin{remark}
	All module functors are $\mathbb F$-linear: Suppose $(F, s)$ is a $\mathcal C$-module functor, then for every $\lambda\in\mathbb F$ and every $\mathcal M$-morphism $f:X \to Y$, the diagram
	\[ \begin{tikzcd}[column sep=2.5cm]
		F(X) \ar[r, "F(\lambda f)"] \ar[d, "F(l_X)^{-1}"] \ar[ddd, "\ident_{F(X)}", swap, bend right=70] & F(Y) \ar[d, "F(l_Y)^{-1}"] \ar[ddd, "\ident_{F(Y)}", bend left=70] \\
		F(e\opl X) \ar[r, "F(\ident_e \opl (\lambda f))"] \ar[r, "=F((\lambda\, \ident_e)\opl f)", swap] \ar[d, "s_{e, X}"] & F(e\opl Y) \ar[d, "s_{e, Y}"] \\
		e\opl' F(X) \ar[r, "(\lambda\, \ident_e)\opl' F(f)"] \ar[r, "=\ident_e \opl' (\lambda\, F(f))", swap] \ar[d, "l'_X"] & e\opl' F(Y) \ar[d, "l'_Y"] \\
		F(X) \ar[r, "\lambda\, F(f)"] & F(Y)
	\end{tikzcd} \]
	commutes due to the commutative triangle from Remark \ref{rem:modf_triangle}.
\end{remark}

Following \cite{bm_tv+}, we expand the graphical calculus for module categories to include module functors.

Let $\mathcal M$ and $\mathcal N$ be $(\mathcal C, \mathcal D)$-bimodule categories and let $(F, s): \mathcal M \to \mathcal N$ be a $\mathcal C$-left module functor. For a morphism $f:M\to M'$ in $\mathcal M$, the morphism $Ff: FM \to FM'$ is represented by the diagram for $f$ together with a coloured dashed line to the right labelled by $F$:
\begin{center}
	$Ff \ \eqdiag \ $ 
	\begin{tikzpicture}
		\draw[very thick, dashed, color=red] (2, 0) -- node[near start, right=1pt] {$F$} (2, -2);
		\draw[very thick, color=blue] (1, 0) -- node[near start, left=1pt] {$M$} (1, -1) node[morphism, label=180:{$f$}] {} -- node[near end, left=1pt] {$M'$} (1, -2);
	\end{tikzpicture}
\end{center}
The coherence datum of $F$ is suppressed in diagrams.
If multiple module categories appear in the same diagram, we we use a different colour for each of them.
The action of a $\mathcal D$-right module functor $F: \mathcal M\to \mathcal N$ is represented by a coloured dashed line, labelled by $F$, to the left of the diagram for an $\mathcal M$-morphism.

\begin{example} \label{ex:modf}
	\cite{bm_tv+}
	Let $\mathcal M$ be a $(\mathcal C, \mathcal D)$-bimodule category. Recall that $\mathcal C$ is a $(\mathcal C, \mathcal C)$-bimodule category and $\mathcal D$ is a $(\mathcal D, \mathcal D)$-bimodule category by Example \ref{ex:c_bimod_cat}.
	\begin{enumerate}
		\item For $D\in\mathcal D$ the functor $- \opr D: \mathcal M \to \mathcal M$ is a $\mathcal C$-module functor with $b_{-, -, D}$ as coherence datum. The pentagon axiom for module functors reduces to the first pentagon axiom satisfied by $b$.
		
		\item Analogously, for $C\in\mathcal C$ the functor $C \opl -: \mathcal M \to \mathcal M$ is a $\mathcal D$-right module functor.
		\item For $M \in \mathcal M$ the functor $- \opl M: \mathcal C\to \mathcal M$ is a $\mathcal C$-module functor with coherence datum $m_{-, -, M}$. Likewise, $M \opr -: \mathcal D\to \mathcal M$ is a $\mathcal D$-right module functor.
	\end{enumerate}
\end{example}

Monoidal natural transformations also have an analogue in the theory of module categories:

\begin{definition}
	Let $(F, s^F), (G, s^G): \mathcal M \to \mathcal N$ be $\mathcal C$-module functors.
	A \emph{morphism of module functors} or \emph{$\mathcal C$-(left) module natural transformation} is a natural transformation $\eta: F \Rightarrow G$ such that the diagram
	\begin{equation} \label{diag:mor_modf} \begin{tikzcd}[column sep=large]
		F(C \opl M) \ar[r, "\eta_{C\opl M}"] \ar[d, "s^F_{C, M}", swap] & G(C \opl M) \ar[d, "s^G_{C, M}"] \\
		C\opl F(M) \ar[r, "\ident_C \opl \eta_M"] & C\opl G(M)
	\end{tikzcd} \end{equation}
	commutes for all $C\in \mathcal C, M\in \mathcal M$.
	
	Morphisms of $\mathcal C$-right module functors are defined analogously. A morphism of $(\mathcal C, \mathcal D)$-bimodule functors is a natural transformation that is both a morphism of $\mathcal C$-left module functors and a morphism of $\mathcal D$-right module functors.
\end{definition}

Like additive categories, module categories may be decomposed into direct sums.

\begin{proposition}
	Let $(\mathcal M_1, \opl_1, m^{(1)}), (\mathcal M_2, \opl_2, m^{(2)})$ be $\mathcal C$-module categories. Then their direct sum $\mathcal M_1 \oplus \mathcal M_2$ is a $\mathcal C$-module category with the following structures:
	\begin{formarray}{rl}
	X\opl (M_1 \oplus M_2) &:= (X\opl_1 M_1) \oplus (X\opl_2 M_2) \\
	m_{X, Y, M_1\oplus M_2} &:= m^{(1)}_{X, Y, M_1} \oplus m^{(2)}_{X, Y, M_2}
	\end{formarray}
\end{proposition}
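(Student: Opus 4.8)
The plan is to exploit that every piece of structure on $\mathcal M_1 \oplus \mathcal M_2$ is defined \enquote{diagonally}, so that each condition in the definition of a module category decomposes as a direct sum of the corresponding conditions for $\mathcal M_1$ and $\mathcal M_2$, which hold by hypothesis. Recall from Section \ref{ssec:tens_cat} that
\[ \Hom_{\mathcal M_1 \oplus \mathcal M_2}(M_1 \oplus M_2, N_1 \oplus N_2) = \Hom_{\mathcal M_1}(M_1, N_1) \oplus \Hom_{\mathcal M_2}(M_2, N_2), \]
so that every morphism, every natural transformation and every commuting square in $\mathcal M_1 \oplus \mathcal M_2$ splits into its two components, with composition computed componentwise. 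This is the single observation on which the whole argument rests.

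First I would record the routine structural points. The underlying category $\mathcal M_1 \oplus \mathcal M_2$ is abelian and locally finite over $\mathbb F$ since both summands are. The action functor is defined on morphisms by $g \opl (f_1 \oplus f_2) := (g \opl_1 f_1) \oplus (g \opl_2 f_2)$ for a $\mathcal C$-morphism $g$ and a morphism $(f_1, f_2)$ of $\mathcal M_1 \oplus \mathcal M_2$; this is functorial because each $\opl_i$ is. Bilinearity on morphisms is inherited from the $\opl_i$ via the splitting of the Hom-spaces, and exactness in the first argument follows because each $\opl_i$ is exact in the first argument and a finite direct sum of exact sequences is exact (for $\mathcal C = \vectgo$ this is also immediate from Lemma \ref{lem:exactness}). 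Similarly, $m := m^{(1)} \oplus m^{(2)}$ is a natural isomorphism, being a componentwise direct sum of natural isomorphisms, and $e \opl - = (e \opl_1 -) \oplus (e \opl_2 -)$ is an equivalence because a direct sum of equivalences is an equivalence.

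It then remains to verify the module pentagon axiom \eqref{diag:modc_pent}. Here the key step is to observe that every arrow in the pentagon for $\mathcal M_1 \oplus \mathcal M_2$ — each instance of $m$ and each $a_{X, Y, Z} \opl \ident_M$ — is by construction the direct sum of the two corresponding arrows for $\mathcal M_1$ and $\mathcal M_2$. Using that composition in a direct sum of categories is computed componentwise, the pentagon for $\mathcal M_1 \oplus \mathcal M_2$ is exactly the direct sum of the pentagons for $\mathcal M_1$ and $\mathcal M_2$, each of which commutes by assumption; hence it commutes. I do not expect any genuine obstacle: the only point requiring care is the bookkeeping confirming that the diagonal definitions of $\opl$ and $m$ really make every structure morphism split along $\mathcal M_1 \oplus \mathcal M_2$, after which all four requirements in the definition of a $\mathcal C$-module category reduce termwise to the assumptions on the two summands.
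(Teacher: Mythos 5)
Your proposal is correct. The paper in fact states this proposition without any proof, treating it as routine, and your componentwise verification — all structure maps on $\mathcal M_1 \oplus \mathcal M_2$ split along the summands, so each axiom (bilinearity and exactness of the action, naturality of $m$, the unit equivalence, and the module pentagon) reduces to the corresponding hypothesis on $\mathcal M_1$ and $\mathcal M_2$ — is precisely the argument the paper implicitly relies on; it fills the omitted details correctly and completely.
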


\begin{definition}
	A module category $\mathcal M$ is called \emph{indecomposable} if it is not equivalent to the direct sum of two non-zero module categories $\mathcal M_1$ and $\mathcal M_2$.
\end{definition}

Analogous definitions can be given for right module categories and bimodule categories. It suffices to study indecomposable bimodule categories, but it is important to note that an indecomposable bimodule category may not be indecomposable as a left (or right) module category.

To define generalised 6j symbols for bimodule categories, we need a trace on module categories which generalises the trace on spherical categories. Such module traces where first introduced in \cite{schaumann}.

\begin{definition}
	\cite[Definition 3.7]{schaumann}
	Let $\mathcal C$ be a pivotal multifusion category over $\mathbb F$ and let $\mathcal M$ be a $\mathcal C$-module category. Then a \emph{$\mathcal C$-(left) module trace} $\theta$ on $\mathcal M$ is a collection of $\mathbb F$-linear maps
	\[ \theta_M: \End(M) \to \mathbb F \]
	with the following properties:
	\begin{enumerate}
		\item Cyclicity: Let $f: M\to M', g:M'\to M$ be $\mathcal M$-morphisms, then
		\[ \theta_M (g\circ f) = \theta_{M'} (f\circ g). \]
		\item Non-degeneracy: The pairing
		\[ \Hom_{\mathcal M}(M, M') \times \Hom_{\mathcal M}(M', M) \to \mathbb F: (f, g) \mapsto \theta_M(g\circ f) \]
		is non-degenerate for all $M, M'\in\mathcal M$.
		\item $\theta$ is $\mathcal C$-compatible: For all $X\in\mathcal C, M\in \mathcal M$ and $f\in\End(X\opl M)$ we have 
		\[ \theta_{X\opl M} (f) = \theta_M (\trace^{\mathcal C}_{X, M}(f)), \]
		where $\trace_{X, M}^{\mathcal C}$ denotes the \emph{partial trace}
		\[ \trace^{\mathcal C}_{X, M} (f) := (\eval_X^R\opl \ident_M)\circ m_{X^*, X, M}^{-1} \circ (\ident_{X^*}\opl f)\circ m_{X^*, X, M}\circ (\coev_X^L \opl \ident_M). \]
	\end{enumerate}
\end{definition}

A \emph{$\mathcal D$-right module trace} on a $\mathcal D$-right module category $\mathcal M$ is a $\mathcal D^{rev}$-left module trace on $\mathcal M$ as a $\mathcal D^{rev}$-module category.
A \emph{$(\mathcal C, \mathcal D$)-bimodule trace} on a $(\mathcal C, \mathcal D)$-bimodule category $\mathcal M$ is a $\mathcal C \boxtimes \mathcal D^{rev}$-module trace on $\mathcal M$ considered as a $\mathcal C \boxtimes \mathcal D^{rev}$-left module category.
If $\theta$ is a left module trace, right module trace or bimodule trace on $\mathcal M$, define the \emph{dimension} of an object $M\in \mathcal M$ as
$\dim^{\theta}(M) := \theta_M(\ident_M)$.

We represent the (bi-)module trace graphically as in \cite{bm_tv+}, see also \cite[Section 3.2]{schaumann}:
\begin{center}
\begin{tikzpicture}
	\draw[very thick, color=blue] (-0.2, 0) -- (0.2, 0);
	\draw[very thick, color=blue] (0, 0) -- node[near start, left=1pt] {$M$} (0, -1) node[morphism, label=180:{\textcolor{black}{$\theta_M(f)\ \eqdiag\ $} $f$}] {} -- node[near end, left=1pt] {$M$} (0, -2);
	\draw[very thick, color=blue] (-0.2, -2) -- (0.2, -2);
\end{tikzpicture}
\end{center}

The condition that a $\mathcal C$-module trace $\theta$ is $\mathcal C$-compatible is written diagrammatically as

\begin{center}
\begin{tikzpicture}
	\draw[very thick, color=blue] (-1.7, 0) -- (0.2, 0);
	\draw[very thick, color=blue] (0, 0) -- node[near start, right=1pt] {$M$} (0, -1.5) node[morphism, label=180:{$f$}] (f) {} -- node[near end, right=1pt] {$M$} (0, -3);
	\draw[very thick, color=blue] (-1.7, -3) -- (0.2, -3);
	\draw[very thick] (f.north west) -- node[near end, left=1pt] {$X$} (-1.5, 0);
	\draw[very thick] (f.south west) -- node[near end, left=1pt] {$X$} (-1.5, -3);
\end{tikzpicture}
$\ =\ $
\begin{tikzpicture}
	\draw[very thick, color=blue] (-0.2, 0) -- (0.2, 0);
	\draw[very thick, color=blue] (0, 0) -- node[near start, right=1pt] {$M$} (0, -1.5) node[morphism, label=0:{$f$}] (f) {} -- node[near end, right=1pt] {$M$} (0, -3);
	\draw[very thick, color=blue] (-0.2, -3) -- (0.2, -3);
	\draw[very thick, ->] (f.south west) .. controls +(0, -1) and +(0, -1) .. +(-1, 0) coordinate (c1) {};
	\draw[very thick] (f.north west) .. controls +(0, 1) and +(0, 1) .. +(-1, 0) -- node[left=1pt] {$X$} (c1);
\end{tikzpicture}
\end{center}
and an analogous diagram describes the $\mathcal D$-compatibility of a $\mathcal D$-right module trace.

Corollary \ref{cor:trace_mult} also holds for module traces since the proof only uses the linearity of the trace:

\begin{corollary} \label{cor:mod_tr_mult}
	Let $\mathcal M$ be a $\mathcal C$-module category and let $x\in\mathcal M$ be simple. Then, for every morphism $h: x\to x$, we have
	\[ \theta(h)\, \ident_x = \dim^{\theta}(x)\, h. \]
\end{corollary}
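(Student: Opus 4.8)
The plan is to reduce the statement to the $\mathbb F$-linearity of $\theta_x$ together with Schur's Lemma, exactly as in the proof of Corollary \ref{cor:trace_mult}; the only structure of a module trace that will be used is that each $\theta_x$ is an $\mathbb F$-linear map, so none of the cyclicity, non-degeneracy or $\mathcal C$-compatibility axioms will enter.

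First I would invoke Schur's Lemma \ref{lem:schur}. Since $\mathcal M$ is locally finite over the algebraically closed field $\mathbb F$ (this is part of the definition of a $\mathcal C$-module category) and $x$ is simple, we have $\End_{\mathcal M}(x) \cong \mathbb F$, so every endomorphism $h\colon x\to x$ can be written uniquely as $h = \lambda\, \ident_x$ for some scalar $\lambda\in\mathbb F$.

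Next I would apply the $\mathbb F$-linearity of $\theta_x\colon \End(x)\to \mathbb F$, which is guaranteed by the definition of a module trace. This gives $\theta_x(h) = \theta_x(\lambda\, \ident_x) = \lambda\, \theta_x(\ident_x) = \lambda\, \dim^{\theta}(x)$, where I use the definition $\dim^{\theta}(x) = \theta_x(\ident_x)$. Substituting into the two sides of the claimed identity, the left-hand side becomes $\theta_x(h)\, \ident_x = \lambda\, \dim^{\theta}(x)\, \ident_x$, while the right-hand side becomes $\dim^{\theta}(x)\, h = \dim^{\theta}(x)\, \lambda\, \ident_x$; since scalar multiplication in $\End_{\mathcal M}(x)$ commutes, the two agree.

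There is no genuine obstacle here: the computation is formally identical to that of Corollary \ref{cor:trace_mult}, the point being that the only property of the trace used in that proof was its linearity, which a module trace shares. The mild notational care to take is that the symbol $\theta(h)$ in the statement abbreviates $\theta_x(h)$, the $x$-component of the module trace, and that we must ensure $\lambda$ is well defined, which is exactly what Schur's Lemma provides.
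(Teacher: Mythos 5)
Your proof is correct and coincides with the paper's: the paper proves Corollary \ref{cor:trace_mult} by writing $h=\lambda\,\ident_x$ via Schur's Lemma and invoking linearity of the trace, and then observes that this argument carries over verbatim to module traces since only linearity is used. Your write-up simply makes the same reasoning explicit, including the correct observation that local finiteness of $\mathcal M$ is built into the definition of a $\mathcal C$-module category, so Schur's Lemma \ref{lem:schur} applies.
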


\subsection{Module categories over $\vectgo$} \label{ssec:vecgo_modc}
We now classify the semisimple module categories over $\vectgo$. This is a slight generalisation of \cite[Example 7.4.10]{egno}, where only the case $\omega=1$ is treated.

We proceed in three steps:
\begin{enumerate}
	\item \label{it:modxp} Construct a family of semisimple module categories over $\vectgo$.
	\item Prove that every semisimple module category over $\vectgo$ is equivalent to at least one of the module categories constructed.
	\item Determine which of the module categories from step \ref{it:modxp} are equivalent.
\end{enumerate}

\begin{definition}
Let $X$ be a $G$-set with action map $\opl: G\,\times\, X\to X$ and let $n\geq 0$. There is a canonical inclusion
\begin{equation} \label{eq:cochain_incl} \,\widetilde{}\,: C^n(G, \units) \to C^n(G, \Map(X, \units)): \eta \mapsto \widetilde{\eta},\quad \widetilde{\eta}(g_1, \dots, g_n, x) := \eta(g_1, \dots, g_n).
\end{equation}
Let $\Psi\in C^2(G, \Map(X, \Psi))$ be a 2-cochain satisfying $\dif \Psi = \widetilde{\omega}^{-1}$ or, equivalently,
\begin{equation} \label{eq:2cocycle} \Psi(h, k, g^{-1} \opl x)\, \Psi^{-1} (gh, k, x)\, \Psi(g, hk, x)\, \Psi^{-1}(g, h, x) = \omega^{-1}(g, h, k) 
\end{equation}
for all $g, h, k\in G, x\in X$. We will refer to (\ref{eq:2cocycle}) as the \emph{coboundary condition for $(g, h, k, x)$} in subsequent calculations. We will also suppress the inclusion homomorphism $\,\widetilde{}\,$ subsequently, writing $\dif \Psi = \omega^{-1}$ instead of $\dif \Psi = \widetilde{\omega}^{-1}$.

Define the $\vectgo$-module category $\mathcal M(X, \Psi)$ as the category of $X$-graded finite dimensional vector spaces over $\mathbb F$ with the $\vectgo$-module category structure described below. To distinguish objects in $\modc$ from objects in $\vectgo$, we identify simple objects in $\modc$ with the respective elements of $X$, i.e., we write $x$ instead of $\delta^x$ for $x\in X$.

The functor $\opl: \vectgo \times \modc \to \modc$ is defined by $\delta^g \opl x := g \opl x$ and the module constraint by
\[ m_{g, h, x} := \Psi(g, h, (gh)\opl x)\, \ident_{(gh) \opl x}:\ (\delta^g \otimes \delta^h) \opl x \to \delta^g \opl (\delta^h \opl x). \]
\end{definition}

The module pentagon axiom translates to (\ref{eq:2cocycle}), hence $\modc$ is indeed a $\vectgo$-module category. $\modc$ is a finite category over $\mathbb F$ iff the set $X$ is finite, as was shown in Section \ref{sec:vecgo}.

\begin{proposition}
	Every semisimple module category over $\vectgo$ is equivalent to $\modc$ for some $G$-set $X$ and some 2-cochain $\Psi$ with $\dif \Psi = \omega^{-1}$.
\end{proposition}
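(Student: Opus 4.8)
The plan is to reconstruct the pair $(X,\Psi)$ directly from $\mathcal M$ and then to build an explicit equivalence $\modc \simeq \mathcal M$. First I would set $X := I_{\mathcal M}$, a set of representatives for the simple objects of $\mathcal M$. Since each $\delta^g$ is invertible in $\vectgo$, with $\delta^g \otimes \delta^{g^{-1}} = \delta^1 = e$, the functor $\delta^g \opl -: \mathcal M \to \mathcal M$ is an equivalence and hence maps simple objects to simple objects. For $g\in G$ and $x\in X$ I would therefore let $g\opl x\in X$ denote the representative of the isomorphism class of $\delta^g \opl x$, and choose an isomorphism
\[ c_{g, x}: \delta^g \opl x \xrightarrow{\ \sim\ } g\opl x, \]
normalised so that $1\opl x = x$ and $c_{1, x} = l_x$. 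The unit constraint gives $1\opl x = x$, and the module constraint $m_{g, h, x}$ exhibits an isomorphism $\delta^{gh}\opl x \cong \delta^g\opl(\delta^h\opl x)$, so that $(gh)\opl x = g\opl(h\opl x)$; hence these assignments make $X$ a $G$-set.

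Next I would read off $\Psi$ from the module constraint. For $g, h\in G$ and $x\in X$, both
\[ c_{g, h\opl x}\circ(\ident_{\delta^g}\opl c_{h, x})\circ m_{g, h, x} \quad\text{and}\quad c_{gh, x} \]
are isomorphisms $\delta^{gh}\opl x \to (gh)\opl x$ between simple objects, so by Schur's Lemma \ref{lem:schur} they differ by a scalar. I define $\Psi(g, h, (gh)\opl x)\in\units$ by
\[ c_{g, h\opl x}\circ(\ident_{\delta^g}\opl c_{h, x})\circ m_{g, h, x} = \Psi(g, h, (gh)\opl x)\, c_{gh, x}; \]
as $x\mapsto(gh)\opl x$ is a bijection of $X$ for each fixed $g,h$, this determines a cochain $\Psi\in C^2(G, \Map(X, \units))$.

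The one genuinely computational step, and the main obstacle, is verifying $\dif\Psi = \omega^{-1}$, i.e.\ the coboundary condition (\ref{eq:2cocycle}). I would substitute the defining equation of $\Psi$ into the module pentagon axiom (\ref{diag:modc_pent}) for $\mathcal M$ at $(X,Y,Z,M)=(\delta^g,\delta^h,\delta^k,x)$: each module constraint $m$ then contributes a factor of $\Psi$, the associativity constraint $a_{\delta^g,\delta^h,\delta^k}=\omega(g,h,k)\,\ident$ contributes $\omega(g,h,k)$, and the chosen isomorphisms $c$ cancel, because along both paths of the pentagon each intermediate simple object is reached through the same identifications. The resulting scalar identity in $\units$ is exactly (\ref{eq:2cocycle}); this is the mirror image of the calculation, already noted after the definition of $\modc$, showing that $\modc$ satisfies the module pentagon precisely when $\dif\Psi=\omega^{-1}$. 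In particular $\modc$ is now a genuine $\vectgo$-module category.

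Finally I would produce an equivalence $(F, s): \modc \to \mathcal M$. By Lemma \ref{lem:fun_simp} the assignment $x\mapsto x$ on simple objects extends to an $\mathbb F$-linear functor $F$; on the full subcategory of simple objects $F$ is the identity, so $F$ is fully faithful, and it is essentially surjective since every object of $\mathcal M$ is a direct sum of simples, whence $F$ is an equivalence of categories. I would equip $F$ with the module functor structure $s$ whose components on simple objects are
\[ s_{\delta^g, x} := c_{g, x}^{-1}:\ F(\delta^g \opl x) = g\opl x \ \to\ \delta^g \opl F(x) = \delta^g \opl x, \]
extended to a natural isomorphism $s: F\opl \Rightarrow \opl F$ by Corollary \ref{cor:nat_simp2} (applied in the $\mathcal M$-variable for each fixed $g$) together with additivity in the $\vectgo$-variable. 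By the very definitions of $\Psi$ and of the module constraint of $\modc$, the pair $(F,s)$ then satisfies the module functor pentagon axiom (\ref{diag:modf}), so $(F,s)$ is an equivalence of $\vectgo$-module categories and $\mathcal M\simeq\modc$.
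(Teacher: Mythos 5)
Your proof is correct and essentially coincides with the paper's: your isomorphisms $c_{g,x}$ are (the inverses of) the paper's chosen isomorphisms $\alpha_{g,x}$, your $\Psi$ is exactly the transported module constraint the paper reads off from them, and both arguments derive the coboundary condition $\dif\Psi=\omega^{-1}$ from the module pentagon of $\mathcal M$ and conclude with the identity-on-simples module equivalence whose coherence datum is given by these isomorphisms. The only cosmetic differences are that the paper routes the construction through an intermediate module structure $(\mathcal M', \opl', m')$ on the full subcategory of representatives, taking the inclusion $\mathcal M'\to\mathcal M$ as the equivalence, and proves simplicity of $\delta^g \opl x$ by a direct splitting argument rather than by observing that $\delta^g \opl -$ is an equivalence.
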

\begin{proof}
	Let $\mathcal M$ be an arbitrary semisimple module category over $\vectgo$ and denote by $X$ a set of representatives for the simple objects in $\mathcal M$. Let $\mathcal M'$ be an additive, full subcategory of $\mathcal M$ whose simple objects are the ones in $X$. Note that $\mathcal M'$ is not necessarily a module subcategory of $\mathcal M$ because it may not be closed under the action of $\vectgo$ on $\mathcal M$.
	
	1. We show that $\delta^g \opl x$ is simple for all $g\in G, x\in X$.
	
	Assume $\delta^g \opl x$ is a non-trivial direct sum: $\delta^g \opl x \cong M_1 \oplus M_2$ with $0\neq M_1, M_2\in\mathcal M$. Then
	\[ (\delta^{g^{-1}} \opl M_1) \oplus (\delta^{g^{-1}} \opl M_2) \cong \delta^{g^{-1}} \opl (\delta^g \opl x) \cong (\delta^{g^{-1}} \otimes \delta^g) \opl x = \delta^1 \opl x \cong x, \]
	and as $x$ is simple, either $\delta^{g^{-1}} \opl M_1 = 0$ or $\delta^{g^{-1}} \opl M_2=0$. But if $\delta^{g^{-1}} \opl M_1 = 0$,
	\[ M_1 \cong \delta^g \opl (\delta^{g^{-1}} \opl M_1) = 0, \]
	and likewise $\delta^{g^{-1}} \opl M_2=0$ implies $M_2=0$. Hence $\delta^g \opl x$ is simple.
	
	2. We define a $G$-action $\opl$ on $X$ by taking as $g \opl x\in X$ the unique object from $X$ that is isomorphic to $\delta^g \opl x$. It is easy to check that this turns $X$ into a $G$-set.

	3. We now equip $\mathcal M'$ with the structure of a $\vectgo$-module category.
	Define the action functor $\opl'$ on $\mathcal M'$ by $\delta^g \opl' x := g \opl x$ for $g\in G, x\in X$, where $\opl$ denotes the $G$-action on $X$. Fix a family of isomorphisms
	\[ \alpha_{g, x}: \delta^g \opl' x \to \delta^g \opl x \qquad (g\in G, x\in X). \]
	By Corollary \ref{cor:nat_simp2}, these define a natural isomorphism $\alpha: \opl' \to \opl$.
	The module constraint of $\mathcal M'$ is then given by
	\[ \begin{tikzcd}
		m'_{X, Y, M} \esccol (X\otimes Y) \opl' M \ar[r, "\alpha_{X\otimes Y, M}"] & (X\otimes Y)\opl M \ar[r, "m_{X, Y, M}"] & X\opl (Y\opl M) \ar[r, "\ident_X \opl \alpha_{Y, M}^{-1}"] & X\opl (Y\opl' M) \ar[d, "\alpha_{X, Y\opl' M}^{-1}"] \\ &&& X\opl' (Y\opl' M).
	\end{tikzcd} \]
	A diagram chase using the naturality of $\alpha$ and $m$ as well as the module pentagon axiom for $\mathcal M$ shows that $m'$ satisfies the module pentagon axiom. 
	
	4. Define $\Psi\in C^2(G, \Map(X, \units))$ by
	\[ m'_{g, h, x} = \Psi(g, h, (gh)\opl x)\, \ident_{(gh)\opl x}. \]
	Obviously, $\mathcal M'$ is equivalent to $\mathcal M(X, \Psi)$ as a module category. The inclusion functor $\mathcal M' \to \mathcal M$ is an equivalence and it becomes a module functor when equipped with coherence datum $\alpha$. Hence $\mathcal M$ is equivalent to $\modc$ as a module category.
\end{proof}

In order to decide which module categories $\mathcal M(X, \Psi)$ are equivalent, it is necessary to describe all module functors $F:\mathcal M(X, \Psi_X) \to \mathcal M(Y, \Psi_Y)$ that \emph{preserve simple objects}. A functor $F$ is said to preserve simple objects if $Fx$ is simple for all simple objects $x$. This class of functors includes all equivalences. General module functors $\mathcal M(X, \Psi_X) \to \mathcal M(Y, \Psi_Y)$ are analysed in Section \ref{sec:modf}.

We now construct a family of module functors $\mathcal M(X, \Psi_X) \to \mathcal M(Y, \Psi_Y)$ and then show that every module functor that preserves simple objects is isomorphic to one of these module functors.

Let $X$ and $Y$ be $G$-sets and let $\Psi_X\in C^2(G, \Map(X, \units)), \Psi_Y\in C^2(G, \Map(Y, \units))$ such that $\dif \Psi_X = \omega^{-1}$ and $\dif \Psi_Y = \omega^{-1}$. 

We construct a module functor $(F_{f, \Lambda}, s): \mathcal M(X, \Psi_X) \to \mathcal M(Y, \Psi_Y)$ for each $G$-equivariant map $f: X \to Y$ and each 1-cochain $\Lambda \in C^1(G, \Map(X, \units))$ with
\begin{equation} \label{eq:Lambda} \dif \Lambda(g, h, x) = \Lambda(h, g^{-1}\opl x)\, \Lambda^{-1}(gh, x) \, \Lambda(g, x) = \Psi_X^{-1}(g, h, x)\, \Psi_Y(g, h, f(x)) \end{equation}
for all $g, h\in G, x\in X$. For this, we define $F_{f, \Lambda}$ on simple objects $x\in X$ by
$F_{f, \Lambda}(x) := f(x)$
and the coherence datum as
\[ s_{g, x} := \Lambda(g, g \opl x)\, \ident_{g\opl f(x)}:\ F_{f, \Lambda}(\delta^g \opl x)\ \to\ \delta^g \opl F_{f, \Lambda}(x). \]
The module pentagon axiom translates to (\ref{eq:Lambda}), hence $F_{f, \Lambda}$ is a module functor. Clearly, $F_{f, \Lambda}$ preserves simple objects.

\begin{lemma} \label{lem:vecgo_coch_modf}
	Let $(F, s): \mathcal M(X, \Psi_X) \to \mathcal M(Y, \Psi_Y)$ be a module functor that preserves simple objects.
	Then there exists a pair $(f, \Lambda)$ as above such that $F$ and $F_{f, \Lambda}$ are isomorphic as module functors.
\end{lemma}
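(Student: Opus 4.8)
The plan is to reconstruct the pair $(f, \Lambda)$ directly from the module functor $(F, s)$, reading off $f$ and the underlying functor from the behaviour of $F$ on simple objects, and then obtaining $\Lambda$ by transporting the coherence datum $s$ along a chosen family of isomorphisms on simples.

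First I would define $f$. Since $F$ preserves simple objects, $F(x)$ is simple in $\mathcal M(Y, \Psi_Y)$ for every $x\in X$, hence isomorphic to a unique element of $Y$, which I call $f(x)$. To see that $f\colon X\to Y$ is $G$-equivariant, observe that the coherence isomorphism $s_{g, x}\colon F(\delta^g \opl x)\to \delta^g \opl F(x)$ yields
\[ F(g\opl x) \;=\; F(\delta^g \opl x) \;\cong\; \delta^g \opl F(x) \;\cong\; \delta^g \opl f(x) \;=\; g\opl f(x), \]
while by the definition of $f$ applied to the simple object $g\opl x\in X$ we also have $F(g\opl x)\cong f(g\opl x)$. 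As distinct elements of $Y$ index non-isomorphic simple objects, this forces the equality $f(g\opl x) = g\opl f(x)$, so $f$ is $G$-equivariant.

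Next I would build the comparison isomorphism and transport $s$. For each $x\in X$ choose an isomorphism $\phi_x\colon F(x)\to f(x)$, which exists by Schur's Lemma \ref{lem:schur} since $F(x)\cong f(x)$. By Lemma \ref{lem:fun_simp} and Corollary \ref{cor:nat_simp2} the family $(\phi_x)_{x\in X}$ extends uniquely to a natural isomorphism $\eta\colon F\Rightarrow F_{f,\Lambda}$, where $F_{f,\Lambda}$ is the $\mathbb F$-linear functor determined on simples by $x\mapsto f(x)$. I then define a coherence datum on $F_{f,\Lambda}$ by transport,
\[ \tilde s_{g, x} \;:=\; (\ident_g \opl \phi_x)\circ s_{g, x}\circ \phi_{g\opl x}^{-1}\colon\ \delta^g \opl f(x)\ \to\ \delta^g \opl f(x). \]
A routine diagram chase, using only the naturality of $\eta$ and the module functor pentagon axiom (\ref{diag:modf}) for $(F, s)$, shows that $(F_{f,\Lambda}, \tilde s)$ is again a module functor and that $\eta$ satisfies (\ref{diag:mor_modf}), i.e.\ is a morphism of module functors.

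Finally, since $\tilde s_{g, x}$ is an endomorphism of the simple object $g\opl f(x)$, Schur's Lemma \ref{lem:schur} provides a scalar $\Lambda(g, g\opl x)\in\units$ with $\tilde s_{g, x} = \Lambda(g, g\opl x)\,\ident_{g\opl f(x)}$. Because $x\mapsto g\opl x$ is a bijection of $X$ for each fixed $g$, this prescription defines $\Lambda\in C^1(G, \Map(X, \units))$, and by construction $(F_{f,\Lambda}, \tilde s)$ is exactly the module functor of the asserted form, isomorphic to $F$ via $\eta$. As $(F_{f,\Lambda}, \tilde s)$ is a module functor, its coherence datum satisfies the pentagon axiom, which — precisely as in the construction of $F_{f,\Lambda}$ preceding the lemma — is equivalent to condition (\ref{eq:Lambda}) for $\Lambda$; hence $(f, \Lambda)$ is admissible. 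The main obstacle is making the transport step precise: verifying that the pentagon axiom for $s$ passes to $\tilde s$ and thereby translates into exactly the coboundary-type identity (\ref{eq:Lambda}); the remaining steps are bookkeeping with Schur's Lemma and the semisimplicity results of Section \ref{sec:prelim}.
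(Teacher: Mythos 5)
Your proof is correct and takes essentially the same route as the paper: read $f$ off from the action of $F$ on simple objects (with $G$-equivariance forced by the coherence isomorphisms $s_{g,x}$), extract $\Lambda$ from the coherence datum via Schur's Lemma \ref{lem:schur}, and let the module functor pentagon axiom (\ref{diag:modf}) translate into condition (\ref{eq:Lambda}). The only difference is presentational: the paper compresses your transport-of-structure step into an initial ``assume w.l.o.g.\ that $Fx \in Y$ for all $x\in X$'', so that $F_{f,\Lambda}=F$ on the nose, whereas your explicit construction of the comparison isomorphism $\eta$ and the transported datum $\tilde s$ is precisely the justification of that reduction.
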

\begin{proof}
	Assume w.l.o.g. that $Fx \in Y$ for all $x\in X$. Define $f$ by $f(x) := Fx \in Y$, which is $G$-equivariant because
	\[ f(g \opl x) = F(\delta^g \opl x) \cong \delta^g \opl Fx = g \opl f(x) \]
	for all $g\in G, x\in X$. Then
	\[ s_{g, x}: \ g\opl f(x) = f(g\opl x) = F(\delta^g \opl x)\ \overset{\sim}{\longrightarrow}\ \delta^g \opl Fx = g \opl f(x) \]
	is a non-zero multiple of the identity, so we can define $\Lambda\in C^1(G, \Map(X, \units))$ such that $s_{g, x} = \Lambda(g, g\opl x) \, \ident_{g\opl f(x)}$. 
	The module pentagon axiom for $(F, s)$ implies that $\Lambda$ satisfies (\ref{eq:Lambda}) and, obviously, we have $F_{f, \Lambda} = F$.
\end{proof}

\begin{remark} \label{rem:vecgo_coch_modf}
	It is straightforward to check that $\vectgo$-module functors $F_{f_1, \Lambda_1}, F_{f_2, \Lambda_2}: \mathcal M(X, \Psi_X) \to \mathcal M(Y, \Psi_Y)$ are isomorphic as module functors iff $f_1 = f_2$ and the 1-cochain $\rho:= \Lambda_1\, \Lambda_2^{-1}\in C^1(G, \Map(X, \units))$
	is a coboundary.
\end{remark}

$F_{f, \Lambda}$ is an equivalence iff $f$ is an isomorphism.
So $\mathcal M(X, \Psi_X)$ is equivalent to $\mathcal M(Y, \Psi_Y)$ if and only if there exists an isomorphism of $G$-sets $f:X \to Y$ and $\Psi_X$ is cohomologous to $\Psi_Y \circ (\ident_G \times \ident_G \times f)$. Here we treated $\Psi_Y$ as a map $G\times G\times Y\to \units$.

In order to make this classification more concrete, first note that 2-cochains $\Psi\in C^2(G, \Map(X, \units))$ with $\dif \Psi= \tilde\omega^{-1}$ exist if and only if $\tilde\omega\in C^3(G, \Map(X, \units))$ defined by (\ref{eq:cochain_incl}) is a coboundary.

Let $X$ be a $G$-set such that $\tilde\omega\in C^3(G, \Map(X, \units))$ is a coboundary. Set $\eta:=\tilde\omega^{-1}$ and $M:=\Map(X, \units)$. We need to classify the 2-cochains $\Psi\in C^2(G, M)$ with $\dif \Psi = \eta$ up to cohomology. Multiplying with an arbitrary $\Psi\in C^2(G, M)$ that satisfies $\dif \Psi=\eta^{-1}$ is a bijection
\[ \left\{ \Psi\in C^2(G, M) \mid \dif \Psi = \eta \right\}\ \overset{\sim}{\longrightarrow}\ \left\{ \overline\Psi\in C^2(G, M) \
\mid \dif \overline\Psi = 1 \right\} = Z^2(G, M). \]
Cohomologous pairs of cochains are mapped to cohomologous cochains by this bijection and by its inverse. Hence it induces a bijection
\[ \left\{ \Psi\in C^2(G, M) \mid \dif \Psi = \eta \right\} / \mathbin{\sim} \ \overset{\sim}{\longrightarrow} \ H^2(G, M), \]
where $\Psi \sim \Psi'$ iff $\Psi$ and $\Psi'$ are cohomologous. Hence the equivalence classes of semisimple $\vectgo$-module categories are in
bijection with pairs $([X], \overline\Psi)$, where
\begin{itemize}
	\item $[X]$ is an isomorphism class of a $G$-set $X$ with $\tilde\omega \in B^3(G, \Map(X, \units))$, where
	\begin{equation} \label{eq:omega_tilde} \tilde\omega (g, h, k, x) := \omega(g, h, k), \end{equation}
	\item $\overline\Psi \in H^2(G, \Map(X, \units))$.
\end{itemize}

As every 2-cochain $\Psi$ with $\dif\Psi = \tilde\omega^{-1}$ is cohomologous to a normalised one by Lemma \ref{lem:norm_cochain}, we will assume that $\Psi$ is normalised whenever we consider a $\vectgo$-module category $\mathcal M(X, \Psi)$ from here on, i.e.,
\[ \Psi(g, 1, x) = \Psi(1, g, x) = 1 \qquad (g\in G, x\in X). \]
It is possible to give a more concrete classification of \emph{indecomposable} semisimple $\vectgo$-module categories. This uses the well-known classification of transitive $G$-sets:

\begin{proposition}\label{prop:trans_Gset}
	\cite[Proposition 6.8.4]{artin_algebra}
	\vspace*{-1\baselineskip}
	\begin{enumerate}
		\item Every transitive $G$-set is isomorphic to $G/H$ with $G$ acting by
		\[ g \opl (g'H) := gg'H \qquad (g, g'\in G) \]
		for some subgroup $H\subset G$.
		\item $G/H_1 \cong G/H_2$ as $G$-sets iff $H_1$ and $H_2$ are conjugate.
	\end{enumerate}
\end{proposition}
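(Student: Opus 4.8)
The plan is to prove the two parts separately via the orbit--stabiliser correspondence.

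For part 1, I would start from an arbitrary transitive $G$-set $X$, fix a base point $x_0 \in X$, and set $H := \{g \in G \mid g \opl x_0 = x_0\}$, its stabiliser, which is readily seen to be a subgroup of $G$. The candidate isomorphism is
\[ \phi: G/H \to X, \qquad \phi(gH) := g \opl x_0. \]
I would verify that $\phi$ is well-defined and injective in one stroke, since $gH = g'H \iff g^{-1}g' \in H \iff (g^{-1}g') \opl x_0 = x_0 \iff g \opl x_0 = g' \opl x_0$; that $\phi$ is surjective, which is precisely transitivity of the action; and that $\phi$ is $G$-equivariant, which is immediate from $(bg) \opl x_0 = b \opl (g \opl x_0)$. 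Hence $X \cong G/H$ as $G$-sets.

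For part 2 the reverse implication is the only one with any content, so I would first record the elementary observation that any $G$-equivariant \emph{bijection} $\psi: X \to Y$ preserves stabilisers, i.e. $\mathrm{Stab}(x) = \mathrm{Stab}(\psi(x))$ for all $x \in X$; indeed $g \opl x = x \iff \psi(g \opl x) = \psi(x) \iff g \opl \psi(x) = \psi(x)$, using injectivity of $\psi$ for the first equivalence and equivariance for the second. For the direction ``conjugate $\Rightarrow$ isomorphic'', assuming $H_2 = a H_1 a^{-1}$, I would exhibit the map $gH_1 \mapsto g a^{-1} H_2$ with inverse $gH_2 \mapsto ga H_1$ and check well-definedness and equivariance by direct coset manipulation. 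For ``isomorphic $\Rightarrow$ conjugate'', given an isomorphism $\psi: G/H_1 \to G/H_2$, write $\psi(H_1) = aH_2$ and compute the two stabilisers appearing in the observation: the stabiliser of $H_1 \in G/H_1$ is $H_1$ itself, whereas the stabiliser of $aH_2 \in G/H_2$ is $\{g \in G \mid gaH_2 = aH_2\} = aH_2 a^{-1}$. The observation then forces $H_1 = aH_2 a^{-1}$, so $H_1$ and $H_2$ are conjugate.

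The result is entirely classical, so there is no genuine obstacle. The only point demanding a little care is the reverse implication of part 2: one must identify the stabiliser of the non-trivial coset $aH_2$ as the conjugate subgroup $aH_2 a^{-1}$ rather than $H_2$, and one must invoke the injectivity of $\psi$ (not merely its surjectivity) when transporting stabilisers across the isomorphism.
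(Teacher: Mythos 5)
Your proof is correct and complete: both the orbit--stabiliser argument for part 1 and the stabiliser-transport argument for part 2 (including the correct identification of the stabiliser of $aH_2$ as $aH_2a^{-1}$) are sound. Note that the paper itself gives no proof of this proposition, citing it directly from Artin's \emph{Algebra}; your argument is precisely the standard one found there, so there is nothing to compare beyond observing that you have supplied the classical proof the paper takes for granted.
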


We now apply Proposition \ref{prop:trans_Gset} to the classification of semisimple $\vectgo$-module categories. Let $X$ be a $G$-set and let $\Psi\in C^2(G, \Map(X, \units))$ with $\dif \Psi= \omega^{-1}$.

\begin{proposition} \label{prop:indecomp}
	\begin{enumerate}
	\item \label{it:indecomp} $\mathcal M(X, \Psi)$ is an indecomposable $\vectgo$-module category iff $X$ is a transitive $G$-set.
	\item The indecomposable semisimple $\vectgo$-module categories are classified by pairs $([H], \psi)$, where
	\begin{itemize}
		\item $[H]$ is a conjugacy equivalence class of a subgroup $H\subset G$ such that
		\[ \omega|_{H\times H \times H}\in B^3(H, \units), \]
		\item $\psi\in H^2(H, \units)$.
	\end{itemize}
	\end{enumerate}
\end{proposition}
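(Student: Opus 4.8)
The statement has two parts, which I would prove in order.

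For part~\ref{it:indecomp}, the guiding principle is that decompositions of $\mathcal M(X,\Psi)$ as a module category correspond exactly to partitions of $X$ into $G$-invariant subsets. First I would record that the set of isomorphism classes of simple objects of a nonzero semisimple $\vectgo$-module category, equipped with the $G$-action induced by the auto-equivalences $\delta^g\opl-$ (with quasi-inverse $\delta^{g^{-1}}\opl-$, which exists because $\delta^g$ is invertible), is an invariant under module equivalence; for $\mathcal M(X,\Psi)$ this recovers precisely the $G$-set $X$. Given this, if $X$ is not transitive I pick one orbit $O\subset X$ and obtain a nontrivial partition $X=O\sqcup(X\setminus O)$ into nonempty $G$-stable subsets, which yields a decomposition $\mathcal M(X,\Psi)\cong\mathcal M(O,\Psi|_O)\oplus\mathcal M(X\setminus O,\Psi|_{X\setminus O})$, so $\mathcal M(X,\Psi)$ is decomposable. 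Conversely, if $\mathcal M(X,\Psi)\cong\mathcal M_1\oplus\mathcal M_2$ with both summands nonzero, then the functors $\delta^g\opl-$ preserve each summand (by the formula for the action on a direct sum of module categories), so the $G$-set $X$ is the disjoint union of the two nonempty $G$-stable sets of simples coming from $\mathcal M_1$ and $\mathcal M_2$ and therefore has at least two orbits. Throughout I assume the module categories are nonzero, i.e.\ $X\neq\emptyset$, which matches the convention that transitive $G$-sets are nonempty.

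For the second part I would assemble results already available. The classification established earlier in this section puts equivalence classes of semisimple $\vectgo$-module categories in bijection with pairs $([X],\overline\Psi)$, where $[X]$ is an isomorphism class of $G$-set with $\tilde\omega\in B^3(G,\Map(X,\units))$ and $\overline\Psi\in H^2(G,\Map(X,\units))$. By part~\ref{it:indecomp} the indecomposable ones are exactly those with $X$ transitive, and by Proposition~\ref{prop:trans_Gset} the isomorphism classes of transitive $G$-sets are the conjugacy classes $[H]$ of subgroups $H\subset G$, via $X\cong G/H$. It then remains to translate the two conditions attached to $X=G/H$ into conditions on $H$, which is exactly what Shapiro's Lemma accomplishes. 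The crucial identification is that, for $M=\units$ with trivial $H$-action, the $G$-module $\Map(G/H,\units)$ is isomorphic to $\Coind_H^G(\units)$: the two $G$-actions agree by the formula $(g\opl f)(g'H)=f(g^{-1}g'H)$ recalled before Lemma~\ref{lem:shapiro}. Shapiro's Lemma~\ref{lem:shapiro} then gives $H^\bullet(G,\Map(G/H,\units))\cong H^\bullet(H,\units)$, induced by the cochain map $\eta\mapsto\tilde\eta$ with $\tilde\eta(h_1,\dots,h_n):=\eta(h_1,\dots,h_n,H)$. Applying this cochain map to $\tilde\omega$ from (\ref{eq:omega_tilde}) returns, in degree $3$, the restriction $\omega|_{H\times H\times H}$; hence the Shapiro isomorphism sends $[\tilde\omega]$ to $[\omega|_{H\times H\times H}]$, and since it is an isomorphism, $\tilde\omega\in B^3(G,\Map(G/H,\units))$ if and only if $\omega|_{H\times H\times H}\in B^3(H,\units)$. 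In degree $2$ the same isomorphism transports the invariant $\overline\Psi$ bijectively to an element $\psi\in H^2(H,\units)$. Combining these identifications converts the pair $([X],\overline\Psi)$ into the pair $([H],\psi)$, giving the claimed classification.

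I expect the main obstacle to be the careful bookkeeping in this Shapiro step: verifying that the isomorphism $\Map(G/H,\units)\cong\Coind_H^G(\units)$ is genuinely $G$-equivariant, that the explicit cochain map of Lemma~\ref{lem:shapiro} restricts $\tilde\omega$ to $\omega|_{H\times H\times H}$ at the level of cochains (so that the coboundary condition, not merely the cohomology class, is correctly matched), and that the resulting data depend only on the conjugacy class $[H]$ rather than on the chosen representative $H$ (which follows because conjugate subgroups give isomorphic $G$-sets and inner automorphisms act trivially on cohomology). The remaining assembly is routine once these points are settled.
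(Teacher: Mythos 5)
Your proof is correct and follows essentially the same route as the paper's: part 1 via the correspondence between module-category decompositions and partitions of $X$ into nonempty $G$-stable subsets, and part 2 by combining the earlier classification of semisimple $\vectgo$-module categories with Proposition \ref{prop:trans_Gset} and Shapiro's Lemma \ref{lem:shapiro}, transporting $\tilde\omega$ to $\omega|_{H\times H\times H}$ and $\overline\Psi$ to $\psi\in H^2(H,\units)$. The bookkeeping points you flag (equivariance of $\Map(G/H,\units)\cong\Coind_H^G(\units)$, the cochain-level restriction, and conjugacy-invariance) are exactly the details the paper's proof relies on implicitly.
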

\begin{proof}
	1. If $X$ is a non-transitive $G$-set, there exist non-empty subsets $X_1, X_2 \subset X$ that are closed under the action of $G$ and satisfy $X = X_1 \mathbin{\dot\cup} X_2$.
	For $i\in\{1, 2\}$ let $\mathcal M_i:=\mathcal M(X_i, \Psi_i)$, where $\Psi_i(g, h, x):=\Psi(g, h, x)$ for $x\in X_i$. Then the simple objects of $\mathcal M_1 \oplus \mathcal M_2$ correspond to the elements of $X_1 \cup X_2 = X$ and it is easy to check that $\mathcal M_1 \oplus \mathcal M_2 \cong \mathcal M(X, \Psi)$ as module categories.
	
	Now assume $\mathcal M(X, \Psi) \cong \mathcal M_1 \oplus \mathcal M_2$ for non-trivial module categories $\mathcal M_1, \mathcal M_2$. Then every simple object from $\modc$ is either isomorphic to a simple object in $\mathcal M_1$ or it is isomorphic to a simple object in $\mathcal M_2$. This gives a bijection $X \cong X_1 \dcup X_2$, where $X_1$ and $X_2$ are sets of representatives for the isomorphism classes of simple objects in $\mathcal M_1$ and $\mathcal M_2$, respectively. Since $\mathcal M_1$ and $\mathcal M_2$ both need to be semisimple and non-trivial, we have $\emptyset \neq X_1, X_2$. Both $X_1$ and $X_2$, considered as subsets of $X$, must be closed under the action of $G$, so $X$ is not transitive.
	
	2. By \ref{it:indecomp}. and Proposition \ref{prop:trans_Gset} every indecomposable semisimple $\vectgo$-module category is equivalent to $\mathcal M(G/H, \Psi)$
	for some subgroup $H\subset G$ and some 2-cochain $\Psi\in C^2(G, \Map(G/H, \units))$ with $\dif\Psi = \omega^{-1}$. Proposition \ref{prop:trans_Gset} also implies that $H$ is only determined up to conjugacy. By Shapiro's Lemma \ref{lem:shapiro}, 
	\[ H^n(G, \Map(G/H, \units)) \cong H^n(H, \units), \]
	so $\Psi$ can be replaced by its image under this isomorphism, which is
	\[ \psi(h_1, h_2) := \Psi(h_1, h_2, H) \qquad (h_1, h_2 \in H). \]
	The isomorphism $H^3(G, \Map(G/H, \units)) \cong H^3(H, \units)$ maps $\tilde\omega$ to $\omega|_{H\times H\times H}$, so $\omega|_{H\times H \times H}$ is a coboundary iff $\tilde\omega$ is one. This concludes the proof.
\end{proof}

We denote the $\vectgo$-module category determined by a conjugacy equivalence class $[H]$ and $\psi\in H^2(H, \units)$ by $\mathcal M'(H, \psi)$.

It remains to investigate whether a given $\vectgok$-module category admits a module trace. This depends on the choice of $\kappa$.

\begin{lemma} \label{lem:vecgo_mod_tr}
	\begin{enumerate}
		\item The $\vectgok$-module category $\mathcal M'(H, \psi)$ admits a module trace (which is unique up to rescaling) if and only if $\kappa|_H = 1$ \cite[Example 3.13]{schaumann}.

		\item Let $X$ be a $G$-set with finitely many orbits. The $\vectgok$-module category $\mathcal M(X, \Psi)$ admits a module trace iff every orbit $\Gamma$ of $X$ is isomorphic as a $G$-set to $G/H_{\Gamma}$ for some subgroup $H_{\Gamma}\subset G$ with $\kappa|_{H_{\Gamma}} = 1$. In particular, $\modc$ admits a module trace if $\kappa = 1$.
	\end{enumerate}
\end{lemma}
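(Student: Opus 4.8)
The plan is to reduce everything to the behaviour of $\theta$ on simple objects and to extract a single numerical constraint from $\mathcal C$-compatibility. First I would record that any module trace $\theta$ on $\modc$ is determined by the scalars $\dim^{\theta}(x) = \theta_x(\ident_x)$, $x \in X$: writing a general $M \cong \bigoplus_i x_i$ with inclusions and projections $(\iota_i, \pi_i)$, cyclicity gives $\theta_M(f) = \sum_i \theta_{x_i}(\pi_i \circ f \circ \iota_i)$, and each $\pi_i \circ f \circ \iota_i \in \End(x_i) \cong \mathbb F$ by Schur's Lemma \ref{lem:schur}. The same decomposition shows that the non-degeneracy axiom is equivalent to $\dim^{\theta}(x) \neq 0$ for every simple $x$. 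Thus the whole question becomes: which assignments $x \mapsto \dim^{\theta}(x)$ are forced by, and compatible with, $\mathcal C$-compatibility?

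The crux is the partial-trace computation $\trace^{\mathcal C}_{\delta^g, x}(\ident_{\delta^g \opl x})$ for $g \in G$ and simple $x \in X$, with $\mathcal C = \vectgok$. Since $f = \ident$, the middle module constraint $m_{\delta^{g^{-1}}, \delta^g, x}$ and its inverse cancel, so (using that $\Psi$ is normalised, hence the unit constraints are trivial) the partial trace collapses to $(\eval_g^R \opl \ident_x) \circ (\coev_g^L \opl \ident_x)$. Inserting the explicit data from (\ref{eq:vecgo_ev}), $\coev_g^L = \kappa^{-1}(g)\,\omega(g, g^{-1}, g)\,\ident_1$ and $\eval_g^R = \omega(g^{-1}, g, g^{-1})\,\ident_1$, and cancelling the $\omega$-factors via (\ref{eq:3co_inv}), I obtain $\trace^{\mathcal C}_{\delta^g, x}(\ident) = \kappa^{-1}(g)\,\ident_x$; note that all dependence on $\Psi$ drops out. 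Applying $\mathcal C$-compatibility to $f = \ident_{\delta^g \opl x}$ (and Corollary \ref{cor:mod_tr_mult} to evaluate $\theta_x$ on a scalar) then forces, since $\delta^g \opl x = g \opl x$ in $\modc$, the relation
\[ \dim^{\theta}(g \opl x) = \kappa^{-1}(g)\, \dim^{\theta}(x) \qquad (g \in G,\ x \in X). \]

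This relation drives both implications. For necessity, if $\theta$ exists then for any $x$ and any $g$ in the stabiliser $H_\Gamma$ of $x$ one has $g \opl x = x$, whence $\dim^{\theta}(x) = \kappa^{-1}(g)\,\dim^{\theta}(x)$; as $\dim^{\theta}(x) \neq 0$ this yields $\kappa(g) = 1$, i.e. $\kappa|_{H_\Gamma} = 1$ on every orbit (independent of the chosen representative, since $\kappa$ is constant on conjugacy classes). For sufficiency, assuming $\kappa|_{H_\Gamma} = 1$ for every orbit, I would pick a representative $x_\Gamma$ of each of the finitely many orbits and set $\dim^{\theta}(g \opl x_\Gamma) := \kappa^{-1}(g)$; this is well defined exactly because $\kappa|_{H_\Gamma} = 1$ makes $\kappa^{-1}(g)$ depend only on $g \opl x_\Gamma$, and all values are nonzero. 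One then defines $\theta_M(f) := \sum_i \dim^{\theta}(x_i)\,(\pi_i \circ f \circ \iota_i)$; cyclicity and $\mathbb F$-linearity are immediate, and $\mathcal C$-compatibility follows by reducing, via the bilinearity of $\opl$ and the additivity of the evaluations, coevaluations and module constraints, to the simple case $X = \delta^g$, $x$ simple, $f = \ident$ treated above. Part 1 is the transitive case $X = G/H$ of Proposition \ref{prop:trans_Gset}, where the single orbit has stabiliser $H$ at the base point, so the criterion reads $\kappa|_H = 1$ (matching the cited Example 3.13 of Schaumann), and the trace is unique up to rescaling the single free scalar $\dim^{\theta}(x_\Gamma)$; alternatively, part 2 can be obtained from part 1 by decomposing $\modc \cong \bigoplus_\Gamma \mathcal M(\Gamma, \Psi|_\Gamma)$ into its indecomposable summands $\mathcal M'(H_\Gamma, \psi_\Gamma)$ (Proposition \ref{prop:indecomp}) and assembling the block-diagonal traces. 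The main obstacle is the partial-trace computation, where the pivotal data genuinely enter and produce the factor $\kappa^{-1}(g) = \dim_L(\delta^g)$ responsible for the $\kappa$-dependence; a secondary, bookkeeping obstacle is the reduction of $\mathcal C$-compatibility for arbitrary objects and morphisms to this simple case.
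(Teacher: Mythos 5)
Your proof is correct, and its core mechanism is the same as the paper's: apply $\mathcal C$-compatibility to $\ident_{\delta^g\opl x}$ for simple $x$, observe that the partial trace of the identity is a pure character value (the module constraints and all $\Psi$-dependence cancel), deduce the dimension relation $\dim^{\theta}(g\opl x)=\kappa^{\mp 1}(g)\dim^{\theta}(x)$, get necessity from stabilisers and sufficiency by prescribing dimensions on orbit representatives. The differences are organisational and worth noting. First, you construct the trace globally on $\mathcal M(X,\Psi)$ and verify the axioms directly, obtaining part 1 as the transitive special case; the paper goes the other way, proving the transitive case $\mathcal M'(H,\psi)$ first and then assembling part 2 by the orbit decomposition $\mathcal M(X,\Psi)\cong\bigoplus_{\Gamma}\mathcal M(\Gamma,\Psi|_{\Gamma})$ together with Schaumann's result that direct sums of module categories with module traces admit module traces \cite[Proposition 3.11 (ii)]{schaumann}. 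Your route buys independence from that citation (at the cost of the bookkeeping reduction of compatibility to simple objects, which you sketch adequately and the paper also leaves as ``straightforward to check''); the paper's route buys a shorter argument. Second, your explicit partial-trace computation yields the factor $\kappa^{-1}(g)=\dim_L(\delta^g)$, which is indeed the literal output of $\trace^{\mathcal C}_{\delta^g,x}$ as defined (it uses $\eval^R$ and $\coev^L$), whereas the paper writes $\dim(\delta^g)=\kappa(g)=\dim_R(\delta^g)$ at the corresponding step; this discrepancy is harmless for the statement, since $\kappa$ and $\kappa^{-1}$ are characters with the same kernel (and coincide in the spherical case $\kappa\in\{\pm1\}$ used later), but your version is the more careful one. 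Your observation that non-degeneracy is equivalent to $\dim^{\theta}(x)\neq 0$ for all simple $x$ is also a useful piece of the ``straightforward'' verification that the paper omits.
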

\begin{proof}
	1. Assume $\theta$ is a module trace on $\mathcal M'(H, \psi)$. Since $\theta$ is $\vectgok$-compatible,
	\[ \dim^{\theta}(gH) = \theta_{gH} (\ident_{gH}) = \theta_H \left(\trace^{\vectgok}_{g, H}(\ident_{gH})\right) = \dim(\delta^g) \dim^{\theta}(H) = \kappa(g) \dim^{\theta}(H). \]
	For $g\in H$, this implies $\kappa(g) = 1$. The above equation defines $\theta$ uniquely up to a factor since $\dim^{\theta}(H) \in\units$ can be chosen arbitrarily. It is straightforward to check that the above formula for the dimension of simple objects extends uniquely to a module trace on $\mathcal M'(H, \psi)$.
	
	2. From the proof of Proposition \ref{prop:indecomp} it follows that 
	\[ \modc \cong \bigoplus_{\Gamma\in X/G} \, \mathcal M\left(\Gamma, \Psi|_{G\times G\times \Gamma}\right), \]
	where $X/G$ denotes the set of orbits of $X$. Since each orbit is transitive as a $G$-set, $\mathcal M(\Gamma, \Psi|_{G\times G\times \Gamma}) \cong \mathcal M'(H_{\Gamma}, \psi_{\Gamma})$ for some $H_{\Gamma}$ and $\psi_{\Gamma}$. If $\kappa|_{H_{\Gamma}} = 1$ for all $\Gamma\in X/G$, then $\mathcal M(X, \Psi)$ admits a module trace because it is the direct sum of module categories with module traces \cite[Proposition 3.11 (ii)]{schaumann}. If, on the other hand, $\modc$ admits a module trace $\theta$, the restrictions of $\theta$ to the direct summands are module traces on the direct summands.
\end{proof}

If $H\subset G$ is a subgroup with $\kappa|_H = 1$, set $\tilde\kappa (gH) := \kappa(g)$ for $g\in G$. There exists a module trace $\theta$ on $\mathcal M'(H, \psi)$ which satisfies
\[ \dim^{\theta}(x) = \tilde\kappa(x) \qquad (x\in G/H). \]
For a given $\vectgok$-module category $\mathcal M(X, \Psi)$ with module trace $\theta$, set
\[ \tilde\kappa(x) := \dim^{\theta}(x)\qquad (x\in X). \]
If $\vectgok$ is spherical and thus $\kappa(g)\in \{-1, 1\}$ for all $g\in G$, it follows from the proof of Lemma \ref{lem:vecgo_mod_tr} that we may assume $\tilde\kappa(x)\in \{-1, 1\}$ for all $x\in X$. As the module trace is $\vectgok$-compatible, $\tilde \kappa$ satisfies
\[ \tilde\kappa(g \opl x) = \kappa(g)\, \tilde\kappa(x) \qquad (g\in G, x\in X). \]

\newcommand{\deliProd}{\mathrm{Vec}_G^{\omega_G} \boxtimes (\mathrm{Vec}_H^{\omega_H})^{rev}}

\subsection [\texorpdfstring{$\monos$-bimodule categories}{(VecG, VecH)-bimodule categories}]
{$\monos$-bimodule categories} \label{ssec:vecgo_bimodc}
By Remark \ref{rem:bimod_deligne}, $(\mathcal C, \mathcal D)$-bimodule categories can be identified with module categories over the Deligne product $\mathcal C\boxtimes \mathcal D^{rev}$.
In this section we describe this identification explicitly in the case $\mathcal C=\vectgog{G}, \mathcal D=\vectgog{H}$.

First observe that the monoidal category $\left(\vectgo\right)^{rev}$ is equivalent to $\mathrm{Vec}_{G^{op}}^{\omega'}$, where $G^{op}=(G, \cdot_{op})$ denotes the opposite group of $G$ with multiplication $g\cdot_{op} h := hg$ and 
\[ \omega'(g, h, k) := \omega^{-1}(k, h, g). \]
The monoidal category $\mathrm{Vec}_{G^{op}}^{\omega'}$ is in turn equivalent to $\mathrm{Vec}_G^{\bar \omega}$ with
\begin{equation}\label{eq:omega_bar} 
	\bar\omega(g, h, k) := \omega^{-1}(k^{-1}, h^{-1}, g^{-1}).
\end{equation}
Hence $\left(\vectgo\right)^{rev}=\mathrm{Vec}_G^{\bar \omega}$. It is straightforward to check that
\begin{eqformarray}{lrcl} \label{eq:vecgo_rev_equiv}
	F: &\left(\vectgo\right)^{rev} \hspace*{-0.5em} &\to& \mathrm{Vec}_G^{\bar \omega} \\
	& \delta^g &\mapsto& \delta^{g^{-1}}
\end{eqformarray}
\[ \phi^{\otimes}_{g, h} := \ident_{g^{-1}h^{-1}}:\ F(\delta^g) \otimes F(\delta^h) = \delta^{g^{-1}h^{-1}} \longrightarrow \delta^{g^{-1}h^{-1}} = F(\delta^h \otimes \delta^g) \]
defines an $\mathbb F$-linear monoidal equivalence $(F, \phi^{\otimes}): \left(\vectgo\right)^{rev} \to \mathrm{Vec}_G^{\bar \omega}$.

\begin{lemma}
Let $(\mathcal C, b)$ be a pivotal category. Then $\mathcal C^{rev}$ admits a canonical pivotal structure. 
\end{lemma}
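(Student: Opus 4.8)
The plan is to transport the pivotal datum of $\mathcal C$ to $\mathcal C^{rev}$ by hand and then verify the two axioms of a pivotal structure, naturality and monoidality. The first task is to understand duals in $\mathcal C^{rev}$. Since $\otimes_{\mathcal C^{rev}} = \otimes\,\tau$, a right dual of $X$ in $\mathcal C^{rev}$ is exactly a left dual of $X$ in $\mathcal C$: the evaluation $e\leftarrow X^\vee\otimes_{\mathcal C^{rev}}X = X\otimes X^\vee$ and coevaluation $e\to X\otimes_{\mathcal C^{rev}}X^\vee = X^\vee\otimes X$ demanded in $\mathcal C^{rev}$ are precisely the morphisms $\eval_X^L$ and $\coev_X^L$ of $\mathcal C$, read with their factors in the order dictated by $\otimes_{\mathcal C^{rev}}$. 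As $(\mathcal C,b)$ is pivotal it is left rigid with $\ldual X = X^*$, so $\mathcal C^{rev}$ is right rigid and the duality functor on $\mathcal C^{rev}$ is the left-duality functor $\ldual{(-)}$ of $\mathcal C$. Iterating, the double-dual functor of $\mathcal C^{rev}$ sends $X$ to $\ldual{\ldual X}=X^{**}$ on objects, i.e.\ it agrees with $**$ of $\mathcal C$ on objects; this is exactly what allows us to reuse $b$.

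With this identification in place I would propose the components $b^{rev}_X := b_X\colon X\to X^{**}$ as the candidate pivotal structure on $\mathcal C^{rev}$. Naturality of $b^{rev}$ in $X$ is immediate: the morphisms of $\mathcal C^{rev}$ are literally those of $\mathcal C$, so the naturality squares for $b^{rev}$ are the naturality squares for $b$. The real content is monoidality, namely that for all $X,Y$ the square relating $b^{rev}_X\otimes_{\mathcal C^{rev}}b^{rev}_Y$, the coherence datum $\phi^{\otimes,rev}$ of the double dual on $\mathcal C^{rev}$, and $b^{rev}_{X\otimes_{\mathcal C^{rev}}Y}$ commutes. To check this I would first compute the coherence datum of the duality functor on $\mathcal C^{rev}$ from formula~(\ref{eq:coh_*}), feeding in the left evaluations and coevaluations~(\ref{eq:left_eval}),~(\ref{eq:left_coev}) in place of the right ones, and then obtain $\phi^{\otimes,rev}$ by applying Lemma~\ref{lem:coh_**} \emph{inside} $\mathcal C^{rev}$. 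Because $X\otimes_{\mathcal C^{rev}}Y = Y\otimes X$, the passage to the reversed tensor product simply interchanges the two factors everywhere, so $\phi^{\otimes,rev}_{X,Y}$ should come out equal to $\phi^{\otimes}_{Y,X}$. The monoidality square for $b^{rev}$ in $\mathcal C^{rev}$ then collapses to the monoidality square for $b$ in $\mathcal C$ evaluated at the pair $(Y,X)$, which holds by hypothesis; invertibility of $b^{rev}$ is inherited from that of $b$.

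The main obstacle is precisely this coherence-data bookkeeping in the middle step. One has to track carefully how $\eval_X^L$, $\coev_X^L$ and the associativity constraints reorganise under $\tau$, confirm that $\phi^{\otimes,rev}$ is obtained from $\phi^{\otimes}$ purely by swapping the arguments (and not, for instance, by additionally passing to an inverse or inserting the pivotal morphism a second time), and thereby pin down that the correct choice is $b^{rev}=b$ rather than $b^{-1}$. I expect the bulk of the work — and the only place where a sign or inversion could sneak in — to lie in this identification of the coherence data; once it is settled, the monoidal-natural-isomorphism axiom follows formally and establishes that $(\mathcal C^{rev},b^{rev})$ is a pivotal category, with $b^{rev}=b$ exhibiting the promised canonical choice.
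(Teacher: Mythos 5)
Your proposal is correct and lands on the same pivotal structure as the paper, but it is packaged differently. The paper does not assert $b^{rev}=b$ directly: it keeps left dualisation formally separate from $*$, builds the canonical natural isomorphism $\rho_C\colon C\to(\ldual C)^*$ out of $\eval_C^L$ and $\coev_{\ldual C}^R$, and defines $b'_C:=b_{\dldual C}^{-1}\circ\bigl(\bigl(\rho_{\ldual C}\bigr)^{-1}\bigr)^{*}\circ\rho_C$, deferring monoidality to a ``long, but straightforward'' graphical computation. Your shortcut agrees with this construction: inserting $\eval^L_C=\eval^R_{C^*}\circ(b_C\otimes\ident_{C^*})$ and using a snake identity gives $\rho_C=b_C$, and then the standard pivotal identity $b_{C^*}=(b_C^*)^{-1}$ together with naturality of $b$ collapses the paper's composite to $b'_C=b_C$ --- consistent with the paper's own specialisation to $\vectgok$, where it computes $b'_g=b_g$. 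What your formulation buys is transparency (the canonical structure is just $b$ under the identification $\ldual X=X^*$); what the paper's buys is a formula that makes sense before one has identified left and right dualisation as functors, and whose verification is pure graphical calculus. Two points you treat as automatic deserve flagging, since they are exactly the content of the computation both proofs defer. First, your ``naturality is immediate'' step needs the $\mathcal C^{rev}$ double dual to agree with $**$ on \emph{morphisms}, i.e.\ $\ldual f=f^*$ for the $b$-induced left duality data; this is true, but it is itself a snake-identity argument, not a consequence of the object-level equality $\ldual X=X^*$. Second, $\phi^{\otimes,rev}_{X,Y}=\phi^{\otimes}_{Y,X}$ is not pure bookkeeping under $\tau$: the duality data of $\mathcal C^{rev}$ are the \emph{left} duality data of $\mathcal C$ and therefore carry copies of $b^{\pm 1}$, and cancelling them (via $\psi^{\otimes,rev}_{X,Y}=\psi^{\otimes}_{Y,X}$ and Lemma \ref{lem:coh_**} applied inside $\mathcal C^{rev}$) uses naturality \emph{and} monoidality of $b$ in $\mathcal C$. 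Both claims do hold, your predicted outcome ($b^{rev}=b$ rather than $b^{-1}$) is right, and with those two verifications written out your argument is complete --- at essentially the same level of deferred detail as the paper's own proof.
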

\begin{proof}
	Recall from Section \ref{sec:prelim} that every pivotal category is rigid. Left duals in $\mathcal C$ are right duals in $\mathcal C^{rev}$ and
	\[ \begin{tikzcd}[column sep=large]
		\rho_C \escdef\ C \ar[r, "\ident_C \otimes \coev_{\ldual C}^R"] & C \otimes (\ldual C \otimes (\ldual C)^*) \ar[r, "a^{-1}_{C, \ldual C, (\ldual C)^*}"] & (C\otimes \ldual C) \otimes (\ldual C)^* \ar[r, "\eval_C^L"] & (\ldual C)^*
	\end{tikzcd} \]
	is a natural isomorphism, as can be checked using the graphical calculus from Section~\ref{ssec:graph_fusion} \cite[Remark 2.10.3]{egno}. Now define the pivotal structure $b'$ on $\mathcal C^{rev}$ by
	\[ \begin{tikzcd}
		b'_C \escdef\ C \ar[r, "\rho_C"] & (\ldual C)^* \ar[r, "\left(\left(\rho_{\ldual C}\right)^{-1}\right)^*"] &[2em] (\dldual C)^{**} \ar[r, "b_{\dldual C}^{-1}"] & \dldual C.
	\end{tikzcd} \]
	 A long, but straightforward calculation using the graphical calculus shows that this is indeed a monoidal natural transformation.
\end{proof}

We now specialise to $(\mathcal C, b)=\vectgok$. In this case, $\eval_g^L = \kappa(g)\, \ident_1$ by (\ref{eq:vecgo_ev}) and thus
\[ \rho_g = \omega^{-1}(g, g^{-1}, g) \, \kappa(g) \, \ident_g. \]
This implies
\[
	b'_g =
	\kappa(g) \, \omega(g^{-1}, g, g^{-1}) \, \ident_g = \beta(g) \, \ident_g = b_g,
\]
where $\beta(g) := \kappa(g) \, \omega(g^{-1}, g, g^{-1})$ is defined as in Lemma \ref{lem:vecgo_piv}.
\vspace{2pt} In (\ref{eq:vecgo_rev_equiv}), we identified $\delta^g \in \mathrm{Vec}_G^{\bar\omega}$ with $\delta^{g^{-1}}\in \left(\vectgo\right)^{rev}$. As a result,
\begin{equation} \label{eq:vecgo_rev_piv} \left(\vectgok\right)^{rev} = \mathrm{Vec}_G^{\bar \omega, \kappa^{-1}} \end{equation}
as a pivotal category.

\begin{lemma} \label{lem:vecgo_deli_prod}
	Let $G$ and $H$ be groups and let $\omega_G\in Z^3(G, \units), \omega_H\in Z^3(H, \units)$. Then
	\[ \vectgog G \boxtimes \vectgog H = \mathrm{Vec}_{G\times H}^{\omega}, \]
	where
	\[ \omega((g_1, h_1), (g_2, h_2), (g_3, h_3)) := \omega_G(g_1, g_2, g_3)\, \omega_H(h_1, h_2, h_3). \]
\end{lemma}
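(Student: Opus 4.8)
The plan is to exploit the universal property from Definition \ref{def:deli_prod} together with the uniqueness of the Deligne product up to unique equivalence. First I would propose $\mathrm{Vec}_{G\times H}$, equipped with the $\mathbb F$-bilinear functor $T\colon \mathrm{Vec}_G\times\mathrm{Vec}_H\to\mathrm{Vec}_{G\times H}$ sending $(V,W)$ to the vector space $V\otimes_{\mathbb F}W$ with grading $(V\otimes_{\mathbb F}W)_{(g,h)}:=V_g\otimes_{\mathbb F}W_h$, as the candidate for $\mathrm{Vec}_G\boxtimes\mathrm{Vec}_H$. Since $\mathrm{Vec}_G$ and $\mathrm{Vec}_H$ are semisimple and locally finite, every additive functor out of them is exact by Lemma \ref{lem:exactness}, so the required right exactness of $T$ in each variable is automatic, and on simple objects one has $T(\delta^g,\delta^h)=\delta^{(g,h)}$, where the $\delta^{(g,h)}$ for $(g,h)\in G\times H$ exhaust the simple objects of $\mathrm{Vec}_{G\times H}$.

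To verify that $(\mathrm{Vec}_{G\times H},T)$ satisfies the universal property, I would show for every $\mathbb F$-linear abelian category $\mathcal E$ that precomposition with $T$ defines an equivalence $\mathrm{Rex}(\mathrm{Vec}_{G\times H};\mathcal E)\to\mathrm{Rex}(\mathrm{Vec}_G,\mathrm{Vec}_H;\mathcal E)$. By Lemma \ref{lem:fun_simp} a right exact (hence additive) functor $\mathrm{Vec}_{G\times H}\to\mathcal E$ is determined up to natural isomorphism by the family of objects it assigns to the simples $\delta^{(g,h)}$, and every such family arises; the same lemma, applied in each variable, shows that an $\mathbb F$-bilinear functor $\mathrm{Vec}_G\times\mathrm{Vec}_H\to\mathcal E$ right exact in each variable is determined up to isomorphism by the family $(B(\delta^g,\delta^h))_{(g,h)\in G\times H}$. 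Since $T(\delta^g,\delta^h)=\delta^{(g,h)}$, both sides are thus parametrised by the same data, namely $(G\times H)$-indexed families of objects of $\mathcal E$: essential surjectivity of the restriction functor follows by using Lemma \ref{lem:fun_simp} to construct $\tilde B$ with $\tilde B(\delta^{(g,h)})=B(\delta^g,\delta^h)$, and full faithfulness follows from Lemma \ref{lem:nat_simp} and Corollary \ref{cor:nat_simp2}, which identify natural transformations on either side with arbitrary families of component morphisms on the simple objects. By the uniqueness statement following Definition \ref{def:deli_prod} this identifies $\mathrm{Vec}_G\boxtimes\mathrm{Vec}_H$ with $\mathrm{Vec}_{G\times H}$ as abelian categories, with $\boxtimes=T$.

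It then remains to match the monoidal structures. By the second fact recalled about the Deligne product, $\mathrm{Vec}_G^{\omega_G}\boxtimes\mathrm{Vec}_H^{\omega_H}$ is a multitensor category whose tensor product is the functor induced by the universal property from $(V_1,W_1,V_2,W_2)\mapsto(V_1\otimes V_2)\boxtimes(W_1\otimes W_2)$ and whose associator is the external product $a_{V_1\boxtimes W_1,V_2\boxtimes W_2,V_3\boxtimes W_3}=a^{G}_{V_1,V_2,V_3}\boxtimes a^{H}_{W_1,W_2,W_3}$. Evaluating on the simple objects $\delta^{(g,h)}=\delta^g\boxtimes\delta^h$ gives $\delta^{(g_1,h_1)}\otimes\delta^{(g_2,h_2)}=\delta^{(g_1g_2,\,h_1h_2)}$, so the induced tensor product is that of $\mathrm{Vec}_{G\times H}$ for the componentwise group structure, and the associator equals $\omega_G(g_1,g_2,g_3)\,\omega_H(h_1,h_2,h_3)\,\ident$; by Corollary \ref{cor:nat_simp2} the associator is determined by these values on simples, so it coincides with the associativity constraint of $\mathrm{Vec}_{G\times H}^{\omega}$ for the stated $\omega$. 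The main obstacle I anticipate is not the computation on simples, which is immediate, but pinning down precisely how the canonical monoidal structure on the Deligne product is produced from \cite[Corollary 4.6.2]{egno} --- in particular verifying that the induced tensor product really restricts to $\otimes_{\mathbb F}$ on objects of the form $V\boxtimes W$ and that its associator is the external product $a^G\boxtimes a^H$, rather than merely asserting this. Establishing that compatibility carefully through the universal property of $\boxtimes$ as a bilinear functor is the step requiring genuine care; once it is in place, the cocycle identity for $\omega$ and the matching of constraints reduce to evaluation on simple objects as above.
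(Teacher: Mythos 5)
Your proposal is correct and takes essentially the same route as the paper: the paper's proof likewise consists of checking that $\mathrm{Vec}_{G\times H}$ satisfies the universal property of Definition \ref{def:deli_prod} and then verifying that the associativity constraint given by $\omega$ agrees with the canonical one on the Deligne product from \cite[Proposition 4.6.1]{egno}. The paper leaves both steps as "straightforward"; your write-up simply supplies the details (via Lemmas \ref{lem:fun_simp}, \ref{lem:nat_simp}, \ref{lem:exactness} and Corollary \ref{cor:nat_simp2}) and correctly flags the matching of the induced monoidal structure as the only step needing real care.
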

\begin{proof}
	It is straightforward to check that $\mathrm{Vec}_{G\times H}$ satisfies the universal property of the Deligne product from Definition \ref{def:deli_prod}. One easily verifies that the associativity constraint defined by $\omega$ is the associativity constraint of the Deligne product defined in \cite[Proposition 4.6.1]{egno}.
\end{proof}

If $\vectgokg G$ and $\vectgokg H$ are pivotal categories, then
\[ \kappa((g, h)) := \kappa_G(g) \, \kappa_H(h) \qquad (g\in G, h\in H) \]
defines a character of $G \times H$ and we have
\[ \vectgokg G \boxtimes \vectgokg H = \mathrm{Vec}_{G\times H}^{\omega, \kappa} \]
with $\omega$ defined as in Lemma \ref{lem:vecgo_deli_prod}. By combining this result with (\ref{eq:vecgo_rev_piv}), we get
\[ \vectgokg G \boxtimes \left(\vectgokg H \right)^{rev} = \mathrm{Vec}_{G\times H}^{\omega, \kappa}, \]
where
\begin{align}
	\label{eq:omega_deli_prod}
	\omega((g_1, h_1), (g_2, h_2), (g_3, h_3)) &:= \omega_G(g_1, g_2, g_3) \, \omega_H^{-1}(h_3^{-1}, h_2^{-1}, h_1^{-1}) \\
	\label{eq:kappa_deli_prod}
	\kappa((g, h)) &:= \kappa_G(g) \, \kappa_H^{-1}(h).
\end{align}

We now give an explicit bijection between semisimple $\monos$-bimodule categories and semisimple $\mathrm{Vec}_{G\times H}^{\omega}$-module categories, where $\omega$ is defined as in (\ref{eq:omega_deli_prod}).
 
Let $\mathcal M$ be a $\monos$-bimodule category. As a $\mathrm{Vec}_G^{\omega_G}$-left module category, $\mathcal M$ is equivalent to $\mathcal M(X, \Psi)$ for some $G$-set $X$ and some $\Psi\in C^2(G, \Map(X, \units))$ with $\dif \Psi=\omega_G^{-1}$.
Assume w.l.o.g. that $x\opr \delta^h \in X$ for all $x\in X, h\in H$. Then $h\opl x := x \opr \delta^{h^{-1}}$ defines an $H$-set structure on $X$, which satisfies
\[ g \opl (h\opl x) = \delta^g \opl (x\opr \delta^{h^{-1}}) = (\delta^g \opl x) \opr \delta^{h^{-1}} = h \opl (g \opl x) \]
for $g\in G, h\in H, x\in X$, so
\[ (g, h) \opl x := g \opl (h \opl x) \qquad (g\in G, h\in H, x\in X) \]
turns $X$ into a $G\times H$-set. Notationally, we do not distinguish between the $G$-action, $H$-action and $G\times H$-action on $X$.
Now define maps $\Phi\in C^2(H, \Map(X, \units)), \Omega\in \Map(G\times H\times X, \units)$ by
\begin{eqformarray}{rrlcl} \label{eq:bimodc_constr}
	n_{x, h_1, h_2} =& \hspace*{-0.5em} \Phi(h_2^{-1}, h_1^{-1}, (h_1 h_2)^{-1} \opl x) \hspace*{-0.5em} &\ident_{(h_1 h_2)^{-1}\opl x} \hspace*{-0.5em}&:& x \opr (\delta^{h_1} \otimes \delta^{h_2}) \to (x\opr \delta^{h_1}) \opr \delta^{h_2} \\
	b_{g, x, h} =& \Omega(g, h^{-1}, (g, h^{-1}) \opl x) \hspace*{-0.5em} &\ident_{(g, h^{-1})\opl x} \hspace*{-0.5em}&:& (\delta^g \opl x) \opr \delta^h \to \delta^g \opl (x\opr \delta^h)
\end{eqformarray}
for $g\in G, h, h_1, h_2\in H, x\in X$.
\newcommand{\baromega}{\bar\omega_H}
The axioms for a bimodule category imply $\dif \Phi = \baromega^{-1}$:
\begin{multline*} 
	\Phi(h_2, h_3, h_1^{-1}\opl x) \, \Phi^{-1}(h_1 h_2, h_3, x) \, \Phi(h_1, h_2 h_3, x) \, \Phi^{-1}(h_1, h_2, x) = \\ =\baromega^{-1} (h_1, h_2, h_3) = \omega_H(h_3^{-1}, h_2^{-1}, h_1^{-1}) 
\end{multline*}
for $h_1, h_2, h_3\in H, x\in X$, and
\begin{align} \label{eq:omega_cond_1}
	\Omega(g_2, h, g_1^{-1} \opl x) \, \Omega^{-1}(g_1 g_2, h, x) \, \Omega(g_1, h, x) &= \Psi(g_1, g_2, x) \, \Psi^{-1}(g_1, g_2, h^{-1} \opl x) \\
	\label{eq:omega_cond_2}
	\Omega(g, h_2, h_1^{-1} \opl x) \, \Omega^{-1}(g, h_1 h_2, x) \, \Omega(g, h_1, x) &= \Phi^{-1}(h_1, h_2, x)\, \Phi(h_1, h_2, g^{-1} \opl x)
\end{align}
for $g, g_1, g_2 \in G, h, h_1, h_2\in H, x\in X$. Conversely, let $X$ be a $G\times H$-set and let $\Psi, \Phi, \Omega$ be maps that satisfy $\dif\Psi = \omega_G^{-1}$ and $\dif\Phi = \baromega^{-1}$ as well as (\ref{eq:omega_cond_1}) and (\ref{eq:omega_cond_2}).
Then these data define a $\monos$-bimodule category via (\ref{eq:bimodc_constr}), which we denote $\mathcal B(X, \Psi, \Phi, \Omega)$.

In the following, we denote the neutral elements of both $G$ and $H$ by $1$. Let $X$ be a $G\times H$-set and let $\Gamma'\in C^2(G\times H, \Map(X, \units))$ be normalised, then $\Gamma'$ is cohomologous to $\Gamma:=\Gamma' \cdot \dif \mu$, where
\[ \mu((g, h), x) := \Gamma'((1, h), (g, 1), x). \]
$\Gamma$ is still normalised and satisfies
\begin{equation}\label{eq:gamma_norm} 
	\Gamma((1, h), (g, 1), x) = 1
\end{equation}
for all $g\in G, h\in H, x\in X$, as a simple calculation shows. Hence, we may assume that (\ref{eq:gamma_norm}) is satisfied for every $\mathrm{Vec}_{G\times H}^{\omega}$-module category $\mathcal M(X, \Gamma)$.

\begin{proposition} \label{pro:vecgo_bimodc}
	A bijection between semisimple $\monos$-bimodule categories and semisimple $ \mathrm{Vec}_{G\times H}^{\omega}$-module categories is given by the two mutually inverse assignments
	\begin{formarray}{rcl}
		\mathcal B(X, \Psi, \Phi, \Omega) &\mapsto& \mathcal M(X, \Gamma') \\
		\mathcal B(X, \tilde\Psi, \tilde\Phi, \tilde\Omega) &\text{\reflectbox{$\mapsto$}}& \mathcal M(X, \Gamma),
	\end{formarray}
	where
	\begin{align*}
		\omega((g_1, h_1), (g_2, h_2), (g_3, h_3)) &:= \omega_G(g_1, g_2, g_3) \, \omega_H^{-1}(h_3^{-1}, h_2^{-1}, h_1^{-1}) 
		\\
		\Gamma'((g_1, h_1), (g_2, h_2), x) &:= \Psi(g_1, g_2, (h_2^{-1}h_1^{-1}) \opl x) \, \Phi(h_1, h_2, x) \, \Omega(g_1, h_2, h_1^{-1} \opl x) \\
		\tilde\Psi(g_1, g_2, x) &:= \Gamma((g_1, 1), (g_2, 1), x) \\
		\tilde\Phi(h_1, h_2, x) &:= \Gamma((1, h_1), (1, h_2), x) \\
		\tilde\Omega(g, h, x) &:= \Gamma((g, 1), (1, h), x)
	\end{align*}
	for $g, g_1, g_2, g_3\in G, h, h_1, h_2, h_3\in H, x\in X$.
\end{proposition}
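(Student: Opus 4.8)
The proposition asserts an explicit bijection which, abstractly, is already guaranteed: by Remark~\ref{rem:bimod_deligne} together with the identification $\vectgokg G \boxtimes (\vectgokg H)^{rev} = \mathrm{Vec}_{G\times H}^{\omega}$ with $\omega$ as in (\ref{eq:omega_deli_prod}), $(\vectgog G, \vectgog H)$-bimodule categories correspond to $\mathrm{Vec}_{G\times H}^{\omega}$-module categories with the same underlying semisimple category, hence the same set $X$ of simple objects. So the real task is to match the cochain data. Throughout I would exploit the \emph{block structure} of $\omega$: since $\omega_G$ and $\omega_H$ are normalised, $\omega$ restricts to $\omega_G$ on triples in $G\times\{1\}$ and to $\bar\omega_H$ (cf.\ (\ref{eq:omega_bar})) on triples in $\{1\}\times H$, and is trivial on every triple that contains both a factor from $G\times\{1\}$ and a factor from $\{1\}\times H$. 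I would also fix the normalisation conventions already in force, namely $\Psi,\Phi$ and $\Gamma$ normalised with $\Gamma$ satisfying (\ref{eq:gamma_norm}); note that $\Omega(1,h,x)=\Omega(g,1,x)=1$ is then \emph{automatic}, by setting $g_1=1$ in (\ref{eq:omega_cond_1}) and $h_1=1$ in (\ref{eq:omega_cond_2}).

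First I would check that both assignments land in valid cochain data. For the backward map this is quick: restricting the single relation $\dif\Gamma=\omega^{-1}$ to triples in $G\times\{1\}$ gives $\dif\tilde\Psi=\omega_G^{-1}$ and to triples in $\{1\}\times H$ gives $\dif\tilde\Phi=\bar\omega_H^{-1}$, the right-hand side being read off from the block structure; the mixed conditions (\ref{eq:omega_cond_1}) and (\ref{eq:omega_cond_2}) for $\tilde\Omega$ come from combining a bounded number of instances of $\dif\Gamma=\omega^{-1}$ at mixed triples such as $((g_1,1),(g_2,1),(1,h))$ and $((g,1),(1,h_1),(1,h_2))$, where (\ref{eq:gamma_norm}) is used to discard the non-pure cross terms. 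For the forward map one must verify $\dif\Gamma'=\omega^{-1}$; this can be done either directly, by expanding $\dif\Gamma'$ into its twelve factors, sorting them by type and collapsing them with $\dif\Psi=\omega_G^{-1}$, $\dif\Phi=\bar\omega_H^{-1}$, (\ref{eq:omega_cond_1}) and (\ref{eq:omega_cond_2}), or more conceptually by recognising $\Gamma'$ as the module constraint of the $\mathrm{Vec}_{G\times H}^{\omega}$-module category associated to $\mathcal B(X,\Psi,\Phi,\Omega)$, obtained by composing $m$, $n$ and $b$ along $\delta^{(g,h)}\opl x = \delta^g \opl (x \opr \delta^{h^{-1}})$, so that its pentagon axiom is equivalent to the bimodule axioms and $\dif\Gamma'=\omega^{-1}$ holds automatically.

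The composite forward-then-backward is then immediate: substituting the restriction formulas into the product formula gives, for instance,
\[ \tilde\Psi(g_1,g_2,x) = \Gamma'((g_1,1),(g_2,1),x) = \Psi(g_1,g_2,x)\,\Phi(1,1,x)\,\Omega(g_1,1,x) = \Psi(g_1,g_2,x), \]
using $\Phi(1,1,x)=1$ and the automatic normalisation $\Omega(g_1,1,x)=1$; the equalities $\tilde\Phi=\Phi$ and $\tilde\Omega=\Omega$ follow in exactly the same way.

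The main obstacle is the opposite composite, the \emph{master decomposition identity} $\Gamma'=\Gamma$, which after substituting the definitions of $\tilde\Psi,\tilde\Phi,\tilde\Omega$ reads
\[ \Gamma((g_1,h_1),(g_2,h_2),x) = \Gamma((g_1,1),(g_2,1),(h_2^{-1}h_1^{-1})\opl x)\,\Gamma((1,h_1),(1,h_2),x)\,\Gamma((g_1,1),(1,h_2),h_1^{-1}\opl x). \]
I would prove it by factoring $(g_i,h_i)=(g_i,1)(1,h_i)$ and re-associating the product $(g_1,1)(1,h_1)(g_2,1)(1,h_2)$ through a short chain of applications of $\dif\Gamma=\omega^{-1}$. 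The block structure guarantees that every triple occurring in this chain contains both a pure-$G$ factor $(g,1)$ and a pure-$H$ factor $(1,h)$, so that $\omega_G$ and $\omega_H$ each receive a trivial argument and the $\omega$-contribution of the re-association is $1$; meanwhile (\ref{eq:gamma_norm}), which annihilates every factor of the form $\Gamma((1,h),(g,1),-)$, removes the cross terms produced along the way, leaving precisely the three surviving values on the right. Intermediate identities such as $\Gamma((g_1,h),(g_2,1),x)=\Gamma((g_1,1),(g_2,1),h^{-1}\opl x)$ --- itself a one-step consequence of the relation at $((1,h),(g_1,1),(g_2,1))$, using (\ref{eq:gamma_norm}) twice --- are the building blocks. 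The delicate part, and where essentially all the work lies, is the bookkeeping of the $\opl$-shifts $(h_2^{-1}h_1^{-1})\opl x$ and $h_1^{-1}\opl x$ that accumulate during these re-associations and that pin down the precise arguments on the right-hand side.
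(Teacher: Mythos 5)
Your proposal is correct and follows essentially the same route as the paper's proof: well-definedness of both assignments via the coboundary conditions, the easy composite via the automatic normalisation $\Omega(g,1,x)=\Omega(1,h,x)=1$, and the master identity $\Gamma'=\Gamma$ via a chain of applications of $\dif\Gamma=\omega^{-1}$ at mixed tuples killed down by (\ref{eq:gamma_norm}) and the block structure of $\omega$ --- the paper uses precisely the tuples $((g_1,h_1),(1,h_2),(g_2,1),x)$, $((1,h_1h_2),(g_1,1),(g_2,1),x)$, $((1,h_1),(g_1,1),(1,h_2),x)$ and $((1,h_1),(1,h_2),(g_1,1),x)$, matching your sketched re-association. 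Your optional conceptual shortcut for $\dif\Gamma'=\omega^{-1}$ (reading $\Gamma'$ off as the composite module constraint coming from $\delta^{(g,h)}\opl x=\delta^g\opl(x\opr\delta^{h^{-1}})$) is a harmless variant of the paper's direct verification.
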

\begin{proof}
	Using (\ref{eq:omega_cond_1}), (\ref{eq:omega_cond_2}), $\dif \Psi=\omega_G^{-1}$ and $\dif \Phi = \baromega^{-1}$ one checks that $\dif \Gamma' = \omega^{-1}$, and it is easy to verify $\dif \tilde\Psi = \omega_G^{-1}$ and $\dif \tilde\Phi = \baromega^{-1}$. We now check that $\tilde\Omega$ satisfies (\ref{eq:omega_cond_1}).
	\begin{multline*} 
		\tilde\Omega(g_2, h, g_1^{-1} \opl x) \, \tilde\Omega^{-1}(g_1 g_2, h, x) \, \tilde\Omega(g_1, h, x) = \\ \Gamma((g_2, 1), (1, h), g_1^{-1} \opl x)\, \Gamma^{-1}((g_1 g_2, 1), (1, h), x) \, \Gamma((g_1, 1), (1, h), x), 
	\end{multline*}
	to which we apply the coboundary condition (\ref{eq:2cocycle}) for the tuples $((g_1, 1), (g_2, 1), (1, h), x)$, $((g_1, 1), (1, h), (g_2, 1), x)$ and $((h, 1), (g_1, 1), (g_2, 1), x)$, as well as (\ref{eq:gamma_norm}). This yields
	\[ \tilde\Omega(g_2, h, g_1^{-1} \opl x) \, \tilde\Omega^{-1}(g_1 g_2, h, x) \, \tilde\Omega(g_1, h, x) = \tilde\Psi(g_1, g_2, x) \, \tilde\Psi^{-1}(g_1, g_2, h^{-1}\opl x), \]
	so $\tilde\Omega$ and $\tilde\Psi$ satisfy (\ref{eq:omega_cond_1}). Analogously, one checks (\ref{eq:omega_cond_2}) for $\tilde\Omega$ and $\tilde\Phi$.
	Hence $\mathcal M(X, \Gamma')$ and $\mathcal B(X, \tilde\Psi, \tilde\Phi, \tilde\Omega)$ are well-defined.
	
	Since $\Psi$ and $\Phi$ are normalised, (\ref{eq:omega_cond_1}) and (\ref{eq:omega_cond_2}) imply that
	\[ \Omega(g, 1, x) = 1 = \Omega(1, h, x) \]
	for $g\in G, h\in H, x\in X$. This makes it trivial to check that the composite of maps
	\[ \mathcal B(X, \Psi, \Phi, \Omega) \mapsto \mathcal M(X, \Gamma') \mapsto \mathcal B(X, \tilde\Psi', \tilde\Phi', \tilde\Omega') \]
	is the identity. Finally, let $\mathcal M(X, \tilde\Gamma')$ be the image of $\mathcal M(X, \Gamma)$ under the composition
	\begin{equation} \label{eq:second_comp} \mathcal M(X, \Gamma) \mapsto \mathcal B(X, \tilde\Psi, \tilde\Phi, \tilde\Omega) \mapsto \mathcal M(X, \tilde\Gamma'). \end{equation}
	We apply the coboundary condition (\ref{eq:2cocycle}) for $\Gamma$ and the tuples $((g_1, h_1), (1, h_2), (g_2, 1), x)$, $((1, h_1 h_2), (g_1, 1), (g_2, 1), x)$, $((1, h_1), (g_1, 1), (1, h_2), x)$ and $((1, h_1), (1, h_2), (g_1, 1), x)$ and get
	\begin{align*}
		\Gamma((g_1, h_1), (g_2, h_2), x) &= \Gamma((g_1, 1), (g_2, 1), (h_1 h_2)^{-1}\opl x) \, \Gamma((1, h_1), (1, h_2), x) \\ & \hphantom{{}\mathbin{=}{}} \Gamma((g_1, 1), (1, h_2), h_1^{-1}\opl x) \\
		&= \tilde\Psi(g_1, g_2, (h_1 h_2)^{-1}\opl x) \, \tilde\Phi(h_1, h_2, x) \, \tilde\Omega(g_1, h_2, h_1^{-1}\opl x) \\
		&= \tilde\Gamma'((g_1, h_1), (g_2, h_2), x).
	\end{align*}
	Hence the composite of maps in (\ref{eq:second_comp}) is also the identity map.
\end{proof}

In particular, it is now possible to decide whether a given $\monosk$-bimodule category admits a bimodule trace by using Lemma \ref{lem:vecgo_mod_tr} with $\kappa$ defined by (\ref{eq:kappa_deli_prod}).
\clearpage
\section{Module functors}\label{sec:modf}
In the previous section, we defined (bi-)module functors between (bi-)module categories. The aim of this section is to describe the module functors between finite semisimple $\vectgo$-module categories in more detail.

\subsection{Properties of module functors} \label{ssec:modf_prop}
Let $\mathcal C$ be a multitensor category over $\mathbb F$.
It can be shown that semisimple $\mathcal C$-module categories together with module functors and module natural transformations form a 2-category $\mathrm{Mod}(\mathcal C)$ \cite[Remark 7.12.15]{egno}. We will only use a part of this result, which is proved in the following.

Let $\mathcal M$ and $\mathcal N$ be semisimple $\mathcal C$-module categories.

\begin{proposition} \label{pro:modf_cat}
	\cite[Proposition 7.11.1]{egno}
	The $\mathcal C$-module functors $\mathcal M \to \mathcal N$ form an abelian $\mathbb F$-linear category $\Fun_{\mathcal C}(\mathcal M, \mathcal N)$.
\end{proposition}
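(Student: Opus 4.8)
The plan is to build up the structure in three layers---$\mathbb F$-linearity, additivity, and the abelian axioms---reducing each to the corresponding structure of the target category $\mathcal N$ and deferring the real work to the construction of kernels and cokernels. The $\mathbb F$-linear and additive structure is formal. A module natural transformation $\eta\colon (F, s^F)\Rightarrow (G, s^G)$ is an ordinary natural transformation satisfying the linear constraint (\ref{diag:mor_modf}), so the morphism space $\Hom_{\Fun_{\mathcal C}(\mathcal M, \mathcal N)}(F, G)$ is an $\mathbb F$-subspace of $\mathrm{Nat}(F, G)$ and composition is $\mathbb F$-bilinear. The zero functor, with its trivial coherence datum, is a module functor and a zero object, and for module functors $(F, s^F), (G, s^G)$ I would define a biproduct by $(F\oplus G)(M) := FM\oplus GM$ with coherence datum $s^F\oplus s^G$, using that $X\opl' -$ is additive to identify $X\opl'(FM\oplus GM)$ with $(X\opl' FM)\oplus(X\opl' GM)$; the pentagon axiom (\ref{diag:modf}) then holds summandwise. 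This exhibits $\Fun_{\mathcal C}(\mathcal M, \mathcal N)$ as an additive $\mathbb F$-linear category.

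The core of the argument is to produce kernels and cokernels, which I would do pointwise. For a module natural transformation $\eta\colon (F, s^F)\Rightarrow (G, s^G)$, each component $\eta_M\colon FM\to GM$ lies in the abelian category $\mathcal N$, so I set $K(M):=\ker(\eta_M)$ and $C(M):=\mathrm{coker}(\eta_M)$; functoriality of kernels and cokernels makes $K$ and $C$ into $\mathbb F$-linear functors $\mathcal M\to\mathcal N$. The decisive step is equipping them with coherence data. Since $\mathcal N$ is semisimple and locally finite, Lemma \ref{lem:exactness} shows that every action functor $X\opl' -\colon\mathcal N\to\mathcal N$ is exact and hence preserves kernels and cokernels. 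Thus $X\opl' K(M)=\ker(\ident_X\opl'\eta_M)$, and rewriting $\ident_X\opl'\eta_M = s^G_{X,M}\circ\eta_{X\opl M}\circ (s^F_{X,M})^{-1}$ using (\ref{diag:mor_modf}) together with the invertibility of $s^F_{X,M}$ and $s^G_{X,M}$ produces a natural isomorphism $s^K_{X,M}\colon K(X\opl M)\to X\opl' K(M)$; the dual argument gives $s^C$. I would then verify that $(K, s^K)$ and $(C, s^C)$ satisfy the module functor pentagon axiom (\ref{diag:modf})---inherited from those for $F$ and $G$ via the exactness isomorphisms---and that the inclusion $K\to F$ and the projection $G\to C$ are module natural transformations, both checked componentwise.

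It remains to confirm the abelian axioms, and here everything reduces to $\mathcal N$. The objects $(K, s^K)$ and $(C, s^C)$ are genuine kernels and cokernels in $\Fun_{\mathcal C}(\mathcal M, \mathcal N)$ because the required universal factorisations are computed componentwise in $\mathcal N$, where they are unique, and compatibility of these factorisations with the module structure follows from the fact that $s^K, s^C$ were assembled from $s^F, s^G$; similarly, the canonical comparison morphism from the coimage to the image of $\eta$ is an isomorphism because it is one objectwise in the abelian category $\mathcal N$. The main obstacle is the middle step: constructing the coherence datum on the pointwise kernel and cokernel and checking the pentagon axiom. This is exactly where semisimplicity of $\mathcal N$ is indispensable, as it guarantees through Lemma \ref{lem:exactness} that the actions $X\opl' -$ are exact---without this the pointwise kernel would not in general inherit any module functor structure.
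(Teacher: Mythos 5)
Your proposal is correct, but it takes a genuinely different route from the paper. The paper's proof delegates the abelian structure entirely to the cited result \cite[Proposition 7.11.1]{egno}, declares $\mathbb F$-linearity obvious, and devotes itself to a single concrete construction: the direct sum of module functors, defined via inclusions and projections with the coherence datum acting diagonally (the same biproduct you describe), because that construction is reused later in the paper (e.g.\ in Lemma \ref{lem:sum_vecgo_modf}). You instead prove the statement from scratch: beyond the formal additive layer, you build kernels and cokernels pointwise and equip them with coherence data, using Lemma \ref{lem:exactness} (applicable because $\mathcal N$ is semisimple and locally finite) to get exactness of the action functors $X \opl' -$, so that $X \opl' \ker(\eta_M) \cong \ker(\ident_X \opl' \eta_M)$, and then transporting along $s^F$, $s^G$ via (\ref{diag:mor_modf}). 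In substance you have re-proved, in the semisimple case, the result the paper merely cites; what this buys is self-containedness and a precise localisation of where semisimplicity of $\mathcal N$ enters, at the cost of more verification work. Two points you should make explicit in a full write-up: (i) module functors are required in this paper to be \emph{left exact}, so you must note that $K$, $C$ and the biproducts are left exact --- automatic from Lemma \ref{lem:exactness} again, this time applied with the semisimple source $\mathcal M$; (ii) the pentagon axiom for $s^K$ is verified by postcomposing with the monomorphism $X\opl'(Y\opl' K(M)) \to X\opl'(Y\opl' F(M))$ (exact functors preserve monomorphisms) and invoking the pentagon for $(F, s^F)$, and dually for $s^C$ using the epimorphism out of the cokernel.
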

\begin{proof}
	It is shown in \cite[Proposition 7.11.1]{egno} that $\Fun_{\mathcal C}(\mathcal M, \mathcal N)$ is abelian and it is obvious that $\Fun_{\mathcal C}(\mathcal M, \mathcal N)$ is $\mathbb F$-linear. 
	
	We now give an explicit construction of the direct sum of module functors which will be used later.
	For module functors $(F^{(1)}, s^{(1)}), \dots, (F^{(n)}, s^{(n)}) \in \Fun_{\mathcal C}(\mathcal M, \mathcal N)$, define a $\mathcal C$-module functor $(F, s): \mathcal M \to \mathcal N$ by
	\[ F(M) := \bigoplus_{i=1}^n F^{(i)}(M), \]
	\[ (\ident_C \opl \pi^{(i)}_M) \circ s_{C, M} \circ \iota^{(k)}_{C\opl M} := \delta_{i,k} \, s^{(i)}_{C, M}. \]
	Here $\iota^{(i)}_M: F^{(i)}(M) \to F(M), \pi^{(i)}_M: F(M)\to F^{(i)}(M)$ are inclusions and projections for $F(M)$ which are natural in $M$.
	It is straightforward to check that $(F, s)$ is a $\mathcal C$-module functor and that
	\[ (\iota^{(i)}_M)_{M\in\mathcal M}: (F^{(i)}, s^{(i)}) \Rightarrow (F, s), \quad (\pi^{(i)}_M)_{M\in \mathcal M}: (F, s) \Rightarrow (F^{(i)}, s^{(i)}) \qquad (i=1, ..., n) \]
	are inclusions and projections for $(F, s)$. Hence $(F, s)=\bigoplus_{i=1}^n (F^{(i)}, s^{(i)})$.
\end{proof}

\begin{proposition} \label{pro:modfc_semi}
	\cite[Theorem 2.16]{eno05}
	If $\mathcal C$ is a fusion category and $\mathcal M, \mathcal N$ are module categories over $\mathcal C$, the category $\Fun_{\mathcal C}(\mathcal M, \mathcal N)$ is semisimple.
\end{proposition}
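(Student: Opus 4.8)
The plan is to prove that $\Fun_{\mathcal C}(\mathcal M,\mathcal N)$ is semisimple by showing that every epimorphism in it splits. The category is abelian by Proposition \ref{pro:modf_cat}, and since $\mathcal M,\mathcal N$ are finite semisimple, every object of $\Fun_{\mathcal C}(\mathcal M,\mathcal N)$ has finite length. Granting that epimorphisms split, semisimplicity follows by induction on length: an object $Y$ of positive length has a simple quotient $Y\twoheadrightarrow S$, the splitting exhibits $S$ as a direct summand $Y\cong S\oplus Y'$, and $Y'$ has strictly smaller length. So the whole task reduces to splitting a given epimorphism $p\colon(F,s^F)\to(G,s^G)$ of module functors \emph{as a morphism of module functors}.

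First I would produce a section of $p$ as an ordinary natural transformation. Because the abelian structure on $\Fun_{\mathcal C}(\mathcal M,\mathcal N)$ is computed componentwise (as in the direct-sum construction of Proposition \ref{pro:modf_cat}), $p$ is an epimorphism precisely when each component $p_M\colon FM\to GM$ is epi in $\mathcal N$. Since $\mathcal N$ is semisimple, each $p_M$ splits by Lemma \ref{lem:ssimp_epi_split}; choosing a section of $p_m$ for each simple $m\in I_{\mathcal M}$ and extending by Corollary \ref{cor:nat_simp2} yields a natural transformation $q\colon G\Rightarrow F$ with $p\circ q=\ident_G$. In general $q$ is \emph{not} a morphism of module functors, and the heart of the proof is to repair this by averaging over $\mathcal C$.

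The key tool is an averaging operator. For a natural transformation $\psi\colon G\Rightarrow F$ and an object $X\in\mathcal C$, form the conjugated morphism $\tilde\psi^X_M:=s^F_{X,M}\circ\psi_{X\opl M}\circ(s^G_{X,M})^{-1}\colon X\opl GM\to X\opl FM$ in $\mathcal N$ and take its partial trace over $X$, exactly as in the formula for $\trace^{\mathcal C}_{X,M}$, to obtain $\psi^X_M\colon GM\to FM$. Setting
\[ A(\psi)_M:=\frac{1}{\dim\mathcal C}\sum_{X\in I_{\mathcal C}}\dim(X)\,\psi^X_M \]
defines a linear operator $A$ on natural transformations $G\Rightarrow F$, and I would verify three properties: (i) $A(\psi)$ is always a morphism of module functors, the averaging symmetrising the defect measured by the square (\ref{diag:mor_modf}); (ii) $A$ is the identity on morphisms of module functors, since for such an $\alpha$ the module-morphism condition gives $\tilde\alpha^X_M=\ident_X\opl\alpha_M$, whose partial trace is $\dim(X)\,\alpha_M$, and the weights are normalised by $\frac{1}{\dim\mathcal C}\sum_X\dim(X)^2=1$; and (iii) $A(\gamma\circ\psi\circ\beta)=\gamma\circ A(\psi)\circ\beta$ whenever $\beta,\gamma$ are morphisms of module functors, because these slide through the coherence data and the partial trace. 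Granting (i)--(iii), the proof concludes at once: $A(q)$ is a morphism of module functors by (i), and
\[ p\circ A(q)=A(p\circ q)=A(\ident_G)=\ident_G \]
by (iii) applied to the module morphism $p$ and then (ii), so $A(q)$ is a module-functor section of $p$.

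The main obstacle is the construction of $A$ and, above all, the verification of property (i): that the averaged transformation genuinely satisfies the module-morphism condition. This is the step that uses rigidity of $\mathcal C$ together with the interaction of $s^F,s^G$ with the evaluations and coevaluations, and it is most cleanly checked in the graphical calculus of Section \ref{ssec:graph_fusion}. A second, more delicate point is the division by the global dimension $\dim\mathcal C=\sum_{X\in I_{\mathcal C}}\dim(X)^2$: the averaging requires $\dim\mathcal C\neq0$ in $\mathbb F$. This holds automatically in characteristic $0$, which is exactly why the result is attributed to \cite{eno05}; for $\mathcal C=\vectgo$ with $G$ finite one computes $\dim\mathcal C=|G|$, so the argument remains valid whenever $\mathrm{char}\,\mathbb F\nmid|G|$.
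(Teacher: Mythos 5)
The paper does not prove this proposition at all: it is quoted from \cite[Theorem 2.16]{eno05}, where it is established over a field of characteristic zero, for arbitrary fusion categories and with no pivotal structure assumed, by quite different methods (via algebra objects and weak Hopf algebras). Your averaging argument is therefore an independent route, and its skeleton is the right one: reducing semisimplicity to the splitting of epimorphisms, extracting a naive section $q$ of $p$ componentwise from Lemma \ref{lem:ssimp_epi_split} and Corollary \ref{cor:nat_simp2}, and then symmetrising $q$ by a weighted sum of partial traces is the correct categorical analogue of Maschke's theorem. Your verifications of (ii) and (iii) are correct, and in the setting of Example \ref{ex:modf_repr} your operator $A$ specialises literally to the classical average $\frac{1}{|G|}\sum_{g\in G}\rho_F(g)\,v\,\rho_G(g)^{-1}$ of a linear map $v$ between the underlying representations.

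Two genuine gaps remain, however. The first is property (i), which you assert but never prove, and which is where all of the difficulty of the theorem lives. Unwinding the pentagon axiom (\ref{diag:modf}) and the naturality of $\psi$, property (i) is equivalent to the sliding identity
\[ \sum_{X\in I_{\mathcal C}}\dim(X)\,\trace^{\mathcal C}_{X}\bigl(\Theta_{X\otimes Y}\bigr) \;=\; \ident_Y\opl\Bigl(\sum_{X\in I_{\mathcal C}}\dim(X)\,\trace^{\mathcal C}_{X}\bigl(\Theta_{X}\bigr)\Bigr), \qquad \Theta_Z:=s^F_{Z,M}\circ\psi_{Z\opl M}\circ\bigl(s^G_{Z,M}\bigr)^{-1}, \]
where on the left only the $X$-strand is traced out and coherence isomorphisms are suppressed. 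Proving this requires decomposing $X\otimes Y$ into simples, using naturality of $\Theta_Z$ in $Z$, and completeness relations that are special to the weights $\dim(X)$; note that (ii) and (iii) would hold for any suitably normalised system of weights, so (i) is exactly what singles out $\dim(X)$ and cannot be dismissed as a routine graphical check. The second gap is one of scope: partial traces and dimensions presuppose a pivotal (for the sliding identity, spherical) structure on $\mathcal C$, which a general fusion category is not known to admit, whereas the proposition and \cite{eno05} assume none. So even when completed, your argument proves the statement only for spherical $\mathcal C$ with $\dim\mathcal C=\sum_{X}\dim(X)^2$ invertible in $\mathbb F$ --- a weaker result than the one cited, though sufficient for every use made of it in this paper. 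Your closing caveat on invertibility of $\dim\mathcal C$ is, on the other hand, not a blemish but a necessity: over a field whose characteristic divides $|G|$, the paper's own Example \ref{ex:modf_repr} exhibits a category of $\vectgo$-module endofunctors isomorphic to $\mathrm{Rep}(G^{op})$, which is not semisimple, so the proposition as stated cannot hold without a restriction on $\mathrm{char}\,\mathbb F$.
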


We will call a $\mathcal C$-module functor $F: \mathcal M\to \mathcal N$ \emph{simple} if it is simple as an object of $\Fun_{\mathcal C}(\mathcal M, \mathcal N)$. By Proposition \ref{pro:modfc_semi}, it is sufficient to classify simple module functors in order to fully describe the category $\Fun_{\mathcal C}(\mathcal M, \mathcal N)$.

Let $\mathcal M$ and $\mathcal N$ be finite semisimple $\mathcal C$-module categories. Then every additive functor $F: \mathcal M\to \mathcal N$ is exact by Lemma \ref{lem:exactness}. The following facts imply that $F$ has a right adjoint:

\begin{proposition} 
	\cite[Section 1.8]{egno}
	Let $\mathcal A$ be a finite category over $\mathbb F$. Then there exists a finite dimensional $\mathbb F$-algebra $A$ such that $\mathcal A$ is equivalent to the category $A-\Mod$ of finite dimensional $A$-modules. 
\end{proposition}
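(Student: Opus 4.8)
The plan is to realise $\mathcal A$ as the module category of the endomorphism algebra of a suitable projective generator, following the standard reconstruction argument.

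First I would assemble a projective generator. Since $\mathcal A$ is finite over $\mathbb F$, it has only finitely many isomorphism classes of simple objects $S_1, \dots, S_n$, and by definition each $S_i$ has a projective cover $P_i$. Set $P := \bigoplus_{i=1}^n P_i$. Then $P$ is projective, and it is a generator: any nonzero object $X$ has, by finite length, a simple subquotient $S_i$, and projectivity of $P_i$ together with the surjection $P_i \twoheadrightarrow S_i$ yields a nonzero morphism $P \to X$. Next I would put $A := \End_{\mathcal A}(P)^{op}$, which is a finite-dimensional $\mathbb F$-algebra because $\Hom_{\mathcal A}(P, P)$ is finite-dimensional over $\mathbb F$, and consider the functor
\[ H := \Hom_{\mathcal A}(P, -) : \mathcal A \to A-\Mod, \]
where $H(X)$ carries the left $A$-module structure given by precomposition. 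Since every object of $\mathcal A$ has finite length and all $\Hom$ spaces are finite-dimensional, $H(X)$ is a finite-dimensional $A$-module, so $H$ indeed lands in $A-\Mod$.

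Then I would verify that $H$ is an equivalence by checking the standard properties. Exactness: $P$ is projective, so $H$ is exact. Faithfulness: $P$ is a generator, so a morphism $f$ with $H(f) = 0$ annihilates every map out of $P$ and must therefore vanish. For fullness and essential surjectivity I would use that $P$ is a \emph{projective} generator. Concretely, $H(P) = A$ as a left $A$-module, so $H$ sends finite direct sums of copies of $P$ to finite free $A$-modules and is fully faithful on these. A general object $X$ admits a presentation
\[ P^{\oplus b} \longrightarrow P^{\oplus a} \longrightarrow X \longrightarrow 0 \]
by projectivity and the finiteness hypotheses, and applying the exact functor $H$ together with fullness on free modules identifies $H(X)$ with the cokernel of a map of finite free $A$-modules. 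Every finite-dimensional $A$-module arises as such a cokernel, which gives essential surjectivity, and the five lemma then upgrades fullness and faithfulness from free modules to all of $\mathcal A$.

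The main obstacle is precisely this last step: establishing that $H$ is not merely exact and faithful but an equivalence onto \emph{all} finite-dimensional $A$-modules. This is where projectivity of $P$ is indispensable, since it simultaneously guarantees exactness of $H$ and the existence of presentations of arbitrary objects by copies of $P$, without which the comparison of cokernels cannot be carried out. The finiteness conditions on $\mathcal A$ ensure these presentations are finite, keeping the entire argument inside the category of finite-dimensional $A$-modules.
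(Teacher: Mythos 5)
Your proposal is correct and is precisely the standard projective-generator reconstruction argument: the paper states this proposition without proof, citing \cite[Section 1.8]{egno}, and the proof given there is exactly your construction (projective covers of the simples, $A := \End_{\mathcal A}(P)^{op}$, and the equivalence $\Hom_{\mathcal A}(P, -)$). The only point worth tightening is the faithfulness step, where "annihilates every map out of $P$" should be backed by the observation that, by projectivity of $P$ and finite length, the images of all morphisms $P \to X$ sum to $X$ — but this is the standard argument and your presentation step supplies it implicitly.
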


\begin{proposition}
	\cite[Proposition 1.8.10]{egno}
	Let $A$ and $B$ be finite-dimensional $\mathbb F$-algebras. Then for every right exact functor $F: A-\Mod \to B-\Mod$ there exists a $(B, A)$-bimodule $V$ such that $F$ is naturally isomorphic to the functor $V \otimes_A -: A-\Mod \to B-\Mod$.
\end{proposition}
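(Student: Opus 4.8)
This is a version of the Eilenberg--Watts theorem, and the plan is to follow its standard proof. The natural candidate for the bimodule is $V := F(A)$, where $A$ is regarded as the regular left $A$-module. By hypothesis $F(A)$ is a finite-dimensional left $B$-module, and I would equip it with a compatible right $A$-action as follows: for $a\in A$, right multiplication $r_a\colon A\to A$, $x\mapsto xa$, is an endomorphism of the regular left $A$-module, so $F(r_a)\colon F(A)\to F(A)$ is a $B$-module homomorphism. Since $r_a\circ r_b = r_{ba}$, the assignment $a\mapsto F(r_a)$ is a homomorphism $A^{op}\to \End_B(F(A))$, which is precisely a right $A$-action on $V$ commuting with the left $B$-action. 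Thus $V$ becomes a $(B,A)$-bimodule.

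Next I would construct a natural transformation $\eta\colon V\otimes_A - \Rightarrow F$. For a left $A$-module $M$ and $m\in M$, let $\phi_m\colon A\to M$, $a\mapsto am$, be the associated $A$-module homomorphism, and define $\eta_M\colon F(A)\otimes_A M \to F(M)$ on elementary tensors by $v\otimes m \mapsto F(\phi_m)(v)$. I would then check that this assignment is $\mathbb F$-bilinear and balanced over $A$, using the identity $\phi_{am}=\phi_m\circ r_a$ together with the right $A$-action just defined, so that $\eta_M$ is well defined, and that it is natural in $M$.

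It remains to show that each $\eta_M$ is an isomorphism. For $M=A$ the map $\eta_A\colon F(A)\otimes_A A \to F(A)$ is the canonical identification and hence an isomorphism; since both $V\otimes_A -$ and $F$ are additive, $\eta_{A^n}$ is an isomorphism for every finite free module $A^n$. For an arbitrary finite-dimensional $M$, I would use that $A$ is Artinian, so that $M$ admits a finite free presentation $A^m \to A^n \to M \to 0$. Both $V\otimes_A -$ (tensor products are right exact) and $F$ (right exact by assumption) send this to exact sequences, and the naturality of $\eta$ furnishes a morphism between these two right-exact rows. Since $\eta$ is already known to be an isomorphism on $A^m$ and $A^n$, the five lemma forces $\eta_M$ to be an isomorphism as well.

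The routine verifications---the $\mathbb F$-bilinearity and $A$-balancedness of $\eta_M$ and its naturality in $M$---are the only places requiring genuine care; the main conceptual step is simply recognising $F(A)$ as the sought bimodule and then reducing to free modules via right exactness. As no obstacle beyond standard homological algebra arises once these ingredients are in place, this reproduces \cite[Proposition 1.8.10]{egno}.
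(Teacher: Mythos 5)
The paper offers no proof of this proposition: it is quoted directly from \cite[Proposition 1.8.10]{egno}, so there is no internal argument to compare yours against. What you give is the standard Eilenberg--Watts argument --- $V := F(A)$ with right $A$-action $a \mapsto F(r_a)$, the natural transformation $\eta_M(v \otimes m) := F(\phi_m)(v)$, the observation that $\eta_A$ is the canonical isomorphism $F(A)\otimes_A A \cong F(A)$, and the reduction of a general finite-dimensional $M$ to a finite free presentation $A^{\oplus m} \to A^{\oplus n} \to M \to 0$ using right exactness of both functors and the five lemma --- and it is correct; it is also, in essence, the proof given in the cited source, so nothing genuinely different is happening. The one hypothesis you use silently is additivity of $F$: you need it both for $\eta_M$ to be well defined (additivity in $m$ requires $F(\phi_m + \phi_{m'}) = F(\phi_m) + F(\phi_{m'})$) and for $\eta_{A^{\oplus n}} \cong \eta_A^{\oplus n}$ to be an isomorphism. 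This is harmless here: if right exactness is read as preservation of finite colimits, additivity follows automatically, and in any case every functor appearing in this paper's setting is $\mathbb F$-linear.
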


\begin{proposition}
	\cite[Proof of Corollary 4.5.8]{riehl17}
	Let $A, B$ be $\mathbb F$-algebras and let $V$ be a $(B, A)$-bimodule. Then the functor $V \otimes_A -: A-\Mod \to B-\Mod$ is left adjoint to 
	\[ \Hom_{B-\Mod}(V, -): B-\Mod \to A-\Mod. \]
\end{proposition}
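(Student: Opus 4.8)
The plan is to establish the tensor--hom adjunction by constructing an explicit natural bijection
\[ \Hom_{B-\Mod}(V\otimes_A M, N)\ \cong\ \Hom_{A-\Mod}(M, \Hom_{B-\Mod}(V, N)) \]
for all $M\in A-\Mod$ and $N\in B-\Mod$. First I would record the module structures involved. Since $V$ is a $(B,A)$-bimodule, $V\otimes_A M$ is a left $B$-module via $b(v\otimes m):=(bv)\otimes m$, and the $\mathbb F$-vector space $\Hom_{B-\Mod}(V, N)$ carries a left $A$-module structure defined by $(a\cdot\phi)(v):=\phi(va)$ for $a\in A$, $\phi\in\Hom_{B-\Mod}(V, N)$ and $v\in V$. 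A short computation using associativity of the right $A$-action on $V$ confirms that this is indeed a left action, so both sides of the claimed isomorphism are well-defined $\mathbb F$-vector spaces.

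Next I would define the forward map $\Phi$. Given a $B$-module homomorphism $f:V\otimes_A M\to N$, set $\Phi(f)(m)(v):=f(v\otimes m)$ for $m\in M$, $v\in V$. Here $\Phi(f)(m)$ is $B$-linear in $v$ because $f$ is $B$-linear and by the definition of the $B$-action on $V\otimes_A M$, and $\Phi(f)$ is $A$-linear in $m$ by the computation
\[ \Phi(f)(am)(v)=f(v\otimes am)=f(va\otimes m)=\Phi(f)(m)(va)=(a\cdot\Phi(f)(m))(v), \]
which uses the $A$-balancing relation in $V\otimes_A M$ together with the definition of the $A$-action on $\Hom_{B-\Mod}(V, N)$. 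Conversely, given an $A$-module homomorphism $g:M\to\Hom_{B-\Mod}(V, N)$, I would set $\Psi(g)(v\otimes m):=g(m)(v)$.

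The main obstacle---indeed the only point requiring care---is the well-definedness of $\Psi(g)$ on the tensor product $V\otimes_A M$: one must verify that the assignment $(v,m)\mapsto g(m)(v)$ is $\mathbb F$-bilinear and $A$-balanced, so that it factors through $V\otimes_A M$ by the universal property. Bilinearity is immediate, and the balancing is precisely the identity $g(m)(va)=(a\cdot g(m))(v)=g(am)(v)$, which holds because $g$ is $A$-linear and by the definition of the $A$-action on $\Hom_{B-\Mod}(V, N)$. A further routine check, $\Psi(g)(bv\otimes m)=g(m)(bv)=b\,g(m)(v)$, shows $\Psi(g)$ is $B$-linear.

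Finally I would verify that $\Phi$ and $\Psi$ are mutually inverse: both composites act as the identity, since $\Psi(\Phi(f))(v\otimes m)=f(v\otimes m)$ on generators and $\Phi(\Psi(g))(m)(v)=g(m)(v)$ by unwinding the definitions. Naturality in $M$ and $N$ then amounts to a straightforward diagram chase, using that all the relevant maps are defined pointwise on elements of $V$. This exhibits $V\otimes_A-$ as left adjoint to $\Hom_{B-\Mod}(V,-)$ and completes the proof.
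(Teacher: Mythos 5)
Your proposal is correct. Note that the paper does not actually prove this proposition: it is quoted from Riehl's \emph{Category theory in context} as a black box, so there is no in-paper argument to compare yours against. What you have written is the standard explicit proof of the tensor--hom adjunction --- the hom-set bijection $\Phi(f)(m)(v) = f(v\otimes m)$, $\Psi(g)(v\otimes m) = g(m)(v)$, with the only delicate point being that $\Psi(g)$ is well defined on $V\otimes_A M$, which you correctly reduce to the $A$-balancing identity $g(m)(va) = g(am)(v)$ --- and all your module-structure checks (the left $A$-action $(a\cdot\phi)(v) = \phi(va)$ on $\Hom_{B-\Mod}(V,N)$, $B$-linearity of $\Psi(g)$, mutual inverseness on generators) are accurate, so your argument is a complete and valid substitute for the citation.
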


Let $(F, s):\mathcal M\to \mathcal N$ be a $\mathcal C$-module functor. Then, by \cite[Section 7.12]{egno}, its right adjoint $F^r: \mathcal N\to \mathcal M$ is also a $\mathcal C$-module functor whose coherence datum $s'_{C, N}: F^r(C\opl N) \to C\opl F^rN$ is defined by the isomorphism
\begin{multline}\label{eq:modf_adj}
	s'_{C, N} \circ - : \Hom_{\mathcal M}(M, F^r(C\opl N)) \cong \Hom_{\mathcal N}(FM, C\opl N) \overset{\ref{lem:*opl_adj}}{\cong} \Hom_{\mathcal N}(C^*\opl FM, N) \\ 
	\cong \Hom_{\mathcal N}(F(C^*\opl M), N) \cong \Hom_{\mathcal M}(C^* \opl M, F^rN) \overset{\ref{lem:*opl_adj}}{\cong} \Hom_{\mathcal M}(M, C\opl F^rN)
\end{multline}
natural in $M\in\mathcal M, C\in\mathcal C$ and $N\in\mathcal N$.

\begin{remark} \cite[Exercise 7.12.1]{egno}
	Suppose $\mathcal M = \mathcal N$. The category $\Fun_{\mathcal C}(\mathcal M, \mathcal M)$ is a monoidal category with composition as tensor product and the module functor $(F^r, s')$ is a left dual of $(F, s)$ in $\Fun_{\mathcal C}(\mathcal M, \mathcal M)$. The evaluation for $F$ is the counit $\epsilon: FF^r \Rightarrow \Ident_{\mathcal M}$ and the coevaluation is the unit $\eta: \Ident_{\mathcal M} \Rightarrow FF^r$. Likewise, the left adjoint of $F$ is a right dual of $(F, s)$ when equipped with a certain coherence datum. Thus, $\Fun_{\mathcal C}(\mathcal M, \mathcal M)$ is a tensor category.
\end{remark}

\subsection{Description of $\vectgo$-module functors} \label{ssec:vecgo_modf}
In the following, we restrict attention to the fusion category $\mathcal C=\vectgo$. As before, we only consider semisimple $\vectgo$-module categories. Additionally, all $\vectgo$-module categories are considered to be finite.

\newcommand{\modfxy}{(F, s): \mathcal M(X, \Psi_X) \to \mathcal M(Y, \Psi_Y)}
Let $F: \mathcal A \to \mathcal B$ be an additive functor between finite semisimple categories $\mathcal A$ and $\mathcal B$ and let $I_{\mathcal A}$ and $I_{\mathcal B}$ be sets of representatives for the simple objects of $\mathcal A$ and $\mathcal B$, respectively. For every simple object $a\in I_{\mathcal A}$, there is a unique decomposition of $F(a)$ into simple objects from $I_{\mathcal B}$:
\[ F(a) \cong \bigoplus_{b\in I_{\mathcal B}} b^{\oplus m_{ab}^F}, \]
where $m_{ab}^F = \dim_{\mathbb F} \Hom(F(a), b) \in\mathbb N$. Choose inclusions and projections for $F(a)$ and denote them
\[ j_{Fab}^{\alpha}: b \to F(a), \quad p_{Fab}^{\alpha}: F(a) \to b \qquad (b\in I_{\mathcal B}, \alpha=1, \dots, m_{ab}^F). \]
By definition, they satisfy
\[ p_{Fab}^{\alpha} \circ j_{Fab'}^{\beta} = \delta_{b, b'}\, \delta_{\alpha \beta}\, \ident_b\, , \quad \sum_{b\in I_{\mathcal B}} \sum_{\alpha=1}^{m_{ab}^F} j_{Fab}^{\alpha} \circ p_{Fab}^{\alpha} = \ident_{F(a)}. \]
It will be convenient to write
\[ j_{Fab}:= \left(j_{Fab}^{1}, \dots, j_{Fab}^{m_{ab}^F}\right): b^{\oplus m_{ab}^F} \to F(a), \quad p_{Fab}:= \left(p_{Fab}^1, \dots, p_{Fab}^{m_{ab}^F}\right)^T: F(a) \to b^{\oplus m_{ab}^F}. \]
Clearly, the morphisms $(j_{Fab}, p_{Fab})_{b\in I_{\mathcal B}}$ are also inclusions and projections for $F(a)$.

Now let $\mathcal M:=\mathcal M(X, \Psi_X)$ and $\mathcal N:=\mathcal M(Y, \Psi_Y)$ be finite semisimple $\vectgo$-module categories as described in Section \ref{ssec:vecgo_modc} and let $(F, s): \mathcal M\to \mathcal N$ be a $\vectgo$-module functor. Clearly, we can choose $I_{\mathcal M}=X, I_{\mathcal N}=Y$ and the functor $F$ is described up to natural isomorphism by the numbers $(m_{xy}^F)_{x\in X, y\in Y}$. Due to
\[ \bigoplus_{y\in Y} y^{\oplus m_{g\opl x, y}^F} = F(\delta^g \opl x) \cong \delta^g \opl F(x) = \bigoplus_{y\in Y} (g\opl y)^{\oplus m_{xy}^F} = \bigoplus_{y\in Y} y^{\oplus m_{x, g^{-1}\opl y}^F}, \]
these numbers satisfy $m_{g\opl x, y}^F = m_{x, g^{-1}\opl y}^F$ for all $g\in G, x\in X, y\in Y$ or, equivalently, $m_{g\opl x, g\opl y}^F = m_{xy}^F$.
It remains to find a suitable description of the coherence datum $s$.

The coherence datum $s_{g, x}: F(\delta^g \opl x) \to \delta^g \opl F(x)$ is uniquely determined by the morphisms
\[ (\ident_g \opl p_{Fxy}) \circ s_{g, x} \circ j_{F, g\opl x, z}: z^{\oplus m_{g\opl x, z}^F} \to (g\opl y)^{\oplus m_{xy}^F} \]
for $g\in G, x\in X, y, z\in Y$. But if $z \neq g\opl y$, $\Hom_{\vectfin[Y]}(z, g\opl y) = 0$, so the above morphism is automatically $0$. Hence $s$ is already determined by the smaller family of morphisms
\begin{equation} \label{eq:def_A} A_{g, x, y}^F := (\ident_g \opl p_{Fxy}) \circ s_{g, x} \circ j_{F,g\opl x, g\opl y} \qquad (g\in G, x\in X, y\in Y). \end{equation}
Furthermore, it is easy to check that $A_{g, x, y}^F$ has the inverse
\[ \left(A_{g, x, y}^F\right)^{-1} = p_{F, g\opl x, g\opl y} \circ s_{g, x}^{-1} \circ (\ident_g \opl j_{Fxy}). \]
From here on, we will identify $\Hom_{\mathrm{Vec}_X}(x^n, x^m) \cong \Hom_{\mathrm{Vec}}(\mathbb F^n, \mathbb F^m)$ with $\Mat(m\times n, \mathbb F)$ for $x\in X$, thus viewing $A_{g, x, y}^F$ as an element of $\glmat(m_{xy}^F, \mathbb F)$. The elements of this matrix are given by the morphisms
\[ \left( A_{g, x, y}^F \right)_\beta^\alpha = (\ident_g \opl p_{Fxy}^{\alpha}) \circ s_{g, x} \circ j_{F,g\opl x, g\opl y}^{\beta} \in \Hom(g\opl y, g\opl y)\cong \mathbb F \]
for $\alpha, \beta = 1, \dots, m_{xy}^F$.
Here $A_j^i$ denotes the entry in the $i$-th row and $j$-th column of $A$.
The pentagon axiom for module functors (\ref{diag:modf}) implies
\begin{align*} \label{eq:calc_A}
	A_{gh, x, y}^F &= (\ident_{gh} \opl p_{Fxy}) \circ s_{gh, x} \circ j_{F, (gh)\opl x, (gh)\opl y} \\ &\overset{(\ref{diag:modf})} = (\ident_{gh} \opl p_{Fxy}) \circ \left(m_{g, h, F(x)}^{-1} \circ (\ident_g \opl s_{h, x}) \circ s_{g, h\opl x} \circ Fm_{g, h, x}\right) \circ j_{F, (gh)\opl x, (gh)\opl y} \\
	&\overset{(*)} = \Psi_Y^{-1}(g, h, (gh)\opl y)\, \Psi_X(g, h, (gh)\opl x)\\ 
	\refstepcounter{equation}\tag{\theequation}
	& \hphantom{=}(\ident_g \opl (\ident_h \opl p_{Fxy})) \circ (\ident_g \opl s_{h, x}) \circ s_{g, h\opl x} \circ j_{F, (gh)\opl x, (gh)\opl y} \\
	&= \Psi_X(g, h, (gh)\opl x)\, \Psi_Y^{-1}(g, h, (gh)\opl y) \\ & \hphantom{=} (\ident_g \opl A_{h, x, y}^F) \circ (\ident_g \opl p_{F, h\opl x, h\opl y}) \circ s_{g, h\opl x} \circ j_{F, (gh)\opl x, (gh)\opl y} \\
	&= \Psi_X(g, h, (gh)\opl x)\, \Psi_Y^{-1}(g, h, (gh)\opl y) \ A_{h, x, y}^F \, A_{g, h\opl x, h\opl y}^F
\end{align*}
Here we abused notation and denoted by $m$ the module constraint of $\mathcal M(X, \Psi_X)$ and of $\mathcal M(Y, \Psi_Y)$.
Furthermore, we identified $A_{h, x, y}^F$ and $\ident_g \opl A_{h, x, y}^F$ as matrices.
One can check that this is consistent by calculating the entries of $A_{gh, x, y}^F$.
In $(*)$, we used the naturality of $m$ and the definition of $m$ on simple objects. The remaining equalities follow from (\ref{eq:def_A}).

\begin{lemma} \label{lem:modf_vectgo}
	Every $\vectgo$-module functor $\modfxy$ is determined up to isomorphism by
	\begin{itemize}
		\item natural numbers $m_{xy}^F \in \mathbb N$ for $x\in X, y\in Y$ with
		\[ m_{x, y}^F = m_{g\opl x, g\opl y}^F \]
		for all $g\in G, x\in X, y\in Y$,
		\item matrices $A_{g, x, y}^F \in \glmat(m_{xy}^F, \mathbb F)$ for $g\in G, x\in X, y\in Y$ which satisfy
		\begin{equation} \label{eq:cond_A} A_{gh, x, y}^F = \Psi_X(g, h, (gh)\opl x)\, \Psi_Y^{-1}(g, h, (gh)\opl y) \ A_{h, x, y}^F \, A_{g, h\opl x, h\opl y}^F \end{equation}
		for all $g, h\in G, x\in X, y\in Y$.
	\end{itemize}
	Conversely, each choice of such data defines a $\vectgo$-module functor $\modfxy$.
\end{lemma}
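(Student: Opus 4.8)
The plan is to handle the two directions separately. Most of the forward direction has already been carried out in the computation preceding the statement, so the real work lies in the converse and in pinning down the functor up to isomorphism. For the forward direction I would simply collect what is established: by Lemma \ref{lem:fun_simp} the underlying functor $F$ is determined up to natural isomorphism by the objects $F(x)$, $x\in X$, hence by the multiplicities $m_{xy}^F$; the relation $m_{xy}^F = m_{g\opl x, g\opl y}^F$ is read off from the isomorphism $F(\delta^g\opl x)\cong \delta^g\opl F(x)$ supplied by $s$. The matrices $A_{g,x,y}^F\in\glmat(m_{xy}^F,\mathbb F)$ of (\ref{eq:def_A}) are invertible and satisfy (\ref{eq:cond_A}), the latter being exactly the displayed consequence of the module functor pentagon axiom (\ref{diag:modf}) computed above. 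I would then note that $s$ is recovered from these matrices: the component $(\ident_g\opl p_{Fxy})\circ s_{g,x}\circ j_{F,g\opl x,z}$ vanishes unless $z=g\opl y$ and otherwise equals $A_{g,x,y}^F$, so $s$ is fixed by the data once inclusions and projections have been chosen.

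For the converse I would build $(F,s)$ from abstract data $(m_{xy}, A_{g,x,y})$. First set $F(x):=\bigoplus_{y\in Y} y^{\oplus m_{xy}}$ on simple objects and extend via Lemma \ref{lem:fun_simp} to an $\mathbb F$-linear functor $F:\mathcal M(X,\Psi_X)\to\mathcal M(Y,\Psi_Y)$, which is automatically left exact by Lemma \ref{lem:exactness}; fix inclusions and projections $j_{F,x,y}, p_{F,x,y}$ for each $F(x)$. Using the invariance $m_{xy}=m_{g\opl x,g\opl y}$ and the reindexing $z=g\opl y$, both $F(\delta^g\opl x)$ and $\delta^g\opl F(x)$ decompose as $\bigoplus_z z^{\oplus m_{g\opl x,z}}$, so I can define
\[ s_{g,x} := \sum_{y\in Y}(\ident_g\opl j_{F,x,y})\circ A_{g,x,y}\circ p_{F,g\opl x,g\opl y}. \]
Each $A_{g,x,y}$ lies in $\glmat(m_{xy},\mathbb F)$ and is therefore invertible, so $s_{g,x}$ is an isomorphism, and by Corollary \ref{cor:nat_simp2} this family extends uniquely to a natural isomorphism $s:F\opl\Rightarrow\opl' F$ (applying the corollary through the Deligne product $\vectgo\boxtimes\mathcal M(X,\Psi_X)$, whose simple objects are the $\delta^g\boxtimes x$, so as to obtain naturality in both arguments). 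By construction the data extracted from $(F,s)$ via (\ref{eq:def_A}) are again $(m_{xy},A_{g,x,y})$.

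The remaining verification is the module functor pentagon axiom (\ref{diag:modf}) for the constructed $(F,s)$, and this is where I expect the main obstacle to lie. I would evaluate both composites of (\ref{diag:modf}) on the components $(\ident_{gh}\opl p_{Fxy})\circ(-)\circ j_{F,(gh)\opl x,(gh)\opl y}$: this computation is precisely the chain of equalities deriving (\ref{eq:cond_A}) run in reverse, so the two paths agree exactly when (\ref{eq:cond_A}) holds, which it does by hypothesis. Since a morphism between the relevant direct sums is determined by these components, (\ref{diag:modf}) commutes. Finally, the phrase \emph{determined up to isomorphism} I would justify by a round trip: two module functors sharing the same data are each isomorphic to the $(F,s)$ just built, the isomorphism of underlying functors coming from the multiplicities $m_{xy}$ and its compatibility with the coherence data from the agreement of the matrices $A_{g,x,y}$.

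The pentagon check itself is routine once reduced to (\ref{eq:cond_A}); the genuine care is needed in two places: the extension of $s$ to a natural isomorphism in both variables (hence the appeal to the Deligne product rather than to a naive product of categories, whose simple objects would be of the wrong form), and the consistent bookkeeping of indices under the $G$-action, where the invariance $m_{xy}=m_{g\opl x,g\opl y}$ is what makes the square matrices $A_{g,x,y}$ act between the correct isotypic components.
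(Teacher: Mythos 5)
Your proposal is correct and follows essentially the same route as the paper: extract the data from the computation preceding the lemma, rebuild $(F,s)$ from abstract data via Lemma \ref{lem:fun_simp} with the component-wise definition of $s$ (your explicit sum $\sum_{y}(\ident_g\opl j_{F,x,y})\circ A_{g,x,y}\circ p_{F,g\opl x,g\opl y}$ is exactly the paper's implicit definition $(\ident_g \opl p_{Fxy}) \circ s_{g, x} \circ j_{F,g\opl x, z} := \delta_{z, g\opl y} \, A_{g, x, y}$), verify the pentagon by running the computation (\ref{eq:calc_A}) in reverse, and establish uniqueness through the natural isomorphism $\nu_x = \sum_{y} j_{Hxy}\circ p_{Fxy}$ determined by the multiplicities, whose compatibility with the coherence data follows from the agreement of the matrices. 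Your additional care in justifying naturality of $s$ in both variables via the Deligne product (rather than the naive product category, whose simple objects indeed have the wrong form) is a legitimate refinement of a point the paper passes over silently, but it does not alter the structure of the argument.
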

\begin{proof}
	It has already been shown that every $\vectgo$-module functor $(F, s)$ defines such families via
	\[ m_{xy}^F := \dim_{\mathbb F} \Hom(F(x), y), \]
	\[ A_{g, x, y}^F := (\ident_g \opl p_{Fxy}) \circ s_{g, x} \circ j_{F,g\opl x, g\opl y} \quad (g\in G, x\in X, y\in Y). \]
	Let $(m_{xy})_{x, y}, (A_{g, x, y})_{g, x, y}$ be families that satisfy the conditions in this lemma. Then
	\[ F(x) := \bigoplus_{y\in Y} y^{\oplus m_{xy}} \qquad (x\in X) \]
	determines a functor $F: \mathcal M(X, \Psi_X) \to \mathcal M(Y, \Psi_Y)$ up to natural isomorphism by Lemma \ref{lem:fun_simp}. Choose inclusions and projections 
	\[ j_{Fxy}: y^{\oplus m_{xy}} \to F(x), \quad p_{Fxy}: F(x) \to y^{\oplus m_{xy}} \qquad (x\in X, y\in Y) \]
	for $F(x)$ and define a natural isomorphism $s$ by
	\[ (\ident_g \opl p_{Fxy}) \circ s_{g, x} \circ j_{F,g\opl x, z} := \delta_{z, g\opl y} \, A_{g, x, y} \quad (g\in G, x\in X, y\in Y). \]
	A calculation similar to (\ref{eq:calc_A}) shows that $(F, s)$ satisfies the module functor pentagon axiom.
	We now show that two $\vectgo$-module functors $(F, s^F), (H, s^H): \mathcal M(X, \Psi_X)\to \mathcal M(Y, \Psi_Y)$ are isomorphic if
	\[ m_{xy}^F = m_{xy}^H, \quad A_{g, x, y}^F = A_{g, x, y}^H \]
	for all $x\in X, y\in Y, g\in G$.
	Define a natural transformation $\nu: F\Rightarrow H$ by
	\[ \nu_x := \sum_{y\in Y} j_{Hxy} \circ p_{Fxy}:\ F(x) \to H(x) \qquad (x\in X). \]
	It is straightforward to check that $\nu$ is a natural isomorphism. In addition,
	\begin{align*}
		(\ident_g \opl \nu_x) \circ s^F_{g, x} &= \sum_{y\in Y} (\ident_g \opl j_{Hxy}) \circ (\ident_g \opl p_{Fxy}) \circ s^F_{g, x} \\ 
		&= \sum_{y\in Y} (\ident_g \opl j_{Hxy}) \circ A_{g, x, y}^F \circ p_{F, g\opl x, g\opl y} \\
		&= \sum_{y\in Y} (\ident_g \opl j_{Hxy}) \circ A_{g, x, y}^H \circ p_{F, g\opl x, g\opl y} \\
		&= \sum_{y\in Y} s^H_{g, x} \circ j_{H, g\opl x, g\opl y} \circ p_{F, g\opl x, g\opl y} = s^H_{g, x} \circ \nu_{g\opl x},
	\end{align*}
	so $\nu$ is an isomorphism of module functors. This concludes the proof.
\end{proof}

In particular, setting $g=h=1$ in (\ref{eq:cond_A}) shows that $A_{1, x, y}^F$ is the identity matrix for all $x\in X, y\in Y$.

Just like $\vectgo$-module functors, a $\vectgo$-module natural transformation can be described by a family of matrices.
Let $(F, s^F), (H, s^H): \mathcal M(X, \Psi_X) \to \mathcal M(Y, \Psi_Y)$ be $\vectgo$-module functors and let $\eta: (F, s^F) \Rightarrow (H, s^H)$ be a morphism of module functors. Then $\eta$ is uniquely determined by the morphisms
\[ M_{x, y}^{\eta} := p_{Hxy} \circ \eta_x \circ j_{Fxy} \qquad (x\in X, y\in Y), \]
which can again be regarded as matrices $M_{xy}^{\eta} \in \Mat(m_{xy}^H \times m_{xy}^F, \mathbb F)$. Since $\eta$ is a morphism of module functors, we have
\begin{align*} 
	M_{x, y}^{\eta}\, A^F_{g, x, y} &= (\ident_g \opl p_{Hxy}) \circ (\ident_g \opl \eta_x) \circ (\ident_g \opl j_{Fxy}) \circ (\ident_g \opl p_{Fxy}) \circ s^F_{g, x} \circ j_{F, g\opl x, g\opl y} \\
	&\overset{(*)} = \sum_{z\in Y} (\ident_g \opl p_{Hxy}) \circ (\ident_g \opl \eta_x) \circ (\ident_g \opl j_{Fxz}) \circ (\ident_g \opl p_{Fxz}) \circ s^F_{g, x} \circ j_{F, g\opl x, g\opl y}\\
	&= (\ident_g \opl p_{Hxy}) \circ (\ident_g \opl \eta_x) \circ s^F_{g, x} \circ j_{F, g\opl x, g\opl y} \\
	&\overset{(\ref{diag:mor_modf})} = (\ident_g \opl p_{Hxy}) \circ s^H_{g, x} \circ \eta_{g\opl x} \circ j_{F, g\opl x, g\opl y} \\
	&= A^H_{g, x, y} \circ p_{H, g\opl x, g\opl y} \circ \eta_{g\opl x} \circ j_{F, g\opl x, g\opl y}
	= A^H_{g, x, y} \, M^{\eta}_{g\opl x, g\opl y}.
\end{align*}
Here, $(*)$ holds because $(\ident_g \opl p_{Fxz}) \circ s^F_{g, x} \circ j_{F, g\opl x, g\opl y}: (g\opl y)^{\oplus m_{xy}^F} \to (g\opl z)^{\oplus m_{xz}^F}$ vanishes by Schur's Lemma \ref{lem:schur} if $z\neq y$.

\begin{lemma} \label{lem:vectgo_mor_modf}
	Let $(F, s^F), (H, s^H): \mathcal M(X, \Psi_X) \to \mathcal M(Y, \Psi_Y)$ be $\vectgo$-module functors. Then morphisms $\eta: (F, s^F)\Rightarrow (H, s^H)$ are in bijection with families of matrices
	\[ M_{x, y}^{\eta} \in \Mat(m_{xy}^H\times m_{xy}^F, \mathbb F) \quad (x\in X, y\in Y) \]
	which satisfy
	\begin{equation} \label{eq:cond_M} M_{x, y}^{\eta}\, A^F_{g, x, y} = A^H_{g, x, y} \, M^{\eta}_{g\opl x, g\opl y} \end{equation}
	for all $g\in G, x\in X, y\in Y$.
	Moreover, $\eta$ is an isomorphism iff $M_{x, y}^{\eta}$ is quadratic and invertible for all $x\in X, y\in Y$.
\end{lemma}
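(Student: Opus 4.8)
The computation preceding the statement already establishes that $\eta \mapsto (M^{\eta}_{x,y})_{x,y}$, with $M^{\eta}_{x,y} := p_{Hxy}\circ \eta_x\circ j_{Fxy}$, takes a morphism of module functors to a family of matrices satisfying (\ref{eq:cond_M}). The plan is to produce an explicit two-sided inverse of this assignment and then read off the isomorphism criterion. Given a family $(M_{x,y})_{x\in X,\, y\in Y}$ satisfying (\ref{eq:cond_M}), I first define morphisms on simple objects
\[ \eta_x := \sum_{y\in Y} j_{Hxy}\circ M_{x,y}\circ p_{Fxy}\ :\ F(x)\to H(x)\qquad (x\in X), \]
and extend them uniquely to a natural transformation $\eta: F\Rightarrow H$ using Corollary \ref{cor:nat_simp2}.

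The main step is to check that $\eta$ is a \emph{morphism of module functors}, i.e.\ that (\ref{diag:mor_modf}) commutes for all $C\in\vectgo$ and $M\in\mathcal M(X,\Psi_X)$. Both composites in (\ref{diag:mor_modf}) are natural in $C$ and in $M$, so by Lemma \ref{lem:nat_simp}, applied to the semisimple categories $\vectgo$ and $\mathcal M(X,\Psi_X)$, it suffices to verify the identity $s^H_{g,x}\circ \eta_{g\opl x} = (\ident_g\opl \eta_x)\circ s^F_{g,x}$ for simple objects $\delta^g$ and $x$. Both sides are morphisms $F(\delta^g\opl x)\to \delta^g\opl H(x)$, so by Schur's Lemma \ref{lem:schur} two such morphisms coincide once they agree after pre-composing with $j_{F,\,g\opl x,\,g\opl y}$ and post-composing with $\ident_g\opl p_{Hxy}$ for every $y\in Y$. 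Using the orthogonality of inclusions and projections together with the defining relation (\ref{eq:def_A}), the left-hand side collapses to $A^H_{g,x,y}\, M_{g\opl x,\, g\opl y}$ and the right-hand side to $M_{x,y}\, A^F_{g,x,y}$, where I identify $\ident_g\opl M_{x,y}$ with $M_{x,y}$ as a matrix exactly as in (\ref{eq:calc_A}). Hence (\ref{diag:mor_modf}) holds precisely because the family satisfies (\ref{eq:cond_M}).

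That the two assignments are mutually inverse is then immediate: $p_{Hxy}\circ \eta_x\circ j_{Fxy} = M_{x,y}$ by orthogonality, while conversely Lemma \ref{lem:nat_simp} shows that any module natural transformation $\eta$ is determined by the entries $p_{Hxy'}\circ \eta_x\circ j_{Fxy}$, which vanish for $y'\neq y$ by Schur's Lemma \ref{lem:schur}, so that $\eta_x = \sum_{y} j_{Hxy}\circ M^{\eta}_{x,y}\circ p_{Fxy}$ is recovered. For the final claim, Lemma \ref{lem:nat_simp} reduces ``$\eta$ is an isomorphism'' to ``every component $\eta_x$ is an isomorphism''. Since $\eta_x$ is block diagonal along the isotypic decompositions of $F(x)$ and $H(x)$, the off-diagonal blocks vanishing by Schur's Lemma \ref{lem:schur}, it is an isomorphism if and only if each block $M_{x,y}: y^{\oplus m_{xy}^F}\to y^{\oplus m_{xy}^H}$ is; by Corollary \ref{cor:mor_matr} this happens exactly when $M_{x,y}$ is quadratic and invertible. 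The only real obstacle is bookkeeping --- keeping the functors $F$ and $H$, the shifted indices $g\opl x,\, g\opl y$, and the identification $\ident_g\opl M\simeq M$ aligned in the computation on simple objects --- rather than anything conceptual.
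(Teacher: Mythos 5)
Your proposal is correct and is essentially the proof the paper intends: the paper's proof reads ``Analogous to Lemma \ref{lem:modf_vectgo}'', and your construction $\eta_x := \sum_{y} j_{Hxy}\circ M_{x,y}\circ p_{Fxy}$, extended by Corollary \ref{cor:nat_simp2} and verified on simple objects via the orthogonality of inclusions and projections and (\ref{eq:def_A}), mirrors exactly the construction of the natural isomorphism $\nu$ in that proof. The reduction of (\ref{diag:mor_modf}) to simple objects by applying Lemma \ref{lem:nat_simp} in each variable separately, and the isomorphism criterion via the blockwise decomposition of $\eta_x$ and Corollary \ref{cor:mor_matr}, fill in the same routine steps the paper leaves implicit.
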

\begin{proof}
	Analogous to Lemma \ref{lem:modf_vectgo}.
\end{proof}

\newcommand{\bimodcx}{\mathcal B(X, \Psi_X, \Phi_X, \Omega_X)}
\newcommand{\bimodcy}{\mathcal B(Y, \Psi_Y, \Phi_Y, \Omega_Y)}
We now turn to $\monos$-bimodule functors. Let $\mathcal M:=\bimodcx$ and $\mathcal N:=\bimodcy$ be finite semisimple $\monos$-bimodule categories as defined in Section \ref{ssec:vecgo_bimodc}. Then a $\monos$-bimodule functor $(F, s, t): \mathcal M \to \mathcal N$ is determined by the families $(m_{xy}^F)_{x\in X, y\in Y}$ and $(A_{g, x, y}^F)_{g, x, y}$ defined above, together with the family of matrices
\[ B_{h, x, y}^F := (p_{Fxy} \opr \ident_h) \circ t_{x, h} \circ j_{F, h^{-1}\opl x, h^{-1}\opl y} \in \glmat(m_{xy}^F, \mathbb F) \qquad (h\in H, x\in X, y\in Y), \]
where $H$ operates on both $X$ and $Y$ by $h \opl x := x \opr \delta^{h^{-1}}$ as described in Section \ref{ssec:vecgo_bimodc}. These data satisfy the following conditions, in addition to those from Lemma \ref{lem:modf_vectgo}:
\begin{itemize}
	\item The isomorphism $F(x \opr \delta^h) \cong F(x) \opr \delta^h$ implies $m_{xy}^F = m_{h\opl x, h\opl y}^F$ for $h\in H, x\in X, y\in Y$.
	\item The pentagon axiom for $\mathcal D$-right module functors implies
	\[ B_{gh, x, y}^F = \Phi_X(h^{-1}, g^{-1}, (gh)^{-1}\opl x) \, \Phi_Y^{-1}(h^{-1}, g^{-1}, (gh)^{-1}\opl y) \ B_{g, x, y}^F \, B_{h, g^{-1}\opl x, g^{-1}\opl y}^F \]
	for $g, h\in H, x\in X, y\in Y$.
	\item The hexagon axiom for bimodule functors translates to
	\begin{multline*} \Omega_X (g, h^{-1}, (g, h^{-1}) \opl x) \ B_{h, x, y}^F \, A_{g, h^{-1}\opl x, h^{-1}\opl y}^F \\ = \Omega_Y(g, h^{-1}, (g, h^{-1})\opl y) \ A_{g, x, y}^F \, B_{h, g\opl x, g\opl y}^F \end{multline*}
	for all $g\in G, h\in H, x\in X, y\in Y$.
\end{itemize}
The conditions on the family $(m_{xy}^F)_{x, y}$ can also be stated as follows: $m_{xy}^F = m_{x', y'}^F$ if $(x, y)$ and $(x', y')$ are in the same orbit of the $G\times H$-set $X\times Y$ with action
\[ (g, h) \opl (x, y) := ((g, h)\opl x, \ (g, h)\opl y) \qquad (g\in G, h\in H, x\in X, y\in Y). \]

As in Lemma \ref{lem:modf_vectgo}, each choice of such data defines a $\monos$-bimodule functor $F:\mathcal M\to \mathcal N$.

In Proposition \ref{pro:vecgo_bimodc} we proved that $\monos$-bimodule categories are in bijection with $\mathrm{Vec}_{G\times H}^{\omega}$-module categories, where
\[ \omega((g_1, h_1), (g_2, h_2), (g_3, h_3)) = \omega_G(g_1, g_2, g_3) \, \omega_H^{-1}(h_3^{-1}, h_2^{-1}, h_1^{-1}). \]
We now show that this bijection is compatible with (bi-)module functors. Write $\Fun_{(\mathcal C, \mathcal D)}(\mathcal M, \mathcal N)$ for the category of $(\mathcal C, \mathcal D)$-bimodule functors $\mathcal M\to\mathcal N$, which is abelian and $\mathbb F$-linear by Proposition \ref{pro:modf_cat}.

\newcommand{\deliProdv}{\mathrm{Vec}_{G\times H}^{\omega}}

\begin{proposition}
	Let $\mathcal M:=\bimodcx$ and $\mathcal N:=\bimodcy$ be finite semisimple $\monos$-bimodule categories and let $\mathcal M(X, \Gamma_X)$ and $\mathcal M(Y, \Gamma_Y)$ be the associated $\deliProdv$-module categories from Proposition \ref{pro:vecgo_bimodc}.
	Then there is an $\mathbb F$-linear isomorphism of categories
	\begin{formarray}{rcl}
		\Fun_{\monos}(\mathcal M, \mathcal N) &\cong &\Fun_{\deliProdv}(\mathcal M(X, \Gamma_X), \mathcal M(Y, \Gamma_Y)) \\
		F &\mapsto &\tilde F \\
		K' &\text{\reflectbox{$\mapsto$}} &K,
	\end{formarray}
	where $\tilde F$ and $K'$ are defined by
	\begin{formarray}{ll}[1.5]
		m_{xy}^{\tilde F} := m_{xy}^F, &A_{(g, h), x, y}^{\tilde F} := A_{g, x, y}^F \, B_{h^{-1}, g\opl x, g\opl y}^F \\
		m_{xy}^{K'} := m_{xy}^K, &A_{g, x, y}^{K'} := A_{(g, 1), x, y}^K \, , 
		\quad B_{h, x, y}^{K'} := A_{(1, h^{-1}), x, y}^K
	\end{formarray}
	for $g\in G, h\in H, x\in X, y\in Y$.
\end{proposition}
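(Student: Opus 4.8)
The plan is to push everything through the combinatorial descriptions of (bi)module functors just established. By Lemma~\ref{lem:modf_vectgo} and the analogous description of bimodule functors given above, a $\monos$-bimodule functor $\mathcal M\to\mathcal N$ is exactly a tuple $(m^F_{xy},A^F_{g,x,y},B^F_{h,x,y})$ in which $A^F$ obeys the $G$-pentagon (\ref{eq:cond_A}) for $\Psi_X,\Psi_Y$, $B^F$ obeys the right-module pentagon for $\Phi_X,\Phi_Y$, and $A^F,B^F$ are tied together by the hexagon through $\Omega_X,\Omega_Y$; likewise a $\deliProdv$-module functor $\mathcal M(X,\Gamma_X)\to\mathcal M(Y,\Gamma_Y)$ is exactly a tuple $(m_{xy},A_{(g,h),x,y})$ in which $A$ obeys the single pentagon (\ref{eq:cond_A}) for $\Gamma_X,\Gamma_Y$. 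Under these identifications $F\mapsto\tilde F$ and $K\mapsto K'$ are just the stated transformations of matrix data, and morphisms on both sides will be carried by the same families $M_{xy}$. So the proposition reduces to four checks: (i) $\tilde F$ is a legal module functor, (ii) $K'$ is a legal bimodule functor, (iii) the two assignments are mutually inverse, (iv) the morphism conditions agree.

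For (i) I would verify that $A^{\tilde F}_{(g,h),x,y}=A^F_{g,x,y}\,B^F_{h^{-1},\,g\opl x,\,g\opl y}$ satisfies (\ref{eq:cond_A}) for $\Gamma_X,\Gamma_Y$. Writing out the right-hand side of that pentagon, $A^{\tilde F}_{(g_2,h_2)}A^{\tilde F}_{(g_1,h_1)}=A^F_{g_2}B^F_{h_2^{-1}}A^F_{g_1}B^F_{h_1^{-1}}$, the only non-formal step is to swap the central pair $B^F_{h_2^{-1}}A^F_{g_1}$ into $A^F_{g_1}B^F_{h_2^{-1}}$ by the hexagon; the two $A$'s and two $B$'s then recombine, by the $G$-pentagon and the right-module pentagon respectively, into $A^F_{g_1g_2}B^F_{(h_1h_2)^{-1}}=A^{\tilde F}_{(g_1g_2,h_1h_2)}$. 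It remains to match the accumulated scalars ($\Psi_X^{\pm},\Psi_Y^{\pm}$, $\Phi_X^{\pm},\Phi_Y^{\pm}$, $\Omega_X^{\pm},\Omega_Y^{\pm}$) against $\Gamma_X\Gamma_Y^{-1}$ via the explicit formula for $\Gamma$ from Proposition~\ref{pro:vecgo_bimodc}; after simplifying the $G\times H$-action arguments (for instance $(h_1h_2)^{-1}\opl\big((g_1g_2,h_1h_2)\opl x\big)=(g_1g_2)\opl x$ and $h_1^{-1}\opl\big((g_1g_2,h_1h_2)\opl x\big)=(g_1g_2,h_2)\opl x$) the two sides agree exactly. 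This argument-tracking is the main obstacle, though it is a single clean pass through the three axioms and directly parallels the verification $\dif\Gamma'=\omega^{-1}$ already carried out in Proposition~\ref{pro:vecgo_bimodc}.

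For (ii) I would read the three bimodule-functor relations off the single pentagon for $K$ by restricting the group element. On $G\times\{1\}$ the pentagon (\ref{eq:cond_A}) becomes the $G$-pentagon for $A^{K'}_g=A^K_{(g,1)}$, since $\Gamma_X((g,1),(g',1),x)=\Psi_X(g,g',x)$ by the normalisation of $\Phi_X,\Omega_X$; on $\{1\}\times H$ it becomes the right-module pentagon for $B^{K'}_h=A^K_{(1,h^{-1})}$, since $\Gamma_X((1,h),(1,h'),x)=\Phi_X(h,h',x)$. The hexagon arises from the two factorisations $(g,h^{-1})=(1,h^{-1})(g,1)=(g,1)(1,h^{-1})$: applying (\ref{eq:cond_A}) to each expresses $A^K_{(g,h^{-1})}$ as $A^{K'}_gB^{K'}_h$ and as $B^{K'}_hA^{K'}_g$, and the ratio of the two $\Gamma$-prefactors is precisely $\Omega_X/\Omega_Y$ because $\Gamma((g,1),(1,h^{-1}),\cdot)=\Omega(g,h^{-1},\cdot)$ while $\Gamma((1,h^{-1}),(g,1),\cdot)=1$ by (\ref{eq:gamma_norm}). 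For (iii) I would simply substitute: $A^{(\tilde F)'}_g=A^{\tilde F}_{(g,1)}=A^F_g B^F_{1}=A^F_g$ and $B^{(\tilde F)'}_h=A^{\tilde F}_{(1,h^{-1})}=A^F_1 B^F_{h}=B^F_h$ using $A^F_1=B^F_1=\ident$, and dually $A^{\widetilde{K'}}_{(g,h),x,y}=A^K_{(g,1),x,y}\,A^K_{(1,h),\,g\opl x,\,g\opl y}=A^K_{(g,h),x,y}$ by (\ref{eq:cond_A}) applied to $(g,h)=(1,h)(g,1)$, whose prefactor is trivial by (\ref{eq:gamma_norm}).

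Finally, for (iv), by Lemma~\ref{lem:vectgo_mor_modf} and its right-module analogue a bimodule natural transformation $\eta$ is a family $M_{xy}$ satisfying both $M_{xy}A^F_{g,x,y}=A^H_{g,x,y}M_{g\opl x,g\opl y}$ and $M_{xy}B^F_{h,x,y}=B^H_{h,x,y}M_{h^{-1}\opl x,h^{-1}\opl y}$, whereas a $\deliProdv$-module natural transformation is a family $M_{xy}$ satisfying (\ref{eq:cond_M}) for every $(g,h)$. These are equivalent: restricting the latter to $G\times\{1\}$ and to $\{1\}\times H$ returns the two bimodule conditions, while conversely chaining the left condition with the right condition applied at $(g\opl x,g\opl y)$ across $A^{\tilde F}_{(g,h),x,y}=A^F_{g,x,y}\,B^F_{h^{-1},\,g\opl x,\,g\opl y}$ recovers the $(g,h)$-condition, the indices matching because $h\opl(g\opl x)=(g,h)\opl x$. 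As the correspondence is the identity on the matrices $M_{xy}$, it is manifestly $\mathbb F$-linear and respects composition; together with (i)--(iii) this yields the asserted isomorphism of categories.
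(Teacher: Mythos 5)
Your proposal is correct and takes essentially the same approach as the paper: both work entirely through the matrix descriptions of (bi)module functors, define the correspondence by the stated transformation of the families $(A,B) \leftrightarrow A$, act as the identity on the morphism matrices $M_{x,y}$, and then verify well-definedness, functoriality and mutual inverseness. Your write-up in fact supplies more detail (the hexagon swap and pentagon recombination for $\tilde F$, the restriction and two-factorisation arguments for $K'$, and the equivalence of the morphism conditions) than the paper's ``one checks'' and ``it is easy to check'' steps.
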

\begin{proof}
	One checks that $\tilde F$ is well-defined by using the properties of the families $(m_{xy}^F)_{x, y}$, $(A_{g, x, y}^F)_{g, x, y}$, $(B_{h, x, y}^F)_{h, x, y}$ as well as the definition of $\Gamma_X$ and $\Gamma_Y$. It is straightforward to check that $K'$ is well-defined.
	
	We define the isomorphism of categories on morphisms of $\monos$-bimodule functors by
	\begin{equation} \label{eq:eta_tilde} \eta: F \Rightarrow K \quad \mapsto\quad \tilde\eta: \tilde F\Rightarrow \tilde K, \end{equation}
	\[ M^{\tilde\eta}_{x, y} := M_{x, y}^\eta \]
	and on morphisms of $\deliProdv$-module functors by
	\begin{equation} \label{eq:eta_prime} \eta: F \Rightarrow K \quad \mapsto\quad \eta': F' \Rightarrow K', \end{equation}
	\[ M_{x, y}^{\eta'} := M_{x, y}^\eta. \]
	These assignments are obviously $\mathbb F$-linear and it is easy to check that $\tilde \eta$ and $\eta'$ are well-defined.
	Note that a morphism of $\monos$-bimodule functors $\eta: F\Rightarrow K$ needs to satisfy the condition
	\[ M_{x, y}^{\eta} \, B_{h, x, y}^F = B_{h, x, y}^K \, M_{h^{-1}\opl x, h^{-1}\opl y}^{\eta} \qquad (h\in H, x\in X, y\in Y) \]
	in addition to (\ref{eq:cond_M}).
	
	We check that (\ref{eq:eta_tilde}) and (\ref{eq:eta_prime}) define functors between the categories $\Fun_{\monos}(\mathcal M, \mathcal N)$ and $\Fun_{\deliProdv}(\mathcal M(X, \Gamma_X), \mathcal M(Y, \Gamma_Y))$. The identity morphisms $\ident_F: F\Rightarrow F$ in both categories are represented by a family of identity matrices
	\[ M_{x, y}^{\ident_F} = \mathbb 1_{m_{xy}^F} \qquad (x\in X, y\in Y), \]
	so $\ident_F' = \ident_{F'}$ and $\tilde{\ident_F} = \ident_{\tilde F}$.
	Composition of (bi-)module natural transformations $\eta: F\Rightarrow K$ and $\mu: K \Rightarrow L$ is described by the formula
	\[ M_{x, y}^{\mu \circ \eta} = M_{x, y}^{\mu} \, M_{x, y}^{\eta} \qquad (x\in X, y\in Y), \]
	so the assignments (\ref{eq:eta_tilde}) and (\ref{eq:eta_prime}) define functors.
	It is easy to check that these functors are inverses of each other.
\end{proof}

\subsection{Examples of $\vectgo$-module functors}
In some cases, the matrices $(A_{g, x, y}^F)_{g, x, y}$ that determine a $\vectgo$-module functor can be represented elegantly by a certain functor. We now introduce the notation needed to state this correspondence.

\begin{definition}
	\cite[Example 1.5.18]{riehl17}
	Let $G$ be a group and let $X$ be a $G$-set. The \emph{action groupoid} $\mathcal C_X$ of $X$ is the category whose objects are elements $x$ of $X$ and whose morphisms are pairs
	\[ (g, x): x \to g\opl x \qquad (g\in G, x\in X). \]
	The composite of two morphisms is
	\[ (g, x) \circ (h, x') = (gh, x') \]
	and is defined if and only if $x = h \opl x'$.
\end{definition}

From here on, we will regard $X\times Y$ as a $G$-set with the diagonal action
\[ g \opl (x, y) := (g\opl x, g\opl y)\qquad (g\in G, x\in X, y\in Y). \]
Furthermore, we write $\vectfin$ for the category of finite-dimensional $\mathbb F$-vector spaces.

\begin{example}\label{ex:action_groupoid}
	Let $\mathcal M(X, \Psi_X)$ and $\mathcal M(Y, \Psi_Y)$ be finite semisimple $\vectgo$-module categories such that at least one of the following holds:
	\begin{itemize} 
		\item $\Psi_X$ and $\Psi_Y$ are trivial, i.e., $\Psi_X(g, h, x) = \Psi_Y(g, h, y) = 1$ for all $g\in G, h\in H, x\in X, y\in Y$ (this implies $\omega=1$), or
		\item $X=Y=\{\bullet\}$ is the trivial $G$-set with one element and $\Psi_X=\Psi_Y$. 
	\end{itemize}
	Then there is an $\mathbb F$-linear isomorphism of categories
	\[ \Fun_{\vectgo}(\mathcal M(X, \Psi_X), \mathcal M(Y, \Psi_Y)) \cong \Fun(\mathcal C_{X\times Y}^{op}, \vectfin). \]
\end{example}
\begin{proof}
	In both cases, condition (\ref{eq:cond_A}) simplifies to
	\begin{equation} \label{eq:cond_A_simp} A_{gh, x, y}^F = A_{h, x, y}^F \, A_{g, h\opl x, h\opl y}^F. \end{equation}
	For a functor $T: \mathcal C_{X\times Y}^{op} \to \vectfin$, define a $\vectgo$-module functor $\modfxy$ by
	\begin{formarray}{ll} &m_{xy}^F := \dim_{\mathbb F}(T((x, y))) \in \mathbb N, \\
	&A_{g, x, y}^F := T((g, (x, y))) \in \Mat(m_{g\opl x, g\opl y}^F\times m_{xy}^F, \mathbb F) \qquad (g\in G, x\in X, y\in Y), \end{formarray}
	\hspace*{-0.5em} where we again identified $\Hom_{\vectfin}(\mathbb F^n, \mathbb F^m)$ and $\Mat(m\times n, \mathbb F)$. Since every morphism in $\mathcal C_{X\times Y}$ is an isomorphism, $A_{g, x, y}^F$ is invertible and $m_{xy}^F = m_{g\opl x, g\opl y}^F$.
	Furthermore, compatibility of $T$ with composition implies that the family $(A_{g, x, y}^F)_{g, x, y}$ satisfies (\ref{eq:cond_A_simp}).
	Now let $\modfxy$ be a $\vectgo$-module functor. Then
	\begin{align*}
		T((x, y)) & := \mathbb F^{\oplus m^F_{xy}}, \\
		T((g, (x, y))) & := A^F_{g, x, y} \qquad (g\in G, x\in X, y\in Y)
	\end{align*}
	defines a functor $T:\mathcal C_{X\times Y}^{op} \to \vectfin$, as can be verified easily. It is straightforward to check that these constructions are mutually inverse and give rise to an $\mathbb F$-linear isomorphism of categories.
\end{proof}

We now reformulate the second case of Example \ref{ex:action_groupoid}, which generalises \cite[Example 7.12.19]{egno}.
Let $\mathrm BG$ be the category with a single object, elements of $G$ as morphisms and group multiplication as composition. Write $G^{op}$ for the opposite group of $G$ defined in Section \ref{ssec:vecgo_bimodc}. Then $(\mathrm BG)^{op}$ is isomorphic to $\mathrm B(G^{op})$.
It is well-known that $\Fun(\mathrm B(G^{op}), \vectfin)$ is isomorphic to the category $\mathrm{Rep}(G^{op})$ of representations of $G^{op}$ as an $\mathbb F$-linear category.
	
\begin{example} \label{ex:modf_repr}
	Suppose $\omega=1\in Z^3(G, \units)$, let $X=Y=\{\bullet\}$ be the trivial $G$-set with one element and let $\Psi_X = \Psi_Y = \Psi\in C^2(G, \Map(\{\bullet\}, \units))$ with $\dif \Psi = \omega^{-1}$. 
	
	Then $\mathcal C_{X\times Y} = \mathrm BG$, so we have an $\mathbb F$-linear isomorphism of categories
	\[ \Fun_{\mathcal C}(\mathcal M(\{\bullet\}, \Psi), \, \mathcal M(\{\bullet\}, \Psi)) \cong \mathrm{Rep}(G^{op}). \]
\end{example}

Example \ref{ex:modf_repr} shows that, in general, classifying all simple $\vectgo$-module functors between finite semisimple $\vectgo$-module categories is difficult since this classification would include a classification of the simple representations of $G$.

We now generalise Example \ref{ex:modf_repr} by dropping the condition $\Psi_X = \Psi_Y$. It will turn out that $\vectgo$-module functors $\mathcal M(\{\bullet\}, \Psi_X) \to \mathcal M(\{\bullet\}, \Psi_Y)$ are classified by certain projective representations. 

\begin{definition}
	\cite[Example 7.4.9]{egno}
	Let $G$ be a group and let $\psi\in Z^2(G, \units)$ be a 2-cocycle. A \emph{projective representation} of $G$ with \emph{Schur multiplier} $\psi$ consists of an $\mathbb F$-vector space $V$ and a map $\rho: G \to \glmat(V)$ that satisfies
	\[ \rho(g) \circ \rho(h) = \psi(g, h)\, \rho(gh) \]
	for all $g, h\in G$. Projective representations of $G$ with Schur multiplier $\psi$ form an $\mathbb F$-linear category, which will be denoted $\Rep_{\psi}(G)$.
\end{definition}

\begin{example}
	As before, let $\mathcal M(X, \Psi_X)$ and $\mathcal M(Y, \Psi_Y)$ be finite semisimple $\vectgo$-module categories. Assume $X=Y=\{\bullet\}$, but let $\Psi_X$ and $\Psi_Y$ be arbitrary. Then there is an $\mathbb F$-linear isomorphism of categories
	\[ \Fun_{\vectgo}(\mathcal M(X, \Psi_X), \mathcal M(Y, \Psi_Y)) \cong \mathrm{Rep}_{\psi}(G^{op}). \]
	for a certain $\psi\in Z^2(G^{op}, \units)$.
\end{example}
\begin{proof}
	By Proposition \ref{prop:indecomp}, $\mathcal M(X, \Psi_X) = \mathcal M'(G, \psi_X)$ and $\mathcal M(Y, \Psi_Y) = \mathcal M'(G, \psi_Y)$, where $\psi_X$ and $\psi_Y$ are the 2-cocycles defined by
	\[ \psi_U (g, h) := \Psi_U (g, h, \bullet) \qquad (g, h\in G, U\in \{X, Y\}). \]
	Hence (\ref{eq:cond_A}) is, in this case, equivalent to
	\[ A_{gh, \bullet, \bullet}^F = \psi_X(g, h)\, \psi_Y^{-1}(g, h) \ A_{h, \bullet, \bullet}^F \, A_{g, \bullet, \bullet}^F \qquad (g, h\in G). \]
	This can be rewritten as
	\begin{equation} \label{eq:proj_repr} A_{h, \bullet, \bullet}^F \, A_{g, \bullet, \bullet}^F = \psi(h, g) \  A_{gh, \bullet, \bullet}^F, \end{equation}
	where $\psi\in Z^2(G^{op}, \units)$ is defined by
	\[ \psi(g, h) := \psi_X^{-1}(h, g) \, \psi_Y(h, g) \quad (g, h\in G). \]
	It is easy to check that $\psi$ is indeed a cocycle.
	By (\ref{eq:proj_repr}), $V:= \mathbb F^{\oplus m_{\bullet, \bullet}^F}$ and $\rho(g) := A_{g, \bullet, \bullet}^F$ defines a projective representation $(V, \rho)$ of $G^{op}$ with Schur multiplier $\psi$. As in Example \ref{ex:action_groupoid}, this gives rise to an $\mathbb F$-linear isomorphism of categories.
\end{proof}

\subsection{Properties of $\vectgo$-module functors} \label{ssec:vecgo_modf_prop}
Let $\mathcal M(X, \Psi_X)$ and $\mathcal M(Y, \Psi_Y)$ be finite semisimple $\vectgo$-module categories. We have already seen in Example \ref{ex:action_groupoid} that the $G$-set $X\times Y$ plays an important role in describing the $\vectgo$-module functors $\modfxy$. It is also apparent that the numbers $m_{xy}^F$ only depend on the orbit of $(x, y)\in X\times Y$ and that condition (\ref{eq:cond_A})
\[ A_{gh, x, y}^F = \Psi_X(g, h, (gh)\opl x)\, \Psi_Y^{-1}(g, h, (gh)\opl y) \ A_{h, x, y}^F \, A_{g, h\opl x, h\opl y}^F \]
only relates matrices $A_{g, x, y}^F$ with $(x, y)$ from the same orbit. These facts give rise to a decomposition of $\vectgo$-module functors into direct sums, as will be made precise in the following.

\newcommand{\blockfun}[1][\Gamma]{\Fun_{#1}(\mathcal M(X, \Psi_X), \mathcal M(Y, \Psi_Y))}

First, we describe the direct sum of $\vectgo$-module functors by applying the construction from the proof of Proposition \ref{pro:modf_cat}.

\begin{lemma} \label{lem:sum_vecgo_modf}
	The direct sum of $(F_1, s_1), \dots, (F_n, s_n) \in \Fun_{\vectgo}(\mathcal M(X, \Psi_X), \mathcal M(Y, \Psi_Y))$ is the module functor $\modfxy$ defined by
	\[ m_{xy}^F := \sum_{i=1}^n m_{xy}^{F_i}, \]
	\[ A_{g, x, y}^F := 
	\begin{bmatrix}
		A_{g, x, y}^{F_1} & & 0 \\ & \ddots & \\ 0 & & A_{g, x, y}^{F_n}
	\end{bmatrix}. \]
\end{lemma}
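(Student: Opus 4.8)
The plan is to invoke the explicit construction of direct sums of module functors given in the proof of Proposition \ref{pro:modf_cat}, and then translate that construction into the matrix language of Lemma \ref{lem:modf_vectgo} and equation (\ref{eq:def_A}). Concretely, if $(F, s) = \bigoplus_{i=1}^n (F_i, s_i)$ denotes the direct sum furnished by Proposition \ref{pro:modf_cat}, I must check that the invariants $m_{xy}^F$ and $A_{g,x,y}^F$ attached to $(F,s)$ by Lemma \ref{lem:modf_vectgo} are exactly the block-diagonal data claimed in the statement. Since Lemma \ref{lem:modf_vectgo} already establishes that a $\vectgo$-module functor is determined up to isomorphism by such data, it suffices to verify the two formulas.

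First I would treat the multiplicities. By construction $F(x) = \bigoplus_{i=1}^n F_i(x)$, so on the simple object $x$ we have
\[
F(x) = \bigoplus_{i=1}^n F_i(x) \cong \bigoplus_{i=1}^n \bigoplus_{y\in Y} y^{\oplus m_{xy}^{F_i}} = \bigoplus_{y\in Y} y^{\oplus \sum_{i=1}^n m_{xy}^{F_i}},
\]
which gives $m_{xy}^F = \sum_{i=1}^n m_{xy}^{F_i}$ immediately. This also dictates the natural choice of inclusions and projections for $F(x)$: I take $j_{Fxy}$ and $p_{Fxy}$ to be the block maps assembled from the $j_{F_i xy}, p_{F_i xy}$ via the inclusions $\iota_x^{(i)}: F_i(x)\to F(x)$ and projections $\pi_x^{(i)}: F(x)\to F_i(x)$ from the proof of Proposition \ref{pro:modf_cat}. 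Concretely, $j_{Fxy}$ stacks the $\iota^{(i)}\circ j_{F_i xy}$ over $i$, and $p_{Fxy}$ stacks the $p_{F_i xy}\circ\pi^{(i)}$; these are inclusions and projections for $F(x)$ compatible with the block decomposition.

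Second I would compute $A_{g,x,y}^F$ from its defining formula $A_{g,x,y}^F = (\ident_g \opl p_{Fxy}) \circ s_{g,x} \circ j_{F,g\opl x,g\opl y}$. Here $s_{g,x}$ is the coherence datum of the direct-sum functor, which by the defining relation $(\ident_C \opl \pi^{(i)}_M)\circ s_{C,M}\circ \iota^{(k)}_{C\opl M} = \delta_{i,k}\, s^{(i)}_{C,M}$ is block-diagonal with respect to the $\iota^{(i)}, \pi^{(i)}$. Plugging the block expressions for $j_{Fxy}, p_{Fxy}$ into the formula and using $\pi_x^{(i)}\circ\iota_x^{(k)} = \delta_{i,k}\ident_{F_i(x)}$ together with the block-diagonality of $s_{g,x}$ collapses all cross terms, leaving precisely $(\ident_g \opl p_{F_i xy})\circ s^{(i)}_{g,x}\circ j_{F_i, g\opl x, g\opl y} = A_{g,x,y}^{F_i}$ on the $i$-th diagonal block and zero off the diagonal. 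This yields the asserted block-diagonal matrix.

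The argument is essentially bookkeeping, so I do not expect a genuine obstacle; the only point requiring care is keeping the two index conventions consistent, namely the ambient index $y$ labelling simple objects of $\mathcal N$ versus the summand index $i$ labelling the functors $F_i$, and verifying that the chosen inclusions and projections for $F(x)$ are indeed natural in $x$ (which follows from the naturality of $\iota^{(i)}, \pi^{(i)}$ asserted in Proposition \ref{pro:modf_cat}). Once that is in place, the two displayed formulas drop out, and by Lemma \ref{lem:modf_vectgo} they determine $(F,s)=\bigoplus_{i=1}^n(F_i,s_i)$ up to isomorphism, completing the proof.
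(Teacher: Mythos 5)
Your proposal is correct and matches the paper's own proof essentially step for step: both invoke the direct-sum construction from Proposition \ref{pro:modf_cat}, choose the inclusions and projections for $F(x)$ as the composites of the summand inclusions/projections with $\iota_x^{(i)}, \pi_x^{(i)}$, and use the relation $(\ident_C \opl \pi^{(i)}_M)\circ s_{C,M}\circ \iota^{(k)}_{C\opl M} = \delta_{i,k}\, s^{(i)}_{C,M}$ to see that all cross terms vanish, yielding the block-diagonal matrix. The only difference is presentational: the paper writes the computation entrywise with an explicit index conversion $\alpha \mapsto (i_\alpha, \alpha')$, whereas you phrase it in block form.
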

\begin{proof}
	It is clear that the functor defined by $(m_{xy}^F)_{x, y}$ is the direct sum of the functors $F_i$ in the sense that
	\[ F(x) = \bigoplus_{i=1}^n F_i(x) \]
	for $x\in X$. As for the matrices $(A_{g, x, y}^F)_{g, x, y}$, note that we implicitly chose
	\[ j_{Fxy}^{\alpha} := \iota_x^{(i_{\alpha})} \circ j_{F_{i_{\alpha}}, x, y}^{\alpha'}: y \to F_{i_{\alpha}}(x) \to F(x), \quad 
	p_{Fxy}^{\alpha} := p_{F_{i_{\alpha}}, x, y}^{\alpha'} \circ \pi_x^{(i_{\alpha})}: F(x) \to F_{i_{\alpha}}(x) \to y \]
	as inclusions and projections for $F(x)$, where 
	\[ \iota_x^{(i)}: F_i(x) \to F(x), \quad \pi_x^{(i)}: F(x)\to F_i(x) \qquad (x\in X, i=1, ..., n) \] 
	are inclusions and projections for $F(x)$ and 
	\[ i_{\alpha} := \min\left\{ i\in \{1, \dots, n\} \ \middle| \ \sum_{k=1}^{i} m_{xy}^{F_k} \geq \alpha \right\}, \quad \alpha' := \alpha - \sum_{k=1}^{i_{\alpha}-1} m_{xy}^{F_k}. \]
	Now we can calculate
	\begin{align*} 
		\left( A_{g, x, y}^F \right)_{\beta}^{\alpha} &= (\ident_g \opl p_{Fxy}^{\alpha}) \circ s_{g, x} \circ j_{F, g\opl x, g\opl y}^{\beta} \\
		&= (\ident_g \opl p_{F_{i_{\alpha}}, x, y}^{\alpha'}) 	\circ (\ident_g \opl \pi_x^{(i_{\alpha})}) \circ s_{g, x} \circ \iota_{g\opl x}^{(i_{\beta})} \circ j_{F_{i_{\beta}}, g\opl x, g\opl y}^{\beta'} \\
		&\overset{\ref{pro:modf_cat}} = \delta_{i_{\alpha}, i_{\beta}} \ (\ident_g \opl p_{F_{i_{\alpha}}, x, y}^{\alpha'}) \circ \left(s_{i_{\alpha}}\right)_{g, x} \circ j_{F_{i_{\beta}}, g\opl x, g\opl y}^{\beta'} = \delta_{i_{\alpha}, i_{\beta}} \ \left( A_{g, x, y}^{F_{i_{\alpha}}} \right)^{\alpha'}_{\beta'}.
	\end{align*}
	This completes the proof.
\end{proof}

We now prove that the category of $\vectgo$-module functors $\mathcal M(X, \Psi_X)\to \mathcal M(Y, \Psi_Y)$ can be decomposed into subcategories which are parametrised by the orbits of $X\times Y$.

\begin{definition}
	For every orbit $\Gamma\in (X\times Y)/G$, define the $\mathbb F$-linear abelian category $\Fun_{\Gamma}(\mathcal M(X, \Psi_X), \mathcal M(Y, \Psi_Y))$ as the full subcategory of $\Fun_{\vectgo}(\mathcal M(X, \Psi_X), \mathcal M(Y, \Psi_Y))$ consisting of all $\vectgo$-module functors $\modfxy$ with $m_{xy}^F = 0$ whenever $(x, y)\notin \Gamma$.
\end{definition}

\begin{proposition} \label{prop:block_decomp}
	There exists an exact $\mathbb F$-linear equivalence
	\[ \Fun_{\vectgo}(\mathcal M(X, \Psi_X), \mathcal M(Y, \Psi_Y)) \cong \bigoplus_{\Gamma \in (X\times Y)/G} \Fun_{\Gamma}(\mathcal M(X, \Psi_X), \mathcal M(Y, \Psi_Y)). \]
\end{proposition}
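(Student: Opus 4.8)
The plan is to build an explicit $\mathbb F$-linear functor $\Phi$ from $\Fun_{\vectgo}(\mathcal M(X, \Psi_X), \mathcal M(Y, \Psi_Y))$ to the direct sum $\bigoplus_{\Gamma} \Fun_{\Gamma}(\mathcal M(X, \Psi_X), \mathcal M(Y, \Psi_Y))$ and to show that it is fully faithful and essentially surjective; exactness then comes for free, since an equivalence of abelian categories automatically preserves kernels and cokernels. The argument rests entirely on two locality observations already contained in the matrix description of module functors. First, by Lemma \ref{lem:modf_vectgo} the defining equation (\ref{eq:cond_A}) only relates matrices $A^F_{g, x, y}$ whose index $(x, y)$ lies in one fixed orbit of the diagonal $G$-action on $X\times Y$. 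Second, by Lemma \ref{lem:vectgo_mor_modf} the morphism condition (\ref{eq:cond_M}) likewise couples only those matrices $M^{\eta}_{x, y}$ with $(x, y)$ in a single orbit. Since $\mathcal M(X, \Psi_X)$ and $\mathcal M(Y, \Psi_Y)$ are finite, $X$ and $Y$ are finite sets, so $X\times Y$ is a finite $G$-set and the index set $(X\times Y)/G$ is finite; hence the direct sum above is a finite one.

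First I would define $\Phi$ on objects. Given $(F, s)$ and an orbit $\Gamma\in (X\times Y)/G$, let $F_{\Gamma}$ be the datum with $m^{F_{\Gamma}}_{x, y} := m^F_{x, y}$ and $A^{F_{\Gamma}}_{g, x, y} := A^F_{g, x, y}$ for $(x, y)\in\Gamma$, and $m^{F_{\Gamma}}_{x, y} := 0$ otherwise. By the locality of (\ref{eq:cond_A}) these data still satisfy the hypotheses of Lemma \ref{lem:modf_vectgo}, so $F_{\Gamma}$ is a well-defined module functor lying in $\Fun_{\Gamma}(\dots)$, and only finitely many $F_{\Gamma}$ are non-zero. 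Set $\Phi(F) := (F_{\Gamma})_{\Gamma}$. On a morphism $\eta: F\Rightarrow K$, send $\eta$ to the tuple $(\eta_{\Gamma})_{\Gamma}$ determined by $M^{\eta_{\Gamma}}_{x, y} := M^{\eta}_{x, y}$ for $(x, y)\in\Gamma$; by the locality of (\ref{eq:cond_M}) and Lemma \ref{lem:vectgo_mor_modf} each $\eta_{\Gamma}$ is a morphism $F_{\Gamma}\Rightarrow K_{\Gamma}$. Functoriality and $\mathbb F$-linearity are immediate, since composition and the linear structure on morphisms are given componentwise by the matrices $M^{\eta}_{x, y}$.

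Next I would verify that $\Phi$ is fully faithful. A morphism in the target from $(F_{\Gamma})_{\Gamma}$ to $(K_{\Gamma})_{\Gamma}$ is, by the definition of the direct sum of categories, exactly a tuple of morphisms $\eta_{\Gamma}: F_{\Gamma}\Rightarrow K_{\Gamma}$, each described by matrices $M_{x, y}$ for $(x, y)\in\Gamma$; there are no off-diagonal contributions, which is consistent with the fact that $\Hom(F_{\Gamma}, K_{\Gamma'}) = 0$ for $\Gamma\neq\Gamma'$, because such a morphism is given by matrices in $\Mat(m^{K_{\Gamma'}}_{x, y}\times m^{F_{\Gamma}}_{x, y}, \mathbb F)$ in which one of the two dimensions vanishes for every $(x, y)$. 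As $X\times Y$ is the disjoint union of its orbits and (\ref{eq:cond_M}) holds orbit by orbit, grouping a family $(M^{\eta}_{x, y})_{(x, y)\in X\times Y}$ by orbit is a bijection onto $\bigoplus_{\Gamma}\Hom(F_{\Gamma}, K_{\Gamma})$, which is precisely full faithfulness.

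Finally, for essential surjectivity I would reverse the construction: given $(F_{\Gamma})_{\Gamma}$, form the module-functor direct sum $F := \bigoplus_{\Gamma} F_{\Gamma}$ using Lemma \ref{lem:sum_vecgo_modf}. Writing $\Gamma(x, y)$ for the orbit of $(x, y)$, disjointness of the orbits means that for each $(x, y)$ only the summand $F_{\Gamma(x, y)}$ contributes, so the block-diagonal matrix of Lemma \ref{lem:sum_vecgo_modf} collapses to $A^F_{g, x, y} = A^{F_{\Gamma(x, y)}}_{g, x, y}$ and $m^F_{x, y} = m^{F_{\Gamma(x, y)}}_{x, y}$; hence $\Phi(F) = (F_{\Gamma})_{\Gamma}$. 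This makes $\Phi$ an $\mathbb F$-linear equivalence, and exactness follows since equivalences of abelian categories preserve kernels and cokernels. The only genuinely fiddly step — and the main obstacle — is matching the bookkeeping of the chosen inclusions and projections $j_{Fxy}, p_{Fxy}$ used in Lemma \ref{lem:sum_vecgo_modf} with the trivial index identifications built into $\Phi$, so that the two constructions are literally mutually inverse rather than merely naturally isomorphic; everything else in the argument is formal.
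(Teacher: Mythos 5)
Your proposal is correct and follows essentially the same route as the paper's proof: you restrict a functor orbit-by-orbit using the locality of (\ref{eq:cond_A}), recover it as a direct sum via Lemma \ref{lem:sum_vecgo_modf}, and use the orbit-locality of (\ref{eq:cond_M}) to get the Hom-decomposition (the paper phrases this as $\Hom(H_{\Gamma}, H_{\Gamma'})=0$ for $\Gamma\neq\Gamma'$). The only difference is presentational: you spell out the equivalence functor and verify full faithfulness and essential surjectivity explicitly, where the paper invokes the standard criterion that object decomposition plus Hom-vanishing yields a direct-sum decomposition of the category.
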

\begin{proof}
	For a $\vectgo$-module functor $\modfxy$ define module functors 
	\( F_{\Gamma} \in \Fun_{\Gamma}(\mathcal M(X, \Psi_X), \mathcal M(Y, \Psi_Y)) \) by
	\[ m_{xy}^{F_{\Gamma}} := \left\{ \begin{array}{ll} m_{xy}^F, & (x, y)\in \Gamma \\ 0, & \text{ otherwise} \end{array} \right. , \qquad A_{g, x, y}^{F_{\Gamma}} := \left\{ \begin{array}{ll} A_{g, x, y}^F, & (x, y)\in \Gamma \\ \left[\right], & \text{ otherwise} \end{array} \right. \]
	for $\Gamma\in (X\times Y)/G, g\in G, x\in X$ and $y\in Y$, where $[ ]$ denotes the $0\times 0$-matrix. It is straightforward to check that $F_{\Gamma}$ is well-defined and Lemma \ref{lem:sum_vecgo_modf} implies that $F = \bigoplus_{\Gamma \in (X\times Y)/G}\, F_{\Gamma}$. Furthermore,
	\( \Hom(H_{\Gamma}, H_{\Gamma'}) = 0 \)
	whenever 
	\[ H_{\Gamma} \in \blockfun,\, H_{\Gamma'}\in\blockfun[\Gamma'] \text{ and } \Gamma \neq \Gamma'. \]
	This concludes the proof.
\end{proof}

\begin{corollary} \label{cor:simp_modf}
	Every simple $\vectgo$-module functor $\modfxy$ is contained in $\blockfun$ for some $\Gamma \in (X\times Y)/G$.
\end{corollary}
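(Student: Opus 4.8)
The plan is to read the statement off the block decomposition of Proposition \ref{prop:block_decomp}, using only the elementary fact that a simple object of an abelian category cannot be written as a nontrivial direct sum of nonzero objects.

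First I would recall that $\Fun_{\vectgo}(\mathcal M(X, \Psi_X), \mathcal M(Y, \Psi_Y))$ is abelian by Proposition \ref{pro:modf_cat}, so the notion of a simple object is available, and a simple object is by definition nonzero with no nonzero proper subobjects. Let $\modfxy$ be simple. The construction in the proof of Proposition \ref{prop:block_decomp} produces module functors $F_\Gamma \in \blockfun$, one for each orbit $\Gamma \in (X\times Y)/G$, together with inclusions and projections (supplied by the direct-sum construction of Proposition \ref{pro:modf_cat} and Lemma \ref{lem:sum_vecgo_modf}) realising
\[ F \cong \bigoplus_{\Gamma \in (X\times Y)/G} F_\Gamma. \]
Since $X$ and $Y$ are finite, this is a finite direct sum, so each $F_\Gamma$ is a genuine direct summand of $F$.

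Next I would show that exactly one summand survives. Each $F_\Gamma$ is a direct summand of $F$, hence a subobject; as $F$ is simple, every $F_\Gamma$ is either $0$ or isomorphic to $F$. If two distinct orbits $\Gamma \neq \Gamma'$ gave $F_\Gamma, F_{\Gamma'} \neq 0$, then $F$ would contain the nonzero object $F_\Gamma \oplus F_{\Gamma'}$ as a direct summand, yielding a nontrivial direct-sum decomposition of $F$ and contradicting its simplicity. Because $F \neq 0$, at least one summand is nonzero, so there is a unique $\Gamma_0 \in (X\times Y)/G$ with $F_{\Gamma_0} \neq 0$ and $F_\Gamma = 0$ for $\Gamma \neq \Gamma_0$. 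Consequently $F \cong F_{\Gamma_0}$, and since $\blockfun[\Gamma_0]$ is a full subcategory, $F$ lies (up to isomorphism) in $\blockfun[\Gamma_0]$, as claimed.

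The argument is formal, and I expect no real obstacle: the only point needing care is the standard fact that a simple object admits no nontrivial direct-sum decomposition. Alternatively, this can be verified directly from Lemma \ref{lem:sum_vecgo_modf}, which shows that contributions from two distinct orbits force the matrices $A^F_{g, x, y}$ into block-diagonal form and hence split $F$; a simple $F$ therefore cannot meet more than one orbit.
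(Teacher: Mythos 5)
Your proof is correct and is essentially the argument the paper intends: the corollary is stated as an immediate consequence of the block decomposition in Proposition \ref{prop:block_decomp}, and you have simply spelled out the standard fact that a simple object of an abelian category cannot split as a direct sum of two nonzero summands, so exactly one block $F_{\Gamma_0}$ survives. Your final remark that one could instead argue directly from Lemma \ref{lem:sum_vecgo_modf} is also fine, but no new ingredient beyond Proposition \ref{prop:block_decomp} is needed.
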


We now determine the right adjoint of a $\vectgo$-module functor $\modfxy$.
Recall from Section \ref{ssec:modf_prop} the definition (\ref{eq:modf_adj}) of the coherence datum for the right adjoint module functor $(F^r, s'):\mathcal M(Y, \Psi_Y)\to\mathcal M(X, \Psi_X)$.
To determine matrices $(A_{g, x, y}^{F^r})_{g, x, y}$ which describe this coherence datum, we need to choose inclusions and projections for $F^r(x)$. For $x\in X, y\in Y$, let
\[ \phi_{x,y}: \Hom(F(x), y) \overset{\sim}{\longrightarrow} \Hom(x, F^r(y)): f \mapsto F^r(f) \circ \eta_x, \]
where $\eta: \Ident_{\mathcal M(X, \Psi_X)} \Rightarrow F^r F$ is the unit of the adjunction $F \dashv F^r$. Since $F^r$ is $\mathbb F$-linear and composition is bilinear, $\phi_{x, y}$ is $\mathbb F$-linear.
We now define inclusions for $F^r(x)$ as
\[ j_{F^r, y, x}^{\alpha} := \phi_{x, y}(p_{F, x, y}^{\alpha}):\ x\to F^r(y) \qquad (x\in X, y\in Y, \alpha=1,\dots,m_{xy}^F). \]
Since they form a basis of $\Hom(y, F^r(x))$, there exist projections $(p_{F^r, y, x}^{\alpha})_{y, x, \alpha}$ with
\[ p_{F^r,y,x}^{\alpha} \circ j_{F^r,y,x'}^{\beta} = \delta_{x, x'}\, \delta_{\alpha \beta}\, \ident_x\, , \quad \sum_{x\in X} \sum_{\alpha=1}^{m_{xy}^F} j_{F^r,y,x}^{\alpha} \circ p_{F^r,y,x}^{\alpha} = \ident_{F^r(y)}. \]

\begin{proposition}
	The right and left adjoint of a $\vectgo$-module functor $\modfxy$ are both given by the $\vectgo$-module functor $(F^r, s'): \mathcal M(Y, \Psi_Y)\to \mathcal M(X, \Psi_X)$, which is defined by
	\[ m_{yx}^{F^r} = m_{xy}^F, \]
	\[ A_{g, y, x}^{F^r} = \Psi_X(g, g^{-1}, g\opl x) \, \Psi_Y^{-1}(g, g^{-1}, g\opl y) \, \left(A_{g^{-1}, g\opl x, g\opl y}^F\right)^T. \]
	Here, $A^T$ denotes the transpose of a matrix $A$.
\end{proposition}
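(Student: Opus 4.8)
The plan is to compute the coherence datum $s'$ of the right adjoint directly from its defining adjunction chain (\ref{eq:modf_adj}), and then to dispatch the left adjoint by a symmetric argument. First I would settle the easy parts. Since $\vectgo$ is a fusion category and $\mathcal M(X,\Psi_X),\mathcal M(Y,\Psi_Y)$ are finite semisimple, the right adjoint $F^r$ exists and is again a $\vectgo$-module functor by the results recalled in Section \ref{ssec:modf_prop}. Its multiplicities follow from the adjunction and semisimplicity over the algebraically closed field $\mathbb F$: one has $m^{F^r}_{yx}=\dim_{\mathbb F}\Hom(F^r(y),x)=\dim_{\mathbb F}\Hom(x,F^r(y))=\dim_{\mathbb F}\Hom(F(x),y)=m^F_{xy}$, where the second equality holds because both sides count the multiplicity of the simple object $x$ in $F^r(y)$ (Schur's Lemma \ref{lem:schur}). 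Throughout I would keep the inclusions and projections for $F^r(y)$ fixed as in the paragraph preceding the statement, namely $j^{\alpha}_{F^r,y,x}=\phi_{x,y}(p^{\alpha}_{F,x,y})=F^r(p^{\alpha}_{F,x,y})\circ\eta_x$; this particular choice is exactly what makes the unit $\eta$ and counit $\epsilon$ disappear in the computation.

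The heart of the proof is to evaluate $A^{F^r}_{g,y,x}=(\ident_g\opl p_{F^r,y,x})\circ s'_{g,y}\circ j_{F^r,g\opl y,g\opl x}$ by running the chain (\ref{eq:modf_adj}) with $C=\delta^g$, $N=y$ and $M=g\opl x$ on the inclusion $j^{\beta}_{F^r,g\opl y,g\opl x}$. Two simplifications drive the calculation. The two adjunction isomorphisms (steps $1$ and $4$ of the chain) collapse via the zigzag identities for $\eta,\epsilon$: applied to our chosen inclusion, step $1$ reduces to the projection $p^{\beta}_{F,g\opl x,g\opl y}$, and step $4$ is precisely the map $\phi$, so it turns a projection of $F$ back into an inclusion of $F^r$ that then pairs off against $p_{F^r,y,x}$. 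The two applications of Lemma \ref{lem:*opl_adj}, with $C=\delta^g$ and hence $C^*=\delta^{g^{-1}}$, contribute only scalars: the inverse direction produces $\eval_g^R=\omega(g^{-1},g,g^{-1})$ together with $m^{-1}_{g^{-1},g,y}=\Psi_Y^{-1}(g^{-1},g,y)$ in $\mathcal N$, while the forward direction produces $\coev_g^R=\ident_1$ (scalar $1$ by (\ref{eq:vecgo_ev})) together with $m_{g,g^{-1},g\opl x}=\Psi_X(g,g^{-1},g\opl x)$ in $\mathcal M$. The remaining middle isomorphism (step $3$) is induced by the coherence datum $s_{g^{-1},g\opl x}$ of $F$, which inserts exactly the matrix $A^F_{g^{-1},g\opl x,g\opl y}$; it appears transposed because our inclusions for $F^r$ are, by construction, the $\phi_{x,y}$-images of the projections of $F$, so the two multiplicity indices exchange roles.

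It then remains to collect the scalar factors. The $\mathcal N$-side contribution $\omega(g^{-1},g,g^{-1})\,\Psi_Y^{-1}(g^{-1},g,y)$ is rewritten using the coboundary condition (\ref{eq:2cocycle}) for $\Psi_Y$ at the tuple $(g,g^{-1},g,g\opl y)$ together with the normalization of $\Psi_Y$ and the identity (\ref{eq:3co_inv}); this telescopes to $\Psi_Y^{-1}(g,g^{-1},g\opl y)$, while the $\mathcal M$-side already gives $\Psi_X(g,g^{-1},g\opl x)$. Combining the three contributions yields the claimed formula $A^{F^r}_{g,y,x}=\Psi_X(g,g^{-1},g\opl x)\,\Psi_Y^{-1}(g,g^{-1},g\opl y)\,(A^F_{g^{-1},g\opl x,g\opl y})^T$. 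That the resulting family satisfies the module-functor condition (\ref{eq:cond_A}) is automatic, since $s'$ is a genuine coherence datum; alternatively one can verify it directly from (\ref{eq:cond_A}) for $F$.

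Finally, for the left adjoint I would run the analogous chain built from the left adjunction. Since $\vectgo$ carries a spherical structure, so that $**$ is trivialized and the left and right dimensions of simple objects agree, the left and right adjoints of $F$ coincide as module functors, and the same formula results; one could also simply repeat the previous computation verbatim with left evaluations and coevaluations and check, using (\ref{eq:3co_inv}), that the scalars match. I expect the main obstacle to be purely the scalar bookkeeping of the two middle paragraphs: tracking every $\omega$- and $\Psi$-factor, together with the correct evaluation points, through the five isomorphisms, and then recognising the telescoping provided by (\ref{eq:2cocycle}) and (\ref{eq:3co_inv}) that collapses them into the stated closed form.
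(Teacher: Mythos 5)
Your computation of the right adjoint is essentially the paper's own proof: you run the same chain (\ref{eq:modf_adj}) with $C=\delta^g$, keep the same inclusions $j^{\alpha}_{F^r,y,x}=\phi_{x,y}(p^{\alpha}_{F,x,y})$ so that unit and counit cancel, and your scalar bookkeeping agrees with the paper's --- your coboundary tuple $(g,g^{-1},g,g\opl y)$ together with (\ref{eq:3co_inv}) is equivalent to the paper's application of (\ref{eq:2cocycle}) at $(g^{-1},g,g^{-1},y)$, and the transpose appears for exactly the reason you give (the multiplicity indices of $F$ and $F^r$ exchange roles under $\phi$).

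The one place you diverge is the left adjoint, and your primary argument there has a gap. Sphericality of $\vectgo$ does not by itself imply that the left and right adjoints of a module functor coincide: ``$**$ is trivialised and left and right dimensions of simple objects agree'' is a statement about $\vectgo$, not about $\Fun_{\vectgo}(\mathcal M,\mathcal N)$. The coincidence of adjoints requires compatible trace structure on the module categories themselves (Schaumann's module traces, with $\kappa=1$), which neither the paper nor your sketch develops, and which is not assumed in the statement. Your fallback --- rerunning the computation with left evaluations and coevaluations --- would work, but needs the left-dual analogue of Lemma \ref{lem:*opl_adj} and a second round of scalar bookkeeping. The paper avoids both: it replaces $F$ by $F^r$ in the two displayed formulas and checks directly that $(F^r)^r\cong F$ as module functors; since $(F^r)^r$ is the right adjoint of $F^r$, this exhibits $F$ as right adjoint of $F^r$, i.e.\ $F^r$ is also the left adjoint of $F$. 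That one-line trick stays entirely inside the formula you have already proved, and you should adopt it to close the argument.
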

\begin{proof}
We first establish that $(F^r, s')$ is right adjoint to $(F, s)$. Since $F^r$ is right adjoint to $F$ as a functor, we have
\[ m_{yx}^{F^r} = \dim_{\mathbb F}(\Hom(y, F^r(x))) = \dim_{\mathbb F}(\Hom(F(y), x)) = m_{xy}^F. \]
Substitute $M:=x, C:=\delta^g$ and $N:=y$ in (\ref{eq:modf_adj}). Then the chain of isomorphisms in (\ref{eq:modf_adj}) defines the map
\[ \varphi_{g, y}:= s'_{g, y} \circ -:\ \Hom(x, F^r(\delta^g \opl y)) \to \Hom(x, \delta^g \opl F^r(y)). \]
We now calculate the matrix entries of $A_{g, y, x}^{F^r}$. We have
\[ \left( A^{F^r}_{g, y, x} \right)^{\alpha}_{\beta} = (\ident_g \opl p_{F^r, y, x}^{\alpha}) \circ s'_{g, y} \circ j_{F^r, g\opl y, g\opl x}^{\beta} = (\ident_g \opl p_{F^r, y, x}^{\alpha}) \circ \varphi_{g, y} (j_{F^r, g\opl y, g\opl x}^{\beta}). \]
Set $q_{g, x, y}^{\alpha} := \varphi_{g, y}(j_{F^r, g\opl y, x}^{\alpha})$.
By writing out the isomorphisms in (\ref{eq:modf_adj}) explicitly, we get
\begin{align*}  q_{g, x, y}^{\alpha} &= \left( \ident_g \opl 
\phi_{g^{-1}\opl x, y}\left( 
	(\eval_g^R \opl \ident_y) \circ m_{g^{-1}, g, y}^{-1} \circ \left( \ident_{g^{-1}} \opl \phi_{x, g\opl y}^{-1}(j_{F^r, g\opl y, x}^{\alpha})\right) \circ s_{g^{-1}, x}
\right) \right) \\
&\hspace*{2em} \circ m_{g, g^{-1}, x}\circ (\coev_g^R \opl \ident_x)\\
&= \omega(g^{-1}, g, g^{-1})\, \Psi_X(g, g^{-1}, x) \, \Psi_Y^{-1}(g^{-1}, g, y) \, \sum_{\beta=1}^{m_{xy}^F} \left( A_{g^{-1}, x, g\opl y}^F \right)_{\alpha}^{\beta} \, (\ident_g \opl j_{F^r, y, g^{-1} \opl x}^{\beta}).
\end{align*}
Once more, we abused notation and wrote $m$ for the module constraint of $\mathcal M(X, \Psi_X)$ and of $\mathcal M(Y, \Psi_Y)$.
We now apply the coboundary condition (\ref{eq:2cocycle}) for $\Psi_Y$ and $(g^{-1}, g, g^{-1}, y)$ and obtain
\[ q_{g, x, y}^{\alpha} = \Psi_X(g, g^{-1}, x) \, \Psi_Y^{-1}(g, g^{-1}, g\opl y) \, \sum_{\beta=1}^{m_{xy}^F} \left( A_{g^{-1}, x, g\opl y}^F \right)_{\alpha}^{\beta} \, (\ident_g \opl j_{F^r, y, g^{-1} \opl x}^{\beta}). \]
As a result,
\[ \left(A_{g, y, x}^{F^r}\right)^{\alpha}_{\beta} = (\ident_g \opl p_{F^r, y, x}^{\alpha}) \circ q_{g, g\opl x, y}^{\beta} = \Psi_X(g, g^{-1}, g\opl x) \, \Psi_Y^{-1}(g, g^{-1}, g\opl y) \, \left( A_{g^{-1}, g\opl x, g\opl y}^F \right)^{\beta}_{\alpha} \]
\[ \Rightarrow \quad A_{g, y, x}^{F^r} = \Psi_X(g, g^{-1}, g\opl x) \, \Psi_Y^{-1}(g, g^{-1}, g\opl y) \, \left(A_{g^{-1}, g\opl x, g\opl y}^F\right)^T. \]

Replacing $F$ by $F^r$ in the formulae for $m_{yx}^{F^r}$ and $A_{g, y, x}^{F^r}$, one easily checks that $(F^r)^r \cong F$ as a module functor. Hence $F^r$ is left adjoint to $F$.
\end{proof}

\subsection
{Classification of simple module functors for $G=\mathbb Z/n \mathbb Z$}
We now use the explicit description of $\vectgo$-module functors from Section \ref{ssec:vecgo_modf} to classify the simple $\vectgo$-module functors between finite semisimple module categories in the case where $G$ a finite cyclic group.
Throughout this section assume that $G=\mathbb Z/n \mathbb Z$ is the cyclic group of order $n\in\mathbb N\backslash \{ 0 \}$.
We denote the elements of $\mathbb Z/n \mathbb Z$ by $\bar k$, where $k\in \mathbb Z$.

The case $G=\mathbb Z/n\mathbb Z$ is also of particular interest in the context of Dijkgraf-Witten theory because there exists an explicit formula for all 3-cocycles $\omega\in Z^3(G, \units)$ up to cohomology in this case. We have $H^3(G, \units) = \mathbb Z/n \mathbb Z$ and
\[ \omega_{\bar s}(\bar k, \bar l, \overline m) = \varepsilon^{\frac{sk(l+m-(l+m)')}{n}} \qquad \left( \bar k, \bar l, \overline m\in \mathbb Z/n \mathbb Z \right) \]
defines a set of representatives $\{ \omega_{\bar s} \mid \bar s\in \mathbb Z/n \mathbb Z \}$ for the cohomology classes, where $\varepsilon$ is a primitive $n$-th root of unity and $z'$ denotes the remainder of division by $n$ for all $z\in\mathbb Z$ \cite[Example 2.6.4]{egno}.

\begin{lemma} \label{lem:simp_modf}
	Let $\modfxy$ be a simple $\vectgo$-module functor. Then there exists an orbit $\Gamma \in (X\times Y)/G$ with
	\[ m_{xy}^F = \left\{ \begin{array}{ll} 1, & (x, y)\in \Gamma \\ 0, & \text{ otherwise}. \end{array} \right. \]
\end{lemma}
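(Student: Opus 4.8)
The plan is to first reduce to a single orbit and then to analyse the endomorphism algebra of $F$. By Corollary \ref{cor:simp_modf}, a simple module functor $F$ lies in $\Fun_{\Gamma}(\mathcal M(X, \Psi_X), \mathcal M(Y, \Psi_Y))$ for some orbit $\Gamma \in (X\times Y)/G$, so $m_{xy}^F = 0$ for $(x,y) \notin \Gamma$. Since Lemma \ref{lem:modf_vectgo} gives $m_{xy}^F = m_{g\opl x, g\opl y}^F$, the multiplicity is constant on $\Gamma$, say $m_{xy}^F = m$ for $(x,y)\in\Gamma$, and $m \geq 1$ because a simple object is nonzero. It then remains only to prove $m = 1$, which is where the hypothesis $G = \mathbb Z/n\mathbb Z$ enters.

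Next I would pass to the stabilizer. Fix a base point $(x_0, y_0) \in \Gamma$ and let $H = \{ g\in G : g\opl x_0 = x_0,\ g\opl y_0 = y_0\}$ be its stabilizer under the diagonal action. Setting $\rho(g) := A_{g, x_0, y_0}^F \in \glmat(m, \mathbb F)$ for $g\in H$, condition (\ref{eq:cond_A}) evaluated at the $H$-fixed point $(x_0, y_0)$ shows that $\rho$ is a projective representation of $H$ (up to passing to the opposite group), with a Schur multiplier built from $\Psi_X$ and $\Psi_Y$. In particular each $\rho(g)$ is a scalar multiple of a power of $\rho(c)$, where $c$ is a generator of $H$; note that $H$ is cyclic, being a subgroup of the cyclic group $G$.

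I would then identify the endomorphism algebra of $F$. By Lemma \ref{lem:vectgo_mor_modf}, applied with both functors equal to $F$, an endomorphism of $F$ is a family of matrices $(M_{x,y})_{(x,y)\in\Gamma}$ satisfying (\ref{eq:cond_M}); since $G$ acts transitively on $\Gamma$ and the matrices $A^F_{g,x,y}$ are invertible, such a family is determined by $M_{x_0, y_0}$, and (\ref{eq:cond_M}) restricted to $g\in H$ says exactly that $M_{x_0,y_0}$ lies in the centralizer $Z := \{ N \in \Mat(m\times m, \mathbb F) : N\rho(c) = \rho(c)\, N\}$. The resulting map $\End(F) \to Z$ is an algebra isomorphism, its inverse being the transport of an endomorphism around the orbit; the only point to verify is that this transport is independent of the chosen group element, which holds precisely because $N$ commutes with the holonomy $\rho(H)$. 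I expect this middle step to be the main obstacle: it is the induced-representation bookkeeping that cleanly reduces the full action-groupoid data to the single generator $\rho(c)$.

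Finally I would conclude using semisimplicity. The category $\Fun_{\vectgo}(\mathcal M(X, \Psi_X), \mathcal M(Y, \Psi_Y))$ is semisimple by Proposition \ref{pro:modfc_semi}, so over the algebraically closed field $\mathbb F$ a simple object has one-dimensional endomorphism algebra; hence $\dim_{\mathbb F} Z = 1$. But if $m\geq 2$, the centralizer $Z$ of the single matrix $\rho(c)$ contains the linearly independent pair $\ident, \rho(c)$ when $\rho(c)$ is non-scalar, and equals all of $\Mat(m\times m, \mathbb F)$ when $\rho(c)$ is scalar; either way $\dim_{\mathbb F} Z \geq 2$, a contradiction. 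Therefore $m = 1$, and once the isomorphism $\End(F)\cong Z$ is established the cyclicity of $H$ does all the remaining work.
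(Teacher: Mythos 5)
Your proposal is correct and follows essentially the same route as the paper's proof: reduce to a single orbit via Corollary \ref{cor:simp_modf}, use Lemma \ref{lem:vectgo_mor_modf} plus transitivity to show an endomorphism is determined by its matrix at a base point, identify $\Hom((F,s),(F,s))$ with the centralizer of the matrix attached to a generator of the stabiliser (the paper's $\bar r$ with $r=|\Gamma|$, your $c$), and derive a contradiction with $\dim_{\mathbb F}\Hom((F,s),(F,s))=1$ by the identical scalar/non-scalar dichotomy. Your projective-representation and holonomy language is only a repackaging of the same stabiliser bookkeeping the paper carries out directly.
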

\begin{proof}
	By Corollary \ref{cor:simp_modf}, there exists an orbit $\Gamma \in (X\times Y)/G$ such that $m_{xy}^F = 0$ if $(x, y)\notin \Gamma$. The category $\blockfun[\vectgo]$ is semisimple by Proposition \ref{pro:modfc_semi}, so it follows from Schur's Lemma \ref{lem:schur} that $(F, s)$ is simple if and only if
	\[ \Hom((F, s), (F, s)) \cong \mathbb F. \]
	By Lemma \ref{lem:vectgo_mor_modf}, morphisms $\eta: (F, s) \Rightarrow (F, s)$ of $\vectgo$-module functors are in bijection with families of matrices $(M_{x, y}^{\eta})_{x, y}$ that satisfy
	\[ M_{x, y}^{\eta}\, A^F_{\bar k, x, y} = A^F_{\bar k, x, y} \, M^{\eta}_{\bar k\opl x, \bar k\opl y} \qquad (x\in X, y\in Y, \bar k \in G). \]
	Fix an element $(x, y)\in \Gamma$ and write $r:=|\Gamma|$ for the length of $\Gamma$. Note that $\bar r\in G$ generates the stabiliser subgroup of $(x, y)$. For every $(x', y')\in\Gamma$ there is some $\bar k\in G$ with $(x', y')=(\bar k\opl x, \bar k\opl y)$, so $\eta$ is uniquely determined by the single matrix $M_{x, y}^{\eta}$:
	\[ M_{\bar k \opl x, \bar k \opl y}^{\eta} = \left(A_{\bar k, x, y}^F\right)^{-1}\, M_{x, y}^{\eta}\, A_{\bar k, x, y}^F. \]
	A direct calculation shows that a given matrix $M_{x, y} \in \Mat(m_{xy}^F\times m_{xy}^F, \mathbb F)$ determines an endomorphism $\eta: (F, s) \Rightarrow (F, s)$ via $M_{\bar k \opl x, \bar k \opl y}^{\eta} := (A_{\bar k, x, y}^F)^{-1}\, M_{x, y}\, A_{\bar k, x, y}^F$ if and only if
	\[ M_{x, y} = \left(A_{\bar r, x, y}^F\right)^{-1}\, M_{x, y}\, A_{\bar r, x, y}^F, \]
	i.e. iff $M_{x, y}$ commutes with $A_{\bar r, x, y}^F$. Hence we have an $\mathbb F$-linear bijection
	\[ \Hom((F, s), (F, s)) \cong \left\{ M\in \Mat(m_{xy}^F\times m_{xy}^F, \mathbb F) \ \middle| \ MA_{\bar r, x, y}^F = A_{\bar r, x, y}^F\, M\right\}. \]
	Assume $m_{xy}^F \geq 2$. If $A_{\bar r, x, y}^F$ is a multiple of the identity matrix, every $2\times 2$-matrix commutes with $A_{\bar r, x, y}^F$ and therefore
	\[ 1 = \dim_{\mathbb F}(\Hom((F, s), (F, s))) = \dim_{\mathbb F}\left(\Mat(m_{xy}^F\times m_{xy}^F, \mathbb F)\right) \geq 4, \]
	a contradiction. If $A_{\bar r, x, y}^F$ is not a multiple of the identity matrix, then there are two linearly independent matrices that commute with $A_{\bar r, x, y}^F$: the identity matrix and $A_{\bar r, x, y}^F$ itself. Hence $1=\Hom((F, s), (F, s)) \geq 2$, a contradiction. It follows that $m_{xy}^F = 1$.
\end{proof}

We can now parametrise all simple $\vectgo$-module functors up to isomorphism:

\begin{definition} \label{def:vecgo_modf_class}
	For every $\Gamma \in (X\times Y)/G$, choose $(x, y)\in\Gamma$, write $r:=|\Gamma|$ and let $\Xi_{\Gamma} \subset \units$ be the set of $n$-th roots of
	\[ \gamma_{x, y} = \prod_{t=1}^{n-1} \Psi_X^{-1}(\bar 1, \bar t, \overline{t+1} \opl x)\, \Psi_Y(\bar 1, \bar t, \overline{t+1}\opl y) \in \units. \]
	Furthermore, we define an equivalence relation $\sim$ on $\Xi_{\Gamma}$ by
	\[ \xi_1 \sim \xi_2 \, :\Leftrightarrow\, \xi_1^r = \xi_2^r \]
	and write $[\xi]$ for the equivalence class of $\xi$ with respect to $\sim$.
\end{definition}

\begin{theorem}
	The isomorphism classes of simple $\vectgo$-module functors $\mathcal M(X, \Psi_X) \to \mathcal M(Y, \Psi_Y)$ are in bijection with pairs $(\Gamma, [\xi])$ of an orbit $\Gamma\in (X\times Y)/G$ and $[\xi]\in \Xi_{\Gamma}/\mathbin\sim$. In particular, if the characteristic of $\mathbb F$ is zero, the number of such isomorphism classes is
	\[ \sum_{\Gamma\in (X\times Y)/G} \frac n {|\Gamma|}. \]
\end{theorem}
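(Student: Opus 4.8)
The plan is to combine Lemma~\ref{lem:simp_modf} and Corollary~\ref{cor:simp_modf} to reduce the classification to a single orbit, and then to distil the matrix data $(A^F_{g,x,y})$ into one scalar invariant which I match with $\Xi_\Gamma/{\sim}$. By Corollary~\ref{cor:simp_modf} every simple module functor $(F,s)$ lies in $\Fun_{\Gamma}(\mathcal M(X,\Psi_X),\mathcal M(Y,\Psi_Y))$ for a unique orbit $\Gamma\in(X\times Y)/G$, and by Lemma~\ref{lem:simp_modf} it has $m^F_{xy}=1$ on $\Gamma$ and $0$ elsewhere. Hence all matrices $A^F_{\bar k,x',y'}$ are elements of $\units$, and the functor is encoded by these scalars subject to~(\ref{eq:cond_A}). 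Fix a base point $(x,y)\in\Gamma$, write $r:=|\Gamma|$, and recall that the stabiliser of $(x,y)$ in $G=\mathbb Z/n\mathbb Z$ is $\langle\bar r\rangle$, so $\bar 1$ acts on $\Gamma$ as a single $r$-cycle.

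Next I would reduce to the \emph{step scalars} $a_j:=A^F_{\bar 1,\bar j\opl x,\bar j\opl y}\in\units$ for $j=0,\dots,r-1$; these are periodic with period $r$ since they depend only on the point $\bar j\opl(x,y)$. As $G$ is generated by $\bar 1$, iterating~(\ref{eq:cond_A}) with $g=\bar 1$ expresses every $A^F_{\bar k,x,y}$ as a product of the $a_j$ with explicit $\Psi$-factors; using $A^F_{\bar 0,x,y}=1$ and normalisation of $\Psi_X,\Psi_Y$ (which kills the $t=0$ factor), the telescoped identity at $\bar k=\bar n=\bar 0$ reads
\[ 1 = A^F_{\bar n,x,y}=\gamma_{x,y}^{-1}\prod_{j=0}^{n-1}a_j, \]
so $\prod_{j=0}^{n-1}a_j=\gamma_{x,y}$. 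By periodicity $\prod_{j=0}^{n-1}a_j=P^{n/r}$ with $P:=\prod_{j=0}^{r-1}a_j$, hence $P$ is an $(n/r)$-th root of $\gamma_{x,y}$; conversely any $a_j$ satisfying this one relation reconstruct a functor. The main technical point is exactly this collapse of the full family~(\ref{eq:cond_A}) (for all $g,h$) to the single monodromy constraint $\prod_j a_j=\gamma_{x,y}$ — the only genuine consistency being the wrap-around at $\bar n=\bar 0$, with~(\ref{eq:cond_A}) for arbitrary $g,h$ then following by induction from cyclicity of $G$.

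I would then read off isomorphism classes. By Lemma~\ref{lem:vectgo_mor_modf} an isomorphism $F\Rightarrow H$ is a family of nonzero scalars $M_{x',y'}\in\units$, $(x',y')\in\Gamma$, with $M_{x',y'}A^F_{\bar k,x',y'}=A^H_{\bar k,x',y'}M_{\bar k\opl x',\bar k\opl y'}$; taking $\bar k=\bar 1$ gives $a^H_j=a^F_j\,M_{\bar j\opl(x,y)}/M_{\overline{j+1}\opl(x,y)}$. Multiplying over $j=0,\dots,r-1$ the $M$-factors telescope, because $\bar r\opl(x,y)=(x,y)$, so $P^H=P^F$; conversely $P^H=P^F$ makes the telescoping obstruction vanish and lets one solve recursively for the $M_{x',y'}$ (again promoting the $g=\bar 1$ relations to all $g$ by cyclicity). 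Thus $P\in\units$ is a complete invariant, and simple functors supported on $\Gamma$ are in bijection with the $(n/r)$-th roots of $\gamma_{x,y}$.

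Finally I would match this with $\Xi_\Gamma/{\sim}$ through the $r$-th power map. For $\xi\in\Xi_\Gamma$ one has $(\xi^r)^{n/r}=\xi^n=\gamma_{x,y}$, so $\xi\mapsto\xi^r$ sends $\Xi_\Gamma$ into the $(n/r)$-th roots of $\gamma_{x,y}$; since $\mathbb F$ is algebraically closed this is surjective (any $(n/r)$-th root $P$ admits $\xi$ with $\xi^r=P$, and then $\xi^n=\gamma_{x,y}$), and by definition $\xi_1\sim\xi_2$ iff $\xi_1^r=\xi_2^r$, so $\Xi_\Gamma/{\sim}$ is in bijection with the $(n/r)$-th roots of $\gamma_{x,y}$. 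Composing the two bijections, the simple module functors are classified by pairs $(\Gamma,[\xi])$, the functor attached to $(\Gamma,[\xi])$ being the one with $P=\xi^r$. In characteristic zero $\gamma_{x,y}\neq 0$ has exactly $n/r=n/|\Gamma|$ distinct $(n/r)$-th roots, giving the count $\sum_{\Gamma\in(X\times Y)/G}n/|\Gamma|$.
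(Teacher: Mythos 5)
Your proposal is correct and follows essentially the same route as the paper's proof: reduce to a single orbit with all multiplicities equal to one (Corollary \ref{cor:simp_modf} and Lemma \ref{lem:simp_modf}), iterate (\ref{eq:cond_A}) along the generator $\bar 1$, extract the monodromy $P=\prod_{j=0}^{r-1}a_j$ subject to the wrap-around constraint $P^{n/r}=\gamma_{x,y}$, show via telescoping that $P$ is a complete isomorphism invariant, and identify its possible values with $\Xi_\Gamma/\mathbin\sim$ through the $r$-th power map (the paper packages the same content as explicit normal forms $F_{\Gamma,[\xi]}$ with $\xi^r=P$ rather than as an invariant, but the substance is identical). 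One caveat: promoting (\ref{eq:cond_A}) from $g=\bar 1$ to arbitrary $g,h$ is \emph{not} a consequence of cyclicity alone — the inductive step requires the coboundary conditions (\ref{eq:2cocycle}) for $\Psi_X$ and $\Psi_Y$ (equivalently, that $\Psi_X\Psi_Y^{-1}$ restricted to the orbit is a genuine $2$-cocycle, the $\omega$-factors cancelling); this verification is precisely what occupies step 1 of the paper's proof, so it should not be waved away as formal. By contrast, your promotion of the morphism relations (\ref{eq:cond_M}) from $g=\bar 1$ to all $g$ really is purely formal telescoping, since the $\Psi$-prefactors of $A^F_{\bar k}$ and $A^H_{\bar k}$ coincide.
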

\begin{proof}
	For every orbit $\Gamma \in (X\times Y)/G$ and $[\xi]\in \Xi_{\Gamma}/\mathbin\sim$, define a $\vectgo$-module functor
	$F_{\Gamma, [\xi]}: \mathcal M(X, \Psi_X) \to \mathcal M(Y, \Psi_Y)$
	by
	\[ m_{xy}^{F_{\Gamma, [\xi]}} = \left\{ \begin{array}{ll} 1, & (x, y)\in \Gamma \\ 0, & \text{ otherwise} \end{array} \right. , \]
	\[ A_{\bar k, x, y}^{F_{\Gamma, [\xi]}} = \xi^k \prod_{t=1}^{k-1} \Psi_X (\bar 1, \bar t, \overline{t+1}\opl x)\, \Psi_Y^{-1}(\bar 1, \bar t, \overline{t+1}\opl y) \in \units \cong \glmat(1, \mathbb F) \quad (k\geq 1, x, y\in \Gamma). \]
	
	1. We prove that $F_{\Gamma, [\xi]}$ is well-defined. To improve readability, write $A_{\bar k, x, y}$ instead of $A_{\bar k, x, y}^{F_{\Gamma, [\xi]}}$. It is straightforward to check that the definition of $A_{\bar k, x, y}$ does not depend on the choice of the representative for $\bar k$.
	It remains to show that (\ref{eq:cond_A}) is satisfied for $(A_{\bar k, x, y})_{\bar k, x, y}$.
	The coboundary condition (\ref{eq:2cocycle}) for $\Psi_X$ and $(\bar 1, \bar t, \bar l, \overline{t+l+1} \opl x)$ reads
	\begin{multline*} \Psi_X(\overline{t+1}, \bar l, \overline{t+l+1}\opl x) \, \Psi_X(\bar 1, \bar t, \overline{t+l+1}\opl x) = \\ \omega(\bar 1, \bar t, \bar l) \, \Psi_X(\bar t, \bar l, \overline{t+l} \opl x) \, \Psi_X(\bar 1, \overline{t+l}, \overline{t+l+1} \opl x). \end{multline*}
	Together with the analogous equation for $\Psi_Y$, this implies
	\begin{align*}
		&\hphantom{{}={}}\Psi_X(\overline{t+1}, \bar l, \overline{t+l+1}\opl x) \, \Psi_Y^{-1}(\overline{t+1}, \bar l, \overline{t+l+1} \opl y) \\
		&\hphantom{{}={}}\Psi_X(\bar 1, \bar t, \overline{t+l+1} \opl x) \, \Psi_Y^{-1}(\bar 1, \bar t, \overline{t+l+1} \opl y) \\
		&= \Psi_X(\bar t, \bar l, \overline{t+l} \opl x) \,
		\Psi_Y^{-1}(\bar t, \bar l, \overline{t+l} \opl x) \\
		&\hphantom{{}={}}\Psi_X(\bar 1, \overline{t+l}, \overline{t+l+1} \opl x) \,
		\Psi_Y^{-1}(\bar 1, \overline{t+l}, \overline{t+l+1} \opl x).
	\end{align*}
	Applying this for $t=1, \dots, k-1$, we get
	\begin{align*}
		&\hphantom{{}={}} \Psi_X(\bar k, \bar l, \overline{k+l}\opl x) \, \Psi_Y^{-1}(\bar k, \bar l, \overline{k+l}\opl y) \ A_{\bar l, x, y} \, A_{\bar k, \bar l\opl x, \bar l\opl y} \\
		&=\Psi_X(\bar k, \bar l, \overline{k+l}\opl x) \, \Psi_Y^{-1}(\bar k, \bar l, \overline{k+l}\opl y) \,
		\xi^{k+l} \left( \prod_{t=1}^{l-1} \Psi_X(\bar 1, \bar t, \overline{t+1} \opl x) \, \Psi_Y^{-1}(\bar 1, \bar t, \overline{t+1}\opl y) \right) \\
		&\hphantom{{}= \Psi_X(\bar k, \bar l, \overline{k+l}\opl x) \,} \prod_{t=1}^{k-1} \Psi_X(\bar 1, \bar t, \overline{t+l+1}\opl x) \, \Psi_Y^{-1}(\bar 1, \bar t, \overline{t+l+1}\opl y) \\
		&=\Psi_X(\bar 1, \bar l, \overline{l+1}\opl x) \, \Psi_Y^{-1}(\bar 1, \bar l, \overline{l+1}\opl y) \,
		\xi^{k+l} \left( \prod_{t=1}^{l-1} \Psi_X(\bar 1, \bar t, \overline{t+1} \opl x) \, \Psi_Y^{-1}(\bar 1, \bar t, \overline{t+1}\opl y) \right) \\
		&\hphantom{{}= \Psi_X(\bar k, \bar l, \overline{k+l}\opl x) \,} \prod_{t=1}^{k-1} \Psi_X(\bar 1, \overline{t+l}, \overline{t+l+1}\opl x) \, \Psi_Y^{-1}(\bar 1, \overline{t+l}, \overline{t+l+1}\opl y) \\
		&=\xi^{k+l} \prod_{t=1}^{k+l-1} \Psi_X(\bar 1, \bar t, \overline{t+1}\opl x) \, \Psi_Y^{-1}(\bar 1, \bar t, \overline{t+1}\opl y) = A_{\overline{k+l}, x, y}.
	\end{align*}
	
	2. Let $\modfxy$ be a simple $\vectgo$-module functor. We prove that $F\cong F_{\Gamma, [\xi]}$ for some $\Gamma\in (X\times Y)/G$ and $\xi\in \Xi_{\Gamma}$. By Lemma \ref{lem:simp_modf}, there exists an orbit $\Gamma\in (X\times Y)/G$ with
	\[ m_{xy}^F = \left\{ \begin{array}{ll} 1, & (x, y)\in \Gamma \\ 0, & \text{ otherwise}. \end{array} \right. \]
	This allows us to treat $A_{\bar k, x, y}^F$ as a scalar whenever $(x, y)\in \Gamma$.
	By applying (\ref{eq:cond_A}) inductively, we get
	\begin{align*} A_{\bar k, x, y}^F &= \Psi_X(\bar 1, \overline{k-1}, \bar k \opl x)\, \Psi_Y^{-1}(\bar 1, \overline{k-1}, \bar k \opl y) \ A_{\overline{k-1}, x, y}^F \, A_{\bar 1, \overline{k-1}\opl x, \overline{k-1}\opl y}^F = \dots \\
	&= \prod_{t=0}^{k-1} \Psi_X(\bar 1, \bar t, \overline{t+1} \opl x)\, \Psi_Y^{-1}(\bar 1, \bar t, \overline{t+1} \opl y) \ A_{\bar 1, \bar t \opl x, \bar t \opl y}^F. \end{align*}
	In particular,
	\newcommand{\orbitprod}{\prod_{t=0}^{r-1} A_{\bar 1, \bar t \opl x, \bar t \opl y}}
	\begin{align*} 
		1 &= A_{\bar 0, x, y}^F = A_{\bar n, x, y}^F = \prod_{t=0}^{n-1} \Psi_X(\bar 1, \bar t, \overline{t+1} \opl x)\, \Psi_Y^{-1}(\bar 1, \bar t, \overline{t+1} \opl y) \ A_{\bar 1, \bar t \opl x, \bar t \opl y}^F \\ 
		&= \gamma_{x, y}^{-1} \left( \orbitprod \right)^{n/r} .
	\end{align*}
	Hence $\orbitprod$ is an $n/r$-th root of $\gamma_{x, y}$ and we can choose $\xi\in \Xi_{\Gamma}$ such that $\xi^r = \orbitprod$.
	Fix some $(x, y)\in \Gamma$. Then
	\[ M_{\bar k \opl x, \bar k \opl y}^{\eta} := \xi^{-k} \prod_{t=0}^{k-1} A_{\bar 1, \bar t \opl x, \bar t \opl y}^F \qquad (k\geq 1) \]
	is well-defined and defines an isomorphism of module functors $\eta: F \overset{\sim}{\Rightarrow} F_{\Gamma, [\xi]}$, as is straightforward to check.
	
	3. Let $\eta: F_{\Gamma, [\xi]} \Rightarrow F_{\Gamma', [\xi']}$ be an isomorphism of $\vectgo$-module functors. Then clearly $\Gamma = \Gamma'$ and
	\[ \xi^r (\xi')^{-r} = A_{\bar r, x, y}^{F_{\Gamma, [\xi]}} \, \left( A_{\bar r, x, y}^{F_{\Gamma, [\xi']}} \right)^{-1} = A_{\bar r, x, y}^{F_{\Gamma, [\xi]}} \, M_{\bar r \opl x, \bar r\opl y}^{\eta} \left(A_{\bar r, x, y}^{F_{\Gamma, [\xi]}}\right)^{-1} \left( M_{x, y}^{\eta} \right)^{-1} = 1. \]
	Hence $\xi\sim \xi'$.
\end{proof}
\clearpage
\section{Generalised 6j symbols}\label{sec:6j}
Let $\mathcal C$ and $\mathcal D$ be spherical fusion categories. All bimodule categories in this section are assumed to be finite, semisimple and equipped with a bimodule trace $\theta$. For every finite semisimple category $\mathcal A$, let $I_{\mathcal A}$ be a set of representatives for the simple objects of $\mathcal A$. Note that $I_{\mathcal A}$ is a finite set. Throughout this section, we abuse notation and do not distinguish between the bimodule traces of different bimodule categories, writing $\theta$ for all of them. Similarly, $m$, $n$ and $b$ may denote the coherence data of any bimodule category.

Up to this point, we analysed spherical fusion categories, bimodule categories and module functors. We now define the generalised 6j symbols from \cite{bm_tv+} that are associated to these structures, cite some of their basic properties and then calculate the generalised 6j symbols in the case $\mathcal C=\mathrm{Vec}_G^{\omega_G}, \mathcal D=\mathrm{Vec}_H^{\omega_H}$.

\subsection{Definition of generalised 6j symbols} \label{ssec:6j_def}

This section closely follows \cite{bm_tv+}.

\subsubsection{6j symbols for spherical fusion categories} \label{sssec:6j_fusion}
Let $i, j\in I_{\mathcal C}$. Since $\mathcal C$ is semisimple, $i\otimes j$ has a decomposition into simple objects
\[ i \otimes j \cong \bigoplus_{k\in I_{\mathcal C}} k^{\otimes m_{ijk}} \]
with certain multiplicities $m_{ijk}\in\mathbb N$. This is similar to the decomposition considered in Section \ref{ssec:vecgo_modf}. Again, we choose inclusions and projections
\[ j_{ijk}^{\alpha}: k \to i\otimes j, \quad p_{ijk}^{\alpha}: i\otimes j \to k \qquad (k\in I_{\mathcal C}, \alpha = 1, \dots, m_{ijk}) \]
for $i\otimes j$, which satisfy
\[ p_{ijk}^{\alpha} \circ j_{ijl}^{\beta} = \delta_{k, l} \, \delta_{\alpha, \beta} \, \ident_k\ , \qquad \sum_{k\in I_{\mathcal C}} \sum_{\alpha=1}^{m_{ijk}} j_{ijk}^{\alpha} \circ p_{ijk}^{\alpha} = \ident_{i\otimes j}. \]
We would like to rescale these morphisms with the inverse of the dimension of $k$. This is possible due to the following lemma, which is an immediate consequence of \cite[Proposition 4.8.4]{egno}.

\begin{lemma}
	Let $\mathcal C$ be a pivotal fusion category. Then $\dim_L(x) \neq 0$ and $\dim_R(x)\neq 0$ for every simple object $x\in \mathcal C$.
\end{lemma}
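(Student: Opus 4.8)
The plan is to deduce the statement directly from the nonvanishing of the squared norm $|x|^2 = \dim_L(x)\,\dim_R(x)$ recorded in \cite[Proposition 4.8.4]{egno}: once we know this product is nonzero, both factors are nonzero because $\mathbb F$ is a field. It is worth recording why the product does not vanish, since the argument uses only semisimplicity together with the snake identities. First I would note that, $x$ being simple, its dual $x^*$ is simple as well, because the duality functor $*:\mathcal C\to\mathcal C^{op,rev}$ is an equivalence and hence preserves simple objects.

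Next I would analyse the object $x\otimes x^*$. Using the rigidity adjunctions (for instance the isomorphism $\Hom_{\mathcal C}(X^*\otimes Y, Z)\cong\Hom_{\mathcal C}(Y, X\otimes Z)$ recalled just before Lemma~\ref{lem:*opl_adj}, together with the pivotal identification $x^{**}\cong x$) and Schur's Lemma~\ref{lem:schur}, both $\Hom_{\mathcal C}(e, x\otimes x^*)$ and $\Hom_{\mathcal C}(x\otimes x^*, e)$ are one-dimensional over $\mathbb F$. Since $\mathcal C$ is semisimple and $e$ is simple, this means $e$ occurs as a direct summand of $x\otimes x^*$ with multiplicity one; in particular there exist morphisms $\iota: e\to x\otimes x^*$ and $\pi: x\otimes x^*\to e$ with $\pi\circ\iota=\ident_e$, so the composition pairing $\Hom_{\mathcal C}(x\otimes x^*, e)\times\Hom_{\mathcal C}(e, x\otimes x^*)\to\End_{\mathcal C}(e)\cong\mathbb F$ is nondegenerate.

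The remaining step is to recognise $\dim_R(x)$ as a value of this pairing. By the snake identity, $\coev_x^R: e\to x\otimes x^*$ is nonzero, and $\eval_x^L: x\otimes x^*\to e$ is nonzero as well, being the composite of the isomorphism $b_x\otimes\ident$ with the nonzero morphism $\eval_{x^*}^R$ via (\ref{eq:left_eval}). As both Hom spaces are one-dimensional, $\coev_x^R$ and $\eval_x^L$ are nonzero scalar multiples of $\iota$ and $\pi$, whence $\dim_R(x)\,\ident_e=\eval_x^L\circ\coev_x^R$ is a nonzero multiple of $\pi\circ\iota=\ident_e$; thus $\dim_R(x)\neq 0$. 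Running the symmetric argument on $x^*\otimes x$ with $\eval_x^R$ and $\coev_x^L$ (equivalently, invoking $\dim_L(x)=\dim_R(x^*)$) gives $\dim_L(x)\neq 0$. The only delicate point, and the one I expect to be the real obstacle, is the splitting of $e$ off $x\otimes x^*$, i.e.\ the nondegeneracy of the pairing: this is precisely where the fusion (semisimplicity) hypothesis is essential and is exactly what is packaged in \cite[Proposition 4.8.4]{egno}.
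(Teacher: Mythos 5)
Your proposal is correct and its core is exactly the paper's argument: the paper proves this lemma purely by citing \cite[Proposition 4.8.4]{egno}, i.e.\ by the observation in your first paragraph that the squared norm $\dim_L(x)\,\dim_R(x)$ is nonzero and $\mathbb F$ is a field. Your additional self-contained argument --- splitting $e$ off $x\otimes x^*$ with multiplicity one via semisimplicity and Schur's Lemma, and noting that $\coev_x^R$ and $\eval_x^L$ are nonzero elements of the one-dimensional Hom spaces --- is also sound; it is essentially the proof of the cited proposition, specialised to the pivotal case, where it directly yields the nonvanishing of each dimension separately rather than only of their product.
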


By rescaling $j_{ijk}^{\alpha}$ and $p_{ijk}^{\alpha}$ with $\dim(k)$, we get morphisms
\[ j_{ijk}^{\alpha}: k \to i\otimes j, \quad p_{ijk}^{\alpha}: i\otimes j \to k \qquad (k\in I_{\mathcal C}, \alpha=1, \dots, m_{ijk}) \]
that satisfy
\[ p_{ijk}^{\alpha} \circ j_{ijl}^{\beta} = \delta_{k, l} \, \delta_{\alpha, \beta} \, \dim^{-1}(k) \, \ident_k\ , \qquad \sum_{k\in I_{\mathcal C}} \sum_{\alpha=1}^{m_{ijk}} \dim(k)\, j_{ijk}^{\alpha} \circ p_{ijk}^{\alpha} = \ident_{i\otimes j}. \]
We call such families of morphisms \emph{rescaled inclusions and projections} for $i\otimes j$. We represent them diagrammatically by
\begin{center}
$j_{ijk}^{\alpha} \ \eqdiag \ $
\begin{tikzpicture}
	\newcommand{\lowerCoor}{0.7}
	\node[morphism] (a) {$\alpha$};
	\draw[very thick] (a.north) -- node[near end, left=1pt] {$k$} +(0, 1);
	\draw[very thick] (a.south west) -- node[near end, left=5pt] {$i$} +(-\lowerCoor, -\lowerCoor);
	\draw[very thick] (a.south east) -- node[near end, right=5pt] {$j$} +(\lowerCoor, -\lowerCoor);
\end{tikzpicture}
$, \qquad p_{ijk}^{\alpha} \ \eqdiag \ $
\begin{tikzpicture}
	\newcommand{\upperCoor}{0.7}
	\node[morphism] (a) {$\alpha$};
	\draw[very thick] (a.south) -- node[near end, left=1pt] {$k$} +(0, -1);
	\draw[very thick] (a.north west) -- node[near end, left=5pt] {$i$} +(-\upperCoor, \upperCoor);
	\draw[very thick] (a.north east) -- node[near end, right=5pt] {$j$} +(\upperCoor, \upperCoor);
\end{tikzpicture}
.
\end{center}

The associativity constraint of $\mathcal C$ is uniquely determined by the morphisms
\begin{multline} \label{eq:6j_fusion} p_{iab}^{\beta} \circ (\ident_i \otimes p_{jka}^{\alpha}) \circ a_{i, j, k} \circ (j_{ijc}^{\gamma} \otimes \ident_k) \circ j_{ckb}^{\delta} \overset{\ref{cor:trace_mult}} = \\ 
\dim^{-1}(b) \, \trace\left( 
	p_{iab}^{\beta} \circ (\ident_i \otimes p_{jka}^{\alpha}) \circ a_{i, j, k} \circ (j_{ijc}^{\gamma} \otimes \ident_k) \circ j_{ckb}^{\delta}
\right) \, \ident_b
\end{multline}
for $a, b, c, i, j, k\in I_{\mathcal C}$ and all $\alpha, \beta, \gamma, \delta$.
The trace in (\ref{eq:6j_fusion}) is called a \emph{6j symbol} and represented by the diagram
\begin{center}
$\trace\left( 
p_{iab}^{\beta} \circ (\ident_i \otimes p_{jka}^{\alpha}) \circ a_{i, j, k} \circ (j_{ijc}^{\gamma} \otimes \ident_k) \circ j_{ckb}^{\delta}
\right) \ \eqdiag \ $
\begin{tikzpicture}[x=8mm, y=8mm, every node/.style={inner sep=2pt}]
	\newcommand{\sca}{1}
	\node[morphism] (d) {$\delta$};
	\node[morphism, on grid, below left=\sca and \sca of d] (g) {$\gamma$};
	\node[morphism, on grid, below right=\sca and 2*\sca of g] (a) {$\alpha$};
	\node[morphism, on grid, below left=\sca and \sca of a] (b) {$\beta$};
	\draw (d) -- node[left=5pt, pos=0] {$c$} (g);
	\draw (d) -- node[right=3pt] {$k$} (a);
	\draw (g) -- node[left=3pt] {$i$} (b);
	\draw (g) -- node[above=1pt] {$j$} (a);
	\draw (a) -- node[pos=1, right=5pt] {$a$} (b);
	\draw (b.south) .. node[very near start, left=5pt] {$b$} controls +(0, -\sca) and +(0, -\sca) .. +(2.5*\sca, 0) coordinate (P);
	\draw (d.north) .. node[very near start, left=5pt] {$b$} controls +(0, \sca) and +(0, \sca) .. +(2.5*\sca, 0) coordinate (Q);
	\draw[->] (P) -- (Q);
\end{tikzpicture}
.
\end{center}

Likewise, $a_{i, j, k}^{-1}$ is uniquely determined by the scalars

\begin{center}
$\trace\left( 
p_{ckb}^{\beta} \circ (p_{ijc}^{\alpha}\otimes \ident_k) \circ a_{i, j, k}^{-1} \circ (\ident_i \otimes j_{jka}^{\gamma}) \circ j_{iab}^{\delta}
\right) \ \eqdiag \ $
\begin{tikzpicture}[x=8mm, y=8mm, every node/.style={inner sep=2pt}]
	\newcommand{\sca}{1}
	\node[morphism] (d) {$\delta$};
	\node[morphism, on grid, below right=\sca and \sca of d] (g) {$\gamma$};
	\node[morphism, on grid, below left=\sca and 2*\sca of g] (a) {$\alpha$};
	\node[morphism, on grid, below right=\sca and \sca of a] (b) {$\beta$};
	\draw (d) -- node[right=5pt, pos=0] {$a$} (g);
	\draw (d) -- node[left=3pt] {$i$} (a);
	\draw (g) -- node[right=3pt] {$k$} (b);
	\draw (g) -- node[above=1pt] {$j$} (a);
	\draw (a) -- node[left=5pt, pos=1] {$c$} (b);
	\draw (b.south) .. node[very near start, left=5pt] {$b$} controls +(0, -\sca) and +(0, -\sca) .. +(2.5*\sca, 0) coordinate (P);
	\draw (d.north) .. node[very near start, left=5pt] {$b$} controls +(0, \sca) and +(0, \sca) .. +(2.5*\sca, 0)  coordinate (Q);
	\draw[->] (P) -- (Q);
\end{tikzpicture}
\end{center}

for $a, b, c, i, j, k\in I_{\mathcal C}$ and all $\alpha, \beta, \gamma, \delta$. These scalars are also called \emph{6j symbols} in the following.
The families $(j_{ijk}^{\alpha})_{\alpha}$ and $(p_{ijk}^{\alpha})_{\alpha}$ form bases of $\Hom(k, i\otimes j)$ and $\Hom(i\otimes j, k)$ respectively. Hence the 6j symbols defined above fully describe the linear maps
\begin{align*} \sixj i c j k a b + : \ &\Hom(b, c\otimes k) \otimes \Hom(c, i\otimes j) \otimes \Hom(j\otimes k, a) \otimes \Hom(i\otimes a, b) \to \mathbb F \\
&\delta \otimes \gamma \otimes \alpha \otimes \beta \mapsto \trace\left(\beta \circ (\ident_i \otimes \alpha) \circ a_{i, j, k} \circ (\gamma \otimes \ident_k) \circ \delta\right) \\
\sixj i c j k a b - : \ &\Hom(b, i\otimes a) \otimes \Hom(a, j\otimes k) \otimes \Hom(i\otimes j, c) \otimes \Hom(c\otimes k, b) \to \mathbb F \\
&\delta \otimes \gamma \otimes \alpha \otimes \beta \mapsto \trace\left( \beta \circ (\alpha\otimes\ident_k) \circ a_{i, j, k}^{-1} \circ (\ident_i \otimes \gamma) \circ \delta\right),
\end{align*}
which are also commonly called 6j symbols.

\subsubsection{6j symbols for bimodule categories}
Let $\mathcal M$ be a $(\mathcal C, \mathcal D)$-bimodule category. For any $i\in I_{\mathcal C}, j\in I_{\mathcal M}$,
\[ i\opl j \cong \bigoplus_{k\in I_{\mathcal M}} k^{\oplus m_{ijk}} \]
for certain multiplicities $m_{ijk}\in\mathbb N$. As in Section \ref{sssec:6j_fusion}, we wish to rescale inclusions and projections for $i\opl j$ with the inverse of the dimension of simple objects $k$.

\begin{lemma}
	\cite[Lemma II.4.2.4]{turaev10}
	Let $\mathcal M$ be a semisimple, locally finite $\mathcal C$-module category over $\mathbb F$ and let $\theta$ be a module trace on $\mathcal M$. Then $\dim^{\theta}(x) \neq 0$ for all simple objects $x\in \mathcal M$.
\end{lemma}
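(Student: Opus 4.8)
The plan is to reduce the statement to a direct application of the non-degeneracy axiom of the module trace together with Schur's Lemma. Fix a simple object $x\in\mathcal M$; the claim is that $\dim^{\theta}(x)=\theta_x(\ident_x)\neq 0$. First I would specialise the non-degeneracy condition in the definition of a module trace to the case $M=M'=x$, which asserts that the $\mathbb F$-bilinear pairing
\[ \Hom_{\mathcal M}(x,x)\times \Hom_{\mathcal M}(x,x)\to\mathbb F:\ (f,g)\mapsto\theta_x(g\circ f) \]
is non-degenerate.

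The key structural input is Schur's Lemma \ref{lem:schur}: since $\mathbb F$ is algebraically closed and $\mathcal M$ is locally finite over $\mathbb F$, we have $\End_{\mathcal M}(x)=\Hom_{\mathcal M}(x,x)\cong\mathbb F$, so every endomorphism of $x$ is a scalar multiple of $\ident_x$. In particular the pairing above lives on a one-dimensional vector space in each variable.

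Now I would feed the nonzero element $f=\ident_x$ into the non-degeneracy statement: there must exist $g\in\End_{\mathcal M}(x)$ with $\theta_x(g\circ\ident_x)=\theta_x(g)\neq 0$. Writing $g=\lambda\,\ident_x$ for some $\lambda\in\mathbb F$ by Schur, linearity of $\theta_x$ gives $\theta_x(g)=\lambda\,\theta_x(\ident_x)=\lambda\,\dim^{\theta}(x)$. Since the left-hand side is nonzero, we conclude $\dim^{\theta}(x)\neq 0$, which is exactly the assertion.

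I expect essentially no serious obstacle here: the entire content is the correct invocation of the non-degeneracy axiom on the one-dimensional space $\End_{\mathcal M}(x)$. The only point that requires a moment's care is recognising that non-degeneracy applied to $f=\ident_x$ produces a partner $g$ whose trace is forced to be a scalar multiple of $\dim^{\theta}(x)$; once Schur's Lemma collapses $\End_{\mathcal M}(x)$ to $\mathbb F$, the conclusion is immediate.
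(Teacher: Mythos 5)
Your proof is correct and takes essentially the same route as the paper: the paper's proof also just invokes the non-degeneracy of the pairing $(f,g)\mapsto\theta_x(g\circ f)$ on $\End_{\mathcal M}(x)$, and your use of Schur's Lemma merely spells out the detail the paper leaves implicit.
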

\begin{proof}
	This follows immediately from the fact that the pairing
	\[ \Hom_{\mathcal M}(x, x) \times \Hom_{\mathcal M}(x, x) \to \mathbb F: (f, g) \mapsto \theta_x(g\circ f) \]
	is non-degenerate.
\end{proof}

As before, this allows us to choose rescaled inclusions and projections
\begin{center}
	$j_{ijk}^{\alpha} \ \eqdiag \ $
	\begin{tikzpicture}
		\newcommand{\lowerCoor}{0.7}
		\node[morphism, draw=blue, text=blue] (a) {$\alpha$};
		\draw[very thick, color=blue] (a.north) -- node[near end, left=1pt] {$k$} +(0, 1);
		\draw[very thick] (a.south west) -- node[near end, left=5pt] {$i$} +(-\lowerCoor, -\lowerCoor);
		\draw[very thick, color=blue] (a.south east) -- node[near end, right=5pt] {$j$} +(\lowerCoor, -\lowerCoor);
	\end{tikzpicture}
	$, \qquad p_{ijk}^{\alpha} \ \eqdiag \ $
	\begin{tikzpicture}
		\newcommand{\upperCoor}{0.7}
		\node[morphism, draw=blue, text=blue] (a) {$\alpha$};
		\draw[very thick, color=blue] (a.south) -- node[near end, left=1pt] {$k$} +(0, -1);
		\draw[very thick] (a.north west) -- node[near end, left=5pt] {$i$} +(-\upperCoor, \upperCoor);
		\draw[very thick, color=blue] (a.north east) -- node[near end, right=5pt] {$j$} +(\upperCoor, \upperCoor);
	\end{tikzpicture}
\end{center}
for $k\in I_{\mathcal M}$ and $\alpha=1, \dots, m_{ijk}$ which satisfy
\[ p_{ijk}^{\alpha} \circ j_{ijl}^{\beta} = \delta_{k, l} \, \delta_{\alpha, \beta} \, \dim^{-1}(k) \, \ident_k\ , \qquad \sum_{k\in I_{\mathcal M}} \sum_{\alpha=1}^{m_{ijk}} \dim(k)\, j_{ijk}^{\alpha} \circ p_{ijk}^{\alpha} = \ident_{i\opl j}. \]
Similarly, for $i\in I_{\mathcal M}$ and $j\in I_{\mathcal D}$ we can decompose $i \opr j$ into simple objects. We also obtain rescaled inclusions and projections for $i\opr j$:
\begin{center}
	$j_{ijk}^{\alpha} \ \eqdiag \ $
	\begin{tikzpicture}
		\newcommand{\lowerCoor}{0.7}
		\node[morphism, draw=blue, text=blue] (a) {$\alpha$};
		\draw[very thick, color=blue] (a.north) -- node[near end, left=1pt] {$k$} +(0, 1);
		\draw[very thick, color=blue] (a.south west) -- node[near end, left=5pt] {$i$} +(-\lowerCoor, -\lowerCoor);
		\draw[very thick, color=grey] (a.south east) -- node[near end, right=5pt] {$j$} +(\lowerCoor, -\lowerCoor);
	\end{tikzpicture}
	$, \qquad p_{ijk}^{\alpha} \ \eqdiag \ $
	\begin{tikzpicture}
		\newcommand{\upperCoor}{0.7}
		\node[morphism, draw=blue, text=blue] (a) {$\alpha$};
		\draw[very thick, color=blue] (a.south) -- node[near end, left=1pt] {$k$} +(0, -1);
		\draw[very thick, color=blue] (a.north west) -- node[near end, left=5pt] {$i$} +(-\upperCoor, \upperCoor);
		\draw[very thick, color=grey] (a.north east) -- node[near end, right=5pt] {$j$} +(\upperCoor, \upperCoor);
	\end{tikzpicture}
\end{center}
for $k\in I_{\mathcal M}$ and $\alpha=1, \dots, m'_{ijk}$.
As in Section \ref{sssec:6j_fusion}, the coherence data of $\mathcal M$ are determined by certain 6j symbols:

1. The module constraint $m$ and its inverse $m^{-1}$ are uniquely determined by the 6j symbols
\begin{center}
\begin{tikzpicture}[x=8mm, y=8mm, every node/.style={inner sep=2pt}, module morphism/.style={morphism, draw=blue, text=blue}]
	\newcommand{\sca}{1}
	\node[module morphism] (d) {$\delta$};
	\node[morphism, on grid, below left=\sca and \sca of d] (g) {$\gamma$};
	\node[module morphism, on grid, below right=\sca and 2*\sca of g] (a) {$\alpha$};
	\node[module morphism, on grid, below left=\sca and \sca of a] (b) {$\beta$};
	
	\draw (d) -- node[left=5pt, pos=0] {$c$} (g);
	\draw[color=blue] (d) -- node[right=3pt] {$k$} (a);
	\draw (g) -- node[left=3pt] {$i$} (b);
	\draw (g) -- node[above=1pt] {$j$} (a);
	\draw[color=blue] (a) -- node[pos=1, right=5pt] {$a$} (b);
	
	\draw[color=blue] (d.north) -- node[left] {$b$} +(0, 0.8) coordinate (top);
	\draw[color=blue] ($(top) + (-0.2, 0)$) -- +(0.4, 0);
	\draw[color=blue] (b.south) -- node[left] {$b$} +(0, -0.8) coordinate (bottom);
	\draw[color=blue] ($(bottom) + (-0.2, 0)$) -- +(0.4, 0);
\end{tikzpicture}
\hspace{1em} and \hspace{1em}
\begin{tikzpicture}[x=8mm, y=8mm, every node/.style={inner sep=2pt}, module morphism/.style={morphism, draw=blue, text=blue}]
	\newcommand{\sca}{1}
	\node[module morphism] (d) {$\delta$};
	\node[module morphism, on grid, below right=\sca and \sca of d] (g) {$\gamma$};
	\node[morphism, on grid, below left=\sca and 2*\sca of g] (a) {$\alpha$};
	\node[module morphism, on grid, below right=\sca and \sca of a] (b) {$\beta$};
	\draw[color=blue] (d) -- node[right=5pt, pos=0] {$a$} (g);
	\draw (d) -- node[left=3pt] {$i$} (a);
	\draw[color=blue] (g) -- node[right=3pt] {$k$} (b);
	\draw (g) -- node[above=1pt] {$j$} (a);
	\draw (a) -- node[left=5pt, pos=1] {$c$} (b);
	
	\draw[color=blue] (d.north) -- node[left] {$b$} +(0, 0.8) coordinate (top);
	\draw[color=blue] ($(top) + (-0.2, 0)$) -- +(0.4, 0);
	\draw[color=blue] (b.south) -- node[left] {$b$} +(0, -0.8) coordinate (bottom);
	\draw[color=blue] ($(bottom) + (-0.2, 0)$) -- +(0.4, 0);
\end{tikzpicture}
,
\end{center}
respectively, where $c, i, j\in I_{\mathcal C}, a, b, k\in I_{\mathcal M}$ and $\alpha, \beta, \gamma, \delta$ are arbitrary.

2. The right module constraint $n$ and its inverse $n^{-1}$ are uniquely determined by the 6j symbols
\begin{center}
\begin{tikzpicture}[x=8mm, y=8mm, every node/.style={inner sep=2pt}, module morphism/.style={morphism, draw=blue, text=blue}]
	\newcommand{\sca}{1}
	\node[module morphism] (d) {$\delta$};
	\node[right morphism, on grid, below right=\sca and \sca of d] (g) {$\gamma$};
	\node[module morphism, on grid, below left=\sca and 2*\sca of g] (a) {$\alpha$};
	\node[module morphism, on grid, below right=\sca and \sca of a] (b) {$\beta$};
	
	\draw[color=grey] (d) -- node[right=5pt, pos=0] {$c$} (g);
	\draw[color=blue] (d) -- node[left=3pt] {$i$} (a);
	\draw[color=grey] (g) -- node[right=3pt] {$k$} (b);
	\draw[color=grey] (g) -- node[above=1pt] {$j$} (a);
	\draw[color=blue] (a) -- node[pos=1, left=5pt] {$a$} (b);
	
	\draw[color=blue] (d.north) -- node[left] {$b$} +(0, 0.8) coordinate (top);
	\draw[color=blue] ($(top) + (-0.2, 0)$) -- +(0.4, 0);
	\draw[color=blue] (b.south) -- node[left] {$b$} +(0, -0.8) coordinate (bottom);
	\draw[color=blue] ($(bottom) + (-0.2, 0)$) -- +(0.4, 0);
\end{tikzpicture}
\hspace{1em} and \hspace{1em}
\begin{tikzpicture}[x=8mm, y=8mm, every node/.style={inner sep=2pt}, module morphism/.style={morphism, draw=blue, text=blue}]
	\newcommand{\sca}{1}
	\node[module morphism] (d) {$\delta$};
	\node[module morphism, on grid, below left=\sca and \sca of d] (g) {$\gamma$};
	\node[right morphism, on grid, below right=\sca and 2*\sca of g] (a) {$\alpha$};
	\node[module morphism, on grid, below left=\sca and \sca of a] (b) {$\beta$};
	\draw[color=blue] (d) -- node[left=5pt, pos=0] {$a$} (g);
	\draw[color=grey] (d) -- node[right=3pt] {$k$} (a);
	\draw[color=blue] (g) -- node[left=3pt] {$i$} (b);
	\draw[color=grey] (g) -- node[above=1pt] {$j$} (a);
	\draw[color=grey] (a) -- node[right=5pt, pos=1] {$c$} (b);
	
	\draw[color=blue] (d.north) -- node[left] {$b$} +(0, 0.8) coordinate (top);
	\draw[color=blue] ($(top) + (-0.2, 0)$) -- +(0.4, 0);
	\draw[color=blue] (b.south) -- node[left] {$b$} +(0, -0.8) coordinate (bottom);
	\draw[color=blue] ($(bottom) + (-0.2, 0)$) -- +(0.4, 0);
\end{tikzpicture}
,
\end{center}
respectively, where $c, j, k\in I_{\mathcal D}, a, b, i\in I_{\mathcal M}$ and $\alpha, \beta, \gamma, \delta$ are arbitrary.

3. The middle module constraint $b$ and its inverse $b^{-1}$ are uniquely determined by the 6j symbols
\begin{center}
	\begin{tikzpicture}[x=8mm, y=8mm, every node/.style={inner sep=2pt}, module morphism/.style={morphism, draw=blue, text=blue}]
		\newcommand{\sca}{1}
		\node[module morphism] (d) {$\delta$};
		\node[module morphism, on grid, below left=\sca and \sca of d] (g) {$\gamma$};
		\node[module morphism, on grid, below right=\sca and 2*\sca of g] (a) {$\alpha$};
		\node[module morphism, on grid, below left=\sca and \sca of a] (b) {$\beta$};
		
		\draw[color=blue] (d) -- node[left=5pt, pos=0] {$c$} (g);
		\draw[color=grey] (d) -- node[right=3pt] {$k$} (a);
		\draw (g) -- node[left=3pt] {$i$} (b);
		\draw[color=blue] (g) -- node[above=1pt] {$j$} (a);
		\draw[color=blue] (a) -- node[pos=1, right=5pt] {$a$} (b);
		
		\draw[color=blue] (d.north) -- node[left] {$b$} +(0, 0.8) coordinate (top);
		\draw[color=blue] ($(top) + (-0.2, 0)$) -- +(0.4, 0);
		\draw[color=blue] (b.south) -- node[left] {$b$} +(0, -0.8) coordinate (bottom);
		\draw[color=blue] ($(bottom) + (-0.2, 0)$) -- +(0.4, 0);
	\end{tikzpicture}
	\hspace{1em} and \hspace{1em}
	\begin{tikzpicture}[x=8mm, y=8mm, every node/.style={inner sep=2pt}, module morphism/.style={morphism, draw=blue, text=blue}]
		\newcommand{\sca}{1}
		\node[module morphism] (d) {$\delta$};
		\node[module morphism, on grid, below right=\sca and \sca of d] (g) {$\gamma$};
		\node[module morphism, on grid, below left=\sca and 2*\sca of g] (a) {$\alpha$};
		\node[module morphism, on grid, below right=\sca and \sca of a] (b) {$\beta$};
		
		\draw[color=blue] (d) -- node[right=5pt, pos=0] {$a$} (g);
		\draw (d) -- node[left=3pt] {$i$} (a);
		\draw[color=grey] (g) -- node[right=3pt] {$k$} (b);
		\draw[color=blue] (g) -- node[above=1pt] {$j$} (a);
		\draw[color=blue] (a) -- node[left=5pt, pos=1] {$c$} (b);
		
		\draw[color=blue] (d.north) -- node[left] {$b$} +(0, 0.8) coordinate (top);
		\draw[color=blue] ($(top) + (-0.2, 0)$) -- +(0.4, 0);
		\draw[color=blue] (b.south) -- node[left] {$b$} +(0, -0.8) coordinate (bottom);
		\draw[color=blue] ($(bottom) + (-0.2, 0)$) -- +(0.4, 0);
	\end{tikzpicture}
	,
\end{center}
respectively, where $i\in I_{\mathcal C}, a, b, c, j\in I_{\mathcal M}, k\in I_{\mathcal D}$ and $\alpha, \beta, \gamma, \delta$ are arbitrary.

\subsubsection{6j symbols for module functors}
Let $\mathcal M$ and $\mathcal N$ be $(\mathcal C, \mathcal D)$-bimodule categories and let $F: \mathcal M \to \mathcal N$ be an $\mathbb F$-linear functor. For every $i\in I_{\mathcal M}$, there exists a decomposition
\[ F(i) \cong \bigoplus_{j\in I_{\mathcal N}} j^{\oplus m_{ij}} \]
for certain multiplicities $m_{ij}\in \mathbb N$, as described in Section \ref{ssec:vecgo_modf}. As above, there exist rescaled inclusions and projections for $F(i)$, which are written as
\begin{center}
	\begingroup
	\tikzset{every picture/.append style={scale=0.6}}
	$j_{Fij}^{\alpha} \ \eqdiag \ $
	\begin{tikzpicture}
		\newcommand{\lowerCoor}{0.7}
		\node[morphism, draw=blue, text=blue] (a) {$\alpha$};
		\draw[very thick, color=blue] (a.north) -- node[near end, left=1pt] {$j$} +(0, 1);
		\draw[very thick, color=darkgreen] (a.south west) -- node[near end, left=5pt] {$i$} +(-\lowerCoor, -\lowerCoor);
		\draw[dashed, color=red] (a.south east) -- node[near end, right=5pt] {$F$} +(\lowerCoor, -\lowerCoor);
	\end{tikzpicture}
	$=$
	\begin{tikzpicture}
		\newcommand{\lowerCoor}{0.7}
		\node[morphism, draw=blue, text=blue] (a) {$\alpha$};
		\draw[very thick, color=blue] (a.north) -- node[near end, left=1pt] {$j$} +(0, 1);
		\draw[dashed, color=red] (a.south west) -- node[near end, left=5pt] {$F$} +(-\lowerCoor, -\lowerCoor);
		\draw[very thick, color=darkgreen] (a.south east) -- node[near end, right=5pt] {$i$} +(\lowerCoor, -\lowerCoor);
	\end{tikzpicture}
	$, \qquad p_{Fij}^{\alpha} \ \eqdiag \ $
	\begin{tikzpicture}
		\newcommand{\upperCoor}{0.7}
		\node[morphism, draw=blue, text=blue] (a) {$\alpha$};
		\draw[very thick, color=blue] (a.south) -- node[near end, left=1pt] {$j$} +(0, -1);
		\draw[very thick, color=darkgreen] (a.north west) -- node[near end, left=5pt] {$i$} +(-\upperCoor, \upperCoor);
		\draw[dashed, color=red] (a.north east) -- node[near end, right=5pt] {$F$} +(\upperCoor, \upperCoor);
	\end{tikzpicture}
	$=$
	\begin{tikzpicture}
		\newcommand{\upperCoor}{0.7}
		\node[morphism, draw=blue, text=blue] (a) {$\alpha$};
		\draw[very thick, color=blue] (a.south) -- node[near end, left=1pt] {$j$} +(0, -1);
		\draw[dashed, color=red] (a.north west) -- node[near end, left=5pt] {$F$} +(-\upperCoor, \upperCoor);
		\draw[very thick, color=darkgreen] (a.north east) -- node[near end, right=5pt] {$i$} +(\upperCoor, \upperCoor);
	\end{tikzpicture}
	\endgroup
\end{center}
for $j\in I_{\mathcal N}$ and $\alpha=1, \dots, m_{ij}^F$. They satisfy
\[ p_{Fij}^{\alpha} \circ j_{Fil}^{\beta} = \delta_{j, l} \, \delta_{\alpha, \beta} \, \dim^{-1}(j) \, \ident_j\ , \qquad \sum_{j\in I_{\mathcal N}} \sum_{\alpha=1}^{m_{ij}^F} \dim(j)\, j_{Fij}^{\alpha} \circ p_{Fij}^{\alpha} = \ident_{F(i)}. \]
From here on, $(j_{Fij}^{\alpha}, p_{Fij}^{\alpha})_{j, \alpha}$ will always denote \emph{rescaled} inclusions and projections, unlike in Section \ref{sec:modf}.

Let $(F, s): \mathcal M\to \mathcal N$ be a $\mathcal C$-left module functor and let $(G, t):\mathcal M\to \mathcal N$ be a $\mathcal D$-right module functor.
Then $s$ and its inverse $s^{-1}$ as well as $t$ and its inverse $t^{-1}$ are uniquely determined by the 6j symbols

\begin{center}
	\begin{tikzpicture}[x=8mm, y=8mm, every node/.style={inner sep=2pt}]
		\newcommand{\sca}{1}
		\node[m morphism] (d) {$\delta$};
		\node[n morphism, on grid, below left=\sca and \sca of d] (g) {$\gamma$};
		\node[m morphism, on grid, below right=\sca and 2*\sca of g] (a) {$\alpha$};
		\node[m morphism, on grid, below left=\sca and \sca of a] (b) {$\beta$};
		
		\draw[color=darkgreen] (d) -- node[left=5pt, pos=0] {$c$} (g);
		\draw[dashed, color=red] (d) -- node[right=3pt] {$F$} (a);
		\draw (g) -- node[left=3pt] {$i$} (b);
		\draw[color=darkgreen] (g) -- node[above=1pt] {$j$} (a);
		\draw[color=blue] (a) -- node[pos=1, right=5pt] {$a$} (b);
		
		\draw[color=blue] (d.north) -- node[left] {$b$} +(0, 0.8) coordinate (top);
		\draw[color=blue] ($(top) + (-0.2, 0)$) -- +(0.4, 0);
		\draw[color=blue] (b.south) -- node[left] {$b$} +(0, -0.8) coordinate (bottom);
		\draw[color=blue] ($(bottom) + (-0.2, 0)$) -- +(0.4, 0);
	\end{tikzpicture}
	\hspace{1em} and \hspace{1em}
	\begin{tikzpicture}[x=8mm, y=8mm, every node/.style={inner sep=2pt}]
		\newcommand{\sca}{1}
		\node[m morphism] (d) {$\delta$};
		\node[m morphism, on grid, below right=\sca and \sca of d] (g) {$\gamma$};
		\node[n morphism, on grid, below left=\sca and 2*\sca of g] (a) {$\alpha$};
		\node[m morphism, on grid, below right=\sca and \sca of a] (b) {$\beta$};
		
		\draw[color=blue] (d) -- node[right=5pt, pos=0] {$a$} (g);
		\draw (d) -- node[left=3pt] {$i$} (a);
		\draw[dashed, color=red] (g) -- node[right=3pt] {$F$} (b);
		\draw[color=darkgreen] (g) -- node[above=1pt] {$j$} (a);
		\draw[color=darkgreen] (a) -- node[left=5pt, pos=1] {$c$} (b);
		
		\draw[color=blue] (d.north) -- node[left] {$b$} +(0, 0.8) coordinate (top);
		\draw[color=blue] ($(top) + (-0.2, 0)$) -- +(0.4, 0);
		\draw[color=blue] (b.south) -- node[left] {$b$} +(0, -0.8) coordinate (bottom);
		\draw[color=blue] ($(bottom) + (-0.2, 0)$) -- +(0.4, 0);
	\end{tikzpicture}
, \hfil
	\begin{tikzpicture}[x=8mm, y=8mm, every node/.style={inner sep=2pt}]
		\newcommand{\sca}{1}
		\node[m morphism] (d) {$\delta$};
		\node[n morphism, on grid, below right=\sca and \sca of d] (g) {$\gamma$};
		\node[m morphism, on grid, below left=\sca and 2*\sca of g] (a) {$\alpha$};
		\node[m morphism, on grid, below right=\sca and \sca of a] (b) {$\beta$};
		
		\draw[color=darkgreen] (d) -- node[right=5pt, pos=0] {$c$} (g);
		\draw[dashed, color=red] (d) -- node[left=3pt] {$G$} (a);
		\draw[color=grey] (g) -- node[right=3pt] {$l$} (b);
		\draw[color=darkgreen] (g) -- node[above=1pt] {$j$} (a);
		\draw[color=blue] (a) -- node[left=5pt, pos=1] {$a$} (b);
		
		\draw[color=blue] (d.north) -- node[left] {$b$} +(0, 0.8) coordinate (top);
		\draw[color=blue] ($(top) + (-0.2, 0)$) -- +(0.4, 0);
		\draw[color=blue] (b.south) -- node[left] {$b$} +(0, -0.8) coordinate (bottom);
		\draw[color=blue] ($(bottom) + (-0.2, 0)$) -- +(0.4, 0);
	\end{tikzpicture}
	\hspace{1em} and \hspace{1em}
	\begin{tikzpicture}[x=8mm, y=8mm, every node/.style={inner sep=2pt}]
		\newcommand{\sca}{1}
		\node[m morphism] (d) {$\delta$};
		\node[m morphism, on grid, below left=\sca and \sca of d] (g) {$\gamma$};
		\node[n morphism, on grid, below right=\sca and 2*\sca of g] (a) {$\alpha$};
		\node[m morphism, on grid, below left=\sca and \sca of a] (b) {$\beta$};
		
		\draw[color=blue] (d) -- node[left=5pt, pos=0] {$a$} (g);
		\draw[color=grey] (d) -- node[right=3pt] {$l$} (a);
		\draw[dashed, color=red] (g) -- node[left=3pt] {$G$} (b);
		\draw[color=darkgreen] (g) -- node[above=1pt] {$j$} (a);
		\draw[color=darkgreen] (a) -- node[pos=1, right=5pt] {$c$} (b);
		
		\draw[color=blue] (d.north) -- node[left] {$b$} +(0, 0.8) coordinate (top);
		\draw[color=blue] ($(top) + (-0.2, 0)$) -- +(0.4, 0);
		\draw[color=blue] (b.south) -- node[left] {$b$} +(0, -0.8) coordinate (bottom);
		\draw[color=blue] ($(bottom) + (-0.2, 0)$) -- +(0.4, 0);
	\end{tikzpicture}
\end{center}
for $c, j\in I_{\mathcal M}, a, b\in I_{\mathcal N}, i\in I_{\mathcal C}, l\in I_{\mathcal D}$ and $\alpha, \beta, \gamma, \delta$ arbitrary.

\begin{remark} \label{rem:6j_modc}
	\cite{bm_tv+}
	The 6j symbols for bimodule categories are special cases of the 6j symbols for module functors:
	\begin{enumerate}
	\item Let $k\in I_{\mathcal M}$ and let $F:= - \opl k: \mathcal C \to \mathcal M$ be the $\mathcal C$-module functor from Example \ref{ex:modf}. If we choose $j_{Fij}^{\alpha}:=j_{ikj}^{\alpha}$ and $p_{Fij}^{\alpha}:=p_{ikj}^{\alpha}$ for $i\in I_{\mathcal C}, j\in I_{\mathcal M}$, we have
	\begin{center}
	\begin{tikzpicture}[x=8mm, y=8mm, every node/.style={inner sep=2pt}]
		\newcommand{\sca}{1}
		\node[m morphism] (d) {$\delta$};
		\node[n morphism, on grid, below left=\sca and \sca of d] (g) {$\gamma$};
		\node[m morphism, on grid, below right=\sca and 2*\sca of g] (a) {$\alpha$};
		\node[m morphism, on grid, below left=\sca and \sca of a] (b) {$\beta$};
		
		\draw[color=darkgreen] (d) -- node[left=5pt, pos=0] {$c$} (g);
		\draw[dashed, color=red] (d) -- node[right=3pt] {$F$} (a);
		\draw (g) -- node[left=3pt] {$i$} (b);
		\draw[color=darkgreen] (g) -- node[above=1pt] {$j$} (a);
		\draw[color=blue] (a) -- node[pos=1, right=5pt] {$a$} (b);
		
		\draw[color=blue] (d.north) -- node[left] {$b$} +(0, 0.8) coordinate (top);
		\draw[color=blue] ($(top) + (-0.2, 0)$) -- +(0.4, 0);
		\draw[color=blue] (b.south) -- node[left] {$b$} +(0, -0.8) coordinate (bottom);
		\draw[color=blue] ($(bottom) + (-0.2, 0)$) -- +(0.4, 0);
	\end{tikzpicture}
	\hspace{1em} $=$ \hspace{1em}
	\begin{tikzpicture}[x=8mm, y=8mm, every node/.style={inner sep=2pt}, module morphism/.style={morphism, draw=blue, text=blue}]
		\newcommand{\sca}{1}
		\node[module morphism] (d) {$\delta$};
		\node[morphism, on grid, below left=\sca and \sca of d] (g) {$\gamma$};
		\node[module morphism, on grid, below right=\sca and 2*\sca of g] (a) {$\alpha$};
		\node[module morphism, on grid, below left=\sca and \sca of a] (b) {$\beta$};
		
		\draw (d) -- node[left=5pt, pos=0] {$c$} (g);
		\draw[color=blue] (d) -- node[right=3pt] {$k$} (a);
		\draw (g) -- node[left=3pt] {$i$} (b);
		\draw (g) -- node[above=1pt] {$j$} (a);
		\draw[color=blue] (a) -- node[pos=1, right=5pt] {$a$} (b);
		
		\draw[color=blue] (d.north) -- node[left] {$b$} +(0, 0.8) coordinate (top);
		\draw[color=blue] ($(top) + (-0.2, 0)$) -- +(0.4, 0);
		\draw[color=blue] (b.south) -- node[left] {$b$} +(0, -0.8) coordinate (bottom);
		\draw[color=blue] ($(bottom) + (-0.2, 0)$) -- +(0.4, 0);
	\end{tikzpicture}
	\hfil , \hfil
	\begin{tikzpicture}[x=8mm, y=8mm, every node/.style={inner sep=2pt}]
		\newcommand{\sca}{1}
		\node[m morphism] (d) {$\delta$};
		\node[m morphism, on grid, below right=\sca and \sca of d] (g) {$\gamma$};
		\node[n morphism, on grid, below left=\sca and 2*\sca of g] (a) {$\alpha$};
		\node[m morphism, on grid, below right=\sca and \sca of a] (b) {$\beta$};
		
		\draw[color=blue] (d) -- node[right=5pt, pos=0] {$a$} (g);
		\draw (d) -- node[left=3pt] {$i$} (a);
		\draw[dashed, color=red] (g) -- node[right=3pt] {$F$} (b);
		\draw[color=darkgreen] (g) -- node[above=1pt] {$j$} (a);
		\draw[color=darkgreen] (a) -- node[left=5pt, pos=1] {$c$} (b);
		
		\draw[color=blue] (d.north) -- node[left] {$b$} +(0, 0.8) coordinate (top);
		\draw[color=blue] ($(top) + (-0.2, 0)$) -- +(0.4, 0);
		\draw[color=blue] (b.south) -- node[left] {$b$} +(0, -0.8) coordinate (bottom);
		\draw[color=blue] ($(bottom) + (-0.2, 0)$) -- +(0.4, 0);
	\end{tikzpicture}
	\hspace{1em}$=$ \hspace{1em}
	\begin{tikzpicture}[x=8mm, y=8mm, every node/.style={inner sep=2pt}, module morphism/.style={morphism, draw=blue, text=blue}]
		\newcommand{\sca}{1}
		\node[module morphism] (d) {$\delta$};
		\node[module morphism, on grid, below right=\sca and \sca of d] (g) {$\gamma$};
		\node[morphism, on grid, below left=\sca and 2*\sca of g] (a) {$\alpha$};
		\node[module morphism, on grid, below right=\sca and \sca of a] (b) {$\beta$};
		\draw[color=blue] (d) -- node[right=5pt, pos=0] {$a$} (g);
		\draw (d) -- node[left=3pt] {$i$} (a);
		\draw[color=blue] (g) -- node[right=3pt] {$k$} (b);
		\draw (g) -- node[above=1pt] {$j$} (a);
		\draw (a) -- node[left=5pt, pos=1] {$c$} (b);
		
		\draw[color=blue] (d.north) -- node[left] {$b$} +(0, 0.8) coordinate (top);
		\draw[color=blue] ($(top) + (-0.2, 0)$) -- +(0.4, 0);
		\draw[color=blue] (b.south) -- node[left] {$b$} +(0, -0.8) coordinate (bottom);
		\draw[color=blue] ($(bottom) + (-0.2, 0)$) -- +(0.4, 0);
	\end{tikzpicture}
	.
	\end{center}
	\item Let $i\in I_{\mathcal M}$. Set $F := i \opr -: \mathcal D \to \mathcal M$ and choose $j_{Fjk}^{\alpha}:=j_{ijk}^{\alpha}$ and $p_{Fjk}^{\alpha}:=p_{ijk}^{\alpha}$ for $j\in I_{\mathcal D}, k\in I_{\mathcal M}$. Then
	\begin{center}
	\begin{tikzpicture}[x=8mm, y=8mm, every node/.style={inner sep=2pt}]
		\newcommand{\sca}{1}
		\node[m morphism] (d) {$\delta$};
		\node[m morphism, on grid, below left=\sca and \sca of d] (g) {$\gamma$};
		\node[n morphism, on grid, below right=\sca and 2*\sca of g] (a) {$\alpha$};
		\node[m morphism, on grid, below left=\sca and \sca of a] (b) {$\beta$};
		
		\draw[color=blue] (d) -- node[left=5pt, pos=0] {$a$} (g);
		\draw[color=grey] (d) -- node[right=3pt] {$k$} (a);
		\draw[dashed, color=red] (g) -- node[left=3pt] {$F$} (b);
		\draw[color=darkgreen] (g) -- node[above=1pt] {$j$} (a);
		\draw[color=darkgreen] (a) -- node[pos=1, right=5pt] {$c$} (b);
		
		\draw[color=blue] (d.north) -- node[left] {$b$} +(0, 0.8) coordinate (top);
		\draw[color=blue] ($(top) + (-0.2, 0)$) -- +(0.4, 0);
		\draw[color=blue] (b.south) -- node[left] {$b$} +(0, -0.8) coordinate (bottom);
		\draw[color=blue] ($(bottom) + (-0.2, 0)$) -- +(0.4, 0);
	\end{tikzpicture}
	\hspace{1em} $=$ \hspace{1em}
	\begin{tikzpicture}[x=8mm, y=8mm, every node/.style={inner sep=2pt}, module morphism/.style={morphism, draw=blue, text=blue}]
		\newcommand{\sca}{1}
		\node[module morphism] (d) {$\delta$};
		\node[module morphism, on grid, below left=\sca and \sca of d] (g) {$\gamma$};
		\node[right morphism, on grid, below right=\sca and 2*\sca of g] (a) {$\alpha$};
		\node[module morphism, on grid, below left=\sca and \sca of a] (b) {$\beta$};
		\draw[color=blue] (d) -- node[left=5pt, pos=0] {$a$} (g);
		\draw[color=grey] (d) -- node[right=3pt] {$k$} (a);
		\draw[color=blue] (g) -- node[left=3pt] {$i$} (b);
		\draw[color=grey] (g) -- node[above=1pt] {$j$} (a);
		\draw[color=grey] (a) -- node[right=5pt, pos=1] {$c$} (b);
		
		\draw[color=blue] (d.north) -- node[left] {$b$} +(0, 0.8) coordinate (top);
		\draw[color=blue] ($(top) + (-0.2, 0)$) -- +(0.4, 0);
		\draw[color=blue] (b.south) -- node[left] {$b$} +(0, -0.8) coordinate (bottom);
		\draw[color=blue] ($(bottom) + (-0.2, 0)$) -- +(0.4, 0);
	\end{tikzpicture}
	\hfil , \hfil
	\begin{tikzpicture}[x=8mm, y=8mm, every node/.style={inner sep=2pt}]
		\newcommand{\sca}{1}
		\node[m morphism] (d) {$\delta$};
		\node[n morphism, on grid, below right=\sca and \sca of d] (g) {$\gamma$};
		\node[m morphism, on grid, below left=\sca and 2*\sca of g] (a) {$\alpha$};
		\node[m morphism, on grid, below right=\sca and \sca of a] (b) {$\beta$};
		
		\draw[color=darkgreen] (d) -- node[right=5pt, pos=0] {$c$} (g);
		\draw[dashed, color=red] (d) -- node[left=3pt] {$F$} (a);
		\draw[color=grey] (g) -- node[right=3pt] {$k$} (b);
		\draw[color=darkgreen] (g) -- node[above=1pt] {$j$} (a);
		\draw[color=blue] (a) -- node[left=5pt, pos=1] {$a$} (b);
		
		\draw[color=blue] (d.north) -- node[left] {$b$} +(0, 0.8) coordinate (top);
		\draw[color=blue] ($(top) + (-0.2, 0)$) -- +(0.4, 0);
		\draw[color=blue] (b.south) -- node[left] {$b$} +(0, -0.8) coordinate (bottom);
		\draw[color=blue] ($(bottom) + (-0.2, 0)$) -- +(0.4, 0);
	\end{tikzpicture}
	\hspace{1em} $=$ \hspace{1em}
	\begin{tikzpicture}[x=8mm, y=8mm, every node/.style={inner sep=2pt}, module morphism/.style={morphism, draw=blue, text=blue}]
		\newcommand{\sca}{1}
		\node[module morphism] (d) {$\delta$};
		\node[right morphism, on grid, below right=\sca and \sca of d] (g) {$\gamma$};
		\node[module morphism, on grid, below left=\sca and 2*\sca of g] (a) {$\alpha$};
		\node[module morphism, on grid, below right=\sca and \sca of a] (b) {$\beta$};
		
		\draw[color=grey] (d) -- node[right=5pt, pos=0] {$c$} (g);
		\draw[color=blue] (d) -- node[left=3pt] {$i$} (a);
		\draw[color=grey] (g) -- node[right=3pt] {$k$} (b);
		\draw[color=grey] (g) -- node[above=1pt] {$j$} (a);
		\draw[color=blue] (a) -- node[pos=1, left=5pt] {$a$} (b);
		
		\draw[color=blue] (d.north) -- node[left] {$b$} +(0, 0.8) coordinate (top);
		\draw[color=blue] ($(top) + (-0.2, 0)$) -- +(0.4, 0);
		\draw[color=blue] (b.south) -- node[left] {$b$} +(0, -0.8) coordinate (bottom);
		\draw[color=blue] ($(bottom) + (-0.2, 0)$) -- +(0.4, 0);
	\end{tikzpicture}
	.
	\end{center}
	\item Let $i \in I_{\mathcal C}$. Set $F:= i \opl -: \mathcal M \to \mathcal M$ and choose $j_{Fjk}^{\alpha} := j_{ijk}^{\alpha}, p_{Fjk}^{\alpha} := p_{ijk}^{\alpha}$ for $j, k\in I_{\mathcal M}$. Then
	\begin{center}
		\begin{tikzpicture}[x=8mm, y=8mm, every node/.style={inner sep=2pt}]
			\newcommand{\sca}{1}
			\node[m morphism] (d) {$\delta$};
			\node[n morphism, on grid, below right=\sca and \sca of d] (g) {$\gamma$};
			\node[m morphism, on grid, below left=\sca and 2*\sca of g] (a) {$\alpha$};
			\node[m morphism, on grid, below right=\sca and \sca of a] (b) {$\beta$};
			
			\draw[color=darkgreen] (d) -- node[right=5pt, pos=0] {$a$} (g);
			\draw[dashed, color=red] (d) -- node[left=3pt] {$F$} (a);
			\draw[color=grey] (g) -- node[right=3pt] {$k$} (b);
			\draw[color=darkgreen] (g) -- node[above=1pt] {$j$} (a);
			\draw[color=blue] (a) -- node[left=5pt, pos=1] {$c$} (b);
			
			\draw[color=blue] (d.north) -- node[left] {$b$} +(0, 0.8) coordinate (top);
			\draw[color=blue] ($(top) + (-0.2, 0)$) -- +(0.4, 0);
			\draw[color=blue] (b.south) -- node[left] {$b$} +(0, -0.8) coordinate (bottom);
			\draw[color=blue] ($(bottom) + (-0.2, 0)$) -- +(0.4, 0);
		\end{tikzpicture}
		\hspace{1em} $=$ \hspace{1em}
		\begin{tikzpicture}[x=8mm, y=8mm, every node/.style={inner sep=2pt}, module morphism/.style={morphism, draw=blue, text=blue}]
			\newcommand{\sca}{1}
			\node[module morphism] (d) {$\delta$};
			\node[module morphism, on grid, below right=\sca and \sca of d] (g) {$\gamma$};
			\node[module morphism, on grid, below left=\sca and 2*\sca of g] (a) {$\alpha$};
			\node[module morphism, on grid, below right=\sca and \sca of a] (b) {$\beta$};
			
			\draw[color=blue] (d) -- node[right=5pt, pos=0] {$a$} (g);
			\draw (d) -- node[left=3pt] {$i$} (a);
			\draw[color=grey] (g) -- node[right=3pt] {$k$} (b);
			\draw[color=blue] (g) -- node[above=1pt] {$j$} (a);
			\draw[color=blue] (a) -- node[left=5pt, pos=1] {$c$} (b);
			
			\draw[color=blue] (d.north) -- node[left] {$b$} +(0, 0.8) coordinate (top);
			\draw[color=blue] ($(top) + (-0.2, 0)$) -- +(0.4, 0);
			\draw[color=blue] (b.south) -- node[left] {$b$} +(0, -0.8) coordinate (bottom);
			\draw[color=blue] ($(bottom) + (-0.2, 0)$) -- +(0.4, 0);
		\end{tikzpicture}
		\hfil , \hfil
		\begin{tikzpicture}[x=8mm, y=8mm, every node/.style={inner sep=2pt}]
			\newcommand{\sca}{1}
			\node[m morphism] (d) {$\delta$};
			\node[m morphism, on grid, below left=\sca and \sca of d] (g) {$\gamma$};
			\node[n morphism, on grid, below right=\sca and 2*\sca of g] (a) {$\alpha$};
			\node[m morphism, on grid, below left=\sca and \sca of a] (b) {$\beta$};
			
			\draw[color=blue] (d) -- node[left=5pt, pos=0] {$c$} (g);
			\draw[color=grey] (d) -- node[right=3pt] {$k$} (a);
			\draw[dashed, color=red] (g) -- node[left=3pt] {$F$} (b);
			\draw[color=darkgreen] (g) -- node[above=1pt] {$j$} (a);
			\draw[color=darkgreen] (a) -- node[pos=1, right=5pt] {$a$} (b);
			
			\draw[color=blue] (d.north) -- node[left] {$b$} +(0, 0.8) coordinate (top);
			\draw[color=blue] ($(top) + (-0.2, 0)$) -- +(0.4, 0);
			\draw[color=blue] (b.south) -- node[left] {$b$} +(0, -0.8) coordinate (bottom);
			\draw[color=blue] ($(bottom) + (-0.2, 0)$) -- +(0.4, 0);
		\end{tikzpicture}
		\hspace{1em} $=$ \hspace{1em}
		\begin{tikzpicture}[x=8mm, y=8mm, every node/.style={inner sep=2pt}, module morphism/.style={morphism, draw=blue, text=blue}]
			\newcommand{\sca}{1}
			\node[module morphism] (d) {$\delta$};
			\node[module morphism, on grid, below left=\sca and \sca of d] (g) {$\gamma$};
			\node[module morphism, on grid, below right=\sca and 2*\sca of g] (a) {$\alpha$};
			\node[module morphism, on grid, below left=\sca and \sca of a] (b) {$\beta$};
			
			\draw[color=blue] (d) -- node[left=5pt, pos=0] {$c$} (g);
			\draw[color=grey] (d) -- node[right=3pt] {$k$} (a);
			\draw (g) -- node[left=3pt] {$i$} (b);
			\draw[color=blue] (g) -- node[above=1pt] {$j$} (a);
			\draw[color=blue] (a) -- node[pos=1, right=5pt] {$a$} (b);
			
			\draw[color=blue] (d.north) -- node[left] {$b$} +(0, 0.8) coordinate (top);
			\draw[color=blue] ($(top) + (-0.2, 0)$) -- +(0.4, 0);
			\draw[color=blue] (b.south) -- node[left] {$b$} +(0, -0.8) coordinate (bottom);
			\draw[color=blue] ($(bottom) + (-0.2, 0)$) -- +(0.4, 0);
		\end{tikzpicture}
	.
	\end{center}

	Alternatively, the 6j symbols for the middle module constraint can be obtained as 6j symbols of the $\mathcal C$-left module functors $- \opr k: \mathcal M \to \mathcal M$ for $k\in I_{\mathcal D}$.
	\end{enumerate}
\end{remark}

\subsection{Properties of generalised 6j symbols} \label{ssec:bie_ell}
In \cite{barrett_westbury}, invariants of oriented closed 3-manifolds are constructed by choosing a triangulation for a 3-manifold $M$ and considering a state sum model which assigns a 6j symbol of a spherical fusion category to every tetrahedron in the triangulation. In order to show that the result is independent of the chosen triangulation, one needs to show invariance under bistellar moves (also called Pachner moves). It turns out that invariance under bistellar moves follows from certain relations of 6j symbols. Generalised 6j symbols satisfy analogous relations, as is shown in \cite{bm_tv+}. In this section, we summarise these relations, closely following \cite{bm_tv+}. 

Throughout this section, sums are to be read as follows: For Latin letters like $a$, the sum $\sum_a$ denotes the sum over all $a\in I_{\mathcal X}$, where it is clear from the context which category $\mathcal X$ is referred to. For Greek letters like $\beta$, the sum $\sum_\beta$ denotes the sum over $\beta = 1, \dots, n$, where $n\in\mathbb N$ is the largest possible number $\beta$ can take such that all 6j symbols in the subsequent expression are still defined.

\begin{lemma}
\cite[Proposition 5.2]{barrett_westbury}
The 6j symbols for a spherical fusion category $\mathcal C$ satisfy the following relations:
\begin{enumerate}
	\item orthogonality relations:
	\begin{center}
	$\sum_{a, \beta, \alpha} \dim(a) \dim(d)$
		\begin{tikzpicture}[x=8mm, y=8mm, every node/.style={inner sep=2pt}]
			\newcommand{\sca}{1}
			\node[morphism] (d) {$\delta$};
			\node[morphism, on grid, below left=\sca and \sca of d] (g) {$\gamma$};
			\node[morphism, on grid, below right=\sca and 2*\sca of g] (a) {$\alpha$};
			\node[morphism, on grid, below left=\sca and \sca of a] (b) {$\beta$};
			
			\draw (d) -- node[left=5pt, pos=0] {$c$} (g);
			\draw (d) -- node[right=3pt] {$k$} (a);
			\draw (g) -- node[left=3pt] {$i$} (b);
			\draw (g) -- node[above=1pt] {$j$} (a);
			\draw (a) -- node[pos=1, right=5pt] {$a$} (b);
			
			\draw (b.south) .. node[very near start, left=5pt] {$b$} controls +(0, -\sca) and +(0, -\sca) .. +(2.5*\sca, 0) coordinate (P);
			\draw (d.north) .. node[very near start, left=5pt] {$b$} controls +(0, \sca) and +(0, \sca) .. +(2.5*\sca, 0) -- (P);
		\end{tikzpicture}
	\hspace{1em}
		\begin{tikzpicture}[x=8mm, y=8mm, every node/.style={inner sep=2pt}]
			\newcommand{\sca}{1}
			\node[morphism] (d) {$\beta$};
			\node[morphism, on grid, below right=\sca and \sca of d] (g) {$\alpha$};
			\node[morphism, on grid, below left=\sca and 2*\sca of g] (a) {$\epsilon$};
			\node[morphism, on grid, below right=\sca and \sca of a] (b) {$\lambda$};
			
			\draw (d) -- node[right=5pt, pos=0] {$a$} (g);
			\draw (d) -- node[left=3pt] {$i$} (a);
			\draw (g) -- node[right=3pt] {$k$} (b);
			\draw (g) -- node[above=1pt] {$j$} (a);
			\draw (a) -- node[left=5pt, pos=1] {$d$} (b);
			
			\draw (b.south) .. node[very near start, left=5pt] {$b$} controls +(0, -\sca) and +(0, -\sca) .. +(2.5*\sca, 0) coordinate (P);
			\draw (d.north) .. node[very near start, left=5pt] {$b$} controls +(0, \sca) and +(0, \sca) .. +(2.5*\sca, 0) -- (P);
		\end{tikzpicture}
	$ = \delta_{c,d}\, \delta_{\epsilon, \gamma} \, \delta_{\delta, \lambda}$
	\end{center}
	\item Biedenharn-Elliott relations:
	\newcommand{\arcwidth}{1.7}
	\begin{center}
	$\sum_{\delta}$
		\begin{tikzpicture}[x=8mm, y=8mm, every node/.style={inner sep=2pt}]
			\newcommand{\sca}{1}
			\node[morphism] (d) {$\delta$};
			\node[morphism, on grid, below left=\sca and \sca of d] (g) {$\gamma$};
			\node[morphism, on grid, below right=\sca and 2*\sca of g] (a) {$\alpha$};
			\node[morphism, on grid, below left=\sca and \sca of a] (b) {$\beta$};
			
			\draw (d) -- node[left=5pt, pos=0] {$c$} (g);
			\draw (d) -- node[right=3pt] {$k$} (a);
			\draw (g) -- node[left=3pt] {$i$} (b);
			\draw (g) -- node[above=1pt] {$j$} (a);
			\draw (a) -- node[pos=1, right=5pt] {$a$} (b);
			
			\draw (b.south) .. node[very near start, left=5pt] {$b$} controls +(0, -\sca) and +(0, -\sca) .. +(\arcwidth*\sca, 0) coordinate (P);
			\draw (d.north) .. node[very near start, left=5pt] {$b$} controls +(0, \sca) and +(0, \sca) .. +(\arcwidth*\sca, 0) -- (P);
		\end{tikzpicture}
	\hspace{1em}
		\begin{tikzpicture}[x=8mm, y=8mm, every node/.style={inner sep=2pt}]
			\newcommand{\sca}{1}
			\node[morphism] (d) {$\epsilon$};
			\node[morphism, on grid, below left=\sca and \sca of d] (g) {$\lambda$};
			\node[morphism, on grid, below right=\sca and 2*\sca of g] (a) {$\mu$};
			\node[morphism, on grid, below left=\sca and \sca of a] (b) {$\delta$};
			
			\draw (d) -- node[left=5pt, pos=0] {$d$} (g);
			\draw (d) -- node[right=3pt] {$n$} (a);
			\draw (g) -- node[left=3pt] {$c$} (b);
			\draw (g) -- node[above=1pt] {$m$} (a);
			\draw (a) -- node[pos=1, right=5pt] {$k$} (b);
			
			\draw (b.south) .. node[very near start, left=5pt] {$b$} controls +(0, -\sca) and +(0, -\sca) .. +(\arcwidth*\sca, 0) coordinate (P);
			\draw (d.north) .. node[very near start, left=5pt] {$b$} controls +(0, \sca) and +(0, \sca) .. +(\arcwidth*\sca, 0) -- (P);
		\end{tikzpicture}
\end{center}

\begin{center}
	$= \sum_{f, \nu, \rho, \sigma} \dim(f)$
		\begin{tikzpicture}[x=8mm, y=8mm, every node/.style={inner sep=2pt}]
			\newcommand{\sca}{1}
			\node[morphism] (d) {$\epsilon$};
			\node[morphism, on grid, below left=\sca and \sca of d] (g) {$\nu$};
			\node[morphism, on grid, below right=\sca and 2*\sca of g] (a) {$\rho$};
			\node[morphism, on grid, below left=\sca and \sca of a] (b) {$\beta$};
			
			\draw (d) -- node[left=5pt, pos=0] {$d$} (g);
			\draw (d) -- node[right=3pt] {$n$} (a);
			\draw (g) -- node[left=3pt] {$i$} (b);
			\draw (g) -- node[above=1pt] {$f$} (a);
			\draw (a) -- node[pos=1, right=5pt] {$a$} (b);
			
			\draw (b.south) .. node[very near start, left=5pt] {$b$} controls +(0, -\sca) and +(0, -\sca) .. +(\arcwidth*\sca, 0) coordinate (P);
			\draw (d.north) .. node[very near start, left=5pt] {$b$} controls +(0, \sca) and +(0, \sca) .. +(\arcwidth*\sca, 0) -- (P);
		\end{tikzpicture}
	\hspace{1em}
		\begin{tikzpicture}[x=8mm, y=8mm, every node/.style={inner sep=2pt}]
			\newcommand{\sca}{1}
			\node[morphism] (d) {$\lambda$};
			\node[morphism, on grid, below left=\sca and \sca of d] (g) {$\gamma$};
			\node[morphism, on grid, below right=\sca and 2*\sca of g] (a) {$\sigma$};
			\node[morphism, on grid, below left=\sca and \sca of a] (b) {$\nu$};
			
			\draw (d) -- node[left=5pt, pos=0] {$c$} (g);
			\draw (d) -- node[right=3pt] {$m$} (a);
			\draw (g) -- node[left=3pt] {$i$} (b);
			\draw (g) -- node[above=1pt] {$j$} (a);
			\draw (a) -- node[pos=1, right=5pt] {$f$} (b);
			
			\draw (b.south) .. node[very near start, left=5pt] {$d$} controls +(0, -\sca) and +(0, -\sca) .. +(\arcwidth*\sca, 0) coordinate (P);
			\draw (d.north) .. node[very near start, left=5pt] {$d$} controls +(0, \sca) and +(0, \sca) .. +(\arcwidth*\sca, 0) -- (P);
		\end{tikzpicture}
	\hspace{1em}
		\begin{tikzpicture}[x=8mm, y=8mm, every node/.style={inner sep=2pt}]
			\newcommand{\sca}{1}
			\node[morphism] (d) {$\rho$};
			\node[morphism, on grid, below left=\sca and \sca of d] (g) {$\sigma$};
			\node[morphism, on grid, below right=\sca and 2*\sca of g] (a) {$\mu$};
			\node[morphism, on grid, below left=\sca and \sca of a] (b) {$\alpha$};
			
			\draw (d) -- node[left=5pt, pos=0] {$f$} (g);
			\draw (d) -- node[right=3pt] {$n$} (a);
			\draw (g) -- node[left=3pt] {$j$} (b);
			\draw (g) -- node[above=1pt] {$m$} (a);
			\draw (a) -- node[pos=1, right=5pt] {$k$} (b);
			
			\draw (b.south) .. node[very near start, left=5pt] {$a$} controls +(0, -\sca) and +(0, -\sca) .. +(\arcwidth*\sca, 0) coordinate (P);
			\draw (d.north) .. node[very near start, left=5pt] {$a$} controls +(0, \sca) and +(0, \sca) .. +(\arcwidth*\sca, 0) -- (P);
		\end{tikzpicture}
	\end{center}

\end{enumerate}
\end{lemma}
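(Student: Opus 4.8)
The plan is to read both families of relations as the algebraic shadow, in fusion--tree bases, of two structural facts about $\mathcal C$: that the associativity constraint $a_{i,j,k}$ is an isomorphism (the orthogonality relations) and that it satisfies the pentagon axiom (the Biedenharn--Elliott relations). The computational engine in both cases is a single observation. For $i,j,k\in I_{\mathcal C}$ the morphisms $\{(j_{ijc}^\gamma\otimes\ident_k)\circ j_{ckb}^\delta\}_{c,\gamma,\delta}$ and $\{(\ident_i\otimes j_{jka}^\alpha)\circ j_{iab}^\beta\}_{a,\alpha,\beta}$ are bases of $\Hom(b,(i\otimes j)\otimes k)$ and $\Hom(b,i\otimes(j\otimes k))$ respectively, and by Corollary \ref{cor:trace_mult} together with the defining trace of the $6j$ symbol, the action of $a_{i,j,k}$ expands as
\[ a_{i,j,k}\circ (j_{ijc}^\gamma\otimes\ident_k)\circ j_{ckb}^\delta = \sum_{a,\alpha,\beta}\dim(a)\,\big[\text{6j}\big]_{+}\,(\ident_i\otimes j_{jka}^\alpha)\circ j_{iab}^\beta, \]
where the factor $\dim(a)$ is produced by inserting the completeness relation $\sum_{a,\alpha}\dim(a)\,j_{jka}^\alpha\circ p_{jka}^\alpha=\ident$ (tensored with $\ident_i$) followed by the completeness relation for $\Hom(b,i\otimes a)$ and then reading off the scalar via Corollary \ref{cor:trace_mult}. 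The entirely analogous expansion for $a_{i,j,k}^{-1}$ produces the minus $6j$ symbols.

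For the orthogonality relations I would compose these two expansions. Applying $a_{i,j,k}^{-1}$ to both sides of the displayed identity, the left-hand side collapses to $(j_{ijc}^\gamma\otimes\ident_k)\circ j_{ckb}^\delta$ since $a_{i,j,k}^{-1}\circ a_{i,j,k}=\ident$, while on the right-hand side I expand $a_{i,j,k}^{-1}\circ(\ident_i\otimes j_{jka}^\alpha)\circ j_{iab}^\beta$ in the left--associated basis via the minus $6j$ symbols. This yields
\[ (j_{ijc}^\gamma\otimes\ident_k)\circ j_{ckb}^\delta = \sum_{a,\alpha,\beta}\sum_{d,\epsilon,\lambda}\dim(a)\dim(d)\,\big[\text{6j}\big]_+\big[\text{6j}\big]_-\,(j_{ijd}^\epsilon\otimes\ident_k)\circ j_{dkb}^\lambda. \]
Since the morphisms $(j_{ijd}^\epsilon\otimes\ident_k)\circ j_{dkb}^\lambda$ form a basis, comparing the coefficients of each basis vector on both sides gives exactly $\sum_{a,\alpha,\beta}\dim(a)\dim(d)\,[\text{6j}]_+[\text{6j}]_-=\delta_{c,d}\,\delta_{\gamma,\epsilon}\,\delta_{\delta,\lambda}$, the stated identity.

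For the Biedenharn--Elliott relations I would run the same machine on the pentagon axiom for four simple objects $i,j,m,n\in I_{\mathcal C}$,
\[ a_{i,j,m\otimes n}\circ a_{i\otimes j,m,n}=(\ident_i\otimes a_{j,m,n})\circ a_{i,j\otimes m,n}\circ(a_{i,j,m}\otimes\ident_n), \]
evaluated as a scalar matrix element between a fixed fully--resolved fusion tree into $((i\otimes j)\otimes m)\otimes n$, resolved through $i\otimes j\to c$, $c\otimes m\to d$, $d\otimes n\to b$, and a fixed fully--resolved projection out of $i\otimes(j\otimes(m\otimes n))$, resolved through $m\otimes n\to k$, $j\otimes k\to a$, $i\otimes a\to b$. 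Inserting the $\dim$--weighted completeness relations at every intermediate object and expanding each associator by its $6j$ symbol, the left--hand composite $a_{i,j,m\otimes n}\circ a_{i\otimes j,m,n}$ produces the two left--hand $6j$ symbols, glued along the single shared vertex $c\otimes k\to b$ whose multiplicity $\delta$ is summed; the right--hand composite produces the three right--hand $6j$ symbols, with the completeness relation at the new channel $j\otimes m\to f$ producing $\sum_f\dim(f)$ and the three summed multiplicities $\nu,\rho,\sigma$. The one genuinely non-formal move is that $a_{i\otimes j,m,n}$ and $a_{i,j\otimes m,n}$ are associators of composite objects; here I would use naturality of $a$ to slide the inclusions past them, e.g.
\[ a_{i\otimes j,m,n}\circ\big((j_{ijc}^\gamma\otimes\ident_m)\otimes\ident_n\big)=(j_{ijc}^\gamma\otimes\ident_{m\otimes n})\circ a_{c,m,n}, \]
reducing every associator to one of simple objects, to which the expansion of the first paragraph applies verbatim.

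The conceptual content of both families is thus immediate: they are invertibility and the pentagon drawn in the graphical calculus. The main obstacle is purely bookkeeping, namely inserting the completeness relations in exactly the right places, tracking the $\dim$ factors and the contraction pattern of the multiplicity indices, and matching the resulting scalars vertex by vertex to the $6j$ symbols depicted in the statement. This is safest to carry out in the graphical calculus of Section \ref{ssec:graph_fusion}, where the $\dim$--weighted completeness relations and Corollary \ref{cor:trace_mult} are the only moves needed and the two sides of each relation are manifestly the two evaluations of a single fusion diagram.
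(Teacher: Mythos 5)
Your proposal is correct and takes essentially the same route as the paper, which cites \cite{barrett_westbury} and justifies the relations exactly as you do: orthogonality from the invertibility of the associativity constraint, and Biedenharn--Elliott from the pentagon axiom, with naturality of $a$ used to reduce associators of composite objects to associators of simple ones. Your expansion of $a_{i,j,k}$ in the fusion-tree bases via the $\dim$-weighted completeness relations and Corollary \ref{cor:trace_mult} is the standard bookkeeping behind the cited result, and the dimension factors in your formulas cancel correctly, so comparing coefficients in the bases $\{(j_{ijd}^{\epsilon}\otimes\ident_k)\circ j_{dkb}^{\lambda}\}$ indeed yields both families of relations.
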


The orthogonality relation follows from the fact that $a$ and $a^{-1}$ are inverses of each other, while the Biedenharn-Elliott relation is a consequence of the pentagon axiom.

We now cite from \cite{bm_tv+} the generalisations of the orthogonality and Biedenharn-Elliott relations for the generalised 6j symbols defined in Section \ref{ssec:6j_def}.

\begin{lemma} \label{lem:rel_6j_modf}
	\cite{bm_tv+}
	The 6j symbols for module functors satisfy the following relations:
	\begin{enumerate}
		\item orthogonality relations:
		\begin{center}
		$\sum_{a, \alpha, \beta} \dim(a) \dim(d)$
		\begin{tikzpicture}[x=8mm, y=8mm, every node/.style={inner sep=2pt}]
			\newcommand{\sca}{1}
			\node[m morphism] (d) {$\delta$};
			\node[n morphism, on grid, below left=\sca and \sca of d] (g) {$\gamma$};
			\node[m morphism, on grid, below right=\sca and 2*\sca of g] (a) {$\alpha$};
			\node[m morphism, on grid, below left=\sca and \sca of a] (b) {$\beta$};
			
			\draw[color=darkgreen] (d) -- node[left=5pt, pos=0] {$c$} (g);
			\draw[dashed, color=red] (d) -- node[right=3pt] {$F$} (a);
			\draw (g) -- node[left=3pt] {$i$} (b);
			\draw[color=darkgreen] (g) -- node[above=1pt] {$j$} (a);
			\draw[color=blue] (a) -- node[pos=1, right=5pt] {$a$} (b);
			
			\draw[color=blue] (d.north) -- node[left] {$b$} +(0, 0.8) coordinate (top);
			\draw[color=blue] ($(top) + (-0.2, 0)$) -- +(0.4, 0);
			\draw[color=blue] (b.south) -- node[left] {$b$} +(0, -0.8) coordinate (bottom);
			\draw[color=blue] ($(bottom) + (-0.2, 0)$) -- +(0.4, 0);
		\end{tikzpicture}
		\hspace{1em}
		\begin{tikzpicture}[x=8mm, y=8mm, every node/.style={inner sep=2pt}]
			\newcommand{\sca}{1}
			\node[m morphism] (d) {$\beta$};
			\node[m morphism, on grid, below right=\sca and \sca of d] (g) {$\alpha$};
			\node[n morphism, on grid, below left=\sca and 2*\sca of g] (a) {$\epsilon$};
			\node[m morphism, on grid, below right=\sca and \sca of a] (b) {$\lambda$};
			
			\draw[color=blue] (d) -- node[right=5pt, pos=0] {$a$} (g);
			\draw (d) -- node[left=3pt] {$i$} (a);
			\draw[dashed, color=red] (g) -- node[right=3pt] {$F$} (b);
			\draw[color=darkgreen] (g) -- node[above=1pt] {$j$} (a);
			\draw[color=darkgreen] (a) -- node[left=5pt, pos=1] {$d$} (b);
			
			\draw[color=blue] (d.north) -- node[left] {$b$} +(0, 0.8) coordinate (top);
			\draw[color=blue] ($(top) + (-0.2, 0)$) -- +(0.4, 0);
			\draw[color=blue] (b.south) -- node[left] {$b$} +(0, -0.8) coordinate (bottom);
			\draw[color=blue] ($(bottom) + (-0.2, 0)$) -- +(0.4, 0);
		\end{tikzpicture}
		$=\delta_{c, d}\, \delta_{\gamma, \epsilon} \, \delta_{\delta, \lambda}$
		\end{center}

		\begin{center}
			$\sum_{c, \gamma, \delta} \dim(c) \dim(d)$
			\begin{tikzpicture}[x=8mm, y=8mm, every node/.style={inner sep=2pt}]
				\newcommand{\sca}{1}
				\node[m morphism] (d) {$\delta$};
				\node[n morphism, on grid, below left=\sca and \sca of d] (g) {$\gamma$};
				\node[m morphism, on grid, below right=\sca and 2*\sca of g] (a) {$\alpha$};
				\node[m morphism, on grid, below left=\sca and \sca of a] (b) {$\beta$};
				
				\draw[color=darkgreen] (d) -- node[left=5pt, pos=0] {$c$} (g);
				\draw[dashed, color=red] (d) -- node[right=3pt] {$F$} (a);
				\draw (g) -- node[left=3pt] {$i$} (b);
				\draw[color=darkgreen] (g) -- node[above=1pt] {$j$} (a);
				\draw[color=blue] (a) -- node[pos=1, right=5pt] {$a$} (b);
				
				\draw[color=blue] (d.north) -- node[left] {$b$} +(0, 0.8) coordinate (top);
				\draw[color=blue] ($(top) + (-0.2, 0)$) -- +(0.4, 0);
				\draw[color=blue] (b.south) -- node[left] {$b$} +(0, -0.8) coordinate (bottom);
				\draw[color=blue] ($(bottom) + (-0.2, 0)$) -- +(0.4, 0);
			\end{tikzpicture}
			\hspace{1em}
			\begin{tikzpicture}[x=8mm, y=8mm, every node/.style={inner sep=2pt}]
				\newcommand{\sca}{1}
				\node[m morphism] (d) {$\epsilon$};
				\node[m morphism, on grid, below right=\sca and \sca of d] (g) {$\lambda$};
				\node[n morphism, on grid, below left=\sca and 2*\sca of g] (a) {$\gamma$};
				\node[m morphism, on grid, below right=\sca and \sca of a] (b) {$\delta$};
				
				\draw[color=blue] (d) -- node[right=5pt, pos=0] {$d$} (g);
				\draw (d) -- node[left=3pt] {$i$} (a);
				\draw[dashed, color=red] (g) -- node[right=3pt] {$F$} (b);
				\draw[color=darkgreen] (g) -- node[above=1pt] {$j$} (a);
				\draw[color=darkgreen] (a) -- node[left=5pt, pos=1] {$c$} (b);
				
				\draw[color=blue] (d.north) -- node[left] {$b$} +(0, 0.8) coordinate (top);
				\draw[color=blue] ($(top) + (-0.2, 0)$) -- +(0.4, 0);
				\draw[color=blue] (b.south) -- node[left] {$b$} +(0, -0.8) coordinate (bottom);
				\draw[color=blue] ($(bottom) + (-0.2, 0)$) -- +(0.4, 0);
			\end{tikzpicture}
			$=\delta_{a, d}\, \delta_{\beta, \epsilon} \, \delta_{\alpha, \lambda}$
		\end{center}
		
		\item Biedenharn-Elliott relations:
		\begin{center}
			$\sum_{\delta}$
			\begin{tikzpicture}[x=8mm, y=8mm, every node/.style={inner sep=2pt}]
				\newcommand{\sca}{1}
				\node[m morphism] (d) {$\delta$};
				\node[morphism, on grid, below left=\sca and \sca of d] (g) {$\gamma$};
				\node[m morphism, on grid, below right=\sca and 2*\sca of g] (a) {$\alpha$};
				\node[m morphism, on grid, below left=\sca and \sca of a] (b) {$\beta$};
				
				\draw (d) -- node[left=5pt, pos=0] {$c$} (g);
				\draw[color=blue] (d) -- node[right=3pt] {$k$} (a);
				\draw (g) -- node[left=3pt] {$i$} (b);
				\draw (g) -- node[above=1pt] {$j$} (a);
				\draw[color=blue] (a) -- node[pos=1, right=5pt] {$a$} (b);
				
				\draw[color=blue] (d.north) -- node[left] {$b$} +(0, 0.8) coordinate (top);
				\draw[color=blue] ($(top) + (-0.2, 0)$) -- +(0.4, 0);
				\draw[color=blue] (b.south) -- node[left] {$b$} +(0, -0.8) coordinate (bottom);
				\draw[color=blue] ($(bottom) + (-0.2, 0)$) -- +(0.4, 0);
			\end{tikzpicture}
		\hspace{1em}
			\begin{tikzpicture}[x=8mm, y=8mm, every node/.style={inner sep=2pt}]
				\newcommand{\sca}{1}
				\node[m morphism] (d) {$\epsilon$};
				\node[n morphism, on grid, below left=\sca and \sca of d] (g) {$\lambda$};
				\node[m morphism, on grid, below right=\sca and 2*\sca of g] (a) {$\mu$};
				\node[m morphism, on grid, below left=\sca and \sca of a] (b) {$\delta$};
				
				\draw[color=darkgreen] (d) -- node[left=5pt, pos=0] {$d$} (g);
				\draw[dashed, color=red] (d) -- node[right=3pt] {$F$} (a);
				\draw (g) -- node[left=3pt] {$c$} (b);
				\draw[color=darkgreen] (g) -- node[above=1pt] {$l$} (a);
				\draw[color=blue] (a) -- node[pos=1, right=5pt] {$k$} (b);
				
				\draw[color=blue] (d.north) -- node[left] {$b$} +(0, 0.8) coordinate (top);
				\draw[color=blue] ($(top) + (-0.2, 0)$) -- +(0.4, 0);
				\draw[color=blue] (b.south) -- node[left] {$b$} +(0, -0.8) coordinate (bottom);
				\draw[color=blue] ($(bottom) + (-0.2, 0)$) -- +(0.4, 0);
			\end{tikzpicture}
		\end{center}
	
		\begin{center}
			\hfill
			$=\sum_{m, \nu, \rho, \sigma} \dim(m)$
			\begin{tikzpicture}[x=8mm, y=8mm, every node/.style={inner sep=2pt}]
				\newcommand{\sca}{1}
				\node[m morphism] (d) {$\epsilon$};
				\node[n morphism, on grid, below left=\sca and \sca of d] (g) {$\nu$};
				\node[m morphism, on grid, below right=\sca and 2*\sca of g] (a) {$\rho$};
				\node[m morphism, on grid, below left=\sca and \sca of a] (b) {$\beta$};
				
				\draw[color=darkgreen] (d) -- node[left=5pt, pos=0] {$d$} (g);
				\draw[dashed, color=red] (d) -- node[right=3pt] {$F$} (a);
				\draw (g) -- node[left=3pt] {$i$} (b);
				\draw[color=darkgreen] (g) -- node[above=1pt] {$m$} (a);
				\draw[color=blue] (a) -- node[pos=1, right=5pt] {$a$} (b);
				
				\draw[color=blue] (d.north) -- node[left] {$b$} +(0, 0.8) coordinate (top);
				\draw[color=blue] ($(top) + (-0.2, 0)$) -- +(0.4, 0);
				\draw[color=blue] (b.south) -- node[left] {$b$} +(0, -0.8) coordinate (bottom);
				\draw[color=blue] ($(bottom) + (-0.2, 0)$) -- +(0.4, 0);
			\end{tikzpicture}
		\hspace{1em}
			\begin{tikzpicture}[x=8mm, y=8mm, every node/.style={inner sep=2pt}]
				\newcommand{\sca}{1}
				\node[n morphism] (d) {$\lambda$};
				\node[morphism, on grid, below left=\sca and \sca of d] (g) {$\gamma$};
				\node[n morphism, on grid, below right=\sca and 2*\sca of g] (a) {$\sigma$};
				\node[n morphism, on grid, below left=\sca and \sca of a] (b) {$\nu$};
				
				\draw (d) -- node[left=5pt, pos=0] {$c$} (g);
				\draw[color=darkgreen] (d) -- node[right=3pt] {$l$} (a);
				\draw (g) -- node[left=3pt] {$i$} (b);
				\draw (g) -- node[above=1pt] {$j$} (a);
				\draw[color=darkgreen] (a) -- node[pos=1, right=5pt] {$m$} (b);
				
				\draw[color=darkgreen] (d.north) -- node[left] {$d$} +(0, 0.8) coordinate (top);
				\draw[color=darkgreen] ($(top) + (-0.2, 0)$) -- +(0.4, 0);
				\draw[color=darkgreen] (b.south) -- node[left] {$d$} +(0, -0.8) coordinate (bottom);
				\draw[color=darkgreen] ($(bottom) + (-0.2, 0)$) -- +(0.4, 0);
			\end{tikzpicture}
		\hspace{1em}
			\begin{tikzpicture}[x=8mm, y=8mm, every node/.style={inner sep=2pt}]
				\newcommand{\sca}{1}
				\node[m morphism] (d) {$\rho$};
				\node[n morphism, on grid, below left=\sca and \sca of d] (g) {$\sigma$};
				\node[m morphism, on grid, below right=\sca and 2*\sca of g] (a) {$\mu$};
				\node[m morphism, on grid, below left=\sca and \sca of a] (b) {$\alpha$};
				
				\draw[color=darkgreen] (d) -- node[left=5pt, pos=0] {$m$} (g);
				\draw[dashed, color=red] (d) -- node[right=3pt] {$F$} (a);
				\draw (g) -- node[left=3pt] {$j$} (b);
				\draw[color=darkgreen] (g) -- node[above=1pt] {$l$} (a);
				\draw[color=blue] (a) -- node[pos=1, right=5pt] {$k$} (b);
				
				\draw[color=blue] (d.north) -- node[left] {$a$} +(0, 0.8) coordinate (top);
				\draw[color=blue] ($(top) + (-0.2, 0)$) -- +(0.4, 0);
				\draw[color=blue] (b.south) -- node[left] {$a$} +(0, -0.8) coordinate (bottom);
				\draw[color=blue] ($(bottom) + (-0.2, 0)$) -- +(0.4, 0);
			\end{tikzpicture}
		{\normalfont \hfill \eqlabel{eq:ber_modf}}
		\end{center}
	
	\end{enumerate}
\end{lemma}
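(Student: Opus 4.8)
The plan is to treat both families of identities as diagrammatic reformulations of structural facts about the coherence datum $s$ (and, for the right-module case, $t$) of a module functor, mirroring exactly the proof of the fusion-category relations recalled just above: the orthogonality relations will express that $s$ and $s^{-1}$ are mutually inverse, while the Biedenharn--Elliott relation (\ref{eq:ber_modf}) will be the module functor pentagon axiom (\ref{diag:modf}) written out in terms of 6j symbols. In every case the 6j symbols are by definition the scalar coefficients of the relevant coherence isomorphism in the bases furnished by the rescaled inclusions and projections, so the whole argument runs through the graphical calculus of Section~\ref{ssec:graph_fusion} together with the two defining identities
\[ p_{Fij}^{\alpha}\circ j_{Fil}^{\beta} = \delta_{j,l}\,\delta_{\alpha,\beta}\,\dim^{-1}(j)\,\ident_j, \qquad \sum_{j}\sum_{\alpha}\dim(j)\,j_{Fij}^{\alpha}\circ p_{Fij}^{\alpha} = \ident_{F(i)} \]
for the rescaled inclusions and projections.

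For the orthogonality relations I would read the two factors in the sum as the morphisms whose traces define the 6j symbol of $s$ and of $s^{-1}$ respectively. Stacking the two diagrams vertically glues these two occurrences of $s$ along the internal edge labelled $a$; summing over $a$, $\alpha$ and $\beta$ with the weight $\dim(a)$ is precisely the completeness relation, which turns the glued pair into the identity via $s\circ s^{-1}=\ident$. What survives is a closed diagram built only from the inclusions and projections at $c$ and $d$ together with the factor $\dim(d)$, and the orthogonality identity $p\circ j=\delta\,\dim^{-1}\ident$ then collapses it to $\delta_{c,d}\,\delta_{\gamma,\epsilon}\,\delta_{\delta,\lambda}$, the dimension weights being exactly those needed to cancel the normalisation of the rescaled morphisms. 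The second orthogonality relation is the same computation with the roles of the two internal edges interchanged, now summing over $c$, $\gamma$, $\delta$ and gluing along the other edge.

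For the Biedenharn--Elliott relation I would start from the commuting pentagon (\ref{diag:modf}), specialised to simple objects with $X=i$ and $Y=j$, and expand every object occurring in it into simple summands by inserting the completeness relation on each intermediate tensor- and action-object. The two composites that the pentagon equates then become, after this expansion, the two sides of (\ref{eq:ber_modf}): one side pairs a module-constraint 6j symbol with a single factor coming from $s$, while the other surrounds a second module-constraint 6j symbol by two factors of $s$. The interior sum $\sum_{m}\dim(m)$ and the extra multiplicity labels $\nu,\rho,\sigma$ on that side are produced precisely by the resolution of the identity inserted on the intermediate simple object $m$. Reading off the scalar coefficients of the two composites in a fixed basis of the relevant hom-space yields (\ref{eq:ber_modf}) term by term.

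Since the conceptual input—invertibility of $s$ and the module functor pentagon axiom—is immediate, I expect the main obstacle to be purely the bookkeeping: tracking the multiplicity indices $\alpha,\beta,\gamma,\delta,\dots$ across the three categories $\mathcal C$, $\mathcal M$ and $\mathcal N$, placing the dimension weights on the correct internal edges, and checking that every rescaling factor $\dim^{\pm1}$ cancels as claimed. These are the same manipulations that establish the fusion-category relations of the preceding lemma, applied one categorical level up; as the statement is quoted from \cite{bm_tv+}, it suffices to exhibit this reduction rather than to reproduce each diagrammatic identity in full.
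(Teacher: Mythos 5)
Your proposal matches the paper's own treatment: the paper cites the relations from \cite{bm_tv+} and justifies them exactly as you do, noting that the orthogonality relations follow from $s$ and $s^{-1}$ being mutually inverse and that the Biedenharn--Elliott relations are a consequence of the module functor pentagon axiom (\ref{diag:modf}), with the completeness and orthogonality identities for the rescaled inclusions and projections supplying the dimension factors. Your diagrammatic elaboration (gluing along the summed internal edge, inserting resolutions of the identity on the intermediate simple object $m$) is the standard bookkeeping implicit in that reduction, so your approach is essentially the same as the paper's.
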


The orthogonality relations follow from the fact that $s$ and $s^{-1}$ are inverses of each other, whereas the Biedenharn-Elliott relations are a consequence of the pentagon axiom for module functors (\ref{diag:modf}).

The 6j symbols for bimodule categories satisfy analogous relations. Their orthogonality relations follow because $m$ and $m^{-1}$, $n$ and $n^{-1}$ as well as $b$ and $b^{-1}$, respectively, are inverses of each other. The Biedenharn-Elliott relations for the 6j symbols of bimodule categories are a consequence of the four pentagon axioms for bimodule categories, see Definition \ref{def:bimodc}. Alternatively, these relations can be obtained by combining Lemma \ref{lem:rel_6j_modf} with Remark \ref{rem:6j_modc}.
\subsection{Generalised 6j symbols over $\vectgo$}
We now consider the case $\mathcal C=\mathrm{Vec}_G^{\omega_G, \kappa_G}, \mathcal D=\mathrm{Vec}_H^{\omega_H, \kappa_H}$.

\subsubsection{6j symbols for spherical fusion categories}
We assume that $\mathcal C$ and $\mathcal D$ are spherical fusion categories, so $G$ and $H$ are finite groups and, by Lemma \ref{lem:vecgok_sph}, $\kappa_G$ and $\kappa_H$ only take values in $\{-1, 1\}$.

Choose $I_{\mathcal C} = \{ \delta^g \mid g\in G \} \cong G$ and $I_{\mathcal D} = \{ \delta^h \mid h\in H\} \cong H$. Since $\delta^i \otimes \delta^j = \delta^{ij} \in I_{\mathcal C}$ is simple for all $i, j\in G$, we can choose as rescaled inclusions and projections for $i\otimes j$
\[ j_{ijk}^1 := \ident_k:\ \delta^k \to \delta^i \otimes \delta^j, \quad p_{ijk}^1 := \kappa_G^{-1}(k)\, \ident_k = \kappa_G(k)\, \ident_k:\ \delta^i \otimes \delta^j \to \delta^k \] 
for $i, j, k\in G$ with $ij = k$. Note that $j_{ijk}^1$ and $p_{ijk}^1$ are undefined unless $ij=k$. Clearly, other choices of rescaled inclusions and projections are possible.

We can now calculate the 6j symbols associated to spherical fusion categories. Let $a, b, c, i, j, k\in G$. Then the 6j symbols below are only defined if $\alpha = \beta = \gamma = \delta = 1$ and $b=ck, c=ij, a=jk$ and, as a consequence, $b=ia$. We calculate

\begin{center}
	\begin{tikzpicture}[x=8mm, y=8mm, every node/.style={inner sep=2pt}]
		\newcommand{\sca}{1}
		\node[morphism] (d) {$\delta$};
		\node[morphism, on grid, below left=\sca and \sca of d] (g) {$\gamma$};
		\node[morphism, on grid, below right=\sca and 2*\sca of g] (a) {$\alpha$};
		\node[morphism, on grid, below left=\sca and \sca of a] (b) {$\beta$};
		\draw (d) -- node[left=5pt, pos=0] {$c$} (g);
		\draw (d) -- node[right=3pt] {$k$} (a);
		\draw (g) -- node[left=3pt] {$i$} (b);
		\draw (g) -- node[above=1pt] {$j$} (a);
		\draw (a) -- node[pos=1, right=5pt] {$a$} (b);
		\draw (b.south) .. node[very near start, left=5pt] {$b$} controls +(0, -\sca) and +(0, -\sca) .. +(2.5*\sca, 0) coordinate (P);
		\draw (d.north) .. node[very near start, left=5pt] {$b$} controls +(0, \sca) and +(0, \sca) .. +(2.5*\sca, 0) -- (P);
	\end{tikzpicture}
	$\ \eqdiag \ \trace\left( 
	p_{iab}^{\beta} \circ (\ident_i \otimes p_{jka}^{\alpha}) \circ a_{i, j, k} \circ (j_{ijc}^{\gamma} \otimes \ident_k) \circ j_{ckb}^{\delta}
	\right)$
\end{center}
\[ = \dim(b) \ p_{iab}^{\beta} \circ (\ident_i \otimes p_{jka}^{\alpha}) \circ a_{i, j, k} \circ (j_{ijc}^{\gamma} \otimes \ident_k) \circ j_{ckb}^{\delta} = \kappa_G(a) \, \omega_G(i, j, k), \]

\begin{center}
	\begin{tikzpicture}[x=8mm, y=8mm, every node/.style={inner sep=2pt}]
		\newcommand{\sca}{1}
		\node[morphism] (d) {$\delta$};
		\node[morphism, on grid, below right=\sca and \sca of d] (g) {$\gamma$};
		\node[morphism, on grid, below left=\sca and 2*\sca of g] (a) {$\alpha$};
		\node[morphism, on grid, below right=\sca and \sca of a] (b) {$\beta$};
		\draw (d) -- node[right=5pt, pos=0] {$a$} (g);
		\draw (d) -- node[left=3pt] {$i$} (a);
		\draw (g) -- node[right=3pt] {$k$} (b);
		\draw (g) -- node[above=1pt] {$j$} (a);
		\draw (a) -- node[left=5pt, pos=1] {$c$} (b);
		\draw (b.south) .. node[very near start, left=5pt] {$b$} controls +(0, -\sca) and +(0, -\sca) .. +(2.5*\sca, 0) coordinate (P);
		\draw (d.north) .. node[very near start, left=5pt] {$b$} controls +(0, \sca) and +(0, \sca) .. +(2.5*\sca, 0) -- (P);
	\end{tikzpicture}
	$\ \eqdiag \ \trace\left( 
	p_{ckb}^{\beta} \circ (p_{ijc}^{\alpha}\otimes \ident_k) \circ a_{i, j, k}^{-1} \circ (\ident_i \otimes j_{jka}^{\gamma}) \circ j_{iab}^{\delta}
	\right)$
\end{center}
\[ =\dim(b) \ p_{ckb}^{\beta} \circ (p_{ijc}^{\alpha}\otimes \ident_k) \circ a_{i, j, k}^{-1} \circ (\ident_i \otimes j_{jka}^{\gamma}) \circ j_{iab}^{\delta} = \kappa_G(c) \, \omega_G^{-1}(i, j, k). \]
Here we identified both $\Hom_{\vectgo}(\delta^1, \delta^1)$ and $\Hom_{\vectgo}(\delta^b, \delta^b)$ with $\mathbb F$.
The orthogonality relation is obviously satisfied and a direct calculation shows that the Biedenharn-Elliott relation follows from the fact that $\omega$ is a cocycle and thus satisfies (\ref{eq:3cocycle}).

\subsubsection{6j symbols for bimodule categories}
\newcommand{\bimodc}{\mathcal B(X, \Psi, \Phi, \Omega)}
Now let $\bimodc$ be a finite semisimple $(\mathcal C, \mathcal D)$-bimodule category as defined in Section \ref{ssec:vecgo_bimodc}. This implies that $X$ is finite. In Section \ref{ssec:vecgo_bimodc}, we proved $\mathcal C \boxtimes \mathcal D^{rev} = \mathrm{Vec}_{G\times H}^{\omega}$, where
\[ \omega((g_1, h_1), (g_2, h_2), (g_3, h_3)) = \omega_G(g_1, g_2, g_3) \, \omega_H^{-1}(h_3^{-1}, h_2^{-1}, h_1^{-1}). \]
Let $\mathcal M(X, \Gamma)$ be the $\mathrm{Vec}_{G\times H}^{\omega}$-module category that is associated to $\bimodc$ by Proposition \ref{pro:vecgo_bimodc}:
\[ \Gamma((g_1, h_1), (g_2, h_2), x) = \Psi(g_1, g_2, (h_2^{-1}h_1^{-1}) \opl x) \, \Phi(h_1, h_2, x) \, \Omega(g_1, h_2, h_1^{-1} \opl x), \]
By definition, a $(\mathcal C, \mathcal D)$-bimodule trace on $\bimodc$ is the same as a $\mathcal C \boxtimes \mathcal D^{rev}$-module trace on $\mathcal M(X, \Gamma)$.

The 6j symbols for a bimodule category are only defined if $\mathcal M(X, \Gamma)$ admits a module trace, i.e., satisfies the condition from Lemma \ref{lem:vecgo_mod_tr} for the $\kappa$ defined by (\ref{eq:kappa_deli_prod}). We will therefore assume from here on that there exists a module trace $\theta$ on $\mathcal M(X, \Gamma)$ and write $\tilde \kappa_X(x):= \dim^{\theta}(x)$ for all $x\in X$. Note that $\theta$ is uniquely determined by $\tilde\kappa_X$. We may assume $\tilde\kappa_X(x) \in \{1, -1\}$ for all $x\in X$, as shown in Section \ref{ssec:vecgo_modc}.

Let $c, i, j\in G, a, b, k\in X$. The 6j symbols below are only defined if $\alpha=\beta=\gamma=\delta=1$ and $b = c\opl k, c=ij, a=j\opl k$. This implies $i\opl a= b$. We can calculate
\begin{tabular}{l}
	\begin{tikzpicture}[x=8mm, y=8mm, every node/.style={inner sep=2pt}, module morphism/.style={morphism, draw=blue, text=blue}]
		\newcommand{\sca}{1}
		\node[module morphism] (d) {$\delta$};
		\node[morphism, on grid, below left=\sca and \sca of d] (g) {$\gamma$};
		\node[module morphism, on grid, below right=\sca and 2*\sca of g] (a) {$\alpha$};
		\node[module morphism, on grid, below left=\sca and \sca of a] (b) {$\beta$};
		
		\draw (d) -- node[left=5pt, pos=0] {$c$} (g);
		\draw[color=blue] (d) -- node[right=3pt] {$k$} (a);
		\draw (g) -- node[left=3pt] {$i$} (b);
		\draw (g) -- node[above=1pt] {$j$} (a);
		\draw[color=blue] (a) -- node[pos=1, right=5pt] {$a$} (b);
		
		\draw[color=blue] (d.north) -- node[left] {$b$} +(0, 0.8) coordinate (top);
		\draw[color=blue] ($(top) + (-0.2, 0)$) -- +(0.4, 0);
		\draw[color=blue] (b.south) -- node[left] {$b$} +(0, -0.8) coordinate (bottom);
		\draw[color=blue] ($(bottom) + (-0.2, 0)$) -- +(0.4, 0);
	\end{tikzpicture}
	$\ \eqdiag \ \theta_b\left( p_{iab}^{\beta} \circ (\ident_i \opl p_{jka}^{\alpha}) \circ m_{i, j, k} \circ (j_{ijc}^{\gamma} \opl \ident_k) \circ j_{ckb}^{\delta} \right) = \tilde \kappa_X(a) \, \Psi(i, j, b),$
\\
	\begin{tikzpicture}[x=8mm, y=8mm, every node/.style={inner sep=2pt}, module morphism/.style={morphism, draw=blue, text=blue}]
		\newcommand{\sca}{1}
		\node[module morphism] (d) {$\delta$};
		\node[module morphism, on grid, below right=\sca and \sca of d] (g) {$\gamma$};
		\node[morphism, on grid, below left=\sca and 2*\sca of g] (a) {$\alpha$};
		\node[module morphism, on grid, below right=\sca and \sca of a] (b) {$\beta$};
		\draw[color=blue] (d) -- node[right=5pt, pos=0] {$a$} (g);
		\draw (d) -- node[left=3pt] {$i$} (a);
		\draw[color=blue] (g) -- node[right=3pt] {$k$} (b);
		\draw (g) -- node[above=1pt] {$j$} (a);
		\draw (a) -- node[left=5pt, pos=1] {$c$} (b);
		
		\draw[color=blue] (d.north) -- node[left] {$b$} +(0, 0.8) coordinate (top);
		\draw[color=blue] ($(top) + (-0.2, 0)$) -- node[above=2pt] {} +(0.4, 0);
		\draw[color=blue] (b.south) -- node[left] {$b$} +(0, -0.8) coordinate (bottom);
		\draw[color=blue] ($(bottom) + (-0.2, 0)$) -- +(0.4, 0);
	\end{tikzpicture}
	$\ \eqdiag \ \theta_b\left( p_{ckb}^{\beta} \circ (p_{ijc}^{\alpha} \opl \ident_k) \circ m_{i, j, k}^{-1} \circ (\ident_i \opl j_{jka}^{\gamma}) \circ j_{iab}^{\delta} \right) = \kappa_G(c) \, \Psi^{-1}(i, j, b).$
\end{tabular}

Similarly, for $c, j, k\in H, a, b, i\in X$, the 6j symbols below are only defined if $\alpha=\beta=\gamma=\delta=1$ and $b = i \opr c, a=i\opr j, c=jk$ and hence $b = a \opr k$. We have

\begin{center}
	\begin{tikzpicture}[x=8mm, y=8mm, every node/.style={inner sep=2pt}, module morphism/.style={morphism, draw=blue, text=blue}]
		\newcommand{\sca}{1}
		\node[module morphism] (d) {$\delta$};
		\node[module morphism, on grid, below left=\sca and \sca of d] (g) {$\gamma$};
		\node[right morphism, on grid, below right=\sca and 2*\sca of g] (a) {$\alpha$};
		\node[module morphism, on grid, below left=\sca and \sca of a] (b) {$\beta$};
		\draw[color=blue] (d) -- node[left=5pt, pos=0] {$a$} (g);
		\draw[color=grey] (d) -- node[right=3pt] {$k$} (a);
		\draw[color=blue] (g) -- node[left=3pt] {$i$} (b);
		\draw[color=grey] (g) -- node[above=1pt] {$j$} (a);
		\draw[color=grey] (a) -- node[right=5pt, pos=1] {$c$} (b);
		
		\draw[color=blue] (d.north) -- node[left] {$b$} +(0, 0.8) coordinate (top);
		\draw[color=blue] ($(top) + (-0.2, 0)$) -- +(0.4, 0);
		\draw[color=blue] (b.south) -- node[left] {$b$} +(0, -0.8) coordinate (bottom);
		\draw[color=blue] ($(bottom) + (-0.2, 0)$) -- +(0.4, 0);
	\end{tikzpicture}
	$=\kappa_H(c)\, \Phi^{-1}(k^{-1}, j^{-1}, b)\, ,$ \hfil
	\begin{tikzpicture}[x=8mm, y=8mm, every node/.style={inner sep=2pt}, module morphism/.style={morphism, draw=blue, text=blue}]
		\newcommand{\sca}{1}
		\node[module morphism] (d) {$\delta$};
		\node[right morphism, on grid, below right=\sca and \sca of d] (g) {$\gamma$};
		\node[module morphism, on grid, below left=\sca and 2*\sca of g] (a) {$\alpha$};
		\node[module morphism, on grid, below right=\sca and \sca of a] (b) {$\beta$};
		
		\draw[color=grey] (d) -- node[right=5pt, pos=0] {$c$} (g);
		\draw[color=blue] (d) -- node[left=3pt] {$i$} (a);
		\draw[color=grey] (g) -- node[right=3pt] {$k$} (b);
		\draw[color=grey] (g) -- node[above=1pt] {$j$} (a);
		\draw[color=blue] (a) -- node[pos=1, left=5pt] {$a$} (b);
		
		\draw[color=blue] (d.north) -- node[left] {$b$} +(0, 0.8) coordinate (top);
		\draw[color=blue] ($(top) + (-0.2, 0)$) -- +(0.4, 0);
		\draw[color=blue] (b.south) -- node[left] {$b$} +(0, -0.8) coordinate (bottom);
		\draw[color=blue] ($(bottom) + (-0.2, 0)$) -- +(0.4, 0);
	\end{tikzpicture}
	$=\tilde\kappa_X(a) \, \Phi(k^{-1}, j^{-1}, b).$
\end{center}

Now let $i\in G, k\in H, a, b, c, j\in X$. The 6j symbols for the middle module constraint below are only defined if $\alpha=\beta=\gamma=\delta=1$ and $b = c\opr k, c=i\opl j, a=j\opr k$ and thus $b=i\opl a$. As before, we calculate

\begin{center}
	\begin{tikzpicture}[x=8mm, y=8mm, every node/.style={inner sep=2pt}, module morphism/.style={morphism, draw=blue, text=blue}]
		\newcommand{\sca}{1}
		\node[module morphism] (d) {$\delta$};
		\node[module morphism, on grid, below left=\sca and \sca of d] (g) {$\gamma$};
		\node[module morphism, on grid, below right=\sca and 2*\sca of g] (a) {$\alpha$};
		\node[module morphism, on grid, below left=\sca and \sca of a] (b) {$\beta$};
		
		\draw[color=blue] (d) -- node[left=5pt, pos=0] {$c$} (g);
		\draw[color=grey] (d) -- node[right=3pt] {$k$} (a);
		\draw (g) -- node[left=3pt] {$i$} (b);
		\draw[color=blue] (g) -- node[above=1pt] {$j$} (a);
		\draw[color=blue] (a) -- node[pos=1, right=5pt] {$a$} (b);
		
		\draw[color=blue] (d.north) -- node[left] {$b$} +(0, 0.8) coordinate (top);
		\draw[color=blue] ($(top) + (-0.2, 0)$) -- +(0.4, 0);
		\draw[color=blue] (b.south) -- node[left] {$b$} +(0, -0.8) coordinate (bottom);
		\draw[color=blue] ($(bottom) + (-0.2, 0)$) -- +(0.4, 0);
	\end{tikzpicture}
	$= \tilde\kappa_X(a)\, \Omega(i, k^{-1}, b)\, ,$ \hfil
	\begin{tikzpicture}[x=8mm, y=8mm, every node/.style={inner sep=2pt}, module morphism/.style={morphism, draw=blue, text=blue}]
		\newcommand{\sca}{1}
		\node[module morphism] (d) {$\delta$};
		\node[module morphism, on grid, below right=\sca and \sca of d] (g) {$\gamma$};
		\node[module morphism, on grid, below left=\sca and 2*\sca of g] (a) {$\alpha$};
		\node[module morphism, on grid, below right=\sca and \sca of a] (b) {$\beta$};
		
		\draw[color=blue] (d) -- node[right=5pt, pos=0] {$a$} (g);
		\draw (d) -- node[left=3pt] {$i$} (a);
		\draw[color=grey] (g) -- node[right=3pt] {$k$} (b);
		\draw[color=blue] (g) -- node[above=1pt] {$j$} (a);
		\draw[color=blue] (a) -- node[left=5pt, pos=1] {$c$} (b);
		
		\draw[color=blue] (d.north) -- node[left] {$b$} +(0, 0.8) coordinate (top);
		\draw[color=blue] ($(top) + (-0.2, 0)$) -- +(0.4, 0);
		\draw[color=blue] (b.south) -- node[left] {$b$} +(0, -0.8) coordinate (bottom);
		\draw[color=blue] ($(bottom) + (-0.2, 0)$) -- +(0.4, 0);
	\end{tikzpicture}
	$=\tilde\kappa_X(c)\, \Omega^{-1}(i, k^{-1}, b).$
\end{center}

\subsubsection{6j symbols for module functors}
We now determine the 6j symbols associated to $\vectgo$-module functors. Let $\mathcal M:=\mathcal B(X, \Psi_X, \Phi_X, \Omega_X)$ and $\mathcal N:=\mathcal B(Y, \Psi_Y, \Phi_Y, \Omega_Y)$ be finite semisimple $(\mathcal C, \mathcal D)$-bimodule categories and let $(F, s): \mathcal M\to \mathcal N$ be a $\mathcal C$-module functor. Let $i\in X$. Denote by $(\overline j_{Fij}^{\alpha}, \overline p_{Fij}^{\alpha})_{j, \alpha}$ the (unrescaled) inclusions and projections for $F(i)$ that were chosen in Section \ref{ssec:vecgo_modf}. We can then define rescaled inclusions and projections for $F(i)$ as
\[ j_{Fij}^{\alpha} := \overline j_{Fij}^{\alpha}, \quad p_{Fij}^{\alpha} := \tilde \kappa_Y(j) \, \overline p_{Fij}^{\alpha} \qquad (j\in Y, \alpha=1, \dots, m_{ij}^F). \]
Let $i\in G, c, j\in X, a, b\in Y$. The 6j symbols for the $\mathcal C$-left module functor $F$ which are calculated below are only defined for $\beta=\gamma=1$ and $c=i \opl j, b = i \opl a$.
\begin{tabular}{l}
	\begin{tikzpicture}[x=8mm, y=8mm, every node/.style={inner sep=2pt}]
		\newcommand{\sca}{1}
		\node[m morphism] (d) {$\delta$};
		\node[n morphism, on grid, below left=\sca and \sca of d] (g) {$\gamma$};
		\node[m morphism, on grid, below right=\sca and 2*\sca of g] (a) {$\alpha$};
		\node[m morphism, on grid, below left=\sca and \sca of a] (b) {$\beta$};
		
		\draw[color=darkgreen] (d) -- node[left=5pt, pos=0] {$c$} (g);
		\draw[dashed, color=red] (d) -- node[right=3pt] {$F$} (a);
		\draw (g) -- node[left=3pt] {$i$} (b);
		\draw[color=darkgreen] (g) -- node[above=1pt] {$j$} (a);
		\draw[color=blue] (a) -- node[pos=1, right=5pt] {$a$} (b);
		
		\draw[color=blue] (d.north) -- node[left] {$b$} +(0, 0.8) coordinate (top);
		\draw[color=blue] ($(top) + (-0.2, 0)$) -- node[above=4pt] {} +(0.4, 0);
		\draw[color=blue] (b.south) -- node[left] {$b$} +(0, -0.8) coordinate (bottom);
		\draw[color=blue] ($(bottom) + (-0.2, 0)$) -- +(0.4, 0);
	\end{tikzpicture}
	$\ \eqdiag \ \theta_b\left( p_{iab}^{\beta} \circ (\ident_i \opl p_{Fja}^{\alpha}) \circ s_{i, j} \circ F(j_{ijc}^{\gamma}) \circ j_{Fcb}^{\delta} \right) = \tilde\kappa_Y(a)\, \left( A_{i, j, a}^F\right)^\alpha_\delta,$
\\
	\begin{tikzpicture}[x=8mm, y=8mm, every node/.style={inner sep=2pt}]
		\newcommand{\sca}{1}
		\node[m morphism] (d) {$\delta$};
		\node[m morphism, on grid, below right=\sca and \sca of d] (g) {$\gamma$};
		\node[n morphism, on grid, below left=\sca and 2*\sca of g] (a) {$\alpha$};
		\node[m morphism, on grid, below right=\sca and \sca of a] (b) {$\beta$};
		
		\draw[color=blue] (d) -- node[right=5pt, pos=0] {$a$} (g);
		\draw (d) -- node[left=3pt] {$i$} (a);
		\draw[dashed, color=red] (g) -- node[right=3pt] {$F$} (b);
		\draw[color=darkgreen] (g) -- node[above=1pt] {$j$} (a);
		\draw[color=darkgreen] (a) -- node[left=5pt, pos=1] {$c$} (b);
		
		\draw[color=blue] (d.north) -- node[left] {$b$} +(0, 0.8) coordinate (top);
		\draw[color=blue] ($(top) + (-0.2, 0)$) -- node[above=2pt] {} +(0.4, 0);
		\draw[color=blue] (b.south) -- node[left] {$b$} +(0, -0.8) coordinate (bottom);
		\draw[color=blue] ($(bottom) + (-0.2, 0)$) -- +(0.4, 0);
	\end{tikzpicture}
	$\ \eqdiag \ \theta_b\left( p_{Fcb}^{\beta} \circ F(p_{ijc}^{\alpha}) \circ s_{i, j}^{-1} \circ (\ident_i \opl j_{Fja}^{\gamma}) \circ j_{iab}^{\delta} \right) = \tilde\kappa_X(c)\, \left(\left( A_{i, j, a}^F \right)^{-1}\right)^{\beta}_{\gamma}$
\end{tabular}

It is straightforward to check that these 6j symbols satisfy the orthogonality relations.
We now show that the Biedenharn-Elliott relations (\ref{eq:ber_modf}) hold. The 6j symbols in (\ref{eq:ber_modf}) are only defined if $c, i, j\in G, d, l, m\in X, a, b, k\in Y$, $\alpha=\beta=\gamma=\lambda=1$ and $a=j\opl k, b=i\opl a, c=ij, d=c\opl l$. The left hand side of (\ref{eq:ber_modf}) is easily calculated to be
\[ \tilde\kappa_Y(a)\, \tilde\kappa_Y(k)\, \Psi_Y(i, j, b) \, \left( A_{c, l, k}^F \right)^{\mu}_{\epsilon}. \]
Most summands on the right hand side vanish, with the exception of those where $m=j\opl l$ and $\nu=\sigma=1$. The right hand side is thus
\begin{align*} 
	& \sum_{\rho=1}^{m_{ma}^F} \tilde\kappa_X (m) \, \tilde\kappa_Y(a)\, \tilde\kappa_X(m) \, \tilde\kappa_Y(k) \, \left(A_{i, m, a}^F \right)^{\rho}_{\epsilon} \, \Psi_X(i, j, d) \, \left( A_{j, l, k}^F \right)^{\mu}_{\rho} \\
	=\hphantom{} &\tilde\kappa_Y(a)\, \tilde\kappa_Y(k) \, \Psi_X(i, j, d) \, \left(A_{j, l, k}^F \, A_{i, m, a}^F \right)^{\mu}_{\epsilon}.
\end{align*}
The Biedenharn-Elliott relations now follow from \ref{eq:cond_A}.

Right module functors are treated analogously: If $(F, t): \mathcal M \to \mathcal N$ is a $\mathcal D$-right module functor, we use the same rescaled inclusions and projections for $F(i)$ as above for all $i\in X$. Recall from Section \ref{ssec:vecgo_modf} that $(F, t)$ is uniquely determined by a family of natural numbers $(m_{xy}^F)_{x\in X, y\in Y}$ with
\[ m_{h\opl x, h\opl y}^F = m_{xy}^F \qquad (h\in H, x\in X, y\in Y) \]
and a family of matrices $B_{h, x, y}^F \in \glmat(m_{xy}^F, \mathbb F), h\in H, x\in X, y\in Y,$ that satisfies
\[ B_{gh, x, y}^F = \Phi_X(h^{-1}, g^{-1}, (gh)^{-1}\opl x) \, \Phi_Y^{-1}(h^{-1}, g^{-1}, (gh)^{-1}\opl y) \ B_{g, x, y}^F \, B_{h, g^{-1}\opl x, g^{-1}\opl y}^F \]
for $g, h\in H, x\in X, y\in Y$. Let $l\in H, i, c\in X, a, b\in Y$. The  6j symbols below are only defined if $\gamma=\beta=1$ and $c=l^{-1} \opl i, b=l^{-1} \opl a$. In this case,

\begin{center}
	\begin{tikzpicture}[x=8mm, y=8mm, every node/.style={inner sep=2pt}]
		\newcommand{\sca}{1}
		\node[m morphism] (d) {$\delta$};
		\node[n morphism, on grid, below right=\sca and \sca of d] (g) {$\gamma$};
		\node[m morphism, on grid, below left=\sca and 2*\sca of g] (a) {$\alpha$};
		\node[m morphism, on grid, below right=\sca and \sca of a] (b) {$\beta$};
		
		\draw[color=darkgreen] (d) -- node[right=5pt, pos=0] {$c$} (g);
		\draw[dashed, color=red] (d) -- node[left=3pt] {$F$} (a);
		\draw[color=grey] (g) -- node[right=3pt] {$l$} (b);
		\draw[color=darkgreen] (g) -- node[above=1pt] {$i$} (a);
		\draw[color=blue] (a) -- node[left=5pt, pos=1] {$a$} (b);
		
		\draw[color=blue] (d.north) -- node[left] {$b$} +(0, 0.8) coordinate (top);
		\draw[color=blue] ($(top) + (-0.2, 0)$) -- +(0.4, 0);
		\draw[color=blue] (b.south) -- node[left] {$b$} +(0, -0.8) coordinate (bottom);
		\draw[color=blue] ($(bottom) + (-0.2, 0)$) -- +(0.4, 0);
	\end{tikzpicture}
	$=\tilde\kappa_Y(a)\, \left( B_{l, i, a}^F \right)^\alpha_\delta\, ,$
	\hfil
	\begin{tikzpicture}[x=8mm, y=8mm, every node/.style={inner sep=2pt}]
		\newcommand{\sca}{1}
		\node[m morphism] (d) {$\delta$};
		\node[m morphism, on grid, below left=\sca and \sca of d] (g) {$\gamma$};
		\node[n morphism, on grid, below right=\sca and 2*\sca of g] (a) {$\alpha$};
		\node[m morphism, on grid, below left=\sca and \sca of a] (b) {$\beta$};
		
		\draw[color=blue] (d) -- node[left=5pt, pos=0] {$a$} (g);
		\draw[color=grey] (d) -- node[right=3pt] {$l$} (a);
		\draw[dashed, color=red] (g) -- node[left=3pt] {$F$} (b);
		\draw[color=darkgreen] (g) -- node[above=1pt] {$i$} (a);
		\draw[color=darkgreen] (a) -- node[pos=1, right=5pt] {$c$} (b);
		
		\draw[color=blue] (d.north) -- node[left] {$b$} +(0, 0.8) coordinate (top);
		\draw[color=blue] ($(top) + (-0.2, 0)$) -- +(0.4, 0);
		\draw[color=blue] (b.south) -- node[left] {$b$} +(0, -0.8) coordinate (bottom);
		\draw[color=blue] ($(bottom) + (-0.2, 0)$) -- +(0.4, 0);
	\end{tikzpicture}
	$=\tilde\kappa_X(c)\, \left( \left(B_{l, i, a} \right)^{-1} \right)^\beta_\gamma.$
\end{center}

From the formulas for the 6j symbols of module functors it becomes obvious that the description of $\vectgo$-module functors in terms of matrices that we introduced in Section \ref{ssec:vecgo_modf} can be viewed, up to a sign, as a description in terms of 6j symbols. This is not surprising because the 6j symbols uniquely determine the coherence datum of the module functor.

The above formulas for 6j symbols over $\vectgo$ can be used to calculate state sum models for 3-manifolds with defects as introduced in \cite{bm_tv+}. This extends Dijkgraf-Witten theory, and the data required for these models are relatively easy to handle, at least if we restrict ourselves to manageable module functors. For instance, one could only consider module functors which preserve simple objects. These are classified by certain $G$-equivariant maps and 1-cochains, see Lemma \ref{lem:vecgo_coch_modf} and Remark \ref{rem:vecgo_coch_modf}.

\section*{Acknowledgements}
I wrote the present work as my Master's thesis under the supervision of Professor Catherine Meusburger. It was submitted at Friedrich-Alexander-Universität in Erlangen on the 7th of October 2021.

\clearpage
\urlstyle{same}

\newcommand{\doi}[1]{\href{https://www.doi.org/#1}{#1}}

\addcontentsline{toc}{section}{References}

\end{document}